\providecommand{\U}[1]{\protect\rule{.1in}{.1in}}
\newcounter{exer}
\theoremstyle{definition}
\newtheorem{theo}{Theorem}[section]
\newenvironment{theorem}[1][]
{\begin{theo}[#1]\begin{leftbar}}
{\end{leftbar}\end{theo}}
\newtheorem{lem}[theo]{Lemma}
\newenvironment{lemma}[1][]
{\begin{lem}[#1]\begin{leftbar}}
{\end{leftbar}\end{lem}}
\newtheorem{prop}[theo]{Proposition}
\newenvironment{proposition}[1][]
{\begin{prop}[#1]\begin{leftbar}}
{\end{leftbar}\end{prop}}
\newtheorem{defi}[theo]{Definition}
\newenvironment{definition}[1][]
{\begin{defi}[#1]\begin{leftbar}}
{\end{leftbar}\end{defi}}
\newtheorem{remk}[theo]{Remark}
\newenvironment{remark}[1][]
{\begin{remk}[#1]\begin{leftbar}}
{\end{leftbar}\end{remk}}
\newtheorem{coro}[theo]{Corollary}
\newenvironment{corollary}[1][]
{\begin{coro}[#1]\begin{leftbar}}
{\end{leftbar}\end{coro}}
\newtheorem{conv}[theo]{Convention}
\newenvironment{convention}[1][]
{\begin{conv}[#1]\begin{leftbar}}
{\end{leftbar}\end{conv}}
\newtheorem{quest}[theo]{Question}
\newenvironment{question}[1][]
{\begin{quest}[#1]\begin{leftbar}}
{\end{leftbar}\end{quest}}
\newtheorem{warn}[theo]{Warning}
\newtheorem{conj}[theo]{Conjecture}
\newtheorem{exam}[theo]{Example}
\newenvironment{example}[1][]
{\begin{exam}[#1]\begin{leftbar}}
{\end{leftbar}\end{exam}}
\newtheorem{exmp}[exer]{Exercise}
\newenvironment{statement}{\begin{quote}}{\end{quote}}
\newenvironment{proof}[1][Proof]{\noindent\textbf{#1.} }{\ \rule{0.5em}{0.5em}}
\newenvironment{question}[1][Question]{\noindent\textbf{#1.} }{\ \rule{0.5em}{0.5em}}
\newcommand{\silentsubsection}{\subsection}
\let\sumnonlimits\sum
\let\prodnonlimits\prod
\let\cupnonlimits\bigcup
\let\capnonlimits\bigcap
\renewcommand{\sum}{\sumnonlimits\limits}
\renewcommand{\prod}{\prodnonlimits\limits}
\renewcommand{\bigcup}{\cupnonlimits\limits}
\renewcommand{\bigcap}{\capnonlimits\limits}
\NOEXPAND{\silentsubsection}{\subsection}
\newenvironment{verlong}{}{}
\newenvironment{vershort}{}{}
\newenvironment{noncompile}{}{}
\begin{document}

\title{The Pelletier--Ressayre hidden symmetry for Littlewood--Richardson coefficients}
\author{Darij Grinberg}
\date{
1 January 2022
\\
(Extended version of a paper published in:
\href{https://doi.org/10.5070/C61055382}{Combinatorial Theory \textbf{1}
(2021), \#16}.)}
\maketitle

\begin{abstract}
\textbf{Abstract.} We prove an identity for Littlewood--Richardson
coefficients conjectured by Pelletier and Ressayre. The proof relies on a
novel birational involution defined over any semifield.

\textbf{Keywords:} symmetric functions, Schur functions, Schur polynomials,
Littlewood--Richardson coefficients, birational combinatorics,
detropicalization, partitions.

\textbf{MSC2010 Mathematics Subject Classifications:} 05E05.

\end{abstract}
\tableofcontents

\section*{***}

One of the central concepts in the theory of symmetric functions are the
\textit{Littlewood--Richardson coefficients} $c_{\mu,\nu}^{\lambda}$: the
coefficients when a product $s_{\mu}s_{\nu}$ of two Schur functions is
expanded back in the Schur basis $\left(  s_{\lambda}\right)  _{\lambda
\in\operatorname*{Par}}$. Various properties of these coefficients have been
found, among them combinatorial interpretations, vanishing results, bounds and
symmetries (i.e., equalities between $c_{\mu,\nu}^{\lambda}$ for different
$\lambda,\mu,\nu$). A recent overview of the latter can be found in
\cite{BriRos20}.

\begin{noncompile}
(See also \cite[Exercises 2.7.11(c), 2.9.15(c) and 2.9.16(c)]{GriRei} for
algebraic proofs of the classical symmetries.)
\end{noncompile}

In \cite{PelRes20}, Pelletier and Ressayre conjectured a further symmetry of
Littlewood--Richardson coefficients. Unless the classical ones, it is a
partial symmetry (i.e., it does not cover every Littlewood--Richardson
coefficient); it is furthermore much less simple to state, to the extent that
Pelletier and Ressayre have conjectured its existence while leaving open the
question which exact coefficients it matches up. In this paper, we answer this
question and prove the conjecture thus concretized.

The conjecture, in its original (unconcrete) form, can be stated as follows:
Let $n\geq2$, and consider the set $\operatorname*{Par}\left[  n\right]  $ of
all partitions having length $\leq n$. Let $a$ and $b$ be two nonnegative
integers, and define the two partitions $\alpha=\left(  a+b,a^{n-2}\right)  $
and $\beta=\left(  a+b,b^{n-2}\right)  $ (where $c^{n-2}$ means
$\underbrace{c,c,\ldots,c}_{n-2\text{ times}}$, as usual in partition
combinatorics). Fix another partition $\mu\in\operatorname*{Par}\left[
n\right]  $. Then, the families $\left(  c_{\alpha,\mu}^{\omega}\right)
_{\omega\in\operatorname*{Par}\left[  n\right]  }$ and $\left(  c_{\beta,\mu
}^{\omega}\right)  _{\omega\in\operatorname*{Par}\left[  n\right]  }$ of
Littlewood--Richardson coefficients seem to be identical up to permutation.
(We can restate this in terms of Schur polynomials in the $n$ variables
$x_{1},x_{2},\ldots,x_{n}$; this then becomes the claim that the products
$s_{\alpha}\left(  x_{1},x_{2},\ldots,x_{n}\right)  \cdot s_{\mu}\left(
x_{1},x_{2},\ldots,x_{n}\right)  $ and $s_{\beta}\left(  x_{1},x_{2}%
,\ldots,x_{n}\right)  \cdot s_{\mu}\left(  x_{1},x_{2},\ldots,x_{n}\right)  $,
when expanded in the basis of Schur polynomials, have the same multiset of coefficients.)

Pelletier and Ressayre have proved this conjecture for $n=3$ (see
\cite[Corollary 2]{PelRes20}) and in some further cases. We shall prove it in
full generality, and construct what is essentially a bijection $\varphi
:\operatorname*{Par}\left[  n\right]  \rightarrow\operatorname*{Par}\left[
n\right]  $ that makes it explicit (i.e., that satisfies $c_{\alpha,\mu
}^{\omega}=c_{\beta,\mu}^{\varphi\left(  \omega\right)  }$ for each $\omega
\in\operatorname*{Par}\left[  n\right]  $). To be fully precise, $\varphi$
will not be a bijection $\operatorname*{Par}\left[  n\right]  \rightarrow
\operatorname*{Par}\left[  n\right]  $, but rather a bijection from
$\mathbb{Z}^{n}$ to $\mathbb{Z}^{n}$, and it will satisfy $c_{\alpha,\mu
}^{\omega}=c_{\beta,\mu}^{\varphi\left(  \omega\right)  }$ with the
understanding that $c_{\alpha,\mu}^{\omega}=c_{\beta,\mu}^{\omega}=0$ when
$\omega\notin\operatorname*{Par}\left[  n\right]  $. (Here,
$\operatorname*{Par}\left[  n\right]  $ is understood to be a subset of
$\mathbb{Z}^{n}$ by identifying each partition $\lambda\in\operatorname*{Par}%
\left[  n\right]  $ with the $n$-tuple $\left(  \lambda_{1},\lambda_{2}%
,\ldots,\lambda_{n}\right)  $.)

We will define this bijection $\varphi$ by explicit (if somewhat intricate)
formulas that \textquotedblleft mingle\textquotedblright\ the entries of the
partition it is being applied to with those of $\mu$ (as well as $a$ and $b$)
using the $\min$ and $+$ operators. These formulas are best understood in the
\textit{birational picture}, in which these $\min$ and $+$ operators are
generalized to the addition and the multiplication of an arbitrary semifield.
(Our proof does not require this generality, but the birational picture has
the advantage of greater familiarity and better notational support. It also
reveals a connection with  a known birational map known as a \textquotedblleft
birational R-matrix\textquotedblright\ (see Section \ref{subsect.fin.Rmat} for
details), which throws some light on the otherwise rather mysterious bijection.)

Another ingredient of our proof is an explicit formula for $s_{\alpha}\left(
x_{1},x_{2},\ldots,x_{n}\right)  $ for the above-mentioned partition $\alpha$.

\subsubsection*{Acknowledgments}

I thank Sunita Chepuri, Grigori Olshanski, Pasha Pylyavskyy and Tom Roby for
interesting and helpful conversations.

Parts of this paper were written at the Mathematisches Forschungsinstitut
Oberwolfach, where I was staying as a Leibniz fellow in Summer 2020. The
SageMath computer algebra system \cite{SageMath} has been used in discovering
some of the results.

\subsection*{Remark on alternative versions}

\begin{vershort}
This paper also has a detailed version \cite{verlong}, which includes some
proofs that have been omitted from the present version (mostly basic
properties of symmetric functions).
\end{vershort}

\begin{verlong}
You are reading the detailed version of this paper. For the standard version
(which is shorter by virtue of omitting some straightforward or well-known
proofs), see \cite{vershort}.
\end{verlong}

An older version of this paper appeared as
\href{https://publications.mfo.de/handle/mfo/3773}{Oberwolfach Preprint
OWP-2020-18}.

\section{\label{sect.not}Notations}

We will use the following notations (most of which are also used in
\cite[\S 2.1]{GriRei}):

\begin{itemize}
\item We let $\mathbb{N}=\left\{  0,1,2,\ldots\right\}  $.

\item We fix a commutative ring $\mathbf{k}$; we will use this $\mathbf{k}$ as
the base ring in what follows.

\item A \emph{weak composition} means an infinite sequence of nonnegative
integers that contains only finitely many nonzero entries (i.e., a sequence
$\left(  \alpha_{1},\alpha_{2},\alpha_{3},\ldots\right)  \in\mathbb{N}%
^{\infty}$ such that all but finitely many $i\in\left\{  1,2,3,\ldots\right\}
$ satisfy $\alpha_{i}=0$).

\item We let $\operatorname*{WC}$ denote the set of all weak compositions.

\item For any weak composition $\alpha$ and any positive integer $i$, we let
$\alpha_{i}$ denote the $i$-th entry of $\alpha$ (so that $\alpha=\left(
\alpha_{1},\alpha_{2},\alpha_{3},\ldots\right)  $). More generally, we use
this notation whenever $\alpha$ is an infinite sequence of any kind of objects.

\item The \emph{size} $\left\vert \alpha\right\vert $ of a weak composition
$\alpha$ is defined to be $\alpha_{1}+\alpha_{2}+\alpha_{3}+\cdots
\in\mathbb{N}$.

\item A \emph{partition} means a weak composition whose entries weakly
decrease (i.e., a weak composition $\alpha$ satisfying $\alpha_{1}\geq
\alpha_{2}\geq\alpha_{3}\geq\cdots$).

\item We let $\operatorname*{Par}$ denote the set of all partitions.

\item We will sometimes omit trailing zeroes from partitions: i.e., a
partition $\lambda=\left(  \lambda_{1},\lambda_{2},\lambda_{3},\ldots\right)
$ will be identified with the $k$-tuple $\left(  \lambda_{1},\lambda
_{2},\ldots,\lambda_{k}\right)  $ whenever $k\in\mathbb{N}$ satisfies
$\lambda_{k+1}=\lambda_{k+2}=\lambda_{k+3}=\cdots=0$. For example, $\left(
3,2,1,0,0,0,\ldots\right)  =\left(  3,2,1\right)  =\left(  3,2,1,0\right)  $.

As a consequence of this, an $n$-tuple $\left(  \lambda_{1},\lambda_{2}%
,\ldots,\lambda_{n}\right)  \in\mathbb{Z}^{n}$ (for any given $n\in\mathbb{N}%
$) is a partition if and only if it satisfies $\lambda_{1}\geq\lambda_{2}%
\geq\cdots\geq\lambda_{n}\geq0$.

\item A \emph{part} of a partition $\lambda$ means a nonzero entry of
$\lambda$. For example, the parts of the partition $\left(  3,1,1\right)
=\left(  3,1,1,0,0,0,\ldots\right)  $ are $3,1,1$.

\item The \emph{length} of a partition $\lambda$ means the smallest
$k\in\mathbb{N}$ such that $\lambda_{k+1}=\lambda_{k+2}=\lambda_{k+3}%
=\cdots=0$. Equivalently, the length of a partition $\lambda$ is the number of
parts of $\lambda$ (counted with multiplicity). This length is denoted by
$\ell\left(  \lambda\right)  $. For example, $\ell\left(  \left(
4,2,0,0\right)  \right)  =\ell\left(  \left(  4,2\right)  \right)  =2$ and
$\ell\left(  \left(  5,1,1\right)  \right)  =3$.

\item We will use the notation $m^{k}$ for \textquotedblleft%
$\underbrace{m,m,\ldots,m}_{k\text{ times}}$\textquotedblright\ in partitions
and tuples (whenever $m\in\mathbb{N}$ and $k\in\mathbb{N}$). (For example,
$\left(  2,1^{4}\right)  =\left(  2,1,1,1,1\right)  $.)

\item We let $\Lambda$ denote the ring of symmetric functions in infinitely
many variables $x_{1},x_{2},x_{3},\ldots$ over $\mathbf{k}$. This is a subring
of the ring $\mathbf{k}\left[  \left[  x_{1},x_{2},x_{3},\ldots\right]
\right]  $ of formal power series. To be more specific, $\Lambda$ consists of
all power series in $\mathbf{k}\left[  \left[  x_{1},x_{2},x_{3}%
,\ldots\right]  \right]  $ that are symmetric (i.e., invariant under
permutations of the variables) and of bounded degree (see \cite[\S 2.1]%
{GriRei} for the precise meaning of this).

\item A \emph{monomial} shall mean a formal expression of the form
$x_{1}^{\alpha_{1}}x_{2}^{\alpha_{2}}x_{3}^{\alpha_{3}}\cdots$ with $\alpha
\in\operatorname*{WC}$. Formal power series are formal infinite $\mathbf{k}%
$-linear combinations of such monomials.

\item For any weak composition $\alpha$, we let $\mathbf{x}^{\alpha}$ denote
the monomial $x_{1}^{\alpha_{1}}x_{2}^{\alpha_{2}}x_{3}^{\alpha_{3}}\cdots$.

\item The \emph{degree} of a monomial $\mathbf{x}^{\alpha}$ is defined to be
$\left\vert \alpha\right\vert $.
\end{itemize}

We shall use the symmetric functions $h_{n}$ and $s_{\lambda}$ in $\Lambda$ as
defined in \cite[Sections 2.1 and 2.2]{GriRei}. Let us briefly recall how they
are defined:

\begin{itemize}
\item For each $n\in\mathbb{Z}$, we define the \emph{complete homogeneous
symmetric function }$h_{n}\in\Lambda$ by%
\[
h_{n}=\sum_{i_{1}\leq i_{2}\leq\cdots\leq i_{n}}x_{i_{1}}x_{i_{2}}\cdots
x_{i_{n}}=\sum_{\substack{\alpha\in\operatorname*{WC};\\\left\vert
\alpha\right\vert =n}}\mathbf{x}^{\alpha}.
\]
Thus, $h_{0}=1$ and $h_{n}=0$ for all $n<0$.

\item For each partition $\lambda$, we define the \emph{Schur function
}$s_{\lambda}\in\Lambda$ by%
\[
s_{\lambda}=\sum\mathbf{x}_{T},
\]
where the sum ranges over all semistandard tableaux $T$ of shape $\lambda$,
and where $\mathbf{x}_{T}$ denotes the monomial obtained by multiplying the
$x_{i}$ for all entries $i$ of $T$. We refer the reader to \cite[Definition
2.2.1]{GriRei} or to \cite[\S 7.10]{Stanley-EC2} for the details of this
definition and further descriptions of the Schur functions.

The family $\left(  s_{\lambda}\right)  _{\lambda\in\operatorname*{Par}}$ is a
basis of the $\mathbf{k}$-module $\Lambda$, and is known as the \emph{Schur
basis}. It is easy to see that each $n\in\mathbb{N}$ satisfies $s_{\left(
n\right)  }=h_{n}$.

\item We shall use the \emph{Littlewood--Richardson coefficients }$c_{\mu,\nu
}^{\lambda}$ (for $\lambda,\mu,\nu\in\operatorname*{Par}$), as defined in
\cite[Definition 2.5.8]{GriRei}, in \cite[\S 7.15]{Stanley-EC2} or in
\cite[Chapter 10]{Egge19}. One of their defining properties is the following
fact (see, e.g., \cite[(2.5.6)]{GriRei} or \cite[(7.64)]{Stanley-EC2} or
\cite[(10.1)]{Egge19}): Any two partitions $\mu,\nu\in\operatorname*{Par}$
satisfy%
\begin{equation}
s_{\mu}s_{\nu}=\sum_{\lambda\in\operatorname*{Par}}c_{\mu,\nu}^{\lambda
}s_{\lambda}. \label{eq.lrcoeff.def}%
\end{equation}

\end{itemize}

\section{\label{sect.mainthm}The theorem}

\begin{convention}
\label{conv.main}\ \ 

\begin{enumerate}
\item[\textbf{(a)}] For the rest of this paper, we fix a positive integer $n$.

\item[\textbf{(b)}] Let $\operatorname*{Par}\left[  n\right]  $ be the set of
all partitions having length $\leq n$. In other words,%
\begin{align*}
\operatorname*{Par}\left[  n\right]   &  =\left\{  \lambda\in
\operatorname*{Par}\ \mid\ \lambda=\left(  \lambda_{1},\lambda_{2}%
,\ldots,\lambda_{n}\right)  \right\}  =\operatorname*{Par}\cap\mathbb{N}^{n}\\
&  =\left\{  \left(  \lambda_{1},\lambda_{2},\ldots,\lambda_{n}\right)
\in\mathbb{Z}^{n}\ \mid\ \lambda_{1}\geq\lambda_{2}\geq\cdots\geq\lambda
_{n}\geq0\right\}
\end{align*}
(where we are using our convention that trailing zeroes can be omitted from
partitions, so that a partition of length $\leq n$ can always be identified
with an $n$-tuple).

\item[\textbf{(c)}] A family $\left(  u_{i}\right)  _{i\in\mathbb{Z}}$ of
objects (e.g., of numbers) is said to be $n$\emph{-periodic} if each
$j\in\mathbb{Z}$ satisfies $u_{j}=u_{j+n}$. Equivalently, a family $\left(
u_{i}\right)  _{i\in\mathbb{Z}}$ of objects is $n$-periodic if and only if it
has the property that%
\[
\left(  u_{j}=u_{j^{\prime}}\text{ whenever }j\text{ and }j^{\prime}\text{ are
two integers satisfying }j\equiv j^{\prime}\operatorname{mod}n\right)  .
\]
Thus, an $n$-periodic family $\left(  u_{i}\right)  _{i\in\mathbb{Z}}$ is
uniquely determined by the $n$ entries $u_{1},u_{2},\ldots,u_{n}$ (because for
any integer $j$, we have $u_{j}=u_{j^{\prime}}$, where $j^{\prime}$ is the
unique element of $\left\{  1,2,\ldots,n\right\}  $ that is congruent to $j$
modulo $n$).
\end{enumerate}
\end{convention}

\begin{example}
If $n=3$, then both partitions $\left(  3,2\right)  $ and $\left(
3,2,2\right)  $ belong to $\operatorname*{Par}\left[  n\right]  $, while the
partition $\left(  3,2,2,2\right)  $ does not. The $n$-tuples $\left(
4,2,1\right)  $ and $\left(  3,3,0\right)  $ are partitions, while the
$n$-tuples $\left(  1,0,-1\right)  $ and $\left(  2,0,1\right)  $ are not.

If $\zeta$ is an $n$-th root of unity, then the family $\left(  \zeta
^{i}\right)  _{i\in\mathbb{Z}}$ of complex numbers is $n$-periodic.
\end{example}

We can now state our main theorem, which is a concretization of
\cite[Conjecture 1]{PelRes20}:

\begin{theorem}
\label{thm.main}Assume that $n\geq2$. Let $a,b\in\mathbb{N}$.

Define the two partitions $\alpha=\left(  a+b,a^{n-2}\right)  $ and
$\beta=\left(  a+b,b^{n-2}\right)  $.

Fix any partition $\mu\in\operatorname*{Par}\left[  n\right]  $.

Define a map $\varphi:\mathbb{Z}^{n}\rightarrow\mathbb{Z}^{n}$ as follows:

Let $\omega\in\mathbb{Z}^{n}$. Define an $n$-tuple $\nu=\left(  \nu_{1}%
,\nu_{2},\ldots,\nu_{n}\right)  \in\mathbb{Z}^{n}$ by
\[
\nu_{i}=\omega_{i}-a\ \ \ \ \ \ \ \ \ \ \text{for each }i\in\left\{
1,2,\ldots,n\right\}  ,
\]
where $\omega_{i}$ means the $i$-th entry of $\omega$.

For each $i\in\mathbb{Z}$, we let $i\#$ denote the unique element of $\left\{
1,2,\ldots,n\right\}  $ congruent to $i$ modulo $n$.

For each $j\in\mathbb{Z}$, set%
\begin{align*}
\tau_{j}  &  =\min\left\{  \left(  \nu_{\left(  j+1\right)  \#}+\nu_{\left(
j+2\right)  \#}+\cdots+\nu_{\left(  j+k\right)  \#}\right)  \right. \\
&  \ \ \ \ \ \ \ \ \ \ \ \ \ \ \ \ \ \ \ \ \left.  +\left(  \mu_{\left(
j+k+1\right)  \#}+\mu_{\left(  j+k+2\right)  \#}+\cdots+\mu_{\left(
j+n-1\right)  \#}\right)  \right. \\
&  \ \ \ \ \ \ \ \ \ \ \ \ \ \ \ \ \ \ \ \ \left.  \ \mid\ k\in\left\{
0,1,\ldots,n-1\right\}
\vphantom{\left(\nu_{\left(j+1\right)\#}\right)}\right\}  .
\end{align*}

Define an $n$-tuple $\eta=\left(  \eta_{1},\eta_{2},\ldots,\eta_{n}\right)
\in\mathbb{Z}^{n}$ by setting%
\[
\eta_{i}=\mu_{i\#}+\left(  \mu_{\left(  i-1\right)  \#}+\tau_{\left(
i-1\right)  \#}\right)  -\left(  \nu_{\left(  i+1\right)  \#}+\tau_{\left(
i+1\right)  \#}\right)  \ \ \ \ \ \ \ \ \ \ \text{for each }i\in\left\{
1,2,\ldots,n\right\}  .
\]

Let $\varphi\left(  \omega\right)  $ be the $n$-tuple $\left(  \eta_{1}%
+b,\eta_{2}+b,\ldots,\eta_{n}+b\right)  \in\mathbb{Z}^{n}$. Thus, we have
defined a map $\varphi:\mathbb{Z}^{n}\rightarrow\mathbb{Z}^{n}$.

Then:

\begin{itemize}
\item[\textbf{(a)}] The map $\varphi$ is a bijection.

\item[\textbf{(b)}] We have%
\[
c_{\alpha,\mu}^{\omega}=c_{\beta,\mu}^{\varphi\left(  \omega\right)
}\ \ \ \ \ \ \ \ \ \ \text{for each }\omega\in\mathbb{Z}^{n}.
\]
Here, we are using the convention that every $n$-tuple $\omega\in
\mathbb{Z}^{n}$ that is not a partition satisfies $c_{\alpha,\mu}^{\omega}=0$
and $c_{\beta,\mu}^{\omega}=0$.
\end{itemize}
\end{theorem}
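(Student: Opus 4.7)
The plan is to prove Theorem \ref{thm.main} by lifting the combinatorial map $\varphi$ to a birational involution over an arbitrary semifield, and then deducing both the bijectivity in part \textbf{(a)} and the coefficient equality in part \textbf{(b)} by tropicalization. The starting point is that $\alpha$ and $\beta$ differ only by the swap $a \leftrightarrow b$; correspondingly, the formulas defining $\tau_j$ and $\eta_i$ use only $\min$ and $+$, so they arise as tropicalizations of semifield identities, and involutivity at the semifield level will imply involutivity of $\varphi$.

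To feed the Littlewood--Richardson coefficients into this framework, I would first derive an explicit formula for $s_\alpha(x_1, \ldots, x_n)$. Because $\alpha = (a+b, a^{n-2})$ has length $\leq n-1$ and is almost a rectangle (width $a$, height $n-1$) with an extra $b$-long row on top, expanding the Weyl bialternant along the bottom row, or equivalently running a short Jacobi--Trudi computation, should yield a formula of the shape $s_\alpha(x_1, \ldots, x_n) = \sum_{k=0}^{n-1} P_k(x_1, \ldots, x_n)$, whose $n$ summands are indexed by the same set $\{0,1,\ldots,n-1\}$ that appears inside $\tau_j$. Combining this with the standard tableau expansion of $s_\mu$ writes $s_\alpha s_\mu$ as a generating sum over pairs (an index $k$ and a semistandard tableau of shape $\mu$); extracting the Schur coefficient via the alternant-quotient trick then expresses $c_{\alpha,\mu}^\omega$ (for $\omega \in \operatorname*{Par}[n]$) as a weighted count of such pairs. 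The key step is now to construct a birational map $\Phi$ on an ambient space of $n$-tuples over an arbitrary semifield $\mathbb{K}$, with parameters $A, B, M_1, \ldots, M_n$ playing the role of $a, b, \mu_1, \ldots, \mu_n$, obtained by detropicalizing the formulas for $\tau_j$ and $\eta_i$, that is, by replacing $\min$ with semifield addition, $+$ with multiplication, and subtraction with division. If $\Phi$ is an involution that exchanges $A$ and $B$ while preserving the above generating polynomial, then tropicalizing in the min-plus semifield recovers $\varphi$, makes it a bijection (its own inverse up to the $A \leftrightarrow B$ swap), and forces $c_{\alpha,\mu}^\omega = c_{\beta,\mu}^{\varphi(\omega)}$. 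The vanishing boundary behaviour for $\omega \notin \operatorname*{Par}[n]$ should fall out as a domain-of-definition statement in the tropical limit.

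The hard part, I expect, is verifying that $\Phi$ really is a well-defined involution and that it intertwines the generating polynomials for $\alpha$ and $\beta$. Neither property is visible from the formulas as written: $\tau_j$ is a minimum over $n$ cyclically shifted sums, and $\eta_i$ is a telescoping combination of $\tau$, $\mu$, and $\nu$ values, so a direct algebraic simplification is unlikely to succeed. I would anticipate either a careful case analysis on which index $k$ attains the minimum defining $\tau_j$ (and how that index transforms under $\Phi$), or a reformulation of $\Phi$ in a more symmetric form -- perhaps as a composition of simpler toggles, or via a determinantal or cyclically symmetric rational expression -- from which the involutivity and the $A \leftrightarrow B$ symmetry become manifest.
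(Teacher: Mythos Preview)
Your high-level architecture---detropicalize the formulas defining $\varphi$ to a semifield map, prove it is an involution there, and then tropicalize back---is exactly what the paper does. The paper's Section~\ref{sect.bir} builds the birational map $\mathbf{f}_u$ on $\mathbb{K}^n$ for any semifield and shows (Theorem~\ref{thm.f.full}) that it is an involution satisfying several product/sum identities; specializing to the min-plus semifield over $\mathbb{Z}$ gives $\varphi(\omega)=\mathbf{f}_\mu(\omega-a)+b$, whence part~\textbf{(a)}. So on that axis you are aligned.

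Where your plan diverges from what actually works is the bridge to the Littlewood--Richardson coefficients. You anticipate that $s_\alpha(x_1,\ldots,x_n)$ expands as ``a sum of $n$ summands indexed by $\{0,1,\ldots,n-1\}$'' matching the index set inside $\tau_j$. That is not the formula the paper finds or needs: instead (Corollary~\ref{cor.ominus.salpha})
\[
\overline{s}_\alpha \;=\; x_\Pi^{\,a}\bigl(h_a^- h_b^+ - h_{a-1}^- h_{b-1}^+\bigr),
\]
a two-term expression in which the factors $h_k^-=h_k(x_1^{-1},\ldots,x_n^{-1})$ live in the Laurent ring. The $n$-term structure in $\tau_j$ comes not from $s_\alpha$ but from the internal sums $t_{n-1,j}$ in the definition of the birational map itself. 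Consequently your proposed mechanism---a single generating polynomial that $\Phi$ preserves while swapping $A\leftrightarrow B$---does not materialize. What the paper does instead is apply two Pieri rules (one ordinary for $h_b^+$, one ``upside-down'' for $h_a^-$, Propositions~\ref{prop.alt.pieri1} and~\ref{prop.alt.pieri2}) to obtain
\[
c_{\alpha,\mu}^{\lambda}\;=\;\bigl|R_{\mu,a,b}(\lambda-a)\bigr|\;-\;\bigl|R_{\mu,a-1,b-1}(\lambda-a)\bigr|,
\]
where $R_{\mu,a,b}(\gamma)$ is the finite set of snakes $\nu$ with $\mu\rightharpoonup\nu$, $\gamma\rightharpoonup\nu$, $|\mu|-|\nu|=a$, $|\gamma|-|\nu|=b$. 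The theorem then reduces to the purely combinatorial equality $|R_{\mu,b,a}(\mathbf{f}_\mu(\gamma))|=|R_{\mu,a,b}(\gamma)|$, and \emph{this} is what the tropicalized identities from Theorem~\ref{thm.f.full} (specifically the entrywise relations in part~\textbf{(c)} and the sum identity in part~\textbf{(d)}) are used for: they let one write down an explicit entrywise bijection $R_{\mu,a,b}(\gamma)\to R_{\mu,b,a}(\eta)$ (Lemma~\ref{lem.fZ.lr}). The involutivity proof itself (Lemma~\ref{lem.f.steps}, parts \textbf{(g)}--\textbf{(k)}) is a direct, lengthy but elementary induction on telescoping identities---neither a case analysis on which $k$ attains the minimum nor a toggle decomposition.
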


This theorem will be proved at the end of this paper, after we have shown
several (often seemingly unrelated, yet eventually useful) results.

\begin{example}
Let $n=4$ and $a=1$ and $b=4$. The partitions $\alpha$ and $\beta$ defined in
Theorem \ref{thm.main} then take the forms $\alpha=\left(  1+4,1^{2}\right)
=\left(  5,1,1\right)  $ and $\beta=\left(  1+4,4^{2}\right)  =\left(
5,4,4\right)  $.

Let $\mu\in\operatorname*{Par}\left[  n\right]  $ be the partition $\left(
2,1\right)  =\left(  2,1,0,0\right)  $. Let $\omega\in\operatorname*{Par}%
\left[  n\right]  $ be the partition $\left(  5,3,2\right)  =\left(
5,3,2,0\right)  $. We shall compute the $n$-tuple $\varphi\left(
\omega\right)  $ defined in Theorem \ref{thm.main}.

Indeed, the $n$-tuple $\nu$ from Theorem \ref{thm.main} is
\[
\nu=\left(  \omega_{1}-a,\omega_{2}-a,\omega_{3}-a,\omega_{4}-a\right)
=\left(  5-1,3-1,2-1,0-1\right)  =\left(  4,2,1,-1\right)  .
\]
The integers $i\#$ from Theorem \ref{thm.main} form an $n$-periodic family%
\[
\left(  i\#\right)  _{i\in\mathbb{Z}}=\left(  \ldots
,0\#,1\#,2\#,3\#,4\#,5\#,6\#,7\#,\ldots\right)  =\left(  \ldots
,4,1,2,3,4,1,2,3,\ldots\right)  .
\]
The integers $\tau_{j}$ (for $j\in\mathbb{Z}$) from Theorem \ref{thm.main} are
given by%
\begin{align*}
\tau_{1}  &  =\min\left\{  \left(  \nu_{2\#}+\nu_{3\#}+\cdots+\nu_{\left(
k+1\right)  \#}\right)  +\left(  \mu_{\left(  k+2\right)  \#}+\mu_{\left(
k+3\right)  \#}+\cdots+\mu_{4\#}\right)  \right. \\
&  \ \ \ \ \ \ \ \ \ \ \ \ \ \ \ \ \ \ \ \ \left.  \ \mid\ k\in\left\{
0,1,2,3\right\}  \right\} \\
&  =\min\left\{  \mu_{2\#}+\mu_{3\#}+\mu_{4\#},\ \ \ \nu_{2\#}+\mu_{3\#}%
+\mu_{4\#},\ \ \ \nu_{2\#}+\nu_{3\#}+\mu_{4\#},\ \ \ \nu_{2\#}+\nu_{3\#}%
+\nu_{4\#}\right\} \\
&  =\min\left\{  \mu_{2}+\mu_{3}+\mu_{4},\ \ \ \nu_{2}+\mu_{3}+\mu
_{4},\ \ \ \nu_{2}+\nu_{3}+\mu_{4},\ \ \ \nu_{2}+\nu_{3}+\nu_{4}\right\} \\
&  =\min\left\{  1+0+0,\ \ \ 2+0+0,\ \ \ 2+1+0,\ \ \ 2+1+\left(  -1\right)
\right\} \\
&  =\min\left\{  1,2,3,2\right\}  =1
\end{align*}
and%
\begin{align*}
\tau_{2}  &  =\min\left\{  \left(  \nu_{3\#}+\nu_{4\#}+\cdots+\nu_{\left(
k+2\right)  \#}\right)  +\left(  \mu_{\left(  k+3\right)  \#}+\mu_{\left(
k+4\right)  \#}+\cdots+\mu_{5\#}\right)  \right. \\
&  \ \ \ \ \ \ \ \ \ \ \ \ \ \ \ \ \ \ \ \ \left.  \ \mid\ k\in\left\{
0,1,2,3\right\}  \right\} \\
&  =\min\left\{  \mu_{3\#}+\mu_{4\#}+\mu_{5\#},\ \ \ \nu_{3\#}+\mu_{4\#}%
+\mu_{5\#},\ \ \ \nu_{3\#}+\nu_{4\#}+\mu_{5\#},\ \ \ \nu_{3\#}+\nu_{4\#}%
+\nu_{5\#}\right\} \\
&  =\min\left\{  \mu_{3}+\mu_{4}+\mu_{1},\ \ \ \nu_{3}+\mu_{4}+\mu
_{1},\ \ \ \nu_{3}+\nu_{4}+\mu_{1},\ \ \ \nu_{3}+\nu_{4}+\nu_{1}\right\} \\
&  =\min\left\{  0+0+2,\ \ \ 1+0+2,\ \ \ 1+\left(  -1\right)
+2,\ \ \ 1+\left(  -1\right)  +4\right\} \\
&  =\min\left\{  2,3,2,4\right\}  =2
\end{align*}
and%
\begin{align*}
\tau_{3}  &  =\min\left\{  \left(  \nu_{4\#}+\nu_{5\#}+\cdots+\nu_{\left(
k+3\right)  \#}\right)  +\left(  \mu_{\left(  k+4\right)  \#}+\mu_{\left(
k+5\right)  \#}+\cdots+\mu_{6\#}\right)  \right. \\
&  \ \ \ \ \ \ \ \ \ \ \ \ \ \ \ \ \ \ \ \ \left.  \ \mid\ k\in\left\{
0,1,2,3\right\}  \right\} \\
&  =\min\left\{  \mu_{4\#}+\mu_{5\#}+\mu_{6\#},\ \ \ \nu_{4\#}+\mu_{5\#}%
+\mu_{6\#},\ \ \ \nu_{4\#}+\nu_{5\#}+\mu_{6\#},\ \ \ \nu_{4\#}+\nu_{5\#}%
+\nu_{6\#}\right\} \\
&  =\min\left\{  \mu_{4}+\mu_{1}+\mu_{2},\ \ \ \nu_{4}+\mu_{1}+\mu
_{2},\ \ \ \nu_{4}+\nu_{1}+\mu_{2},\ \ \ \nu_{4}+\nu_{1}+\nu_{2}\right\} \\
&  =\min\left\{  0+2+1,\ \ \ \left(  -1\right)  +2+1,\ \ \ \left(  -1\right)
+4+1,\ \ \ \left(  -1\right)  +4+2\right\} \\
&  =\min\left\{  3,2,4,5\right\}  =2
\end{align*}
and%
\begin{align*}
\tau_{4}  &  =\min\left\{  \left(  \nu_{5\#}+\nu_{6\#}+\cdots+\nu_{\left(
k+4\right)  \#}\right)  +\left(  \mu_{\left(  k+5\right)  \#}+\mu_{\left(
k+6\right)  \#}+\cdots+\mu_{7\#}\right)  \right. \\
&  \ \ \ \ \ \ \ \ \ \ \ \ \ \ \ \ \ \ \ \ \left.  \ \mid\ k\in\left\{
0,1,2,3\right\}  \right\} \\
&  =\min\left\{  \mu_{5\#}+\mu_{6\#}+\mu_{7\#},\ \ \ \nu_{5\#}+\mu_{6\#}%
+\mu_{7\#},\ \ \ \nu_{5\#}+\nu_{6\#}+\mu_{7\#},\ \ \ \nu_{5\#}+\nu_{6\#}%
+\nu_{7\#}\right\} \\
&  =\min\left\{  \mu_{1}+\mu_{2}+\mu_{3},\ \ \ \nu_{1}+\mu_{2}+\mu
_{3},\ \ \ \nu_{1}+\nu_{2}+\mu_{3},\ \ \ \nu_{1}+\nu_{2}+\nu_{3}\right\} \\
&  =\min\left\{  2+1+0,\ \ \ 4+1+0,\ \ \ 4+2+0,\ \ \ 4+2+1\right\} \\
&  =\min\left\{  3,5,6,7\right\}  =3
\end{align*}
and%
\[
\tau_{j}=\tau_{j^{\prime}}\ \ \ \ \ \ \ \ \ \ \text{whenever }j\equiv
j^{\prime}\operatorname{mod}4
\]
(the latter equality follows from the $n$-periodicity of the family $\left(
i\#\right)  _{i\in\mathbb{Z}}$). Thus, the $n$-tuple $\eta=\left(  \eta
_{1},\eta_{2},\ldots,\eta_{n}\right)  $ from Theorem \ref{thm.main} is given
by%
\[
\eta_{1}=\underbrace{\mu_{1\#}}_{=\mu_{1}=2}+\left(  \underbrace{\mu_{0\#}%
}_{=\mu_{4}=0}+\underbrace{\tau_{0\#}}_{=\tau_{4}=3}\right)  -\left(
\underbrace{\nu_{2\#}}_{=\nu_{2}=2}+\underbrace{\tau_{2\#}}_{=\tau_{2}%
=2}\right)  =2+\left(  0+3\right)  -\left(  2+2\right)  =1
\]
and%
\[
\eta_{2}=\underbrace{\mu_{2\#}}_{=\mu_{2}=1}+\left(  \underbrace{\mu_{1\#}%
}_{=\mu_{1}=2}+\underbrace{\tau_{1\#}}_{=\tau_{1}=1}\right)  -\left(
\underbrace{\nu_{3\#}}_{=\nu_{3}=1}+\underbrace{\tau_{3\#}}_{=\tau_{3}%
=2}\right)  =1+\left(  2+1\right)  -\left(  1+2\right)  =1
\]
and%
\[
\eta_{3}=\underbrace{\mu_{3\#}}_{=\mu_{3}=0}+\left(  \underbrace{\mu_{2\#}%
}_{=\mu_{2}=1}+\underbrace{\tau_{2\#}}_{=\tau_{2}=2}\right)  -\left(
\underbrace{\nu_{4\#}}_{=\nu_{4}=-1}+\underbrace{\tau_{4\#}}_{=\tau_{4}%
=3}\right)  =0+\left(  1+2\right)  -\left(  \left(  -1\right)  +3\right)  =1
\]
and%
\[
\eta_{4}=\underbrace{\mu_{4\#}}_{=\mu_{4}=0}+\left(  \underbrace{\mu_{3\#}%
}_{=\mu_{3}=0}+\underbrace{\tau_{3\#}}_{=\tau_{3}=2}\right)  -\left(
\underbrace{\nu_{5\#}}_{=\nu_{1}=4}+\underbrace{\tau_{5\#}}_{=\tau_{1}%
=1}\right)  =0+\left(  0+2\right)  -\left(  4+1\right)  =-3,
\]
so $\eta=\left(  1,1,1,-3\right)  $. Hence, $\varphi\left(  \omega\right)
=\left(  1+b,1+b,1+b,-3+b\right)  =\left(  5,5,5,1\right)  $ (since $b=4$).
This is a partition. Theorem \ref{thm.main} \textbf{(b)} now yields
$c_{\alpha,\mu}^{\omega}=c_{\beta,\mu}^{\varphi\left(  \omega\right)  }$, that
is, $c_{\left(  5,1,1\right)  ,\left(  2,1\right)  }^{\left(  5,3,2\right)
}=c_{\left(  5,4,4\right)  ,\left(  2,1\right)  }^{\left(  5,5,5,1\right)  }$.
And indeed, this equality holds (both of its sides being equal to $1$).
\end{example}

\begin{question}
Can the bijection $\varphi$ in Theorem \ref{thm.main} be defined in a more
\textquotedblleft intuitive\textquotedblright\ way, similar to (e.g.)
jeu-de-taquin or the RSK correspondence? (Of course, there is no tableau being
transformed here, just a partition, but this should make this construction easier.)
\end{question}

\section{\label{sect.bir}A birational involution}

The leading role in our proof of Theorem \ref{thm.main} will be played by a
certain piecewise-linear involution (which is similar to the bijection
$\varphi$ in Theorem \ref{thm.main}, but without the shifting by $-a$ and
$b$). For the sake of convenience, we prefer to study this involution in a
more general setting, in which the operations $\min$, $+$ and $-$ are replaced
by the structure operations $+$, $\cdot$ and $/$ of a semifield. This kind of
generalization is called \emph{detropicalization} (or \emph{birational
lifting}, or \emph{tropicalization} in the older combinatorial literature);
see, e.g., \cite{Kirill01}, \cite{NoumiYamada}, \cite[Sections 5 and
9]{EinPro13} or \cite[\S 4.2]{Roby15} for examples of this procedure (although
our use of it will be conceptually simpler).

\subsection{Semifields}

We recall some basic definitions from basic abstract algebra (mostly to avoid
confusion arising from slight terminological differences):

\begin{itemize}
\item A \emph{semigroup} means a pair $\left(  S,\ast\right)  $, where $S$ is
a set and where $\ast$ is an associative binary operation on $S$. We do not
require this operation $\ast$ to have a neutral element. We usually write the
operation $\ast$ infix (i.e., we write $a\ast b$ instead of $\ast\left(
a,b\right)  $ when $a,b\in S$).

\item A semigroup $\left(  S,\ast\right)  $ is said to be \emph{abelian} if
the operation $\ast$ is commutative (i.e., we have $a\ast b=b\ast a$ for all
$a,b\in S$).

\item A \emph{monoid} means a triple $\left(  S,\ast,e\right)  $, where
$\left(  S,\ast\right)  $ is a semigroup and where $e$ is a neutral element
for the operation $\ast$ (that is, $e$ is an element of $S$ that satisfies
$e\ast a=a\ast e=a$ for each $a\in S$). Usually, the monoid $\left(
S,\ast,e\right)  $ is equated with the semigroup $\left(  S,\ast\right)  $
because the neutral element is uniquely determined by $S$ and $\ast$.

\item If $\left(  S,\ast,e\right)  $ is a monoid and $a$ is an element of $S$,
then an \emph{inverse} of $a$ (with respect to $\ast$) means an element $b$ of
$S$ satisfying $a\ast b=b\ast a=e$. Such an inverse of $a$ is always unique
when it exists.

\item A \emph{group} means a monoid $\left(  S,\ast,e\right)  $ such that each
element of $S$ has an inverse (with respect to $\ast$).
\end{itemize}

We next recall the definition of a semifield (more precisely, the one we will
be using, as there are many competing ones):

\begin{definition}
\label{def.semifield}A \emph{semifield} means a set $\mathbb{K}$ endowed with

\begin{itemize}
\item two binary operations called \textquotedblleft
addition\textquotedblright\ and \textquotedblleft
multiplication\textquotedblright, and denoted by $+$ and $\cdot$,
respectively, and both written infix (i.e., we write $a+b$ and $a\cdot b$
instead of $+\left(  a,b\right)  $ and $\cdot\left(  a,b\right)  $), and

\item an element called \textquotedblleft unity\textquotedblright\ and denoted
by $1$
\end{itemize}

\noindent such that $\left(  \mathbb{K},+\right)  $ is an abelian semigroup
and $\left(  \mathbb{K},\cdot,1\right)  $ is an abelian group, and such that
the following axiom is satisfied:

\begin{itemize}
\item \textit{Distributivity:} We have $a\cdot\left(  b+c\right)  =\left(
a\cdot b\right)  +\left(  a\cdot c\right)  $ and $\left(  a+b\right)  \cdot
c=\left(  a\cdot c\right)  +\left(  b\cdot c\right)  $ for all $a\in
\mathbb{K}$, $b\in\mathbb{K}$ and $c\in\mathbb{K}$.
\end{itemize}
\end{definition}

Thus, a semifield is similar to a field, except that it has no additive
inverses and no zero element, but, on the other hand, has multiplicative
inverses for all its elements (not just the nonzero ones).

\begin{example}
\label{exa.semifield.Qplus}Let $\mathbb{Q}_{+}$ be the set of all positive
rational numbers. Then, $\mathbb{Q}_{+}$ (endowed with its standard addition
and multiplication and the number $1$) is a semifield.
\end{example}

\begin{example}
\label{exa.semifield.mintrop}Let $\left(  \mathbb{A},\ast,e\right)  $ be any
totally ordered abelian group (whose operation is $\ast$ and whose neutral
element is $e$). Then, $\mathbb{A}$ becomes a semifield if we endow it with
the \textquotedblleft addition\textquotedblright\ $\min$ (that is, we set
$a+b:=\min\left\{  a,b\right\}  $ for all $a,b\in\mathbb{A}$), the
\textquotedblleft multiplication\textquotedblright\ $\ast$ (that is, we set
$a\cdot b:=a\ast b$ for all $a,b\in\mathbb{A}$), and the \textquotedblleft
unity\textquotedblright\ $e$. This semifield $\left(  \mathbb{A},\min
,\ast,e\right)  $ is called the \emph{min tropical semifield} of $\left(
\mathbb{A},\ast,e\right)  $.
\end{example}

\begin{convention}
\label{conv.semifield.notations}All conventions that are typically used for
fields will be used for semifields as well, to the extent they apply. Specifically:

\begin{itemize}
\item If $\mathbb{K}$ is a semifield, and if $a,b\in\mathbb{K}$, then $a\cdot
b$ shall be abbreviated by $ab$.

\item We shall use the standard \textquotedblleft PEMDAS\textquotedblright%
\ convention that multiplication-like operations have higher precedence than
addition-like operations; thus, e.g., the expression \textquotedblleft%
$ab+ac$\textquotedblright\ must be understood as \textquotedblleft$\left(
ab\right)  +\left(  ac\right)  $\textquotedblright\ (and not, for example, as
\textquotedblleft$a\left(  b+a\right)  c$\textquotedblright).

\item If $\mathbb{K}$ is a semifield, then the inverse of any element
$b\in\mathbb{K}$ in the abelian group $\left(  \mathbb{K},\cdot,1\right)  $
will be denoted by $b^{-1}$. Note that this inverse is always defined (unlike
when $\mathbb{K}$ is a field).

\item If $\mathbb{K}$ is a semifield, and if $a,b\in\mathbb{K}$, then the
product $ab^{-1}$ will be denoted by $a/b$ and by $\dfrac{a}{b}$. Note that
this is always defined (unlike when $\mathbb{K}$ is a field).

\item Finite products $\prod_{i\in I}a_{i}$ of elements of a semifield are
defined in the same way as in commutative rings. The same applies to finite
sums $\sum_{i\in I}a_{i}$ as long as they are nonempty (i.e., as long as
$I\neq\varnothing$). The empty sum is not defined in a semifield, since there
is no zero element.
\end{itemize}
\end{convention}

\subsection{The birational involution}

For the rest of Section \ref{sect.bir}, we agree to the following two conventions:

\begin{convention}
\label{conv.bir.1}We fix a positive integer $n$ and a semifield $\mathbb{K}$.
We also fix an $n$-tuple $u\in\mathbb{K}^{n}$.
\end{convention}

\begin{convention}
\label{conv.bir.peri}If $a\in\mathbb{K}^{n}$ is an $n$-tuple, and if
$i\in\mathbb{Z}$, then $a_{i}$ shall denote the $i^{\prime}$-th entry of $a$,
where $i^{\prime}$ is the unique element of $\left\{  1,2,\ldots,n\right\}  $
satisfying $i^{\prime}\equiv i\operatorname{mod}n$. Thus, each $n$-tuple
$a\in\mathbb{K}^{n}$ satisfies $a=\left(  a_{1},a_{2},\ldots,a_{n}\right)  $
and $a_{i}=a_{i+n}$ for each $i\in\mathbb{Z}$. Therefore, if $a\in
\mathbb{K}^{n}$ is any $n$-tuple, then the family $\left(  a_{i}\right)
_{i\in\mathbb{Z}}$ is $n$-periodic.
\end{convention}

We shall soon use the letter $x$ for an $n$-tuple in $\mathbb{K}^{n}$; thus,
$x_{1},x_{2},\ldots,x_{n}$ will be the entries of this $n$-tuple. This has
nothing to do with the indeterminates $x_{1},x_{2},x_{3},\ldots$ from Section
\ref{sect.not} (that unfortunately use the same letters); we actually
\textbf{forget all conventions from Section \ref{sect.not}} (apart from
$\mathbb{N}=\left\{  0,1,2,\ldots\right\}  $) for the entire Section
\ref{sect.bir}.

The following is obvious:

\begin{lemma}
\label{lem.aprod}If $a\in\mathbb{K}^{n}$ is any $n$-tuple, then $a_{k+1}%
a_{k+2}\cdots a_{k+n}=a_{1}a_{2}\cdots a_{n}$ for each $k\in\mathbb{Z}$.
\end{lemma}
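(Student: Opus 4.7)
The plan is to reduce the general claim to the single-step shift identity and then iterate. More precisely, I would first observe that Convention \ref{conv.bir.peri} gives the $n$-periodicity $a_{i}=a_{i+n}$ for every $i\in\mathbb{Z}$, so that $a_i$ depends only on the residue class of $i$ modulo $n$. The whole lemma is then essentially the assertion that any $n$ consecutive values of an $n$-periodic family, multiplied together in a commutative (semi)group, give the same product.

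The cleanest route is induction on $k$ in both directions starting from the base case $k=0$ (where the claim is $a_1a_2\cdots a_n = a_1a_2\cdots a_n$, which is trivial). For the inductive step, I would show
\[
a_{k+1}a_{k+2}\cdots a_{k+n} \;=\; a_{k+2}a_{k+3}\cdots a_{k+n+1}
\]
by noting that the two products share the common factor $a_{k+2}a_{k+3}\cdots a_{k+n}$, so that (using the existence of multiplicative inverses in the semifield) the identity reduces to $a_{k+1}=a_{k+n+1}$, which is exactly the $n$-periodicity $a_i=a_{i+n}$ applied to $i=k+1$. Downward induction (decrementing $k$) is entirely analogous, using $a_{k+n}=a_k$.

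Alternatively, and perhaps more transparently, I would invoke the map $i\mapsto i\#$ from the theorem's setup: the restriction of this map to $\{k+1,k+2,\ldots,k+n\}$ is a bijection onto $\{1,2,\ldots,n\}$, since these are $n$ consecutive integers and every residue class modulo $n$ is hit exactly once. Combined with $a_i = a_{i\#}$ and the commutativity of multiplication in $\mathbb{K}$, this immediately rearranges $a_{k+1}a_{k+2}\cdots a_{k+n}$ into $a_1 a_2\cdots a_n$.

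There is no real obstacle here; the only mild point of care is that the semifield axioms in Definition \ref{def.semifield} give commutativity of multiplication and the existence of inverses, both of which are needed (the latter only in the telescoping approach). I expect the proof to be a short paragraph at most.
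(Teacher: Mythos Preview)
Your proposal is correct, and your first approach (the one-step shift $a_{k+1}a_{k+2}\cdots a_{k+n} = a_{k+2}a_{k+3}\cdots a_{k+n+1}$ iterated in both directions) is exactly what the paper does: it sets $b_k := a_{k+1}\cdots a_{k+n}$, proves $b_p = b_{p+1}$ for all $p$ via $a_{p+1}=a_{p+n+1}$, and concludes $b_k = b_0$. One minor remark: you invoke multiplicative inverses to cancel the common block, but this is unnecessary---the paper simply substitutes $a_{p+1}=a_{p+n+1}$ into the product and uses commutativity, so the argument works in any commutative monoid, not just a semifield.
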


\begin{verlong}
\begin{proof}
[Proof of Lemma \ref{lem.aprod}.]Let $a\in\mathbb{K}^{n}$ be an $n$-tuple. For
each $k\in\mathbb{Z}$, we set $b_{k}=a_{k+1}a_{k+2}\cdots a_{k+n}$. Now, it is
easy to see that%
\begin{equation}
b_{p}=b_{p+1}\ \ \ \ \ \ \ \ \ \ \text{for each }p\in\mathbb{Z}
\label{pf.lem.aprod.2}%
\end{equation}
\footnote{\textit{Proof of (\ref{pf.lem.aprod.2}):} Let $p\in\mathbb{Z}$.
Recall that the family $\left(  a_{i}\right)  _{i\in\mathbb{Z}}$ is
$n$-periodic (by Convention \ref{conv.bir.peri}). In other words, we have%
\[
a_{j}=a_{j^{\prime}}\text{ whenever }j\text{ and }j^{\prime}\text{ are two
integers satisfying }j\equiv j^{\prime}\operatorname{mod}n.
\]
Applying this to $j=p+1$ and $j^{\prime}=p+n+1$, we obtain $a_{p+1}=a_{p+n+1}$
(since $p+1\equiv p+n+1\operatorname{mod}n$).
\par
The definition of $b_{p}$ yields
\[
b_{p}=a_{p+1}a_{p+2}\cdots a_{p+n}=\underbrace{a_{p+1}}_{=a_{p+n+1}}\left(
a_{p+2}a_{p+3}\cdots a_{p+n}\right)  =a_{p+n+1}\left(  a_{p+2}a_{p+3}\cdots
a_{p+n}\right)  .
\]
The definition of $b_{p+1}$ yields%
\begin{align*}
b_{p+1}  &  =a_{\left(  p+1\right)  +1}a_{\left(  p+1\right)  +2}\cdots
a_{\left(  p+1\right)  +n}=a_{p+2}a_{p+3}\cdots a_{p+n+1}\\
&  =\left(  a_{p+2}a_{p+3}\cdots a_{p+n}\right)  a_{p+n+1}=a_{p+n+1}\left(
a_{p+2}a_{p+3}\cdots a_{p+n}\right)  .
\end{align*}
Comparing these two equalities, we obtain $b_{p}=b_{p+1}$. This proves
(\ref{pf.lem.aprod.2}).}. In other words,%
\[
\cdots=b_{-2}=b_{-1}=b_{0}=b_{1}=b_{2}=\cdots.
\]
In other words, all of the elements $\ldots,b_{-2},b_{-1},b_{0},b_{1}%
,b_{2},\ldots$ are equal. Hence, $b_{k}=b_{0}$ for each $k\in\mathbb{Z}$.
Thus, for each $k\in\mathbb{Z}$, we have%
\begin{align*}
a_{k+1}a_{k+2}\cdots a_{k+n}  &  =b_{k}\ \ \ \ \ \ \ \ \ \ \left(  \text{since
}b_{k}=a_{k+1}a_{k+2}\cdots a_{k+n}\right) \\
&  =b_{0}=a_{0+1}a_{0+2}\cdots a_{0+n}\ \ \ \ \ \ \ \ \ \ \left(  \text{by the
definition of }b_{0}\right) \\
&  =a_{1}a_{2}\cdots a_{n}.
\end{align*}
This proves Lemma \ref{lem.aprod}.
\end{proof}
\end{verlong}

\begin{definition}
\label{def.f}We define a map $\mathbf{f}_{u}:\mathbb{K}^{n}\rightarrow
\mathbb{K}^{n}$ as follows:

Let $x\in\mathbb{K}^{n}$ be an $n$-tuple. For each $j\in\mathbb{Z}$ and
$r\in\mathbb{N}$, define an element $t_{r,j}\in\mathbb{K}$ by
\[
t_{r,j}=\sum_{k=0}^{r}\underbrace{x_{j+1}x_{j+2}\cdots x_{j+k}}_{=\prod
_{i=1}^{k}x_{j+i}}\cdot\underbrace{u_{j+k+1}u_{j+k+2}\cdots u_{j+r}}%
_{=\prod_{i=k+1}^{r}u_{j+i}}.
\]
Define $y\in\mathbb{K}^{n}$ by setting%
\[
y_{i}=u_{i}\cdot\dfrac{u_{i-1}t_{n-1,i-1}}{x_{i+1}t_{n-1,i+1}}%
\ \ \ \ \ \ \ \ \ \ \text{for each }i\in\left\{  1,2,\ldots,n\right\}  .
\]
Set $\mathbf{f}_{u}\left(  x\right)  =y$.
\end{definition}

\begin{example}
\label{exa.f.4}Set $n=4$ for this example. Let $x\in\mathbb{K}^{n}$ be an
$n$-tuple; thus, $x=\left(  x_{1},x_{2},x_{3},x_{4}\right)  $. Let us see what
the definition of $\mathbf{f}_{u}\left(  x\right)  $ in Definition \ref{def.f}
boils down to in this case.

Let us first compute the elements $t_{n-1,j}=t_{3,j}$ from Definition
\ref{def.f}. The definition of $t_{3,0}$ yields%
\begin{align*}
t_{3,0}  &  =\sum_{k=0}^{3}x_{0+1}x_{0+2}\cdots x_{0+k}\cdot u_{0+k+1}%
u_{0+k+2}\cdots u_{0+3}\\
&  =\sum_{k=0}^{3}x_{1}x_{2}\cdots x_{k}\cdot u_{k+1}u_{k+2}\cdots u_{3}\\
&  =u_{1}u_{2}u_{3}+x_{1}u_{2}u_{3}+x_{1}x_{2}u_{3}+x_{1}x_{2}x_{3}.
\end{align*}
Similarly,%
\begin{align*}
t_{3,1}  &  =u_{2}u_{3}u_{4}+x_{2}u_{3}u_{4}+x_{2}x_{3}u_{4}+x_{2}x_{3}%
x_{4};\\
t_{3,2}  &  =u_{3}u_{4}u_{5}+x_{3}u_{4}u_{5}+x_{3}x_{4}u_{5}+x_{3}x_{4}x_{5}\\
&  =u_{3}u_{4}u_{1}+x_{3}u_{4}u_{1}+x_{3}x_{4}u_{1}+x_{3}x_{4}x_{1}\\
&  \ \ \ \ \ \ \ \ \ \ \left(  \text{since }u_{5}=u_{1}\text{ and }x_{5}%
=x_{1}\right)  ;\\
t_{3,3}  &  =u_{4}u_{5}u_{6}+x_{4}u_{5}u_{6}+x_{4}x_{5}u_{6}+x_{4}x_{5}x_{6}\\
&  =u_{4}u_{1}u_{2}+x_{4}u_{1}u_{2}+x_{4}x_{1}u_{2}+x_{4}x_{1}x_{2}\\
&  \ \ \ \ \ \ \ \ \ \ \left(  \text{since }u_{5}=u_{1}\text{ and }x_{5}%
=x_{1}\text{ and }u_{6}=u_{2}\text{ and }x_{6}=x_{2}\right)  .
\end{align*}
We don't need to compute any further $t_{3,j}$'s, since we can easily see that%
\begin{equation}
t_{3,j}=t_{3,j^{\prime}}\text{ for any integers }j\text{ and }j^{\prime}\text{
satisfying }j\equiv j^{\prime}\operatorname{mod}4. \label{eq.exa.f.4.3}%
\end{equation}
Thus, in particular, $t_{3,4}=t_{3,0}$ and $t_{3,5}=t_{3,1}$.

Now, let us compute the $4$-tuple $y\in\mathbb{K}^{n}=\mathbb{K}^{4}$ from
Definition \ref{def.f}. By its definition, we have%
\begin{align*}
y_{1}  &  =u_{1}\cdot\dfrac{u_{1-1}t_{3,1-1}}{x_{1+1}t_{3,1+1}}=u_{1}%
\cdot\dfrac{u_{0}t_{3,0}}{x_{2}t_{3,2}}=u_{1}\cdot\dfrac{u_{4}t_{3,0}}%
{x_{2}t_{3,2}}\\
&  \ \ \ \ \ \ \ \ \ \ \left(  \text{since }u_{0}=u_{4}\right) \\
&  =u_{1}\cdot\dfrac{u_{4}\left(  u_{1}u_{2}u_{3}+x_{1}u_{2}u_{3}+x_{1}%
x_{2}u_{3}+x_{1}x_{2}x_{3}\right)  }{x_{2}\left(  u_{3}u_{4}u_{1}+x_{3}%
u_{4}u_{1}+x_{3}x_{4}u_{1}+x_{3}x_{4}x_{1}\right)  }%
\end{align*}
(by our formulas for $t_{3,0}$ and $t_{3,2}$). Similar computations lead to%
\begin{align*}
y_{2}  &  =u_{2}\cdot\dfrac{u_{1}\left(  u_{2}u_{3}u_{4}+x_{2}u_{3}u_{4}%
+x_{2}x_{3}u_{4}+x_{2}x_{3}x_{4}\right)  }{x_{3}\left(  u_{4}u_{1}u_{2}%
+x_{4}u_{1}u_{2}+x_{4}x_{1}u_{2}+x_{4}x_{1}x_{2}\right)  };\\
y_{3}  &  =u_{3}\cdot\dfrac{u_{2}\left(  u_{3}u_{4}u_{1}+x_{3}u_{4}u_{1}%
+x_{3}x_{4}u_{1}+x_{3}x_{4}x_{1}\right)  }{x_{4}\left(  u_{1}u_{2}u_{3}%
+x_{1}u_{2}u_{3}+x_{1}x_{2}u_{3}+x_{1}x_{2}x_{3}\right)  };\\
y_{4}  &  =u_{4}\cdot\dfrac{u_{3}\left(  u_{4}u_{1}u_{2}+x_{4}u_{1}u_{2}%
+x_{4}x_{1}u_{2}+x_{4}x_{1}x_{2}\right)  }{x_{1}\left(  u_{2}u_{3}u_{4}%
+x_{2}u_{3}u_{4}+x_{2}x_{3}u_{4}+x_{2}x_{3}x_{4}\right)  }.
\end{align*}
Of course, knowing one of these four equalities is enough; the expression for
$y_{i+1}$ is obtained from the expression for $y_{i}$ by shifting all indices
(other than the \textquotedblleft$3$\textquotedblright s that were originally
\textquotedblleft$n-1$\textquotedblright s) forward by $1$.
\end{example}

\begin{remark}
Instead of assuming $\mathbb{K}$ to be a semifield, we could have assumed that
$\mathbb{K}$ is an infinite field. In that case, the $\mathbf{f}_{u}$ in
Definition \ref{def.f} would be a birational map instead of a map in the usual
sense of this word, since the denominators $x_{i+1}t_{n-1,i+1}$ in the
definition of $y$ can be zero. Everything we say below about $\mathbf{f}_{u}$
would nevertheless still hold on the level of birational maps.
\end{remark}

The map $\mathbf{f}_{u}$ we just defined has the following properties:

\begin{theorem}
\label{thm.f.full}\ \ 

\begin{enumerate}
\item[\textbf{(a)}] The map $\mathbf{f}_{u}$ is an involution (i.e., we have
$\mathbf{f}_{u}\circ\mathbf{f}_{u}=\operatorname*{id}$).

\item[\textbf{(b)}] Let $x\in\mathbb{K}^{n}$ and $y\in\mathbb{K}^{n}$ be such
that $y=\mathbf{f}_{u}\left(  x\right)  $. Then,%
\[
y_{1}y_{2}\cdots y_{n}\cdot x_{1}x_{2}\cdots x_{n}=\left(  u_{1}u_{2}\cdots
u_{n}\right)  ^{2}.
\]

\item[\textbf{(c)}] Let $x\in\mathbb{K}^{n}$ and $y\in\mathbb{K}^{n}$ be such
that $y=\mathbf{f}_{u}\left(  x\right)  $. Then,%
\[
\left(  u_{i}+x_{i}\right)  \left(  \dfrac{1}{u_{i+1}}+\dfrac{1}{x_{i+1}%
}\right)  =\left(  u_{i}+y_{i}\right)  \left(  \dfrac{1}{u_{i+1}}+\dfrac
{1}{y_{i+1}}\right)
\]
for each $i\in\mathbb{Z}$.

\item[\textbf{(d)}] Let $x\in\mathbb{K}^{n}$ and $y\in\mathbb{K}^{n}$ be such
that $y=\mathbf{f}_{u}\left(  x\right)  $. Then,%
\[
\prod_{i=1}^{n}\dfrac{u_{i}+x_{i}}{x_{i}}=\prod_{i=1}^{n}\dfrac{u_{i}+y_{i}%
}{u_{i}}.
\]

\end{enumerate}
\end{theorem}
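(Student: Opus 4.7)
The plan is to reduce everything to a single key identity relating three consecutive values of $T_j := t_{n-1,j}$, namely
\[
u_{j-1}\, T_{j-1} + x_{j+1}\, T_{j+1} \;=\; (u_j + x_j)\,T_j \qquad(\ast)
\]
for every $j \in \mathbb{Z}$. First I would establish the two evident recursions
\[
t_{r,j} \;=\; u_{j+1}\cdots u_{j+r} + x_{j+1}\,t_{r-1,\,j+1} \;=\; x_{j+1}\cdots x_{j+r} + u_{j+r}\,t_{r-1,\,j},
\]
obtained by splitting off the $k=0$ and the $k=r$ summand of the defining sum. Applying the first recursion to $T_{j-1}$ and the second to $T_{j+1}$ (both with $r = n-1$), and then collapsing products of $n-1$ consecutive $u$'s or $x$'s by means of Lemma~\ref{lem.aprod} (writing $U := u_1\cdots u_n$ and $X := x_1\cdots x_n$), the left-hand side of $(\ast)$ expands to $U + X + u_{j-1}\,x_j\,t_{n-2,j} + u_j\,x_{j+1}\,t_{n-2,\,j+1}$. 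Expanding $(u_j + x_j)\,T_j$ by applying both recursions to $T_j$ (one for each summand) produces the same four terms, so $(\ast)$ follows.

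Given $(\ast)$, parts \textbf{(b)}, \textbf{(c)}, \textbf{(d)} are quick. For \textbf{(b)}, I would simply multiply the defining formulas $y_i = u_i\,u_{i-1}\,T_{i-1}/(x_{i+1}\,T_{i+1})$ over $i = 1,\ldots,n$: the $T$-factors cancel by the $n$-periodicity of $(T_j)_{j\in\mathbb{Z}}$, while Lemma~\ref{lem.aprod} collapses $\prod_i u_{i-1}$ and $\prod_i x_{i+1}$ to $U$ and $X$ respectively, yielding $y_1\cdots y_n \cdot x_1\cdots x_n = U^2$. For \textbf{(c)}, identity $(\ast)$ applied at index $j = i$ gives $u_i + y_i = u_i\,(u_i + x_i)\,T_i/(x_{i+1}\,T_{i+1})$; an analogous manipulation using $(\ast)$ at $j = i+1$ yields $1/u_{i+1} + 1/y_{i+1} = (u_{i+1} + x_{i+1})\,T_{i+1}/(u_{i+1}\,u_i\,T_i)$, and the product of these two equalities reduces precisely to $(u_i + x_i)(1/u_{i+1} + 1/x_{i+1})$. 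For \textbf{(d)}, taking the product of the formula $(u_i + y_i)/u_i = (u_i + x_i)\,T_i/(x_{i+1}\,T_{i+1})$ over $i = 1,\ldots,n$ causes the $T$-ratios to telescope and the $x$-ratios to shift by $n$-periodicity, leaving $\prod_i (u_i + y_i)/u_i = \prod_i (u_i + x_i)/x_i$.

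The main obstacle is part \textbf{(a)}. My plan is to prove the birational identity
\[
T^y_j \;=\; \dfrac{U\, x_{j+1}}{u_j\, X}\; T_{j+1} \qquad(\dagger)
\]
where $T^y_j := \sum_{k=0}^{n-1} y_{j+1}\cdots y_{j+k}\cdot u_{j+k+1}\cdots u_{j+n-1}$ is the quantity $T_j$ but built from $y := \mathbf{f}_u(x)$ in place of $x$. Once $(\dagger)$ is available, writing $z := \mathbf{f}_u(y)$ and substituting both $(\dagger)$ and the formula for $y_{i+1}$ into $z_i = u_i\,u_{i-1}\,T^y_{i-1}/(y_{i+1}\,T^y_{i+1})$ produces a rational expression in which the factors $T_i, T_{i+2}, T^y_{i-1}, T^y_{i+1}$ all cancel; a brief use of Lemma~\ref{lem.aprod} to simplify the remaining $u$- and $x$-products then yields $z_i = x_i$, so $\mathbf{f}_u\circ\mathbf{f}_u = \operatorname{id}$. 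To prove $(\dagger)$, I would substitute the defining formula for each $y_{j+a}$ into the sum for $T^y_j$; the resulting $T$-products telescope, in that $y_{j+1}\cdots y_{j+k}$ collapses to $T_j\,T_{j+1}/(T_{j+k}\,T_{j+k+1})$ times an explicit monomial in the $u_i$'s and $x_i$'s. After clearing the common denominator $u_j\,X$, the identity $(\dagger)$ becomes a polynomial identity in the $u_i$'s, $x_i$'s and $T_j$'s, which I would verify by applying $(\ast)$ iteratively to rewrite $T_{j+1}$ in terms of neighbouring $T$'s until both sides coincide. The delicate bookkeeping involved in this term-by-term match is the technically hardest step of the proof.
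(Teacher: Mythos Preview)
Your approach is essentially the paper's: your $(\ast)$ is Lemma~\ref{lem.f.steps}~\textbf{(f)}, your $(\dagger)$ is Lemma~\ref{lem.f.steps}~\textbf{(j)}, and parts \textbf{(b)}, \textbf{(c)}, \textbf{(d)} are derived from $(\ast)$ just as the paper does (your proof of $(\ast)$ is in fact slightly more direct than the paper's route through part~\textbf{(e)}). The one place where your plan is vaguer than the paper is the proof of $(\dagger)$: rather than ``applying $(\ast)$ iteratively,'' the paper proves the stronger identity
\[
\dfrac{t'_{q,j}}{u_{j+1}\cdots u_{j+q}} \;=\; \dfrac{T_{j+1}}{T_{j+q+1}}\cdot\dfrac{t_{q,j}}{x_{j+2}\cdots x_{j+q+1}}
\]
for all $q \geq 0$ by induction on $q$ (Lemma~\ref{lem.f.steps}~\textbf{(i)}), the induction step itself resting on a further inductive identity (part~\textbf{(g)}); specializing $q = n-1$ then yields $(\dagger)$. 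Your direct attack would need to rediscover some form of this scaffolding. The paper also records (Remark~\ref{rmk.f.invol-by-trick}) a slick alternative for \textbf{(a)} that avoids $(\dagger)$ altogether: reduce to $\mathbb{K} = \mathbb{Q}_+$ via a subtraction-free-identities argument, then observe from \textbf{(c)} that $z_i > x_i$ forces $z_{i+1} > x_{i+1}$, so $z \neq x$ would yield $z_1\cdots z_n \neq x_1\cdots x_n$, contradicting \textbf{(b)}.
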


Theorem \ref{thm.f.full} will be crucial for us; but before we can prove it,
we will need a few lemmas.

\begin{lemma}
\label{lem.f.steps}Let $x\in\mathbb{K}^{n}$ be an $n$-tuple. Let $t_{r,j}$ and
$y$ be as in Definition \ref{def.f}. Then:

\begin{enumerate}
\item[\textbf{(a)}] We have $t_{r,j}=t_{r,j^{\prime}}$ for any $r\in
\mathbb{N}$ and any two integers $j$ and $j^{\prime}$ satisfying $j\equiv
j^{\prime}\operatorname{mod}n$. In other words, for each $r\in\mathbb{N}$, the
family $\left(  t_{r,j}\right)  _{j\in\mathbb{Z}}$ is $n$-periodic.

\item[\textbf{(b)}] We have $t_{0,j}=1$ for each $j\in\mathbb{Z}$.

\item[\textbf{(c)}] For each $r\in\mathbb{N}$ and $j\in\mathbb{Z}$, we have%
\[
x_{j}t_{r,j}+u_{j}u_{j+1}\cdots u_{j+r}=t_{r+1,j-1}.
\]

\item[\textbf{(d)}] For each $r\in\mathbb{N}$ and $j\in\mathbb{Z}$, we have%
\[
u_{j+r+1}t_{r,j}+x_{j+1}x_{j+2}\cdots x_{j+r+1}=t_{r+1,j}.
\]

\item[\textbf{(e)}] For each $a\in\mathbb{Z}$ and $b\in\mathbb{Z}$, we have%
\[
x_{a}t_{n-1,a}+u_{b-1}t_{n-1,b-1}=x_{b}t_{n-1,b}+u_{a-1}t_{n-1,a-1}.
\]

\item[\textbf{(f)}] For each $i\in\mathbb{Z}$, we have%
\[
x_{i+1}t_{n-1,i+1}+u_{i-1}t_{n-1,i-1}=\left(  x_{i}+u_{i}\right)  t_{n-1,i}.
\]

\item[\textbf{(g)}] For each $j\in\mathbb{Z}$ and each positive integer $q$,
we have%
\[
t_{n-1,j+q+1}\cdot x_{j+2}x_{j+3}\cdots x_{j+q+1}+u_{j}t_{n-1,j}%
t_{q-1,j+1}=t_{n-1,j+1}t_{q,j}.
\]

\item[\textbf{(h)}] For each $i\in\mathbb{Z}$, we have%
\[
y_{i}=u_{i}\cdot\dfrac{u_{i-1}t_{n-1,i-1}}{x_{i+1}t_{n-1,i+1}}.
\]

\end{enumerate}

Now, for each $j\in\mathbb{Z}$ and $r\in\mathbb{N}$, let us define an element
$t_{r,j}^{\prime}\in\mathbb{K}$ by
\[
t_{r,j}^{\prime}=\sum_{k=0}^{r}\underbrace{y_{j+1}y_{j+2}\cdots y_{j+k}%
}_{=\prod_{i=1}^{k}y_{j+i}}\cdot\underbrace{u_{j+k+1}u_{j+k+2}\cdots u_{j+r}%
}_{=\prod_{i=k+1}^{r}u_{j+i}}.
\]
(This is precisely how $t_{r,j}$ was defined, except that we are using $y$ in
place of $x$ now.) Then:

\begin{enumerate}
\item[\textbf{(i)}] For each $j\in\mathbb{Z}$ and $q\in\mathbb{N}$, we have%
\[
\dfrac{t_{q,j}^{\prime}}{u_{j+1}u_{j+2}\cdots u_{j+q}}=\dfrac{t_{n-1,j+1}%
}{t_{n-1,j+q+1}}\cdot\dfrac{t_{q,j}}{x_{j+2}x_{j+3}\cdots x_{j+q+1}}.
\]

\item[\textbf{(j)}] For each $j\in\mathbb{Z}$, we have%
\[
\dfrac{t_{n-1,j}^{\prime}u_{j}}{u_{1}u_{2}\cdots u_{n}}=\dfrac{t_{n-1,j+1}%
x_{j+1}}{x_{1}x_{2}\cdots x_{n}}.
\]

\item[\textbf{(k)}] For each $i\in\mathbb{Z}$, we have%
\[
x_{i}=u_{i}\cdot\dfrac{u_{i-1}t_{n-1,i-1}^{\prime}}{y_{i+1}t_{n-1,i+1}%
^{\prime}}.
\]

\end{enumerate}
\end{lemma}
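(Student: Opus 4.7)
The proof of Lemma \ref{lem.f.steps} breaks into pieces that build on one another, and I would tackle the parts in the order (a), (b), (c), (d), (h), (e), (f), (g), (i), (j), (k). The first five items are essentially bookkeeping: (a) follows since every summand in the definition of $t_{r, j}$ depends on $j$ only through the $n$-periodic sequences $(x_i)$ and $(u_i)$; (b) holds because the sum defining $t_{0, j}$ has a single term, an empty product equal to $1$; (c) and (d) are recursions obtained by isolating the $k = 0$ term (a pure $u$-product) in $t_{r+1, j-1}$, respectively the $k = r+1$ term (a pure $x$-product) in $t_{r+1, j}$, and factoring out $x_j$ (resp.\ $u_{j+r+1}$) from what remains; (h) simply extends the definition of $y_i$ to all $i \in \mathbb{Z}$ using the $n$-periodicity of $u$, $x$ and $t_{n-1, \cdot}$ (the last by part (a)).

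For parts (e) and (f), the clean approach is to introduce, for $a \in \mathbb{Z}$ and $k \in \{0, 1, \ldots, n\}$, the monomial
\[
M_{a, k} := x_a x_{a+1} \cdots x_{a+k-1} \cdot u_{a+k} u_{a+k+1} \cdots u_{a+n-1}.
\]
A direct expansion, combined with the periodicity identities $u_{a+n-1} = u_{a-1}$ and $u_{a+n} = u_a$, yields $x_a t_{n-1, a} = \sum_{k=1}^n M_{a, k}$, $u_{a-1} t_{n-1, a-1} = \sum_{k=0}^{n-1} M_{a, k}$, and $u_a t_{n-1, a} = \sum_{k=0}^{n-1} M_{a+1, k}$. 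By Lemma \ref{lem.aprod}, the boundary monomials $M_{j, 0} = u_1 u_2 \cdots u_n$ and $M_{j, n} = x_1 x_2 \cdots x_n$ are independent of $j$; so both sides of (e) expand to $\sum_{k=1}^{n-1} M_{a, k} + \sum_{k=1}^{n-1} M_{b, k} + u_1 u_2 \cdots u_n + x_1 x_2 \cdots x_n$, and both sides of (f) expand to the analogous sum with $(a, b) = (i+1, i)$, so they agree.

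Part (g) is the combinatorial heart of the lemma, and I expect it to be the main obstacle. I would prove it by induction on $q$. The base case $q = 1$ reduces, via $t_{0, j+1} = 1$ (from (b)) and $t_{1, j} = u_{j+1} + x_{j+1}$, to part (f) applied with $i = j + 1$. For the inductive step I would use (d) to expand $t_{q+1, j} = u_{j+q+1} t_{q, j} + x_{j+1} \cdots x_{j+q+1}$ and $t_{q, j+1} = u_{j+q+1} t_{q-1, j+1} + x_{j+2} \cdots x_{j+q+1}$, then substitute the inductive hypothesis into $t_{n-1, j+1} t_{q, j}$. Both sides of the target identity for $q + 1$ turn into three-term sums that share the common summand $u_j u_{j+q+1} t_{n-1, j} t_{q-1, j+1}$; matching this summand and canceling the shared multiplicative factor $x_{j+2} \cdots x_{j+q+1}$ (legitimate since $\mathbb{K}$ is a semifield) leaves only
\[
u_{j+q+1} t_{n-1, j+q+1} + x_{j+1} t_{n-1, j+1} = x_{j+q+2} t_{n-1, j+q+2} + u_j t_{n-1, j},
\]
which is precisely part (e) applied with $a = j + 1$ and $b = j + q + 2$.

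The three remaining parts fall out cleanly. For (i), I induct on $q$, with $q = 0$ trivial. The inductive step uses the $t'$-analog of (c), namely $t'_{q, j} = y_{j+1} t'_{q-1, j+1} + u_{j+1} u_{j+2} \cdots u_{j+q}$; substituting (h) for $y_{j+1}$ and the inductive hypothesis for $t'_{q-1, j+1}$ and then clearing denominators, the equation to verify is exactly part (g). Part (j) follows from (i) specialized to $q = n - 1$: the factor $t_{n-1, j+n}$ equals $t_{n-1, j}$ by (a) and cancels, while Lemma \ref{lem.aprod} turns both sides of (j) into $t_{n-1, j+1}/(x_{j+2} x_{j+3} \cdots x_{j+n})$. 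Finally, for (k), plugging the explicit formulas for $t'_{n-1, i-1}$ and $t'_{n-1, i+1}$ obtained from (j), together with the formula (h) for $y_{i+1}$, into $u_i \cdot (u_{i-1} t'_{n-1, i-1}) / (y_{i+1} t'_{n-1, i+1})$ produces a product in which virtually every factor telescopes away, leaving $x_i$.
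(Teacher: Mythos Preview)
Your proposal is correct and follows essentially the same route as the paper: parts (a)--(d) and (h) are bookkeeping, (g) is proved by induction on $q$ with the step reducing to (e) via (d), and (i)--(k) follow from (g), (h) and (a) in the manner you describe. The one genuine difference is your treatment of (e) and (f): the paper derives (e) by applying parts (c) and (d) with $r=n-2$ (requiring a separate case $n=1$), whereas your monomial bookkeeping with the $M_{a,k}$ gives (e) and (f) directly without invoking (c) or (d) and without the case split. This is a mild but real simplification; otherwise the two arguments coincide.
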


\begin{proof}
[Proof of Lemma \ref{lem.f.steps}.]The proof of this lemma is long but
unsophisticated: Each part follows by rather straightforward computations
(and, in the cases of parts \textbf{(g)} and \textbf{(i)}, an induction on
$q$) from the previously proven parts. We shall show the details, but a
computationally inclined reader may have a better time reconstructing them
independently.\footnote{We note that the hardest parts of the proof -- namely,
the proofs of parts \textbf{(g)}, \textbf{(i)}, \textbf{(j)} and \textbf{(k)}
-- can be sidestepped entirely, as these parts will only be used in the proof
of Theorem \ref{thm.f.full} \textbf{(a)}, but we will give an alternative
proof of Theorem \ref{thm.f.full} \textbf{(a)} later on (in Remark
\ref{rmk.f.invol-by-trick}), which avoids using them.}

\begin{vershort}
\textbf{(a)} Let $r\in\mathbb{N}$. The definition of $t_{r,j}$ shows that
$t_{r,j}=t_{r,j+n}$ for each $j\in\mathbb{Z}$ (since each $i\in\mathbb{Z}$
satisfies $u_{i}=u_{i+n}$ and $x_{i}=x_{i+n}$). Thus, the family $\left(
t_{r,j}\right)  _{j\in\mathbb{Z}}$ is $n$-periodic. This yields the claim of
Lemma \ref{lem.f.steps} \textbf{(a)}.
\end{vershort}

\begin{verlong}
Let $r\in\mathbb{N}$. Let $j\in\mathbb{Z}$. We shall show that $t_{r,j}%
=t_{r,j+n}$.

Indeed, Convention \ref{conv.bir.peri} yields that the family $\left(
u_{i}\right)  _{i\in\mathbb{Z}}$ is $n$-periodic. Thus, $u_{i}=u_{i+n}$ for
each $i\in\mathbb{Z}$. In other words,%
\begin{equation}
u_{i+n}=u_{i}\ \ \ \ \ \ \ \ \ \ \text{for each }i\in\mathbb{Z}.
\label{pf.lem.f.steps.a.uper}%
\end{equation}
Likewise,
\begin{equation}
x_{i+n}=x_{i}\ \ \ \ \ \ \ \ \ \ \text{for each }i\in\mathbb{Z}.
\label{pf.lem.f.steps.a.xper}%
\end{equation}

Now, the definition of $t_{r,j}$ yields%
\begin{align*}
t_{r,j}  &  =\sum_{k=0}^{r}\underbrace{x_{j+1}x_{j+2}\cdots x_{j+k}}%
_{=\prod_{i=1}^{k}x_{j+i}}\cdot\underbrace{u_{j+k+1}u_{j+k+2}\cdots u_{j+r}%
}_{=\prod_{i=k+1}^{r}u_{j+i}}\\
&  =\sum_{k=0}^{r}\left(  \prod_{i=1}^{k}x_{j+i}\right)  \cdot\left(
\prod_{i=k+1}^{r}u_{j+i}\right)  .
\end{align*}
The same argument (applied to $j+n$ instead of $j$) yields%
\begin{align*}
t_{r,j+n}  &  =\sum_{k=0}^{r}\left(  \prod_{i=1}^{k}\underbrace{x_{j+n+i}%
}_{\substack{=x_{\left(  j+i\right)  +n}\\=x_{j+i}\\\text{(by
(\ref{pf.lem.f.steps.a.xper}),}\\\text{applied to }j+i\text{ instead of
}i\text{)}}}\right)  \cdot\left(  \prod_{i=k+1}^{r}\underbrace{u_{j+n+i}%
}_{\substack{=u_{\left(  j+i\right)  +n}\\=u_{j+i}\\\text{(by
(\ref{pf.lem.f.steps.a.uper}),}\\\text{applied to }j+i\text{ instead of
}i\text{)}}}\right) \\
&  =\sum_{k=0}^{r}\left(  \prod_{i=1}^{k}x_{j+i}\right)  \cdot\left(
\prod_{i=k+1}^{r}u_{j+i}\right)  .
\end{align*}
Comparing these two equalities, we obtain $t_{r,j}=t_{r,j+n}$.

Now, forget that we fixed $j$. We thus have shown that $t_{r,j}=t_{r,j+n}$ for
each $j\in\mathbb{Z}$. In other words, the family $\left(  t_{r,j}\right)
_{j\in\mathbb{Z}}$ is $n$-periodic. In other words, we have $t_{r,j}%
=t_{r,j^{\prime}}$ for any two integers $j$ and $j^{\prime}$ satisfying
$j\equiv j^{\prime}\operatorname{mod}n$. This proves Lemma \ref{lem.f.steps}
\textbf{(a)}.
\end{verlong}

\begin{vershort}
\textbf{(b)} Trivial consequence of the definition of $t_{0,j}$.
\end{vershort}

\begin{verlong}
\textbf{(b)} Let $j\in\mathbb{Z}$. The definition of $t_{0,j}$ yields%
\begin{align*}
t_{0,j}  &  =\sum_{k=0}^{0}x_{j+1}x_{j+2}\cdots x_{j+k}\cdot u_{j+k+1}%
u_{j+k+2}\cdots u_{j+0}\\
&  =\underbrace{x_{j+1}x_{j+2}\cdots x_{j+0}}_{=\left(  \text{empty
product}\right)  =1}\cdot\underbrace{u_{j+0+1}u_{j+0+2}\cdots u_{j+0}%
}_{=\left(  \text{empty product}\right)  =1}\\
&  =1.
\end{align*}
This proves Lemma \ref{lem.f.steps} \textbf{(b)}.
\end{verlong}

\begin{vershort}
\textbf{(c)} Let $r\in\mathbb{N}$ and $j\in\mathbb{Z}$. Then, the definition
of $t_{r+1,j-1}$ yields%
\begin{align*}
&  t_{r+1,j-1}\\
&  =\sum_{k=0}^{r+1}x_{j}x_{j+1}\cdots x_{j+k-1}\cdot u_{j+k}u_{j+k+1}\cdots
u_{j+r}\\
&  =\underbrace{x_{j}x_{j+1}\cdots x_{j-1}}_{=\left(  \text{empty
product}\right)  =1}\cdot u_{j}u_{j+1}\cdots u_{j+r}+\sum_{k=1}^{r+1}%
\underbrace{x_{j}x_{j+1}\cdots x_{j+k-1}}_{=x_{j}\cdot x_{j+1}x_{j+2}\cdots
x_{j+k-1}}\cdot u_{j+k}u_{j+k+1}\cdots u_{j+r}\\
&  \ \ \ \ \ \ \ \ \ \ \left(  \text{here, we have split off the addend for
}k=0\text{ from the sum}\right) \\
&  =u_{j}u_{j+1}\cdots u_{j+r}+\underbrace{\sum_{k=1}^{r+1}x_{j}\cdot
x_{j+1}x_{j+2}\cdots x_{j+k-1}\cdot u_{j+k}u_{j+k+1}\cdots u_{j+r}}%
_{=x_{j}\sum_{k=1}^{r+1}x_{j+1}x_{j+2}\cdots x_{j+k-1}\cdot u_{j+k}%
u_{j+k+1}\cdots u_{j+r}}\\
&  =u_{j}u_{j+1}\cdots u_{j+r}+x_{j}\underbrace{\sum_{k=1}^{r+1}x_{j+1}%
x_{j+2}\cdots x_{j+k-1}\cdot u_{j+k}u_{j+k+1}\cdots u_{j+r}}_{\substack{=\sum
_{k=0}^{r}x_{j+1}x_{j+2}\cdots x_{j+k}\cdot u_{j+k+1}u_{j+k+2}\cdots
u_{j+r}\\\text{(here, we have substituted }k\text{ for }k-1\text{ in the
sum)}}}\\
&  =u_{j}u_{j+1}\cdots u_{j+r}+x_{j}\underbrace{\sum_{k=0}^{r}x_{j+1}%
x_{j+2}\cdots x_{j+k}\cdot u_{j+k+1}u_{j+k+2}\cdots u_{j+r}}%
_{\substack{=t_{r,j}\\\text{(by the definition of }t_{r,j}\text{)}}}\\
&  =u_{j}u_{j+1}\cdots u_{j+r}+x_{j}t_{r,j}=x_{j}t_{r,j}+u_{j}u_{j+1}\cdots
u_{j+r}.
\end{align*}
This proves Lemma \ref{lem.f.steps} \textbf{(c)}.
\end{vershort}

\begin{verlong}
\textbf{(c)} Let $r\in\mathbb{N}$ and $j\in\mathbb{Z}$. Then, the definition
of $t_{r+1,j-1}$ yields%
\begin{align*}
&  t_{r+1,j-1}\\
&  =\sum_{k=0}^{r+1}\underbrace{x_{\left(  j-1\right)  +1}x_{\left(
j-1\right)  +2}\cdots x_{\left(  j-1\right)  +k}}_{=x_{j}x_{j+1}\cdots
x_{j+k-1}}\cdot\underbrace{u_{\left(  j-1\right)  +\left(  k+1\right)
}u_{\left(  j-1\right)  +\left(  k+2\right)  }\cdots u_{\left(  j-1\right)
+\left(  r+1\right)  }}_{=u_{j+k}u_{j+k+1}\cdots u_{j+r}}\\
&  =\sum_{k=0}^{r+1}x_{j}x_{j+1}\cdots x_{j+k-1}\cdot u_{j+k}u_{j+k+1}\cdots
u_{j+r}\\
&  =\underbrace{x_{j}x_{j+1}\cdots x_{j+0-1}}_{=\left(  \text{empty
product}\right)  =1}\cdot\underbrace{u_{j+0}u_{j+0+1}\cdots u_{j+r}}%
_{=u_{j}u_{j+1}\cdots u_{j+r}}+\sum_{k=1}^{r+1}\underbrace{x_{j}x_{j+1}\cdots
x_{j+k-1}}_{=x_{j}\cdot x_{j+1}x_{j+2}\cdots x_{j+k-1}}\cdot u_{j+k}%
u_{j+k+1}\cdots u_{j+r}\\
&  \ \ \ \ \ \ \ \ \ \ \left(  \text{here, we have split off the addend for
}k=0\text{ from the sum}\right) \\
&  =u_{j}u_{j+1}\cdots u_{j+r}+\underbrace{\sum_{k=1}^{r+1}x_{j}\cdot
x_{j+1}x_{j+2}\cdots x_{j+k-1}\cdot u_{j+k}u_{j+k+1}\cdots u_{j+r}}%
_{=x_{j}\sum_{k=1}^{r+1}x_{j+1}x_{j+2}\cdots x_{j+k-1}\cdot u_{j+k}%
u_{j+k+1}\cdots u_{j+r}}\\
&  =u_{j}u_{j+1}\cdots u_{j+r}+x_{j}\underbrace{\sum_{k=1}^{r+1}x_{j+1}%
x_{j+2}\cdots x_{j+k-1}\cdot u_{j+k}u_{j+k+1}\cdots u_{j+r}}_{\substack{=\sum
_{k=0}^{r}x_{j+1}x_{j+2}\cdots x_{j+k}\cdot u_{j+k+1}u_{j+k+2}\cdots
u_{j+r}\\\text{(here, we have substituted }k\text{ for }k-1\text{ in the
sum)}}}\\
&  =u_{j}u_{j+1}\cdots u_{j+r}+x_{j}\underbrace{\sum_{k=0}^{r}x_{j+1}%
x_{j+2}\cdots x_{j+k}\cdot u_{j+k+1}u_{j+k+2}\cdots u_{j+r}}%
_{\substack{=t_{r,j}\\\text{(by the definition of }t_{r,j}\text{)}}}\\
&  =u_{j}u_{j+1}\cdots u_{j+r}+x_{j}t_{r,j}=x_{j}t_{r,j}+u_{j}u_{j+1}\cdots
u_{j+r}.
\end{align*}
This proves Lemma \ref{lem.f.steps} \textbf{(c)}.
\end{verlong}

\textbf{(d)} Let $r\in\mathbb{N}$ and $j\in\mathbb{Z}$. Then, the definition
of $t_{r+1,j}$ yields%
\begin{align*}
t_{r+1,j}  &  =\sum_{k=0}^{r+1}x_{j+1}x_{j+2}\cdots x_{j+k}\cdot
u_{j+k+1}u_{j+k+2}\cdots u_{j+r+1}\\
&  =\sum_{k=0}^{r}x_{j+1}x_{j+2}\cdots x_{j+k}\cdot\underbrace{u_{j+k+1}%
u_{j+k+2}\cdots u_{j+r+1}}_{=u_{j+k+1}u_{j+k+2}\cdots u_{j+r}\cdot u_{j+r+1}%
}\\
&  \ \ \ \ \ \ \ \ \ \ +x_{j+1}x_{j+2}\cdots x_{j+r+1}\cdot
\underbrace{u_{j+\left(  r+1\right)  +1}u_{j+\left(  r+1\right)  +2}\cdots
u_{j+r+1}}_{=\left(  \text{empty product}\right)  =1}\\
&  \ \ \ \ \ \ \ \ \ \ \left(  \text{here, we have split off the addend for
}k=r+1\text{ from the sum}\right) \\
&  =\underbrace{\sum_{k=0}^{r}x_{j+1}x_{j+2}\cdots x_{j+k}\cdot u_{j+k+1}%
u_{j+k+2}\cdots u_{j+r}\cdot u_{j+r+1}}_{=u_{j+r+1}\sum_{k=0}^{r}%
x_{j+1}x_{j+2}\cdots x_{j+k}\cdot u_{j+k+1}u_{j+k+2}\cdots u_{j+r}}%
+x_{j+1}x_{j+2}\cdots x_{j+r+1}\\
&  =u_{j+r+1}\underbrace{\sum_{k=0}^{r}x_{j+1}x_{j+2}\cdots x_{j+k}\cdot
u_{j+k+1}u_{j+k+2}\cdots u_{j+r}}_{\substack{=t_{r,j}\\\text{(by the
definition of }t_{r,j}\text{)}}}+x_{j+1}x_{j+2}\cdots x_{j+r+1}\\
&  =u_{j+r+1}t_{r,j}+x_{j+1}x_{j+2}\cdots x_{j+r+1}.
\end{align*}
This proves Lemma \ref{lem.f.steps} \textbf{(d)}.

\textbf{(e)} We WLOG assume that $n\neq1$, since otherwise the claim is easy
to check by hand. Thus, $n\geq2$, so that $n-2\in\mathbb{N}$.

Let $a\in\mathbb{Z}$ and $b\in\mathbb{Z}$. Then, Lemma \ref{lem.f.steps}
\textbf{(c)} (applied to $r=n-2$ and $j=a$) yields%
\[
x_{a}t_{n-2,a}+u_{a}u_{a+1}\cdots u_{a+n-2}=t_{\left(  n-2\right)
+1,a-1}=t_{n-1,a-1}%
\]
(since $\left(  n-2\right)  +1=n-1$). Multiplying both sides of this equality
by $u_{a-1}$, we obtain%
\[
u_{a-1}\left(  x_{a}t_{n-2,a}+u_{a}u_{a+1}\cdots u_{a+n-2}\right)
=u_{a-1}t_{n-1,a-1}.
\]
Hence,%
\begin{align}
u_{a-1}t_{n-1,a-1}  &  =u_{a-1}\left(  x_{a}t_{n-2,a}+u_{a}u_{a+1}\cdots
u_{a+n-2}\right) \nonumber\\
&  =\underbrace{u_{a-1}x_{a}}_{=x_{a}u_{a-1}}t_{n-2,a}+\underbrace{u_{a-1}%
\cdot u_{a}u_{a+1}\cdots u_{a+n-2}}_{\substack{=u_{a-1}u_{a}\cdots
u_{a+n-2}\\=u_{\left(  a-2\right)  +1}u_{\left(  a-2\right)  +2}\cdots
u_{\left(  a-2\right)  +n}\\=u_{1}u_{2}\cdots u_{n}\\\text{(by Lemma
\ref{lem.aprod})}}}\nonumber\\
&  =x_{a}u_{a-1}t_{n-2,a}+u_{1}u_{2}\cdots u_{n}. \label{pf.lem.f.steps.e.1}%
\end{align}

Also, Lemma \ref{lem.f.steps} \textbf{(d)} (applied to $r=n-2$ and $j=b$)
yields%
\[
u_{b+\left(  n-2\right)  +1}t_{n-2,b}+x_{b+1}x_{b+2}\cdots x_{b+\left(
n-2\right)  +1}=t_{\left(  n-2\right)  +1,b}.
\]
In view of $\left(  n-2\right)  +1=n-1$, this rewrites as%
\[
u_{b+n-1}t_{n-2,b}+x_{b+1}x_{b+2}\cdots x_{b+n-1}=t_{n-1,b}.
\]
Multiplying both sides of this equality by $x_{b}$, we obtain%
\[
x_{b}\left(  u_{b+n-1}t_{n-2,b}+x_{b+1}x_{b+2}\cdots x_{b+n-1}\right)
=x_{b}t_{n-1,b},
\]
so that%
\begin{align*}
x_{b}t_{n-1,b}  &  =x_{b}\left(  u_{b+n-1}t_{n-2,b}+x_{b+1}x_{b+2}\cdots
x_{b+n-1}\right) \\
&  =x_{b}\underbrace{u_{b+n-1}}_{=u_{\left(  b-1\right)  +n}=u_{b-1}}%
t_{n-2,b}+\underbrace{x_{b}\cdot x_{b+1}x_{b+2}\cdots x_{b+n-1}}%
_{\substack{=x_{b}x_{b+1}\cdots x_{b+n-1}\\=x_{\left(  b-1\right)
+1}x_{\left(  b-1\right)  +2}\cdots x_{\left(  b-1\right)  +n}\\=x_{1}%
x_{2}\cdots x_{n}\\\text{(by Lemma \ref{lem.aprod})}}}\\
&  =x_{b}u_{b-1}t_{n-2,b}+x_{1}x_{2}\cdots x_{n}.
\end{align*}
Adding (\ref{pf.lem.f.steps.e.1}) to this equality, we obtain%
\begin{align}
&  x_{b}t_{n-1,b}+u_{a-1}t_{n-1,a-1}\nonumber\\
&  =x_{b}u_{b-1}t_{n-2,b}+x_{1}x_{2}\cdots x_{n}+x_{a}u_{a-1}t_{n-2,a}%
+u_{1}u_{2}\cdots u_{n}\label{pf.lem.f.steps.e.5}\\
&  =x_{a}u_{a-1}t_{n-2,a}+x_{1}x_{2}\cdots x_{n}+x_{b}u_{b-1}t_{n-2,b}%
+u_{1}u_{2}\cdots u_{n}.\nonumber
\end{align}
The same argument (applied to $b$ and $a$ instead of $a$ and $b$) yields%
\begin{align*}
&  x_{a}t_{n-1,a}+u_{b-1}t_{n-1,b-1}\\
&  =x_{b}u_{b-1}t_{n-2,b}+x_{1}x_{2}\cdots x_{n}+x_{a}u_{a-1}t_{n-2,a}%
+u_{1}u_{2}\cdots u_{n}.
\end{align*}
Comparing this with (\ref{pf.lem.f.steps.e.5}), we obtain $x_{a}%
t_{n-1,a}+u_{b-1}t_{n-1,b-1}=x_{b}t_{n-1,b}+u_{a-1}t_{n-1,a-1}$. This proves
Lemma \ref{lem.f.steps} \textbf{(e)}.

\textbf{(f)} Applying Lemma \ref{lem.f.steps} \textbf{(e)} to $a=i+1$ and
$b=i$, we obtain%
\begin{align*}
x_{i+1}t_{n-1,i+1}+u_{i-1}t_{n-1,i-1}  &  =x_{i}t_{n-1,i}%
+\underbrace{u_{i+1-1}}_{=u_{i}}\underbrace{t_{n-1,i+1-1}}_{=t_{n-1,i}}\\
&  =x_{i}t_{n-1,i}+u_{i}t_{n-1,i}=\left(  x_{i}+u_{i}\right)  t_{n-1,i}.
\end{align*}
This proves Lemma \ref{lem.f.steps} \textbf{(f)}.

\textbf{(g)} We shall prove Lemma \ref{lem.f.steps} \textbf{(g)} by induction
on $q$:

\textit{Induction base:} Let us show that Lemma \ref{lem.f.steps} \textbf{(g)}
holds for $q=1$.

\begin{vershort}
Indeed, let $j\in\mathbb{Z}$. The definition of $t_{1,j}$ yields%
\[
t_{1,j}=\sum_{k=0}^{1}x_{j+1}x_{j+2}\cdots x_{j+k}\cdot u_{j+k+1}%
u_{j+k+2}\cdots u_{j+1}=x_{j+1}+u_{j+1}.
\]
Hence,%
\[
t_{n-1,j+1}\underbrace{t_{1,j}}_{=x_{j+1}+u_{j+1}}=t_{n-1,j+1}\left(
x_{j+1}+u_{j+1}\right)  =\left(  x_{j+1}+u_{j+1}\right)  t_{n-1,j+1}.
\]
Comparing this with%
\begin{align*}
&  \underbrace{t_{n-1,j+1+1}}_{=t_{n-1,j+2}}\cdot\underbrace{x_{j+2}%
x_{j+3}\cdots x_{j+1+1}}_{\substack{=x_{j+2}x_{j+3}\cdots x_{j+2}\\=x_{j+2}%
}}+u_{j}t_{n-1,j}\underbrace{t_{1-1,j+1}}_{\substack{=t_{0,j+1}=1\\\text{(by
Lemma \ref{lem.f.steps} \textbf{(b)})}}}\\
&  =t_{n-1,j+2}x_{j+2}+u_{j}t_{n-1,j}=x_{j+2}t_{n-1,j+2}+u_{j}t_{n-1,j}%
=\left(  x_{j+1}+u_{j+1}\right)  t_{n-1,j+1}\\
&  \ \ \ \ \ \ \ \ \ \ \left(  \text{by Lemma \ref{lem.f.steps} \textbf{(f)},
applied to }i=j+1\right)  ,
\end{align*}
we obtain
\[
t_{n-1,j+1+1}\cdot x_{j+2}x_{j+3}\cdots x_{j+1+1}+u_{j}t_{n-1,j}%
t_{1-1,j+1}=t_{n-1,j+1}t_{1,j}.
\]

\end{vershort}

\begin{verlong}
Indeed, let $j\in\mathbb{Z}$. The definition of $t_{1,j}$ yields%
\begin{align*}
t_{1,j}  &  =\sum_{k=0}^{1}x_{j+1}x_{j+2}\cdots x_{j+k}\cdot u_{j+k+1}%
u_{j+k+2}\cdots u_{j+1}\\
&  =\underbrace{x_{j+1}x_{j+2}\cdots x_{j+0}}_{=\left(  \text{empty
product}\right)  =1}\cdot\underbrace{u_{j+0+1}u_{j+0+2}\cdots u_{j+1}%
}_{=u_{j+1}}+\underbrace{x_{j+1}x_{j+2}\cdots x_{j+1}}_{=x_{j+1}}%
\cdot\underbrace{u_{j+1+1}u_{j+1+2}\cdots u_{j+1}}_{=\left(  \text{empty
product}\right)  =1}\\
&  =u_{j+1}+x_{j+1}=x_{j+1}+u_{j+1}.
\end{align*}
Hence,%
\[
t_{n-1,j+1}\underbrace{t_{1,j}}_{=x_{j+1}+u_{j+1}}=t_{n-1,j+1}\left(
x_{j+1}+u_{j+1}\right)  =\left(  x_{j+1}+u_{j+1}\right)  t_{n-1,j+1}.
\]
Comparing this with%
\begin{align*}
&  \underbrace{t_{n-1,j+1+1}}_{=t_{n-1,j+2}}\cdot\underbrace{x_{j+2}%
x_{j+3}\cdots x_{j+1+1}}_{\substack{=x_{j+2}x_{j+3}\cdots x_{j+2}\\=x_{j+2}%
}}+u_{j}t_{n-1,j}\underbrace{t_{1-1,j+1}}_{\substack{=t_{0,j+1}=1\\\text{(by
Lemma \ref{lem.f.steps} \textbf{(b)},}\\\text{applied to }j+1\text{ instead of
}j\text{)}}}\\
&  =t_{n-1,j+2}x_{j+2}+u_{j}t_{n-1,j}=\underbrace{x_{j+2}t_{n-1,j+2}%
}_{\substack{=x_{\left(  j+1\right)  +1}t_{n-1,\left(  j+1\right)
+1}\\\text{(since }j+2=\left(  j+1\right)  +1\text{)}}}+\underbrace{u_{j}%
t_{n-1,j}}_{\substack{=u_{\left(  j+1\right)  -1}t_{n-1,\left(  j+1\right)
-1}\\\text{(since }j=\left(  j+1\right)  -1\text{)}}}\\
&  =x_{\left(  j+1\right)  +1}t_{n-1,\left(  j+1\right)  +1}+u_{\left(
j+1\right)  -1}t_{n-1,\left(  j+1\right)  -1}=\left(  x_{j+1}+u_{j+1}\right)
t_{n-1,j+1}\\
&  \ \ \ \ \ \ \ \ \ \ \left(  \text{by Lemma \ref{lem.f.steps} \textbf{(f)},
applied to }i=j+1\right)  ,
\end{align*}
we obtain
\[
t_{n-1,j+1+1}\cdot x_{j+2}x_{j+3}\cdots x_{j+1+1}+u_{j}t_{n-1,j}%
t_{1-1,j+1}=t_{n-1,j+1}t_{1,j}.
\]

\end{verlong}

Now, forget that we fixed $j$. We thus have proved that%
\[
t_{n-1,j+1+1}\cdot x_{j+2}x_{j+3}\cdots x_{j+1+1}+u_{j}t_{n-1,j}%
t_{1-1,j+1}=t_{n-1,j+1}t_{1,j}%
\]
for each $j\in\mathbb{Z}$. In other words, Lemma \ref{lem.f.steps}
\textbf{(g)} holds for $q=1$. This completes the induction base.\footnote{We
could have simplified this part of the proof by taking $q=0$ as induction base
instead. But this would have required extending the semifield $\mathbb{K}$ to
a semiring $\mathbb{K}\sqcup\left\{  0\right\}  $ by adjoining a zero (since
$t_{-1,j}$ would be an empty sum). It is not hard to do this, but we prefer
computations to technicalities.}

\textit{Induction step:} Fix a positive integer $p$. Assume (as induction
hypothesis) that Lemma \ref{lem.f.steps} \textbf{(g)} holds for $q=p$. We must
now show that Lemma \ref{lem.f.steps} \textbf{(g)} holds for $q=p+1$.

We have assumed that Lemma \ref{lem.f.steps} \textbf{(g)} holds for $q=p$. In
other words, each $j\in\mathbb{Z}$ satisfies%
\begin{equation}
t_{n-1,j+p+1}\cdot x_{j+2}x_{j+3}\cdots x_{j+p+1}+u_{j}t_{n-1,j}%
t_{p-1,j+1}=t_{n-1,j+1}t_{p,j}. \label{pf.lem.f.steps.g.IH}%
\end{equation}

\begin{vershort}
Now, let $j\in\mathbb{Z}$ be arbitrary. Then, Lemma \ref{lem.f.steps}
\textbf{(d)} (applied to $r=p$) yields%
\[
u_{j+p+1}t_{p,j}+x_{j+1}x_{j+2}\cdots x_{j+p+1}=t_{p+1,j}.
\]
Multiplying both sides of this equality by $t_{n-1,j+1}$, we obtain%
\[
t_{n-1,j+1}\left(  u_{j+p+1}t_{p,j}+x_{j+1}x_{j+2}\cdots x_{j+p+1}\right)
=t_{n-1,j+1}t_{p+1,j}.
\]
Hence,%
\begin{align*}
&  t_{n-1,j+1}t_{p+1,j}\\
&  =t_{n-1,j+1}\left(  u_{j+p+1}t_{p,j}+x_{j+1}x_{j+2}\cdots x_{j+p+1}\right)
\\
&  =\underbrace{t_{n-1,j+1}u_{j+p+1}}_{=u_{j+p+1}t_{n-1,j+1}}t_{p,j}%
+t_{n-1,j+1}\cdot x_{j+1}x_{j+2}\cdots x_{j+p+1}\\
&  =u_{j+p+1}\underbrace{t_{n-1,j+1}t_{p,j}}_{\substack{=t_{n-1,j+p+1}\cdot
x_{j+2}x_{j+3}\cdots x_{j+p+1}+u_{j}t_{n-1,j}t_{p-1,j+1}\\\text{(by
(\ref{pf.lem.f.steps.g.IH}))}}}+t_{n-1,j+1}\cdot\underbrace{x_{j+1}%
x_{j+2}\cdots x_{j+p+1}}_{=x_{j+1}\cdot x_{j+2}x_{j+3}\cdots x_{j+p+1}}\\
&  =u_{j+p+1}\left(  t_{n-1,j+p+1}\cdot x_{j+2}x_{j+3}\cdots x_{j+p+1}%
+u_{j}t_{n-1,j}t_{p-1,j+1}\right) \\
&  \ \ \ \ \ \ \ \ \ \ +t_{n-1,j+1}\cdot x_{j+1}\cdot x_{j+2}x_{j+3}\cdots
x_{j+p+1}\\
&  =u_{j+p+1}t_{n-1,j+p+1}\cdot x_{j+2}x_{j+3}\cdots x_{j+p+1}+u_{j+p+1}%
u_{j}t_{n-1,j}t_{p-1,j+1}\\
&  \ \ \ \ \ \ \ \ \ \ +t_{n-1,j+1}\cdot x_{j+1}\cdot x_{j+2}x_{j+3}\cdots
x_{j+p+1}\\
&  =\underbrace{\left(  u_{j+p+1}t_{n-1,j+p+1}+t_{n-1,j+1}\cdot x_{j+1}%
\right)  }_{\substack{=x_{j+1}t_{n-1,j+1}+u_{j+p+1}t_{n-1,j+p+1}}}\cdot
x_{j+2}x_{j+3}\cdots x_{j+p+1}+u_{j+p+1}u_{j}t_{n-1,j}t_{p-1,j+1}\\
&  =\underbrace{\left(  x_{j+1}t_{n-1,j+1}+u_{j+p+1}t_{n-1,j+p+1}\right)
}_{\substack{=x_{j+p+2}t_{n-1,j+p+2}+u_{j}t_{n-1,j}\\\text{(by Lemma
\ref{lem.f.steps} \textbf{(e)}, applied to }a=j+1\text{ and }b=j+p+2\text{)}%
}}\cdot x_{j+2}x_{j+3}\cdots x_{j+p+1}+u_{j+p+1}u_{j}t_{n-1,j}t_{p-1,j+1}\\
&  =\left(  x_{j+p+2}t_{n-1,j+p+2}+u_{j}t_{n-1,j}\right)  \cdot x_{j+2}%
x_{j+3}\cdots x_{j+p+1}+u_{j+p+1}u_{j}t_{n-1,j}t_{p-1,j+1}\\
&  =x_{j+p+2}t_{n-1,j+p+2}\cdot x_{j+2}x_{j+3}\cdots x_{j+p+1}+u_{j}%
t_{n-1,j}\cdot x_{j+2}x_{j+3}\cdots x_{j+p+1}\\
&  \ \ \ \ \ \ \ \ \ \ +u_{j+p+1}u_{j}t_{n-1,j}t_{p-1,j+1}%
\end{align*}%
\begin{align*}
&  =u_{j}t_{n-1,j}\underbrace{\left(  u_{j+p+1}t_{p-1,j+1}+x_{j+2}%
x_{j+3}\cdots x_{j+p+1}\right)  }_{\substack{=t_{p,j+1}\\\text{(by Lemma
\ref{lem.f.steps} \textbf{(d)}, applied to }j+1\text{ and }p-1\text{ instead
of }j\text{ and }r\text{)}}}\\
&  \ \ \ \ \ \ \ \ \ \ +\underbrace{x_{j+p+2}t_{n-1,j+p+2}\cdot x_{j+2}%
x_{j+3}\cdots x_{j+p+1}}_{=t_{n-1,j+p+2}\cdot x_{j+2}x_{j+3}\cdots
x_{j+p+1}\cdot x_{j+p+2}}\\
&  =u_{j}t_{n-1,j}t_{p,j+1}+t_{n-1,j+p+2}\cdot\underbrace{x_{j+2}x_{j+3}\cdots
x_{j+p+1}\cdot x_{j+p+2}}_{=x_{j+2}x_{j+3}\cdots x_{j+p+2}}\\
&  =u_{j}t_{n-1,j}t_{p,j+1}+t_{n-1,j+p+2}\cdot x_{j+2}x_{j+3}\cdots
x_{j+p+2}\\
&  =t_{n-1,j+p+2}\cdot x_{j+2}x_{j+3}\cdots x_{j+p+2}+u_{j}t_{n-1,j}t_{p,j+1}.
\end{align*}
In other words,%
\[
t_{n-1,j+p+2}\cdot x_{j+2}x_{j+3}\cdots x_{j+p+2}+u_{j}t_{n-1,j}%
t_{p,j+1}=t_{n-1,j+1}t_{p+1,j}.
\]

\end{vershort}

\begin{verlong}
Now, let $j\in\mathbb{Z}$ be arbitrary. Then, Lemma \ref{lem.f.steps}
\textbf{(d)} (applied to $r=p$) yields%
\[
u_{j+p+1}t_{p,j}+x_{j+1}x_{j+2}\cdots x_{j+p+1}=t_{p+1,j}.
\]
Multiplying both sides of this equality by $t_{n-1,j+1}$, we obtain%
\[
t_{n-1,j+1}\left(  u_{j+p+1}t_{p,j}+x_{j+1}x_{j+2}\cdots x_{j+p+1}\right)
=t_{n-1,j+1}t_{p+1,j}.
\]
Hence,%
\begin{align*}
&  t_{n-1,j+1}t_{p+1,j}\\
&  =t_{n-1,j+1}\left(  u_{j+p+1}t_{p,j}+x_{j+1}x_{j+2}\cdots x_{j+p+1}\right)
\\
&  =\underbrace{t_{n-1,j+1}u_{j+p+1}}_{=u_{j+p+1}t_{n-1,j+1}}t_{p,j}%
+t_{n-1,j+1}\cdot x_{j+1}x_{j+2}\cdots x_{j+p+1}\\
&  =u_{j+p+1}\underbrace{t_{n-1,j+1}t_{p,j}}_{\substack{=t_{n-1,j+p+1}\cdot
x_{j+2}x_{j+3}\cdots x_{j+p+1}+u_{j}t_{n-1,j}t_{p-1,j+1}\\\text{(by
(\ref{pf.lem.f.steps.g.IH}))}}}+t_{n-1,j+1}\cdot\underbrace{x_{j+1}%
x_{j+2}\cdots x_{j+p+1}}_{=x_{j+1}\cdot x_{j+2}x_{j+3}\cdots x_{j+p+1}}\\
&  =u_{j+p+1}\left(  t_{n-1,j+p+1}\cdot x_{j+2}x_{j+3}\cdots x_{j+p+1}%
+u_{j}t_{n-1,j}t_{p-1,j+1}\right) \\
&  \ \ \ \ \ \ \ \ \ \ +t_{n-1,j+1}\cdot x_{j+1}\cdot x_{j+2}x_{j+3}\cdots
x_{j+p+1}\\
&  =u_{j+p+1}t_{n-1,j+p+1}\cdot x_{j+2}x_{j+3}\cdots x_{j+p+1}+u_{j+p+1}%
u_{j}t_{n-1,j}t_{p-1,j+1}\\
&  \ \ \ \ \ \ \ \ \ \ +t_{n-1,j+1}\cdot x_{j+1}\cdot x_{j+2}x_{j+3}\cdots
x_{j+p+1}\\
&  =\underbrace{\left(  u_{j+p+1}t_{n-1,j+p+1}+t_{n-1,j+1}\cdot x_{j+1}%
\right)  }_{\substack{=x_{j+1}t_{n-1,j+1}+u_{j+p+1}t_{n-1,j+p+1}%
\\=x_{j+1}t_{n-1,j+1}+u_{\left(  j+p+2\right)  -1}t_{n-1,\left(  j+p+2\right)
-1}\\\text{(since }j+p+1=\left(  j+p+2\right)  -1\text{)}}}\cdot
x_{j+2}x_{j+3}\cdots x_{j+p+1}+u_{j+p+1}u_{j}t_{n-1,j}t_{p-1,j+1}\\
&  =\underbrace{\left(  x_{j+1}t_{n-1,j+1}+u_{\left(  j+p+2\right)
-1}t_{n-1,\left(  j+p+2\right)  -1}\right)  }_{\substack{=x_{j+p+2}%
t_{n-1,j+p+2}+u_{\left(  j+1\right)  -1}t_{n-1,\left(  j+1\right)
-1}\\\text{(by Lemma \ref{lem.f.steps} \textbf{(e)}, applied to }a=j+1\text{
and }b=j+p+2\text{)}}}\cdot x_{j+2}x_{j+3}\cdots x_{j+p+1}+u_{j+p+1}%
u_{j}t_{n-1,j}t_{p-1,j+1}\\
&  =\left(  x_{j+p+2}t_{n-1,j+p+2}+\underbrace{u_{\left(  j+1\right)  -1}%
}_{=u_{j}}\underbrace{t_{n-1,\left(  j+1\right)  -1}}_{=t_{n-1,j}}\right)
\cdot x_{j+2}x_{j+3}\cdots x_{j+p+1}+u_{j+p+1}u_{j}t_{n-1,j}t_{p-1,j+1}\\
&  =\left(  x_{j+p+2}t_{n-1,j+p+2}+u_{j}t_{n-1,j}\right)  \cdot x_{j+2}%
x_{j+3}\cdots x_{j+p+1}+u_{j+p+1}u_{j}t_{n-1,j}t_{p-1,j+1}\\
&  =x_{j+p+2}t_{n-1,j+p+2}\cdot x_{j+2}x_{j+3}\cdots x_{j+p+1}+u_{j}%
t_{n-1,j}\cdot x_{j+2}x_{j+3}\cdots x_{j+p+1}\\
&  \ \ \ \ \ \ \ \ \ \ +u_{j+p+1}u_{j}t_{n-1,j}t_{p-1,j+1}%
\end{align*}%
\begin{align*}
&  =u_{j}t_{n-1,j}\left(  \underbrace{u_{j+p+1}}_{=u_{\left(  j+1\right)
+\left(  p-1\right)  +1}}t_{p-1,j+1}+\underbrace{x_{j+2}x_{j+3}\cdots
x_{j+p+1}}_{=x_{\left(  j+1\right)  +1}x_{\left(  j+1\right)  +2}\cdots
x_{\left(  j+1\right)  +\left(  p-1\right)  +1}}\right) \\
&  \ \ \ \ \ \ \ \ \ \ +\underbrace{x_{j+p+2}t_{n-1,j+p+2}\cdot x_{j+2}%
x_{j+3}\cdots x_{j+p+1}}_{=t_{n-1,j+p+2}\cdot x_{j+2}x_{j+3}\cdots
x_{j+p+1}\cdot x_{j+p+2}}\\
&  =u_{j}t_{n-1,j}\underbrace{\left(  u_{\left(  j+1\right)  +\left(
p-1\right)  +1}t_{p-1,j+1}+x_{\left(  j+1\right)  +1}x_{\left(  j+1\right)
+2}\cdots x_{\left(  j+1\right)  +\left(  p-1\right)  +1}\right)
}_{\substack{=t_{\left(  p-1\right)  +1,j+1}\\\text{(by Lemma
\ref{lem.f.steps} \textbf{(d)}, applied to }j+1\text{ and }p-1\text{ instead
of }j\text{ and }r\text{)}}}\\
&  \ \ \ \ \ \ \ \ \ \ +t_{n-1,j+p+2}\cdot\underbrace{x_{j+2}x_{j+3}\cdots
x_{j+p+1}\cdot x_{j+p+2}}_{=x_{j+2}x_{j+3}\cdots x_{j+p+2}}\\
&  =u_{j}t_{n-1,j}\underbrace{t_{\left(  p-1\right)  +1,j+1}}_{=t_{\left(
p+1\right)  -1,j+1}}+\underbrace{t_{n-1,j+p+2}}_{=t_{n-1,j+\left(  p+1\right)
+1}}\cdot\underbrace{x_{j+2}x_{j+3}\cdots x_{j+p+2}}_{=x_{j+2}x_{j+3}\cdots
x_{j+\left(  p+1\right)  +1}}\\
&  =u_{j}t_{n-1,j}t_{\left(  p+1\right)  -1,j+1}+t_{n-1,j+\left(  p+1\right)
+1}\cdot x_{j+2}x_{j+3}\cdots x_{j+\left(  p+1\right)  +1}\\
&  =t_{n-1,j+\left(  p+1\right)  +1}\cdot x_{j+2}x_{j+3}\cdots x_{j+\left(
p+1\right)  +1}+u_{j}t_{n-1,j}t_{\left(  p+1\right)  -1,j+1}.
\end{align*}
In other words,%
\[
t_{n-1,j+\left(  p+1\right)  +1}\cdot x_{j+2}x_{j+3}\cdots x_{j+\left(
p+1\right)  +1}+u_{j}t_{n-1,j}t_{\left(  p+1\right)  -1,j+1}=t_{n-1,j+1}%
t_{p+1,j}.
\]

\end{verlong}

\begin{vershort}
Now, forget that we fixed $j$. We thus have proved that each $j\in\mathbb{Z}$
satisfies%
\[
t_{n-1,j+p+2}\cdot x_{j+2}x_{j+3}\cdots x_{j+p+2}+u_{j}t_{n-1,j}%
t_{p,j+1}=t_{n-1,j+1}t_{p+1,j}.
\]
In other words, Lemma \ref{lem.f.steps} \textbf{(g)} holds for $q=p+1$. This
completes the induction step. Hence, Lemma \ref{lem.f.steps} \textbf{(g)} is
proved by induction.
\end{vershort}

\begin{verlong}
Now, forget that we fixed $j$. We thus have proved that each $j\in\mathbb{Z}$
satisfies%
\[
t_{n-1,j+\left(  p+1\right)  +1}\cdot x_{j+2}x_{j+3}\cdots x_{j+\left(
p+1\right)  +1}+u_{j}t_{n-1,j}t_{\left(  p+1\right)  -1,j+1}=t_{n-1,j+1}%
t_{p+1,j}.
\]
In other words, Lemma \ref{lem.f.steps} \textbf{(g)} holds for $q=p+1$. This
completes the induction step. Hence, Lemma \ref{lem.f.steps} \textbf{(g)} is
proved by induction.
\end{verlong}

\begin{vershort}
\textbf{(h)} For $i\in\left\{  1,2,\ldots,n\right\}  $, the claim of Lemma
\ref{lem.f.steps} \textbf{(h)} follows from the definition of $y$. Thus, it
also holds for all $i\in\mathbb{Z}$, since any integers $j$ and $j^{\prime}$
satisfying $j\equiv j^{\prime}\operatorname{mod}n$ satisfy $x_{j}%
=x_{j^{\prime}}$ and $u_{j}=u_{j^{\prime}}$ and $t_{n-1,j}=t_{n-1,j^{\prime}}$
(by Lemma \ref{lem.f.steps} \textbf{(a)}) and $y_{j}=y_{j^{\prime}}$. Thus,
Lemma \ref{lem.f.steps} \textbf{(h)} is proved.
\end{vershort}

\begin{verlong}
\textbf{(h)} In short, this follows from the definition of $y$ because
\textquotedblleft everything is $n$-periodic\textquotedblright. Here is the
argument in detail:

Convention \ref{conv.bir.peri} entails that the families $\left(
u_{i}\right)  _{i\in\mathbb{Z}}$, $\left(  x_{i}\right)  _{i\in\mathbb{Z}}$
and $\left(  y_{i}\right)  _{i\in\mathbb{Z}}$ are $n$-periodic. In other
words, the families $\left(  u_{j}\right)  _{j\in\mathbb{Z}}$, $\left(
x_{j}\right)  _{j\in\mathbb{Z}}$ and $\left(  y_{j}\right)  _{j\in\mathbb{Z}}$
are $n$-periodic (here, we have renamed the index $i$ as $j$).

Let $i\in\mathbb{Z}$. Let $i^{\prime}$ be the unique element of $\left\{
1,2,\ldots,n\right\}  $ that is congruent to $i$ modulo $n$. Thus, $i^{\prime
}\equiv i\operatorname{mod}n$. Hence, $u_{i^{\prime}}=u_{i}$ (since the family
$\left(  u_{j}\right)  _{j\in\mathbb{Z}}$ is $n$-periodic) and $y_{i^{\prime}%
}=y_{i}$ (since the family $\left(  y_{j}\right)  _{j\in\mathbb{Z}}$ is
$n$-periodic). Also, from $i^{\prime}\equiv i\operatorname{mod}n$, we obtain
$i^{\prime}+1\equiv i+1\operatorname{mod}n$, so that $x_{i^{\prime}+1}%
=x_{i+1}$ (since the family $\left(  x_{j}\right)  _{j\in\mathbb{Z}}$ is
$n$-periodic) and $t_{n-1,i^{\prime}+1}=t_{n-1,i+1}$ (by Lemma
\ref{lem.f.steps} \textbf{(a)}, applied to $r=n-1$, $j=i^{\prime}+1$ and
$j^{\prime}=i+1$). Also, from $i^{\prime}\equiv i\operatorname{mod}n$, we
obtain $i^{\prime}-1\equiv i-1\operatorname{mod}n$, so that $u_{i^{\prime}%
-1}=u_{i-1}$ (since the family $\left(  u_{j}\right)  _{j\in\mathbb{Z}}$ is
$n$-periodic) and $t_{n-1,i^{\prime}-1}=t_{n-1,i-1}$ (by Lemma
\ref{lem.f.steps} \textbf{(a)}, applied to $r=n-1$, $j=i^{\prime}-1$ and
$j^{\prime}=i-1$). Now, recall that $y_{i^{\prime}}=y_{i}$; hence,%
\begin{align*}
y_{i}  &  =y_{i^{\prime}}=u_{i^{\prime}}\cdot\dfrac{u_{i^{\prime}%
-1}t_{n-1,i^{\prime}-1}}{x_{i^{\prime}+1}t_{n-1,i^{\prime}+1}}%
\ \ \ \ \ \ \ \ \ \ \left(  \text{by the definition of }y\text{, since
}i^{\prime}\in\left\{  1,2,\ldots,n\right\}  \right) \\
&  =u_{i}\cdot\dfrac{u_{i-1}t_{n-1,i-1}}{x_{i+1}t_{n-1,i+1}}\\
&  \ \ \ \ \ \ \ \ \ \ \left(
\begin{array}
[c]{c}%
\text{since }u_{i^{\prime}}=u_{i}\text{ and }u_{i^{\prime}-1}=u_{i-1}\text{
and }t_{n-1,i^{\prime}-1}=t_{n-1,i-1}\\
\text{and }x_{i^{\prime}+1}=x_{i+1}\text{ and }t_{n-1,i^{\prime}%
+1}=t_{n-1,i+1}%
\end{array}
\right)  .
\end{align*}
Thus, Lemma \ref{lem.f.steps} \textbf{(h)} is proved.
\end{verlong}

\textbf{(i)} We shall prove Lemma \ref{lem.f.steps} \textbf{(i)} by induction
on $q$:

\begin{vershort}
\textit{Induction base:} For each $j\in\mathbb{Z}$, we have%
\[
\dfrac{t_{0,j}^{\prime}}{u_{j+1}u_{j+2}\cdots u_{j}}=\dfrac{t_{n-1,j+1}%
}{t_{n-1,j+1}}\cdot\dfrac{t_{0,j}}{x_{j+2}x_{j+3}\cdots x_{j+1}}%
\]
\footnote{\textit{Proof.} Let $j\in\mathbb{Z}$. Lemma \ref{lem.f.steps}
\textbf{(b)} yields $t_{0,j}=1$. Similarly, $t_{0,j}^{\prime}=1$. From this
equality, and from $u_{j+1}u_{j+2}\cdots u_{j}=\left(  \text{empty
product}\right)  =1$, we obtain%
\[
\dfrac{t_{0,j}^{\prime}}{u_{j+1}u_{j+2}\cdots u_{j}}=\dfrac{1}{1}.
\]
Comparing this with%
\begin{align*}
\dfrac{t_{n-1,j+1}}{t_{n-1,j+1}}\cdot\dfrac{t_{0,j}}{x_{j+2}x_{j+3}\cdots
x_{j+1}}  &  =\dfrac{t_{0,j}}{x_{j+2}x_{j+3}\cdots x_{j+1}}=\dfrac{1}{1}\\
&  \ \ \ \ \ \ \ \ \ \ \left(  \text{since }t_{0,j}=1\text{ and }%
x_{j+2}x_{j+3}\cdots x_{j+1}=\left(  \text{empty product}\right)  =1\right)  ,
\end{align*}
we obtain $\dfrac{t_{0,j}^{\prime}}{u_{j+1}u_{j+2}\cdots u_{j}}=\dfrac
{t_{n-1,j+1}}{t_{n-1,j+1}}\cdot\dfrac{t_{0,j}}{x_{j+2}x_{j+3}\cdots x_{j+1}}$.
Qed.}. In other words, Lemma \ref{lem.f.steps} \textbf{(i)} holds for $q=0$.
This completes the induction base.
\end{vershort}

\begin{verlong}
\textit{Induction base:} For each $j\in\mathbb{Z}$, we have%
\[
\dfrac{t_{0,j}^{\prime}}{u_{j+1}u_{j+2}\cdots u_{j+0}}=\dfrac{t_{n-1,j+1}%
}{t_{n-1,j+0+1}}\cdot\dfrac{t_{0,j}}{x_{j+2}x_{j+3}\cdots x_{j+0+1}}%
\]
\footnote{\textit{Proof.} Let $j\in\mathbb{Z}$. Lemma \ref{lem.f.steps}
\textbf{(b)} yields $t_{0,j}=1$. The same argument (applied to $y$ and
$t_{r,j}^{\prime}$ instead of $x$ and $t_{r,j}$) yields $t_{0,j}^{\prime}=1$.
From this equality, and from $u_{j+1}u_{j+2}\cdots u_{j+0}=\left(  \text{empty
product}\right)  =1$, we obtain%
\[
\dfrac{t_{0,j}^{\prime}}{u_{j+1}u_{j+2}\cdots u_{j+0}}=\dfrac{1}{1}=1.
\]
Comparing this with%
\begin{align*}
&  \dfrac{t_{n-1,j+1}}{t_{n-1,j+0+1}}\cdot\dfrac{t_{0,j}}{x_{j+2}x_{j+3}\cdots
x_{j+0+1}}\\
&  =\dfrac{t_{n-1,j+1}}{t_{n-1,j+0+1}}\cdot\dfrac{1}{1}\\
&  \ \ \ \ \ \ \ \ \ \ \left(  \text{since }t_{0,j}=1\text{ and }%
x_{j+2}x_{j+3}\cdots x_{j+0+1}=\left(  \text{empty product}\right)  =1\right)
\\
&  =\dfrac{t_{n-1,j+1}}{t_{n-1,j+0+1}}=\dfrac{t_{n-1,j+1}}{t_{n-1,j+1}}=1,
\end{align*}
we obtain $\dfrac{t_{0,j}^{\prime}}{u_{j+1}u_{j+2}\cdots u_{j+0}}%
=\dfrac{t_{n-1,j+1}}{t_{n-1,j+0+1}}\cdot\dfrac{t_{0,j}}{x_{j+2}x_{j+3}\cdots
x_{j+0+1}}$. Qed.}. In other words, Lemma \ref{lem.f.steps} \textbf{(i)} holds
for $q=0$. This completes the induction base.
\end{verlong}

\textit{Induction step:} Fix $r\in\mathbb{N}$. Assume (as induction
hypothesis) that Lemma \ref{lem.f.steps} \textbf{(i)} holds for $q=r$. We must
now show that Lemma \ref{lem.f.steps} \textbf{(i)} holds for $q=r+1$.

We have assumed that Lemma \ref{lem.f.steps} \textbf{(i)} holds for $q=r$. In
other words, each $j\in\mathbb{Z}$ satisfies%
\begin{equation}
\dfrac{t_{r,j}^{\prime}}{u_{j+1}u_{j+2}\cdots u_{j+r}}=\dfrac{t_{n-1,j+1}%
}{t_{n-1,j+r+1}}\cdot\dfrac{t_{r,j}}{x_{j+2}x_{j+3}\cdots x_{j+r+1}}.
\label{pf.lem.f.steps.i.IH}%
\end{equation}

\begin{vershort}
Now, let $j\in\mathbb{Z}$ be arbitrary. Then, (\ref{pf.lem.f.steps.i.IH})
(applied to $j+1$ instead of $j$) yields%
\begin{equation}
\dfrac{t_{r,j+1}^{\prime}}{u_{j+2}u_{j+3}\cdots u_{j+r+1}}=\dfrac{t_{n-1,j+2}%
}{t_{n-1,j+r+2}}\cdot\dfrac{t_{r,j+1}}{x_{j+3}x_{j+4}\cdots x_{j+r+2}}.
\label{pf.lem.f.steps.i.short.IH3}%
\end{equation}

But Lemma \ref{lem.f.steps} \textbf{(c)} (applied to $j+1$ instead of $j$)
yields%
\[
x_{j+1}t_{r,j+1}+u_{j+1}u_{j+2}\cdots u_{j+r+1}=t_{r+1,j}.
\]
Hence,%
\[
t_{r+1,j}=x_{j+1}t_{r,j+1}+u_{j+1}u_{j+2}\cdots u_{j+r+1}=u_{j+1}u_{j+2}\cdots
u_{j+r+1}+x_{j+1}t_{r,j+1}.
\]
The same reasoning (applied to $y$ and $t_{r,j}^{\prime}$ instead of $x$ and
$t_{r,j}$) yields
\[
t_{r+1,j}^{\prime}=u_{j+1}u_{j+2}\cdots u_{j+r+1}+y_{j+1}t_{r,j+1}^{\prime}.
\]
Hence,%
\begin{align}
\dfrac{t_{r+1,j}^{\prime}}{u_{j+1}u_{j+2}\cdots u_{j+r+1}}  &  =\dfrac
{u_{j+1}u_{j+2}\cdots u_{j+r+1}+y_{j+1}t_{r,j+1}^{\prime}}{u_{j+1}%
u_{j+2}\cdots u_{j+r+1}}\nonumber\\
&  =1+\underbrace{\dfrac{y_{j+1}t_{r,j+1}^{\prime}}{u_{j+1}u_{j+2}\cdots
u_{j+r+1}}}_{\substack{=\dfrac{y_{j+1}t_{r,j+1}^{\prime}}{u_{j+1}\cdot
u_{j+2}u_{j+3}\cdots u_{j+r+1}}\\=\dfrac{y_{j+1}}{u_{j+1}}\cdot\dfrac
{t_{r,j+1}^{\prime}}{u_{j+2}u_{j+3}\cdots u_{j+r+1}}}}\nonumber\\
&  =1+\dfrac{y_{j+1}}{u_{j+1}}\cdot\underbrace{\dfrac{t_{r,j+1}^{\prime}%
}{u_{j+2}u_{j+3}\cdots u_{j+r+1}}}_{\substack{=\dfrac{t_{n-1,j+2}%
}{t_{n-1,j+r+2}}\cdot\dfrac{t_{r,j+1}}{x_{j+3}x_{j+4}\cdots x_{j+r+2}%
}\\\text{(by (\ref{pf.lem.f.steps.i.short.IH3}))}}}\nonumber\\
&  =1+\dfrac{y_{j+1}}{u_{j+1}}\cdot\dfrac{t_{n-1,j+2}}{t_{n-1,j+r+2}}%
\cdot\dfrac{t_{r,j+1}}{x_{j+3}x_{j+4}\cdots x_{j+r+2}}.
\label{pf.lem.f.steps.i.short.3}%
\end{align}

But Lemma \ref{lem.f.steps} \textbf{(h)} (applied to $i=j+1$) yields%
\[
y_{j+1}=u_{j+1}\cdot\dfrac{u_{j}t_{n-1,j}}{x_{j+2}t_{n-1,j+2}}.
\]
Dividing this equality by $u_{j+1}$, we find%
\[
\dfrac{y_{j+1}}{u_{j+1}}=\dfrac{u_{j}t_{n-1,j}}{x_{j+2}t_{n-1,j+2}}.
\]
Hence, (\ref{pf.lem.f.steps.i.short.3}) becomes%
\begin{align}
\dfrac{t_{r+1,j}^{\prime}}{u_{j+1}u_{j+2}\cdots u_{j+r+1}}  &
=1+\underbrace{\dfrac{y_{j+1}}{u_{j+1}}}_{=\dfrac{u_{j}t_{n-1,j}}%
{x_{j+2}t_{n-1,j+2}}}\cdot\dfrac{t_{n-1,j+2}}{t_{n-1,j+r+2}}\cdot
\dfrac{t_{r,j+1}}{x_{j+3}x_{j+4}\cdots x_{j+r+2}}\nonumber\\
&  =1+\dfrac{u_{j}t_{n-1,j}}{x_{j+2}t_{n-1,j+2}}\cdot\dfrac{t_{n-1,j+2}%
}{t_{n-1,j+r+2}}\cdot\dfrac{t_{r,j+1}}{x_{j+3}x_{j+4}\cdots x_{j+r+2}%
}\nonumber\\
&  =1+\dfrac{u_{j}t_{n-1,j}}{t_{n-1,j+r+2}}\cdot\dfrac{t_{r,j+1}}{x_{j+2}\cdot
x_{j+3}x_{j+4}\cdots x_{j+r+2}}\nonumber\\
&  =1+\dfrac{u_{j}t_{n-1,j}}{t_{n-1,j+r+2}}\cdot\dfrac{t_{r,j+1}}%
{x_{j+2}x_{j+3}\cdots x_{j+r+2}}\nonumber\\
&  \ \ \ \ \ \ \ \ \ \ \left(  \text{since }x_{j+2}\cdot x_{j+3}x_{j+4}\cdots
x_{j+r+2}=x_{j+2}x_{j+3}\cdots x_{j+r+2}\right) \nonumber\\
&  =\dfrac{t_{n-1,j+r+2}\cdot x_{j+2}x_{j+3}\cdots x_{j+r+2}+u_{j}%
t_{n-1,j}t_{r,j+1}}{t_{n-1,j+r+2}\cdot x_{j+2}x_{j+3}\cdots x_{j+r+2}}.
\label{pf.lem.f.steps.i.short.6}%
\end{align}

But Lemma \ref{lem.f.steps} \textbf{(g)} (applied to $q=r+1$) yields%
\begin{equation}
t_{n-1,j+r+2}\cdot x_{j+2}x_{j+3}\cdots x_{j+r+2}+u_{j}t_{n-1,j}%
t_{r,j+1}=t_{n-1,j+1}t_{r+1,j}. \label{pf.lem.f.steps.i.short.8}%
\end{equation}
Hence, (\ref{pf.lem.f.steps.i.short.6}) becomes%
\begin{align*}
\dfrac{t_{r+1,j}^{\prime}}{u_{j+1}u_{j+2}\cdots u_{j+r+1}}  &  =\dfrac
{t_{n-1,j+r+2}\cdot x_{j+2}x_{j+3}\cdots x_{j+r+2}+u_{j}t_{n-1,j}t_{r,j+1}%
}{t_{n-1,j+r+2}\cdot x_{j+2}x_{j+3}\cdots x_{j+r+2}}\\
&  =\dfrac{t_{n-1,j+1}t_{r+1,j}}{t_{n-1,j+r+2}\cdot x_{j+2}x_{j+3}\cdots
x_{j+r+2}}\ \ \ \ \ \ \ \ \ \ \left(  \text{by (\ref{pf.lem.f.steps.i.short.8}%
)}\right) \\
&  =\dfrac{t_{n-1,j+1}}{t_{n-1,j+r+2}}\cdot\dfrac{t_{r+1,j}}{x_{j+2}%
x_{j+3}\cdots x_{j+r+2}}.
\end{align*}

Forget that we fixed $j$. We thus have shown that each $j\in\mathbb{Z}$
satisfies%
\[
\dfrac{t_{r+1,j}^{\prime}}{u_{j+1}u_{j+2}\cdots u_{j+r+1}}=\dfrac{t_{n-1,j+1}%
}{t_{n-1,j+r+2}}\cdot\dfrac{t_{r+1,j}}{x_{j+2}x_{j+3}\cdots x_{j+r+2}}.
\]
In other words, Lemma \ref{lem.f.steps} \textbf{(i)} holds for $q=r+1$. This
completes the induction step. Hence, Lemma \ref{lem.f.steps} \textbf{(i)} is
proved by induction.
\end{vershort}

\begin{verlong}
Now, let $j\in\mathbb{Z}$ be arbitrary. Then, (\ref{pf.lem.f.steps.i.IH})
(applied to $j+1$ instead of $j$) yields%
\[
\dfrac{t_{r,j+1}^{\prime}}{u_{\left(  j+1\right)  +1}u_{\left(  j+1\right)
+2}\cdots u_{\left(  j+1\right)  +r}}=\dfrac{t_{n-1,\left(  j+1\right)  +1}%
}{t_{n-1,\left(  j+1\right)  +r+1}}\cdot\dfrac{t_{r,j+1}}{x_{\left(
j+1\right)  +2}x_{\left(  j+1\right)  +3}\cdots x_{\left(  j+1\right)  +r+1}%
}.
\]
In other words,%
\begin{equation}
\dfrac{t_{r,j+1}^{\prime}}{u_{j+2}u_{j+3}\cdots u_{j+r+1}}=\dfrac{t_{n-1,j+2}%
}{t_{n-1,j+r+2}}\cdot\dfrac{t_{r,j+1}}{x_{j+3}x_{j+4}\cdots x_{j+r+2}}
\label{pf.lem.f.steps.i.IH3}%
\end{equation}
(since $\left(  j+1\right)  +1=j+2$ and $\left(  j+1\right)  +2=j+3$ and
$\left(  j+1\right)  +r+1=j+r+2$ and $\left(  j+1\right)  +r=j+r+1$ and
$\left(  j+1\right)  +3=j+4$).

But Lemma \ref{lem.f.steps} \textbf{(c)} (applied to $j+1$ instead of $j$)
yields%
\[
x_{j+1}t_{r,j+1}+u_{j+1}u_{\left(  j+1\right)  +1}\cdots u_{\left(
j+1\right)  +r}=t_{r+1,\left(  j+1\right)  -1}=t_{r+1,j}%
\]
(since $\left(  j+1\right)  -1=j$). Hence,%
\[
t_{r+1,j}=x_{j+1}t_{r,j+1}+u_{j+1}u_{\left(  j+1\right)  +1}\cdots u_{\left(
j+1\right)  +r}=x_{j+1}t_{r,j+1}+u_{j+1}u_{j+2}\cdots u_{j+r+1}%
\]
(since $\left(  j+1\right)  +1=j+2$ and $\left(  j+1\right)  +r=j+r+1$). The
same reasoning (applied to $y$ and $t_{r,j}^{\prime}$ instead of $x$ and
$t_{r,j}$) yields
\[
t_{r+1,j}^{\prime}=y_{j+1}t_{r,j+1}^{\prime}+u_{j+1}u_{j+2}\cdots u_{j+r+1}.
\]
Hence,%
\begin{align}
\dfrac{t_{r+1,j}^{\prime}}{u_{j+1}u_{j+2}\cdots u_{j+r+1}}  &  =\dfrac
{y_{j+1}t_{r,j+1}^{\prime}+u_{j+1}u_{j+2}\cdots u_{j+r+1}}{u_{j+1}%
u_{j+2}\cdots u_{j+r+1}}\nonumber\\
&  =\underbrace{\dfrac{y_{j+1}t_{r,j+1}^{\prime}}{u_{j+1}u_{j+2}\cdots
u_{j+r+1}}}_{\substack{=\dfrac{y_{j+1}t_{r,j+1}^{\prime}}{u_{j+1}\cdot
u_{j+2}u_{j+3}\cdots u_{j+r+1}}\\\text{(since }u_{j+1}u_{j+2}\cdots
u_{j+r+1}=u_{j+1}\cdot u_{j+2}u_{j+3}\cdots u_{j+r+1}\text{)}}}+1\nonumber\\
&  =\underbrace{\dfrac{y_{j+1}t_{r,j+1}^{\prime}}{u_{j+1}\cdot u_{j+2}%
u_{j+3}\cdots u_{j+r+1}}}_{=\dfrac{y_{j+1}}{u_{j+1}}\cdot\dfrac{t_{r,j+1}%
^{\prime}}{u_{j+2}u_{j+3}\cdots u_{j+r+1}}}+1\nonumber\\
&  =\dfrac{y_{j+1}}{u_{j+1}}\cdot\underbrace{\dfrac{t_{r,j+1}^{\prime}%
}{u_{j+2}u_{j+3}\cdots u_{j+r+1}}}_{\substack{=\dfrac{t_{n-1,j+2}%
}{t_{n-1,j+r+2}}\cdot\dfrac{t_{r,j+1}}{x_{j+3}x_{j+4}\cdots x_{j+r+2}%
}\\\text{(by (\ref{pf.lem.f.steps.i.IH3}))}}}+1\nonumber\\
&  =\dfrac{y_{j+1}}{u_{j+1}}\cdot\dfrac{t_{n-1,j+2}}{t_{n-1,j+r+2}}\cdot
\dfrac{t_{r,j+1}}{x_{j+3}x_{j+4}\cdots x_{j+r+2}}+1.
\label{pf.lem.f.steps.i.3}%
\end{align}

But Lemma \ref{lem.f.steps} \textbf{(h)} (applied to $i=j+1$) yields%
\[
y_{j+1}=u_{j+1}\cdot\dfrac{u_{\left(  j+1\right)  -1}t_{n-1,\left(
j+1\right)  -1}}{x_{\left(  j+1\right)  +1}t_{n-1,\left(  j+1\right)  +1}%
}=u_{j+1}\cdot\dfrac{u_{j}t_{n-1,j}}{x_{j+2}t_{n-1,j+2}}%
\]
(since $\left(  j+1\right)  -1=j$ and $\left(  j+1\right)  +1=j+2$). Dividing
this equality by $u_{j+1}$, we find%
\[
\dfrac{y_{j+1}}{u_{j+1}}=\dfrac{u_{j}t_{n-1,j}}{x_{j+2}t_{n-1,j+2}}.
\]
Hence, (\ref{pf.lem.f.steps.i.3}) becomes%
\begin{align}
\dfrac{t_{r+1,j}^{\prime}}{u_{j+1}u_{j+2}\cdots u_{j+r+1}}  &
=\underbrace{\dfrac{y_{j+1}}{u_{j+1}}}_{=\dfrac{u_{j}t_{n-1,j}}{x_{j+2}%
t_{n-1,j+2}}}\cdot\dfrac{t_{n-1,j+2}}{t_{n-1,j+r+2}}\cdot\dfrac{t_{r,j+1}%
}{x_{j+3}x_{j+4}\cdots x_{j+r+2}}+1\nonumber\\
&  =\dfrac{u_{j}t_{n-1,j}}{x_{j+2}t_{n-1,j+2}}\cdot\dfrac{t_{n-1,j+2}%
}{t_{n-1,j+r+2}}\cdot\dfrac{t_{r,j+1}}{x_{j+3}x_{j+4}\cdots x_{j+r+2}%
}+1\nonumber\\
&  =\dfrac{u_{j}t_{n-1,j}t_{r,j+1}}{t_{n-1,j+r+2}\cdot x_{j+2}\cdot
x_{j+3}x_{j+4}\cdots x_{j+r+2}}+1\nonumber\\
&  =\dfrac{u_{j}t_{n-1,j}t_{r,j+1}}{t_{n-1,j+r+2}\cdot x_{j+2}x_{j+3}\cdots
x_{j+r+2}}+1\nonumber\\
&  \ \ \ \ \ \ \ \ \ \ \left(  \text{since }x_{j+2}\cdot x_{j+3}x_{j+4}\cdots
x_{j+r+2}=x_{j+2}x_{j+3}\cdots x_{j+r+2}\right) \nonumber\\
&  =\dfrac{u_{j}t_{n-1,j}t_{r,j+1}+t_{n-1,j+r+2}\cdot x_{j+2}x_{j+3}\cdots
x_{j+r+2}}{t_{n-1,j+r+2}\cdot x_{j+2}x_{j+3}\cdots x_{j+r+2}}\nonumber\\
&  =\dfrac{t_{n-1,j+r+2}\cdot x_{j+2}x_{j+3}\cdots x_{j+r+2}+u_{j}%
t_{n-1,j}t_{r,j+1}}{t_{n-1,j+r+2}\cdot x_{j+2}x_{j+3}\cdots x_{j+r+2}}.
\label{pf.lem.f.steps.i.6}%
\end{align}

But Lemma \ref{lem.f.steps} \textbf{(g)} (applied to $q=r+1$) yields%
\[
t_{n-1,j+\left(  r+1\right)  +1}\cdot x_{j+2}x_{j+3}\cdots x_{j+\left(
r+1\right)  +1}+u_{j}t_{n-1,j}t_{\left(  r+1\right)  -1,j+1}=t_{n-1,j+1}%
t_{r+1,j}.
\]
This rewrites as%
\begin{equation}
t_{n-1,j+r+2}\cdot x_{j+2}x_{j+3}\cdots x_{j+r+2}+u_{j}t_{n-1,j}%
t_{r,j+1}=t_{n-1,j+1}t_{r+1,j} \label{pf.lem.f.steps.i.8}%
\end{equation}
(since $j+\left(  r+1\right)  +1=j+r+2$ and $\left(  r+1\right)  -1=r$).
Hence, (\ref{pf.lem.f.steps.i.6}) becomes%
\begin{align*}
\dfrac{t_{r+1,j}^{\prime}}{u_{j+1}u_{j+2}\cdots u_{j+r+1}}  &  =\dfrac
{t_{n-1,j+r+2}\cdot x_{j+2}x_{j+3}\cdots x_{j+r+2}+u_{j}t_{n-1,j}t_{r,j+1}%
}{t_{n-1,j+r+2}\cdot x_{j+2}x_{j+3}\cdots x_{j+r+2}}\\
&  =\dfrac{t_{n-1,j+1}t_{r+1,j}}{t_{n-1,j+r+2}\cdot x_{j+2}x_{j+3}\cdots
x_{j+r+2}}\ \ \ \ \ \ \ \ \ \ \left(  \text{by (\ref{pf.lem.f.steps.i.8}%
)}\right) \\
&  =\dfrac{t_{n-1,j+1}}{t_{n-1,j+r+2}}\cdot\dfrac{t_{r+1,j}}{x_{j+2}%
x_{j+3}\cdots x_{j+r+2}}\\
&  =\dfrac{t_{n-1,j+1}}{t_{n-1,j+\left(  r+1\right)  +1}}\cdot\dfrac
{t_{r+1,j}}{x_{j+2}x_{j+3}\cdots x_{j+\left(  r+1\right)  +1}}%
\end{align*}
(since $j+r+2=j+\left(  r+1\right)  +1$).

Forget that we fixed $j$. We thus have shown that each $j\in\mathbb{Z}$
satisfies%
\[
\dfrac{t_{r+1,j}^{\prime}}{u_{j+1}u_{j+2}\cdots u_{j+r+1}}=\dfrac{t_{n-1,j+1}%
}{t_{n-1,j+\left(  r+1\right)  +1}}\cdot\dfrac{t_{r+1,j}}{x_{j+2}x_{j+3}\cdots
x_{j+\left(  r+1\right)  +1}}.
\]
In other words, Lemma \ref{lem.f.steps} \textbf{(i)} holds for $q=r+1$. This
completes the induction step. Hence, Lemma \ref{lem.f.steps} \textbf{(i)} is
proved by induction.
\end{verlong}

\textbf{(j)} Let $j\in\mathbb{Z}$. Then, $u_{j+n}=u_{j}$ (by Convention
\ref{conv.bir.peri}). Lemma \ref{lem.f.steps} \textbf{(i)} (applied to
$q=n-1$) yields%
\begin{align}
\dfrac{t_{n-1,j}^{\prime}}{u_{j+1}u_{j+2}\cdots u_{j+n-1}}  &  =\dfrac
{t_{n-1,j+1}}{t_{n-1,j+\left(  n-1\right)  +1}}\cdot\dfrac{t_{n-1,j}}%
{x_{j+2}x_{j+3}\cdots x_{j+\left(  n-1\right)  +1}}\nonumber\\
&  =\dfrac{t_{n-1,j+1}}{t_{n-1,j+n}}\cdot\dfrac{t_{n-1,j}}{x_{j+2}%
x_{j+3}\cdots x_{j+n}} \label{pf.lem.f.steps.j.1}%
\end{align}
(since $\left(  n-1\right)  +1=n$). But $j\equiv j+n\operatorname{mod}n$;
hence, Lemma \ref{lem.f.steps} \textbf{(a)} (applied to $r=n-1$ and
$j^{\prime}=j+n$) yields $t_{n-1,j}=t_{n-1,j+n}$. Hence,
(\ref{pf.lem.f.steps.j.1}) becomes%
\begin{align*}
\dfrac{t_{n-1,j}^{\prime}}{u_{j+1}u_{j+2}\cdots u_{j+n-1}}  &  =\dfrac
{t_{n-1,j+1}}{t_{n-1,j+n}}\cdot\dfrac{t_{n-1,j}}{x_{j+2}x_{j+3}\cdots x_{j+n}%
}=\dfrac{t_{n-1,j+1}}{t_{n-1,j+n}}\cdot\dfrac{t_{n-1,j+n}}{x_{j+2}%
x_{j+3}\cdots x_{j+n}}\\
&  \ \ \ \ \ \ \ \ \ \ \left(  \text{since }t_{n-1,j}=t_{n-1,j+n}\right) \\
&  =\dfrac{t_{n-1,j+1}}{x_{j+2}x_{j+3}\cdots x_{j+n}}=\dfrac{t_{n-1,j+1}%
x_{j+1}}{x_{j+2}x_{j+3}\cdots x_{j+n}\cdot x_{j+1}}=\dfrac{t_{n-1,j+1}x_{j+1}%
}{x_{1}x_{2}\cdots x_{n}}%
\end{align*}
(since $x_{j+2}x_{j+3}\cdots x_{j+n}\cdot x_{j+1}=x_{j+1}\cdot x_{j+2}%
x_{j+3}\cdots x_{j+n}=x_{j+1}x_{j+2}\cdots x_{j+n}=x_{1}x_{2}\cdots x_{n}$ (by
Lemma \ref{lem.aprod})). Hence,%
\begin{align*}
\dfrac{t_{n-1,j+1}x_{j+1}}{x_{1}x_{2}\cdots x_{n}}  &  =\dfrac{t_{n-1,j}%
^{\prime}}{u_{j+1}u_{j+2}\cdots u_{j+n-1}}=\dfrac{t_{n-1,j}^{\prime}u_{j+n}%
}{u_{j+1}u_{j+2}\cdots u_{j+n-1}\cdot u_{j+n}}=\dfrac{t_{n-1,j}^{\prime
}u_{j+n}}{u_{j+1}u_{j+2}\cdots u_{j+n}}\\
&  \ \ \ \ \ \ \ \ \ \ \left(  \text{since }u_{j+1}u_{j+2}\cdots
u_{j+n-1}\cdot u_{j+n}=u_{j+1}u_{j+2}\cdots u_{j+n}\right) \\
&  =\dfrac{t_{n-1,j}^{\prime}u_{j}}{u_{j+1}u_{j+2}\cdots u_{j+n}%
}\ \ \ \ \ \ \ \ \ \ \left(  \text{since }u_{j+n}=u_{j}\right) \\
&  =\dfrac{t_{n-1,j}^{\prime}u_{j}}{u_{1}u_{2}\cdots u_{n}}%
\end{align*}
(since $u_{j+1}u_{j+2}\cdots u_{j+n}=u_{1}u_{2}\cdots u_{n}$ (by Lemma
\ref{lem.aprod})). This proves Lemma \ref{lem.f.steps} \textbf{(j)}.

\begin{vershort}
\textbf{(k)} Let $i\in\mathbb{Z}$. Applying Lemma \ref{lem.f.steps}
\textbf{(j)} to $j=i-1$, we obtain%
\[
\dfrac{t_{n-1,i-1}^{\prime}u_{i-1}}{u_{1}u_{2}\cdots u_{n}}=\dfrac
{t_{n-1,i}x_{i}}{x_{1}x_{2}\cdots x_{n}}.
\]
Applying Lemma \ref{lem.f.steps} \textbf{(j)} to $j=i+1$, we obtain%
\[
\dfrac{t_{n-1,i+1}^{\prime}u_{i+1}}{u_{1}u_{2}\cdots u_{n}}=\dfrac
{t_{n-1,i+2}x_{i+2}}{x_{1}x_{2}\cdots x_{n}}.
\]
Dividing the former equality by the latter, we obtain%
\[
\dfrac{t_{n-1,i-1}^{\prime}u_{i-1}}{u_{1}u_{2}\cdots u_{n}}/\dfrac
{t_{n-1,i+1}^{\prime}u_{i+1}}{u_{1}u_{2}\cdots u_{n}}=\dfrac{t_{n-1,i}x_{i}%
}{x_{1}x_{2}\cdots x_{n}}/\dfrac{t_{n-1,i+2}x_{i+2}}{x_{1}x_{2}\cdots x_{n}}.
\]
This rewrites as%
\[
\dfrac{t_{n-1,i-1}^{\prime}u_{i-1}}{t_{n-1,i+1}^{\prime}u_{i+1}}%
=\dfrac{t_{n-1,i}x_{i}}{t_{n-1,i+2}x_{i+2}}.
\]
Multiplying both sides of this equality by $u_{i+1}$, we obtain%
\begin{equation}
\dfrac{t_{n-1,i-1}^{\prime}u_{i-1}}{t_{n-1,i+1}^{\prime}}=u_{i+1}\cdot
\dfrac{t_{n-1,i}x_{i}}{t_{n-1,i+2}x_{i+2}}. \label{pf.lem.f.steps.k.short.4}%
\end{equation}
Now,%
\begin{align*}
u_{i}\cdot\dfrac{u_{i-1}t_{n-1,i-1}^{\prime}}{y_{i+1}t_{n-1,i+1}^{\prime}}  &
=\underbrace{\dfrac{t_{n-1,i-1}^{\prime}u_{i-1}}{t_{n-1,i+1}^{\prime}}%
}_{\substack{=u_{i+1}\cdot\dfrac{t_{n-1,i}x_{i}}{t_{n-1,i+2}x_{i+2}%
}\\\text{(by (\ref{pf.lem.f.steps.k.short.4}))}}}\cdot u_{i}%
/\underbrace{y_{i+1}}_{\substack{=u_{i+1}\cdot\dfrac{u_{i}t_{n-1,i}}%
{x_{i+2}t_{n-1,i+2}}\\\text{(by Lemma \ref{lem.f.steps} \textbf{(h)}%
,}\\\text{applied to }i+1\text{ instead of }i\text{)}}}\\
&  =u_{i+1}\cdot\dfrac{t_{n-1,i}x_{i}}{t_{n-1,i+2}x_{i+2}}\cdot u_{i}/\left(
u_{i+1}\cdot\dfrac{u_{i}t_{n-1,i}}{x_{i+2}t_{n-1,i+2}}\right)  =x_{i}.
\end{align*}

\end{vershort}

\begin{verlong}
\textbf{(k)} Let $i\in\mathbb{Z}$. Applying Lemma \ref{lem.f.steps}
\textbf{(j)} to $j=i-1$, we obtain%
\[
\dfrac{t_{n-1,i-1}^{\prime}u_{i-1}}{u_{1}u_{2}\cdots u_{n}}=\dfrac
{t_{n-1,\left(  i-1\right)  +1}x_{\left(  i-1\right)  +1}}{x_{1}x_{2}\cdots
x_{n}}=\dfrac{t_{n-1,i}x_{i}}{x_{1}x_{2}\cdots x_{n}}%
\]
(since $\left(  i-1\right)  +1=i$). Applying Lemma \ref{lem.f.steps}
\textbf{(j)} to $j=i+1$, we obtain%
\[
\dfrac{t_{n-1,i+1}^{\prime}u_{i+1}}{u_{1}u_{2}\cdots u_{n}}=\dfrac
{t_{n-1,\left(  i+1\right)  +1}x_{\left(  i+1\right)  +1}}{x_{1}x_{2}\cdots
x_{n}}=\dfrac{t_{n-1,i+2}x_{i+2}}{x_{1}x_{2}\cdots x_{n}}%
\]
(since $\left(  i+1\right)  +1=i+2$). Dividing the former equality by the
latter, we obtain%
\[
\dfrac{t_{n-1,i-1}^{\prime}u_{i-1}}{u_{1}u_{2}\cdots u_{n}}/\dfrac
{t_{n-1,i+1}^{\prime}u_{i+1}}{u_{1}u_{2}\cdots u_{n}}=\dfrac{t_{n-1,i}x_{i}%
}{x_{1}x_{2}\cdots x_{n}}/\dfrac{t_{n-1,i+2}x_{i+2}}{x_{1}x_{2}\cdots x_{n}}.
\]
This rewrites as%
\[
\dfrac{t_{n-1,i-1}^{\prime}u_{i-1}}{t_{n-1,i+1}^{\prime}u_{i+1}}%
=\dfrac{t_{n-1,i}x_{i}}{t_{n-1,i+2}x_{i+2}}%
\]
(since $\dfrac{t_{n-1,i-1}^{\prime}u_{i-1}}{u_{1}u_{2}\cdots u_{n}}%
/\dfrac{t_{n-1,i+1}^{\prime}u_{i+1}}{u_{1}u_{2}\cdots u_{n}}=\dfrac
{t_{n-1,i-1}^{\prime}u_{i-1}}{t_{n-1,i+1}^{\prime}u_{i+1}}$ and\newline%
$\dfrac{t_{n-1,i}x_{i}}{x_{1}x_{2}\cdots x_{n}}/\dfrac{t_{n-1,i+2}x_{i+2}%
}{x_{1}x_{2}\cdots x_{n}}=\dfrac{t_{n-1,i}x_{i}}{t_{n-1,i+2}x_{i+2}}$).
Multiplying both sides of this equality by $u_{i+1}$, we obtain%
\begin{equation}
\dfrac{t_{n-1,i-1}^{\prime}u_{i-1}}{t_{n-1,i+1}^{\prime}}=u_{i+1}\cdot
\dfrac{t_{n-1,i}x_{i}}{t_{n-1,i+2}x_{i+2}}. \label{pf.lem.f.steps.k.4}%
\end{equation}
Now,%
\begin{align*}
u_{i}\cdot\dfrac{u_{i-1}t_{n-1,i-1}^{\prime}}{y_{i+1}t_{n-1,i+1}^{\prime}}  &
=\underbrace{\dfrac{t_{n-1,i-1}^{\prime}u_{i-1}}{t_{n-1,i+1}^{\prime}}%
}_{\substack{=u_{i+1}\cdot\dfrac{t_{n-1,i}x_{i}}{t_{n-1,i+2}x_{i+2}%
}\\\text{(by (\ref{pf.lem.f.steps.k.4}))}}}\cdot u_{i}/\underbrace{y_{i+1}%
}_{\substack{=u_{i+1}\cdot\dfrac{u_{\left(  i+1\right)  -1}t_{n-1,\left(
i+1\right)  -1}}{x_{\left(  i+1\right)  +1}t_{n-1,\left(  i+1\right)  +1}%
}\\\text{(by Lemma \ref{lem.f.steps} \textbf{(h)},}\\\text{applied to
}i+1\text{ instead of }i\text{)}}}\\
&  =u_{i+1}\cdot\dfrac{t_{n-1,i}x_{i}}{t_{n-1,i+2}x_{i+2}}\cdot u_{i}/\left(
u_{i+1}\cdot\dfrac{u_{\left(  i+1\right)  -1}t_{n-1,\left(  i+1\right)  -1}%
}{x_{\left(  i+1\right)  +1}t_{n-1,\left(  i+1\right)  +1}}\right) \\
&  =u_{i+1}\cdot\dfrac{t_{n-1,i}x_{i}}{t_{n-1,i+2}x_{i+2}}\cdot u_{i}/\left(
u_{i+1}\cdot\dfrac{u_{i}t_{n-1,i}}{x_{i+2}t_{n-1,i+2}}\right) \\
&  =x_{i}.
\end{align*}

\end{verlong}

\noindent This proves Lemma \ref{lem.f.steps} \textbf{(k)}.
\end{proof}

\begin{lemma}
\label{lem.f.back}Let $x\in\mathbb{K}^{n}$ be an $n$-tuple. For each
$j\in\mathbb{Z}$, let
\[
q_{j}=\sum_{k=0}^{n-1}\underbrace{x_{j+1}x_{j+2}\cdots x_{j+k}}_{=\prod
_{i=1}^{k}x_{j+i}}\cdot\underbrace{u_{j+k+1}u_{j+k+2}\cdots u_{j+n-1}}%
_{=\prod_{i=k+1}^{n-1}u_{j+i}}.
\]
Let $z\in\mathbb{K}^{n}$ be such that%
\[
z_{i}=u_{i}\cdot\dfrac{u_{i-1}q_{i-1}}{x_{i+1}q_{i+1}}%
\ \ \ \ \ \ \ \ \ \ \text{for each }i\in\left\{  1,2,\ldots,n\right\}  .
\]
Then, $\mathbf{f}_{u}\left(  x\right)  =z$.
\end{lemma}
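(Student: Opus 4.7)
The plan is essentially to observe that Lemma~\ref{lem.f.back} is a direct unpacking of Definition~\ref{def.f}: it merely reformulates the construction of $\mathbf{f}_u(x)$ by giving the auxiliary quantities $t_{n-1,j}$ a new name ($q_j$) and only invoking the $r = n-1$ case of the definition.

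Concretely, I would first compare the formula for $q_j$ in the lemma with the formula for $t_{r,j}$ in Definition~\ref{def.f}. Setting $r = n-1$ in the latter gives
\[
t_{n-1,j} = \sum_{k=0}^{n-1} x_{j+1}x_{j+2}\cdots x_{j+k} \cdot u_{j+k+1}u_{j+k+2}\cdots u_{j+n-1},
\]
which coincides verbatim with the defining formula for $q_j$. Hence $q_j = t_{n-1,j}$ for every $j \in \mathbb{Z}$.

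With this identification in hand, the defining formula for $z$ in the lemma becomes
\[
z_i = u_i \cdot \frac{u_{i-1}\, q_{i-1}}{x_{i+1}\, q_{i+1}} = u_i \cdot \frac{u_{i-1}\, t_{n-1,i-1}}{x_{i+1}\, t_{n-1,i+1}} \qquad \text{for each } i \in \{1,2,\ldots,n\},
\]
which is precisely the formula used in Definition~\ref{def.f} to define the entries $y_i$ of $y = \mathbf{f}_u(x)$. Therefore $z_i = y_i$ for all $i \in \{1,2,\ldots,n\}$, so $z = y = \mathbf{f}_u(x)$.

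There is no real obstacle here: the lemma is best regarded as a bookkeeping statement that packages Definition~\ref{def.f} in a slightly leaner form (useful later when one only needs the $r = n-1$ level of the family). The entire proof is a matter of pattern-matching the two displayed formulas and invoking the definition of $\mathbf{f}_u$.
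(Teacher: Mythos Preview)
Your proof is correct and follows essentially the same approach as the paper: identify $q_j$ with $t_{n-1,j}$ by comparing definitions, then conclude $z_i = y_i$ for all $i$, hence $z = y = \mathbf{f}_u(x)$.
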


\begin{vershort}

\begin{proof}
[Proof of Lemma \ref{lem.f.back}.]Let $t_{r,j}$ and $y$ be as in Definition
\ref{def.f}. Then, $t_{n-1,j}=q_{j}$ for each $j\in\mathbb{Z}$ (by comparing
the definitions of $t_{n-1,j}$ and $q_{j}$). Hence, $z_{i}=y_{i}$ for each
$i\in\left\{  1,2,\ldots,n\right\}  $ (by comparing the definitions of $z_{i}$
and $y_{i}$). Hence, $z=y=\mathbf{f}_{u}\left(  x\right)  $ (since
$\mathbf{f}_{u}\left(  x\right)  $ was defined to be $y$). This proves Lemma
\ref{lem.f.back}.
\end{proof}
\end{vershort}

\begin{verlong}

\begin{proof}
[Proof of Lemma \ref{lem.f.back}.]Let $t_{r,j}$ and $y$ be as in Definition
\ref{def.f}. Then, for each $j\in\mathbb{Z}$, we have%
\begin{align}
t_{n-1,j}  &  =\sum_{k=0}^{n-1}\underbrace{x_{j+1}x_{j+2}\cdots x_{j+k}%
}_{=\prod_{i=1}^{k}x_{j+i}}\cdot\underbrace{u_{j+k+1}u_{j+k+2}\cdots
u_{j+n-1}}_{=\prod_{i=k+1}^{n-1}u_{j+i}}\nonumber\\
&  \ \ \ \ \ \ \ \ \ \ \left(  \text{by the definition of }t_{n-1,j}\right)
\nonumber\\
&  =q_{j}\ \ \ \ \ \ \ \ \ \ \left(  \text{by the definition of }q_{j}\right)
. \label{pf.lem.f.back.t=q}%
\end{align}

Now, let $i\in\left\{  1,2,\ldots,n\right\}  $. Then, $t_{n-1,i-1}=q_{i-1}$
(by (\ref{pf.lem.f.back.t=q}), applied to $j=i-1$) and $t_{n-1,i+1}=q_{i+1}$
(by (\ref{pf.lem.f.back.t=q}), applied to $j=i+1$). Now, the definition of $y$
yields%
\begin{align*}
y_{i}  &  =u_{i}\cdot\dfrac{u_{i-1}t_{n-1,i-1}}{x_{i+1}t_{n-1,i+1}}=u_{i}%
\cdot\dfrac{u_{i-1}q_{i-1}}{x_{i+1}q_{i+1}}\ \ \ \ \ \ \ \ \ \ \left(
\text{since }t_{n-1,i-1}=q_{i-1}\text{ and }t_{n-1,i+1}=q_{i+1}\right) \\
&  =z_{i}%
\end{align*}
(since we assumed that $z_{i}=u_{i}\cdot\dfrac{u_{i-1}q_{i-1}}{x_{i+1}q_{i+1}%
}$).

Forget that we fixed $i$. We thus have shown that $y_{i}=z_{i}$ for each
$i\in\left\{  1,2,\ldots,n\right\}  $. In other words, $y=z$. Hence,
$z=y=\mathbf{f}_{u}\left(  x\right)  $ (since $\mathbf{f}_{u}\left(  x\right)
$ was defined to be $y$). This proves Lemma \ref{lem.f.back}.
\end{proof}
\end{verlong}

For future convenience, let us restate Lemma \ref{lem.f.back} with different labels:

\begin{lemma}
\label{lem.f.back-y}Let $y\in\mathbb{K}^{n}$ be an $n$-tuple. For each
$j\in\mathbb{Z}$, let
\[
r_{j}=\sum_{k=0}^{n-1}\underbrace{y_{j+1}y_{j+2}\cdots y_{j+k}}_{=\prod
_{i=1}^{k}y_{j+i}}\cdot\underbrace{u_{j+k+1}u_{j+k+2}\cdots u_{j+n-1}}%
_{=\prod_{i=k+1}^{n-1}u_{j+i}}.
\]
Let $x\in\mathbb{K}^{n}$ be such that%
\[
x_{i}=u_{i}\cdot\dfrac{u_{i-1}r_{i-1}}{y_{i+1}r_{i+1}}%
\ \ \ \ \ \ \ \ \ \ \text{for each }i\in\left\{  1,2,\ldots,n\right\}  .
\]
Then, $\mathbf{f}_{u}\left(  y\right)  =x$.
\end{lemma}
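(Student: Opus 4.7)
The plan is essentially trivial, because Lemma \ref{lem.f.back-y} is nothing but Lemma \ref{lem.f.back} with the names of the $n$-tuples swapped. Specifically, in the statement of Lemma \ref{lem.f.back}, rename the $n$-tuple $x$ as $y$, the auxiliary quantity $q_j$ as $r_j$, and the output $n$-tuple $z$ as $x$. Under this renaming, the defining sum for $q_j$ becomes exactly the defining sum for $r_j$ in Lemma \ref{lem.f.back-y}, the formula $z_i = u_i \cdot \dfrac{u_{i-1} q_{i-1}}{x_{i+1} q_{i+1}}$ becomes $x_i = u_i \cdot \dfrac{u_{i-1} r_{i-1}}{y_{i+1} r_{i+1}}$, and the conclusion $\mathbf{f}_u(x) = z$ becomes $\mathbf{f}_u(y) = x$, which is precisely what we want.

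Accordingly, the proof I would write is a single-sentence invocation: apply Lemma \ref{lem.f.back} to the $n$-tuple $y$ in place of $x$ (so that $r_j$ plays the role of $q_j$ and $x$ plays the role of $z$), and read off the conclusion. There is no obstacle here; the lemma is stated separately only for the convenience of later references, where the letter $y$ will denote the \emph{input} of $\mathbf{f}_u$ and $x$ its \emph{output} (the opposite convention to Definition \ref{def.f}). No calculation is required beyond noting that the renaming is consistent.
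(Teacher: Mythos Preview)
Your proposal is correct and matches the paper's own proof exactly: the paper likewise observes that Lemma \ref{lem.f.back-y} is just Lemma \ref{lem.f.back} with $x$, $q_j$ and $z$ renamed as $y$, $r_j$ and $x$.
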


\begin{proof}
[Proof of Lemma \ref{lem.f.back-y}.]Lemma \ref{lem.f.back-y} is just Lemma
\ref{lem.f.back}, with $x$, $q_{j}$ and $z$ renamed as $y$, $r_{j}$ and $x$.
\end{proof}

We are now ready for the proof of Theorem \ref{thm.f.full}:

\begin{proof}
[Proof of Theorem \ref{thm.f.full}.]\textbf{(a)} Let $x\in\mathbb{K}^{n}$. We
shall prove that $\left(  \mathbf{f}_{u}\circ\mathbf{f}_{u}\right)  \left(
x\right)  =x$.

Let $t_{r,j}$ and $y$ be as in Definition \ref{def.f}. Then, $\mathbf{f}%
_{u}\left(  x\right)  =y$ (by the definition of $\mathbf{f}_{u}$), so that
$y=\mathbf{f}_{u}\left(  x\right)  $. Let $t_{r,j}^{\prime}$ (for each
$r\in\mathbb{N}$ and $j\in\mathbb{Z}$) be as in Lemma \ref{lem.f.steps}. The
definition of $t_{n-1,j}^{\prime}$ shows that%
\[
t_{n-1,j}^{\prime}=\sum_{k=0}^{n-1}\underbrace{y_{j+1}y_{j+2}\cdots y_{j+k}%
}_{=\prod_{i=1}^{k}y_{j+i}}\cdot\underbrace{u_{j+k+1}u_{j+k+2}\cdots
u_{j+n-1}}_{=\prod_{i=k+1}^{n-1}u_{j+i}}%
\]
for each $j\in\mathbb{Z}$. Lemma \ref{lem.f.steps} \textbf{(k)} shows that%
\[
x_{i}=u_{i}\cdot\dfrac{u_{i-1}t_{n-1,i-1}^{\prime}}{y_{i+1}t_{n-1,i+1}%
^{\prime}}\ \ \ \ \ \ \ \ \ \ \text{for each }i\in\left\{  1,2,\ldots
,n\right\}  .
\]
Thus, Lemma \ref{lem.f.back-y} (applied to $r_{j}=t_{n-1,j}^{\prime}$) yields
that $\mathbf{f}_{u}\left(  y\right)  =x$. Hence, $x=\mathbf{f}_{u}\left(
\underbrace{y}_{=\mathbf{f}_{u}\left(  x\right)  }\right)  =\mathbf{f}%
_{u}\left(  \mathbf{f}_{u}\left(  x\right)  \right)  =\left(  \mathbf{f}%
_{u}\circ\mathbf{f}_{u}\right)  \left(  x\right)  $. In other words, $\left(
\mathbf{f}_{u}\circ\mathbf{f}_{u}\right)  \left(  x\right)  =x$.

Forget that we fixed $x$. We thus have proved that $\left(  \mathbf{f}%
_{u}\circ\mathbf{f}_{u}\right)  \left(  x\right)  =x$ for each $x\in
\mathbb{K}^{n}$. In other words, $\mathbf{f}_{u}\circ\mathbf{f}_{u}%
=\operatorname*{id}$. In other words, $\mathbf{f}_{u}$ is an involution. This
proves Theorem \ref{thm.f.full} \textbf{(a)}.

\textbf{(b)} Let $t_{r,j}$ be as in Definition \ref{def.f}. Note that the $y$
from Definition \ref{def.f} is precisely the $y$ in Theorem \ref{thm.f.full}
\textbf{(b)} (because both $y$'s satisfy $\mathbf{f}_{u}\left(  x\right)  =y$).

Lemma \ref{lem.f.steps} \textbf{(a)} yields $t_{n-1,0}=t_{n-1,n}$ (since
$0\equiv n\operatorname{mod}n$) and $t_{n-1,1}=t_{n-1,n+1}$ (since $1\equiv
n+1\operatorname{mod}n$). Multiplying these two equalities, we obtain
$t_{n-1,0}t_{n-1,1}=t_{n-1,n}t_{n-1,n+1}$, whence
\begin{equation}
\dfrac{t_{n-1,0}t_{n-1,1}}{t_{n-1,n}t_{n-1,n+1}}=1. \label{pf.thm.f.full.b.1}%
\end{equation}

We have%
\begin{align*}
y_{1}y_{2}\cdots y_{n}  &  =\prod_{i=1}^{n}\underbrace{y_{i}}%
_{\substack{=u_{i}\cdot\dfrac{u_{i-1}t_{n-1,i-1}}{x_{i+1}t_{n-1,i+1}%
}\\\text{(by the definition of }y\\\text{in Definition \ref{def.f})}}%
}=\prod_{i=1}^{n}\left(  u_{i}\cdot\dfrac{u_{i-1}t_{n-1,i-1}}{x_{i+1}%
t_{n-1,i+1}}\right) \\
&  =\left(  \prod_{i=1}^{n}u_{i}\right)  \cdot\dfrac{\left(  \prod_{i=1}%
^{n}u_{i-1}\right)  \cdot\left(  \prod_{i=1}^{n}t_{n-1,i-1}\right)  }{\left(
\prod_{i=1}^{n}x_{i+1}\right)  \cdot\left(  \prod_{i=1}^{n}t_{n-1,i+1}\right)
}\\
&  =\underbrace{\left(  \prod_{i=1}^{n}u_{i}\right)  }_{=u_{1}u_{2}\cdots
u_{n}}\cdot\underbrace{\left(  \prod_{i=1}^{n}u_{i-1}\right)  }%
_{\substack{=u_{0}u_{1}\cdots u_{n-1}\\=u_{1}u_{2}\cdots u_{n}\\\text{(by
Lemma \ref{lem.aprod})}}}\cdot\underbrace{\left(  \prod_{i=1}^{n}%
t_{n-1,i-1}\right)  }_{\substack{=t_{n-1,0}t_{n-1,1}\cdots t_{n-1,n-1}%
\\=\dfrac{t_{n-1,0}t_{n-1,1}\cdots t_{n-1,n+1}}{t_{n-1,n}t_{n-1,n+1}}}}\\
&  \ \ \ \ \ \ \ \ \ \ /\left(  \underbrace{\left(  \prod_{i=1}^{n}%
x_{i+1}\right)  }_{\substack{=x_{2}x_{3}\cdots x_{n+1}\\=x_{1}x_{2}\cdots
x_{n}\\\text{(by Lemma \ref{lem.aprod})}}}\cdot\underbrace{\left(  \prod
_{i=1}^{n}t_{n-1,i+1}\right)  }_{\substack{=t_{n-1,2}t_{n-1,3}\cdots
t_{n-1,n+1}\\=\dfrac{t_{n-1,0}t_{n-1,1}\cdots t_{n-1,n+1}}{t_{n-1,0}t_{n-1,1}%
}}}\right) \\
&  =\left(  u_{1}u_{2}\cdots u_{n}\right)  \cdot\left(  u_{1}u_{2}\cdots
u_{n}\right)  \cdot\dfrac{t_{n-1,0}t_{n-1,1}\cdots t_{n-1,n+1}}{t_{n-1,n}%
t_{n-1,n+1}}\\
&  \ \ \ \ \ \ \ \ \ \ /\left(  \left(  x_{1}x_{2}\cdots x_{n}\right)
\cdot\dfrac{t_{n-1,0}t_{n-1,1}\cdots t_{n-1,n+1}}{t_{n-1,0}t_{n-1,1}}\right)
\\
&  =\dfrac{\left(  u_{1}u_{2}\cdots u_{n}\right)  ^{2}}{x_{1}x_{2}\cdots
x_{n}}\cdot\underbrace{\dfrac{t_{n-1,0}t_{n-1,1}}{t_{n-1,n}t_{n-1,n+1}}%
}_{\substack{=1\\\text{(by (\ref{pf.thm.f.full.b.1}))}}}=\dfrac{\left(
u_{1}u_{2}\cdots u_{n}\right)  ^{2}}{x_{1}x_{2}\cdots x_{n}},
\end{align*}
so that%
\[
y_{1}y_{2}\cdots y_{n}\cdot x_{1}x_{2}\cdots x_{n}=\left(  u_{1}u_{2}\cdots
u_{n}\right)  ^{2}.
\]
This proves Theorem \ref{thm.f.full} \textbf{(b)}.

\textbf{(c)} Let $t_{r,j}$ be as in Definition \ref{def.f}. Note that the $y$
from Definition \ref{def.f} is precisely the $y$ in Theorem \ref{thm.f.full}
\textbf{(c)} (because both $y$'s satisfy $\mathbf{f}_{u}\left(  x\right)  =y$).

Let $i\in\mathbb{Z}$. Then,%
\begin{align}
&  u_{i}+\underbrace{y_{i}}_{\substack{=u_{i}\cdot\dfrac{u_{i-1}t_{n-1,i-1}%
}{x_{i+1}t_{n-1,i+1}}\\\text{(by Lemma \ref{lem.f.steps} \textbf{(h)})}%
}}\nonumber\\
&  =u_{i}+u_{i}\cdot\dfrac{u_{i-1}t_{n-1,i-1}}{x_{i+1}t_{n-1,i+1}}%
=u_{i}\left(  1+\dfrac{u_{i-1}t_{n-1,i-1}}{x_{i+1}t_{n-1,i+1}}\right)
=u_{i}\cdot\dfrac{x_{i+1}t_{n-1,i+1}+u_{i-1}t_{n-1,i-1}}{x_{i+1}t_{n-1,i+1}%
}\nonumber\\
&  =\dfrac{u_{i}}{x_{i+1}t_{n-1,i+1}}\underbrace{\left(  x_{i+1}%
t_{n-1,i+1}+u_{i-1}t_{n-1,i-1}\right)  }_{\substack{=\left(  x_{i}%
+u_{i}\right)  t_{n-1,i}\\\text{(by Lemma \ref{lem.f.steps} \textbf{(f)})}%
}}\nonumber\\
&  =\dfrac{u_{i}}{x_{i+1}t_{n-1,i+1}}\left(  x_{i}+u_{i}\right)  t_{n-1,i}.
\label{pf.thm.f.full.c.1}%
\end{align}
Now,%
\begin{align*}
\dfrac{1}{u_{i}}+\dfrac{1}{y_{i}}  &  =\underbrace{\left(  u_{i}+y_{i}\right)
}_{\substack{=\dfrac{u_{i}}{x_{i+1}t_{n-1,i+1}}\left(  x_{i}+u_{i}\right)
t_{n-1,i}\\\text{(by (\ref{pf.thm.f.full.c.1}))}}}/\left(  u_{i}\cdot
y_{i}\right)  =\dfrac{u_{i}}{x_{i+1}t_{n-1,i+1}}\left(  x_{i}+u_{i}\right)
t_{n-1,i}/\left(  u_{i}\cdot y_{i}\right) \\
&  =\dfrac{1}{x_{i+1}t_{n-1,i+1}}\left(  x_{i}+u_{i}\right)  t_{n-1,i}%
/\underbrace{y_{i}}_{\substack{=u_{i}\cdot\dfrac{u_{i-1}t_{n-1,i-1}}%
{x_{i+1}t_{n-1,i+1}}\\\text{(by Lemma \ref{lem.f.steps} \textbf{(h)})}}}\\
&  =\dfrac{1}{x_{i+1}t_{n-1,i+1}}\left(  x_{i}+u_{i}\right)  t_{n-1,i}/\left(
u_{i}\cdot\dfrac{u_{i-1}t_{n-1,i-1}}{x_{i+1}t_{n-1,i+1}}\right)
=\dfrac{\left(  x_{i}+u_{i}\right)  t_{n-1,i}}{u_{i}u_{i-1}t_{n-1,i-1}}.
\end{align*}
The same argument (applied to $i+1$ instead of $i$) yields%
\[
\dfrac{1}{u_{i+1}}+\dfrac{1}{y_{i+1}}=\dfrac{\left(  x_{i+1}+u_{i+1}\right)
t_{n-1,i+1}}{u_{i+1}u_{\left(  i+1\right)  -1}t_{n-1,\left(  i+1\right)  -1}%
}=\dfrac{\left(  x_{i+1}+u_{i+1}\right)  t_{n-1,i+1}}{u_{i+1}u_{i}t_{n-1,i}}%
\]
(since $\left(  i+1\right)  -1=i$). Multiplying (\ref{pf.thm.f.full.c.1}) with
this equality, we obtain%
\begin{align*}
\left(  u_{i}+y_{i}\right)  \left(  \dfrac{1}{u_{i+1}}+\dfrac{1}{y_{i+1}%
}\right)   &  =\dfrac{u_{i}}{x_{i+1}t_{n-1,i+1}}\left(  x_{i}+u_{i}\right)
t_{n-1,i}\cdot\dfrac{\left(  x_{i+1}+u_{i+1}\right)  t_{n-1,i+1}}{u_{i+1}%
u_{i}t_{n-1,i}}\\
&  =\underbrace{\left(  x_{i}+u_{i}\right)  }_{=u_{i}+x_{i}}\cdot
\underbrace{\dfrac{x_{i+1}+u_{i+1}}{x_{i+1}u_{i+1}}}_{=\dfrac{1}{u_{i+1}%
}+\dfrac{1}{x_{i+1}}}=\left(  u_{i}+x_{i}\right)  \left(  \dfrac{1}{u_{i+1}%
}+\dfrac{1}{x_{i+1}}\right)  .
\end{align*}
This proves Theorem \ref{thm.f.full} \textbf{(c)}.

\textbf{(d)} Let $t_{r,j}$ be as in Definition \ref{def.f}. Note that the $y$
from Definition \ref{def.f} is precisely the $y$ in Theorem \ref{thm.f.full}
\textbf{(d)} (because both $y$'s satisfy $\mathbf{f}_{u}\left(  x\right)  =y$).

We have
\begin{align}
\prod_{i=1}^{n}x_{i+1}  &  =x_{2}x_{3}\cdots x_{n+1}=x_{1}x_{2}\cdots
x_{n}\ \ \ \ \ \ \ \ \ \ \left(  \text{by Lemma \ref{lem.aprod}}\right)
\nonumber\\
&  =\prod_{i=1}^{n}x_{i} \label{pf.thm.f.full.d.1}%
\end{align}
and%
\begin{align}
\prod_{i=1}^{n}t_{n-1,i+1}  &  =\prod_{i=2}^{n+1}t_{n-1,i}%
\ \ \ \ \ \ \ \ \ \ \left(  \text{here, we have substituted }i\text{ for
}i+1\text{ in the product}\right) \nonumber\\
&  =\underbrace{t_{n-1,n+1}}_{\substack{=t_{n-1,1}\\\text{(by Lemma
\ref{lem.f.steps} \textbf{(a)},}\\\text{since }n+1\equiv1\operatorname{mod}%
n\text{)}}}\prod_{i=2}^{n}t_{n-1,i}=t_{n-1,1}\prod_{i=2}^{n}t_{n-1,i}%
\nonumber\\
&  =\prod_{i=1}^{n}t_{n-1,i}. \label{pf.thm.f.full.d.2}%
\end{align}

Every $i\in\mathbb{Z}$ satisfies (\ref{pf.thm.f.full.c.1}) (as we have shown
in the proof of Theorem \ref{thm.f.full} \textbf{(c)} above). Hence,%
\begin{align*}
\prod_{i=1}^{n}\underbrace{\left(  u_{i}+y_{i}\right)  }_{\substack{=\dfrac
{u_{i}}{x_{i+1}t_{n-1,i+1}}\left(  x_{i}+u_{i}\right)  t_{n-1,i}\\\text{(by
(\ref{pf.thm.f.full.c.1}))}}}  &  =\prod_{i=1}^{n}\left(  \dfrac{u_{i}%
}{x_{i+1}t_{n-1,i+1}}\left(  x_{i}+u_{i}\right)  t_{n-1,i}\right) \\
&  =\dfrac{\prod_{i=1}^{n}u_{i}}{\left(  \prod_{i=1}^{n}x_{i+1}\right)
\left(  \prod_{i=1}^{n}t_{n-1,i+1}\right)  }\left(  \prod_{i=1}^{n}\left(
x_{i}+u_{i}\right)  \right)  \left(  \prod_{i=1}^{n}t_{n-1,i}\right) \\
&  =\dfrac{\prod_{i=1}^{n}u_{i}}{\left(  \prod_{i=1}^{n}x_{i}\right)  \left(
\prod_{i=1}^{n}t_{n-1,i}\right)  }\left(  \prod_{i=1}^{n}\left(  x_{i}%
+u_{i}\right)  \right)  \left(  \prod_{i=1}^{n}t_{n-1,i}\right) \\
&  \ \ \ \ \ \ \ \ \ \ \ \ \ \ \ \ \ \ \ \ \left(  \text{by
(\ref{pf.thm.f.full.d.1}) and (\ref{pf.thm.f.full.d.2})}\right) \\
&  =\left(  \prod_{i=1}^{n}u_{i}\right)  \underbrace{\dfrac{\prod_{i=1}%
^{n}\left(  x_{i}+u_{i}\right)  }{\prod_{i=1}^{n}x_{i}}}_{=\prod_{i=1}%
^{n}\dfrac{x_{i}+u_{i}}{x_{i}}}=\left(  \prod_{i=1}^{n}u_{i}\right)
\prod_{i=1}^{n}\dfrac{x_{i}+u_{i}}{x_{i}}.
\end{align*}
Thus,%
\[
\dfrac{\prod_{i=1}^{n}\left(  u_{i}+y_{i}\right)  }{\prod_{i=1}^{n}u_{i}%
}=\prod_{i=1}^{n}\underbrace{\dfrac{x_{i}+u_{i}}{x_{i}}}_{=\dfrac{u_{i}+x_{i}%
}{x_{i}}}=\prod_{i=1}^{n}\dfrac{u_{i}+x_{i}}{x_{i}},
\]
so that%
\[
\prod_{i=1}^{n}\dfrac{u_{i}+x_{i}}{x_{i}}=\dfrac{\prod_{i=1}^{n}\left(
u_{i}+y_{i}\right)  }{\prod_{i=1}^{n}u_{i}}=\prod_{i=1}^{n}\dfrac{u_{i}+y_{i}%
}{u_{i}}.
\]
This proves Theorem \ref{thm.f.full} \textbf{(d)}.
\end{proof}

Let us observe one more property of the involution $\mathbf{f}_{u}$ (even
though we will have no use for it):

\begin{proposition}
\label{prop.f.equal-case}Let $x\in\mathbb{K}^{n}$ be such that $x_{1}%
x_{2}\cdots x_{n}=u_{1}u_{2}\cdots u_{n}$. Then, $\mathbf{f}_{u}\left(
x\right)  =x$.
\end{proposition}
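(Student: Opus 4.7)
The plan is to show that $y := \mathbf{f}_u(x)$ equals $x$ coordinatewise. By the formula in Definition \ref{def.f}, the equality $y_i = x_i$ is equivalent to
\[
x_i x_{i+1} t_{n-1, i+1} = u_{i-1} u_i t_{n-1, i-1}
\]
for each $i \in \{1, 2, \ldots, n\}$, where the $t_{n-1, j}$ are as in Definition \ref{def.f}. I will prove this identity by expanding both sides via the definition of $t_{n-1, j}$ and matching the $n$ resulting summands through an explicit bijection of indices.

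To set this up, for each $k \in \{0, 1, \ldots, n-1\}$ I will write
\[
A_k := (x_i x_{i+1} \cdots x_{i+k+1})(u_{i+k+2} u_{i+k+3} \cdots u_{i+n})
\]
for the $k$-th summand of $x_i x_{i+1} t_{n-1, i+1}$, and
\[
B_k := u_{i-1} u_i (x_i x_{i+1} \cdots x_{i+k-1})(u_{i+k} u_{i+k+1} \cdots u_{i+n-2})
\]
for the $k$-th summand of $u_{i-1} u_i t_{n-1, i-1}$. The core claim will be that $A_k = B_{(k+2) \bmod n}$ for every $k$. Since $k \mapsto (k+2) \bmod n$ is a bijection on $\{0, 1, \ldots, n-1\}$, summing over $k$ will then yield the desired identity.

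The verification of $A_k = B_{(k+2) \bmod n}$ splits into two regimes. For $k \in \{0, 1, \ldots, n-3\}$ (a range that is empty when $n = 2$), the equality $A_k = B_{k+2}$ holds without using the hypothesis at all: by the $n$-periodicity of Convention \ref{conv.bir.peri} we have $u_{i+n-1} = u_{i-1}$ and $u_{i+n} = u_i$, so the tail $u_{i+k+2} \cdots u_{i+n}$ appearing in $A_k$ equals $u_{i-1} u_i (u_{i+k+2} \cdots u_{i+n-2})$, which is precisely the $u$-factor of $B_{k+2}$, while the $x$-factors already coincide. For the two boundary cases $k = n-2$ (matched with $B_0$) and $k = n-1$ (matched with $B_1$), the $A_k$ side contains the full cyclic product $x_i x_{i+1} \cdots x_{i+n-1}$, which equals $x_1 x_2 \cdots x_n$ by Lemma \ref{lem.aprod}; here the hypothesis $x_1 x_2 \cdots x_n = u_1 u_2 \cdots u_n$ enters essentially, converting this $x$-cycle into a $u$-cycle that reassembles (again by Lemma \ref{lem.aprod}) into the $n$-consecutive $u$-product appearing on the $B$ side. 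The main obstacle is thus purely the careful bookkeeping of these two boundary cases, since the hypothesis is genuinely needed only there; everywhere else the term-by-term matching is automatic from $n$-periodicity.
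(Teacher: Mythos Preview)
Your proof is correct and follows essentially the same strategy as the paper: expand both sides of the key identity via the definition of $t_{n-1,j}$, then match summands using $n$-periodicity together with the hypothesis $x_1 x_2 \cdots x_n = u_1 u_2 \cdots u_n$ for the boundary terms. The only organizational difference is that the paper first proves the cleaner one-step identity $u_{i-1} t_{n-1,i-1} = x_i t_{n-1,i}$ (which requires the hypothesis at exactly one boundary term) and then applies it twice to obtain $u_{i-1} u_i t_{n-1,i-1} = x_i x_{i+1} t_{n-1,i+1}$, whereas you prove this two-step identity directly via a shift-by-$2$ bijection with two boundary cases; the content is the same.
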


\begin{proof}
[Proof of Proposition \ref{prop.f.equal-case}.]Let $t_{r,j}$ and $y$ be as in
Definition \ref{def.f}. Then, $\mathbf{f}_{u}\left(  x\right)  =y$ (by the
definition of $\mathbf{f}_{u}$).

\begin{vershort}
Let $i\in\mathbb{Z}$. We shall first show that $u_{i-1}t_{n-1,i-1}%
=x_{i}t_{n-1,i}$.

Indeed, the definition of $t_{n-1,i}$ yields%
\[
t_{n-1,i}=\sum_{k=0}^{n-1}x_{i+1}x_{i+2}\cdots x_{i+k}\cdot u_{i+k+1}%
u_{i+k+2}\cdots u_{i+n-1}.
\]
Multiplying both sides of this equality by $x_{i}$, we find%
\begin{align}
x_{i}t_{n-1,i}  &  =x_{i}\sum_{k=0}^{n-1}x_{i+1}x_{i+2}\cdots x_{i+k}\cdot
u_{i+k+1}u_{i+k+2}\cdots u_{i+n-1}\nonumber\\
&  =\sum_{k=0}^{n-1}\underbrace{x_{i}\cdot x_{i+1}x_{i+2}\cdots x_{i+k}%
}_{=x_{i}x_{i+1}\cdots x_{i+k}}\cdot u_{i+k+1}u_{i+k+2}\cdots u_{i+n-1}%
\nonumber\\
&  =\sum_{k=0}^{n-1}x_{i}x_{i+1}\cdots x_{i+k}\cdot u_{i+k+1}u_{i+k+2}\cdots
u_{i+n-1}\nonumber\\
&  =\underbrace{x_{i}x_{i+1}\cdots x_{i+n-1}}_{\substack{=x_{1}x_{2}\cdots
x_{n}\\\text{(by Lemma \ref{lem.aprod})}}}\cdot\underbrace{u_{i+n}%
u_{i+n+1}\cdots u_{i+n-1}}_{=\left(  \text{empty product}\right)
=1}\nonumber\\
&  \ \ \ \ \ \ \ \ \ \ +\sum_{k=0}^{n-2}x_{i}x_{i+1}\cdots x_{i+k}\cdot
u_{i+k+1}u_{i+k+2}\cdots u_{i+n-1}\nonumber\\
&  \ \ \ \ \ \ \ \ \ \ \ \ \ \ \ \ \ \ \ \ \left(  \text{here, we have split
off the addend for }k=n-1\text{ from the sum}\right) \nonumber\\
&  =\underbrace{x_{1}x_{2}\cdots x_{n}}_{=u_{1}u_{2}\cdots u_{n}%
}+\underbrace{\sum_{k=0}^{n-2}x_{i}x_{i+1}\cdots x_{i+k}\cdot u_{i+k+1}%
u_{i+k+2}\cdots u_{i+n-1}}_{\substack{=\sum_{k=1}^{n-1}x_{i}x_{i+1}\cdots
x_{i+k-1}\cdot u_{i+k}u_{i+k+1}\cdots u_{i+n-1}\\\text{(here, we have
substituted }k-1\text{ for }k\text{ in the sum)}}}\nonumber\\
&  =u_{1}u_{2}\cdots u_{n}+\sum_{k=1}^{n-1}x_{i}x_{i+1}\cdots x_{i+k-1}\cdot
u_{i+k}u_{i+k+1}\cdots u_{i+n-1}. \label{pf.prop.f.equal-case.short.1}%
\end{align}

On the other hand, we have $u_{i-1}=u_{i+n-1}$ (since $i-1\equiv
i+n-1\operatorname{mod}n$) and%
\begin{align*}
t_{n-1,i-1}  &  =\sum_{k=0}^{n-1}\underbrace{x_{\left(  i-1\right)
+1}x_{\left(  i-1\right)  +2}\cdots x_{\left(  i-1\right)  +k}}_{=x_{i}%
x_{i+1}\cdots x_{i+k-1}}\cdot\underbrace{u_{\left(  i-1\right)  +k+1}%
u_{\left(  i-1\right)  +k+2}\cdots u_{\left(  i-1\right)  +n-1}}%
_{=u_{i+k}u_{i+k+1}\cdots u_{i+n-2}}\\
&  \ \ \ \ \ \ \ \ \ \ \left(  \text{by the definition of }t_{n-1,i-1}\right)
\\
&  =\sum_{k=0}^{n-1}x_{i}x_{i+1}\cdots x_{i+k-1}\cdot u_{i+k}u_{i+k+1}\cdots
u_{i+n-2}.
\end{align*}
Multiplying these two equalities, we obtain%
\begin{align*}
u_{i-1}t_{n-1,i-1}  &  =u_{i+n-1}\sum_{k=0}^{n-1}x_{i}x_{i+1}\cdots
x_{i+k-1}\cdot u_{i+k}u_{i+k+1}\cdots u_{i+n-2}\\
&  =\sum_{k=0}^{n-1}x_{i}x_{i+1}\cdots x_{i+k-1}\cdot\underbrace{u_{i+k}%
u_{i+k+1}\cdots u_{i+n-2}\cdot u_{i+n-1}}_{=u_{i+k}u_{i+k+1}\cdots u_{i+n-1}%
}\\
&  =\sum_{k=0}^{n-1}x_{i}x_{i+1}\cdots x_{i+k-1}\cdot u_{i+k}u_{i+k+1}\cdots
u_{i+n-1}\\
&  =\underbrace{x_{i}x_{i+1}\cdots x_{i+0-1}}_{=\left(  \text{empty
product}\right)  =1}\cdot\underbrace{u_{i+0}u_{i+0+1}\cdots u_{i+n-1}%
}_{\substack{=u_{i}u_{i+1}\cdots u_{i+n-1}\\=u_{1}u_{2}\cdots u_{n}\\\text{(by
Lemma \ref{lem.aprod})}}}\\
&  \ \ \ \ \ \ \ \ \ \ +\sum_{k=1}^{n-1}x_{i}x_{i+1}\cdots x_{i+k-1}\cdot
u_{i+k}u_{i+k+1}\cdots u_{i+n-1}\\
&  \ \ \ \ \ \ \ \ \ \ \ \ \ \ \ \ \ \ \ \ \left(  \text{here, we have split
off the addend for }k=0\text{ from the sum}\right) \\
&  =u_{1}u_{2}\cdots u_{n}+\sum_{k=1}^{n-1}x_{i}x_{i+1}\cdots x_{i+k-1}\cdot
u_{i+k}u_{i+k+1}\cdots u_{i+n-1}.
\end{align*}
Comparing this with (\ref{pf.prop.f.equal-case.short.1}), we obtain%
\begin{equation}
u_{i-1}t_{n-1,i-1}=x_{i}t_{n-1,i}. \label{pf.prop.f.equal-case.short.4}%
\end{equation}
The same argument (applied to $i+1$ instead of $i$) yields%
\begin{equation}
u_{i}t_{n-1,i}=x_{i+1}t_{n-1,i+1}. \label{pf.prop.f.equal-case.short.5}%
\end{equation}

Now, the definition of $y$ yields%
\begin{align*}
y_{i}  &  =u_{i}\cdot\dfrac{u_{i-1}t_{n-1,i-1}}{x_{i+1}t_{n-1,i+1}}%
=u_{i}\underbrace{u_{i-1}t_{n-1,i-1}}_{\substack{=x_{i}t_{n-1,i}\\\text{(by
(\ref{pf.prop.f.equal-case.short.4}))}}}/\underbrace{\left(  x_{i+1}%
t_{n-1,i+1}\right)  }_{\substack{=u_{i}t_{n-1,i}\\\text{(by
(\ref{pf.prop.f.equal-case.short.5}))}}}\\
&  =u_{i}x_{i}t_{n-1,i}/\left(  u_{i}t_{n-1,i}\right)  =x_{i}.
\end{align*}

\end{vershort}

\begin{verlong}
Let $i\in\mathbb{Z}$. We shall first show that $u_{i-1}t_{n-1,i-1}%
=x_{i}t_{n-1,i}$.

Indeed, the definition of $t_{n-1,i}$ yields%
\[
t_{n-1,i}=\sum_{k=0}^{n-1}x_{i+1}x_{i+2}\cdots x_{i+k}\cdot u_{i+k+1}%
u_{i+k+2}\cdots u_{i+n-1}.
\]
Multiplying both sides of this equality by $x_{i}$, we find%
\begin{align}
x_{i}t_{n-1,i}  &  =x_{i}\sum_{k=0}^{n-1}x_{i+1}x_{i+2}\cdots x_{i+k}\cdot
u_{i+k+1}u_{i+k+2}\cdots u_{i+n-1}\nonumber\\
&  =\sum_{k=0}^{n-1}\underbrace{x_{i}\cdot x_{i+1}x_{i+2}\cdots x_{i+k}%
}_{=x_{i}x_{i+1}\cdots x_{i+k}}\cdot u_{i+k+1}u_{i+k+2}\cdots u_{i+n-1}%
\nonumber\\
&  =\sum_{k=0}^{n-1}x_{i}x_{i+1}\cdots x_{i+k}\cdot u_{i+k+1}u_{i+k+2}\cdots
u_{i+n-1}\nonumber\\
&  =\underbrace{x_{i}x_{i+1}\cdots x_{i+n-1}}_{\substack{=x_{\left(
i-1\right)  +1}x_{\left(  i-1\right)  +2}\cdots x_{\left(  i-1\right)
+n}\\=x_{1}x_{2}\cdots x_{n}\\\text{(by Lemma \ref{lem.aprod})}}%
}\cdot\underbrace{u_{i+\left(  n-1\right)  +1}u_{i+\left(  n-1\right)
+2}\cdots u_{i+n-1}}_{\substack{=u_{i+n}u_{i+n+1}\cdots u_{i+n-1}\\=\left(
\text{empty product}\right)  =1}}\nonumber\\
&  \ \ \ \ \ \ \ \ \ \ +\sum_{k=0}^{n-2}x_{i}x_{i+1}\cdots x_{i+k}\cdot
u_{i+k+1}u_{i+k+2}\cdots u_{i+n-1}\nonumber\\
&  \ \ \ \ \ \ \ \ \ \ \ \ \ \ \ \ \ \ \ \ \left(  \text{here, we have split
off the addend for }k=n-1\text{ from the sum}\right) \nonumber\\
&  =\underbrace{x_{1}x_{2}\cdots x_{n}}_{=u_{1}u_{2}\cdots u_{n}%
}+\underbrace{\sum_{k=0}^{n-2}x_{i}x_{i+1}\cdots x_{i+k}\cdot u_{i+k+1}%
u_{i+k+2}\cdots u_{i+n-1}}_{\substack{=\sum_{k=1}^{n-1}x_{i}x_{i+1}\cdots
x_{i+\left(  k-1\right)  }\cdot u_{i+\left(  k-1\right)  +1}u_{i+\left(
k-1\right)  +2}\cdots u_{i+n-1}\\\text{(here, we have substituted }k-1\text{
for }k\text{ in the sum)}}}\nonumber\\
&  =u_{1}u_{2}\cdots u_{n}+\sum_{k=1}^{n-1}\underbrace{x_{i}x_{i+1}\cdots
x_{i+\left(  k-1\right)  }}_{=x_{i}x_{i+1}\cdots x_{i+k-1}}\cdot
\underbrace{u_{i+\left(  k-1\right)  +1}u_{i+\left(  k-1\right)  +2}\cdots
u_{i+n-1}}_{=u_{i+k}u_{i+k+1}\cdots u_{i+n-1}}\nonumber\\
&  =u_{1}u_{2}\cdots u_{n}+\sum_{k=1}^{n-1}x_{i}x_{i+1}\cdots x_{i+k-1}\cdot
u_{i+k}u_{i+k+1}\cdots u_{i+n-1}. \label{pf.prop.f.equal-case.1}%
\end{align}

On the other hand, Convention \ref{conv.bir.peri} yields that $u_{j}=u_{j+n}$
for each $j\in\mathbb{Z}$. Applying this to $j=i-1$, we obtain%
\[
u_{i-1}=u_{i-1+n}=u_{i+n-1}\ \ \ \ \ \ \ \ \ \ \left(  \text{since
}i-1+n=i+n-1\right)  .
\]
Furthermore,%
\begin{align*}
t_{n-1,i-1}  &  =\sum_{k=0}^{n-1}\underbrace{x_{\left(  i-1\right)
+1}x_{\left(  i-1\right)  +2}\cdots x_{\left(  i-1\right)  +k}}_{=x_{i}%
x_{i+1}\cdots x_{i+k-1}}\cdot\underbrace{u_{\left(  i-1\right)  +k+1}%
u_{\left(  i-1\right)  +k+2}\cdots u_{\left(  i-1\right)  +n-1}}%
_{=u_{i+k}u_{i+k+1}\cdots u_{i+n-2}}\\
&  \ \ \ \ \ \ \ \ \ \ \left(  \text{by the definition of }t_{n-1,i-1}\right)
\\
&  =\sum_{k=0}^{n-1}x_{i}x_{i+1}\cdots x_{i+k-1}\cdot u_{i+k}u_{i+k+1}\cdots
u_{i+n-2}.
\end{align*}
Multiplying these two equalities, we obtain%
\begin{align*}
u_{i-1}t_{n-1,i-1}  &  =u_{i+n-1}\sum_{k=0}^{n-1}x_{i}x_{i+1}\cdots
x_{i+k-1}\cdot u_{i+k}u_{i+k+1}\cdots u_{i+n-2}\\
&  =\sum_{k=0}^{n-1}x_{i}x_{i+1}\cdots x_{i+k-1}\cdot\underbrace{u_{i+k}%
u_{i+k+1}\cdots u_{i+n-2}\cdot u_{i+n-1}}_{=u_{i+k}u_{i+k+1}\cdots u_{i+n-1}%
}\\
&  =\sum_{k=0}^{n-1}x_{i}x_{i+1}\cdots x_{i+k-1}\cdot u_{i+k}u_{i+k+1}\cdots
u_{i+n-1}\\
&  =\underbrace{x_{i}x_{i+1}\cdots x_{i+0-1}}_{=\left(  \text{empty
product}\right)  =1}\cdot\underbrace{u_{i+0}u_{i+0+1}\cdots u_{i+n-1}%
}_{\substack{=u_{i}u_{i+1}\cdots u_{i+n-1}\\=u_{\left(  i-1\right)
+1}u_{\left(  i-1\right)  +2}\cdots u_{\left(  i-1\right)  +n}\\=u_{1}%
u_{2}\cdots u_{n}\\\text{(by Lemma \ref{lem.aprod})}}}\\
&  \ \ \ \ \ \ \ \ \ \ +\sum_{k=1}^{n-1}x_{i}x_{i+1}\cdots x_{i+k-1}\cdot
u_{i+k}u_{i+k+1}\cdots u_{i+n-1}\\
&  \ \ \ \ \ \ \ \ \ \ \ \ \ \ \ \ \ \ \ \ \left(  \text{here, we have split
off the addend for }k=0\text{ from the sum}\right) \\
&  =u_{1}u_{2}\cdots u_{n}+\sum_{k=1}^{n-1}x_{i}x_{i+1}\cdots x_{i+k-1}\cdot
u_{i+k}u_{i+k+1}\cdots u_{i+n-1}.
\end{align*}
Comparing this with (\ref{pf.prop.f.equal-case.1}), we obtain%
\begin{equation}
u_{i-1}t_{n-1,i-1}=x_{i}t_{n-1,i}. \label{pf.prop.f.equal-case.4}%
\end{equation}
The same argument (applied to $i+1$ instead of $i$) yields%
\[
u_{\left(  i+1\right)  -1}t_{n-1,\left(  i+1\right)  -1}=x_{i+1}t_{n-1,i+1}.
\]
In other words,%
\begin{equation}
u_{i}t_{n-1,i}=x_{i+1}t_{n-1,i+1} \label{pf.prop.f.equal-case.5}%
\end{equation}
(since $\left(  i+1\right)  -1=i$).

Now, the definition of $y$ yields%
\begin{align*}
y_{i}  &  =u_{i}\cdot\dfrac{u_{i-1}t_{n-1,i-1}}{x_{i+1}t_{n-1,i+1}}%
=u_{i}\underbrace{u_{i-1}t_{n-1,i-1}}_{\substack{=x_{i}t_{n-1,i}\\\text{(by
(\ref{pf.prop.f.equal-case.4}))}}}/\underbrace{\left(  x_{i+1}t_{n-1,i+1}%
\right)  }_{\substack{=u_{i}t_{n-1,i}\\\text{(by (\ref{pf.prop.f.equal-case.5}%
))}}}\\
&  =u_{i}x_{i}t_{n-1,i}/\left(  u_{i}t_{n-1,i}\right)  =x_{i}.
\end{align*}

\end{verlong}

Now, forget that we fixed $i$. We thus have proved that $y_{i}=x_{i}$ for each
$i\in\mathbb{Z}$. Thus, in particular, $y_{i}=x_{i}$ for each $i\in\left\{
1,2,\ldots,n\right\}  $. In other words, $y=x$. Hence, $\mathbf{f}_{u}\left(
x\right)  =y=x$. This proves Proposition \ref{prop.f.equal-case}.
\end{proof}

\begin{remark}
\label{rmk.f.invol-by-trick}There is an alternative proof of Theorem
\ref{thm.f.full} \textbf{(a)} that avoids the use of the more complicated
parts of Lemma \ref{lem.f.steps} (specifically, of parts \textbf{(g)},
\textbf{(i)}, \textbf{(j)} and \textbf{(k)}). Let us outline this proof:

The claim of Theorem \ref{thm.f.full} \textbf{(a)} can be restated as the
equality $\mathbf{f}_{u}\left(  \mathbf{f}_{u}\left(  x\right)  \right)  =x$
for each $x\in\mathbb{K}^{n}$ and each $u\in\mathbb{K}^{n}$ (we are not
regarding $u$ as fixed here). This equality boils down to a set of identities
between rational functions in the variables $u_{1},u_{2},\ldots,u_{n}%
,x_{1},x_{2},\ldots,x_{n}$ (since each entry of $\mathbf{f}_{u}\left(
x\right)  $ is a rational function in these variables, and each entry of
$\mathbf{f}_{u}\left(  \mathbf{f}_{u}\left(  x\right)  \right)  $ is a
rational function in the former entries as well as $u_{1},u_{2},\ldots,u_{n}%
$). These rational functions are subtraction-free (i.e., no subtraction signs
appear in them), and thus are defined over any semifield. But there is a
general principle saying that if we need to prove an identity between two
subtraction-free rational functions, it is sufficient to prove that it holds
over the semifield $\mathbb{Q}_{+}$ from Example \ref{exa.semifield.Qplus}.
(Indeed, this principle follows from the fact that any subtraction-free
rational function can be rewritten as a ratio of two polynomials with
nonnegative integer coefficients, and thus an identity between two
subtraction-free rational functions can be rewritten as an identity between
two such polynomials; but the latter kind of identity will necessarily be true
if it has been checked on all positive rational numbers.)

As a consequence of this discussion, in order to prove Theorem
\ref{thm.f.full} \textbf{(a)} in full generality, it suffices to prove Theorem
\ref{thm.f.full} \textbf{(a)} in the case when $\mathbb{K}=\mathbb{Q}_{+}$. So
let us restrict ourselves to this case. Let $x\in\mathbb{K}^{n}$. We must show
that $\mathbf{f}_{u}\left(  \mathbf{f}_{u}\left(  x\right)  \right)  =x$.

Let $y=\mathbf{f}_{u}\left(  x\right)  $, and let $z=\mathbf{f}_{u}\left(
y\right)  $. We will show that $z=x$.

Assume the contrary. Thus, $z\neq x$. Hence, there exists some $i\in\left\{
1,2,\ldots,n\right\}  $ such that $z_{i}\neq x_{i}$. Consider this $i$. Hence,
either $z_{i}>x_{i}$ or $z_{i}<x_{i}$. We WLOG assume that $z_{i}>x_{i}$
(since the proof in the case of $z_{i}<x_{i}$ is identical, except that all
inequality signs are reversed). But Theorem \ref{thm.f.full} \textbf{(c)}
yields%
\[
\left(  u_{i}+x_{i}\right)  \left(  \dfrac{1}{u_{i+1}}+\dfrac{1}{x_{i+1}%
}\right)  =\left(  u_{i}+y_{i}\right)  \left(  \dfrac{1}{u_{i+1}}+\dfrac
{1}{y_{i+1}}\right)  .
\]
Likewise, Theorem \ref{thm.f.full} \textbf{(c)} (applied to $y$ and $z$
instead of $x$ and $y$) yields%
\[
\left(  u_{i}+y_{i}\right)  \left(  \dfrac{1}{u_{i+1}}+\dfrac{1}{y_{i+1}%
}\right)  =\left(  u_{i}+z_{i}\right)  \left(  \dfrac{1}{u_{i+1}}+\dfrac
{1}{z_{i+1}}\right)
\]
(since $z=\mathbf{f}_{u}\left(  y\right)  $). Hence,%
\begin{align*}
\left(  u_{i}+x_{i}\right)  \left(  \dfrac{1}{u_{i+1}}+\dfrac{1}{x_{i+1}%
}\right)   &  =\left(  u_{i}+y_{i}\right)  \left(  \dfrac{1}{u_{i+1}}%
+\dfrac{1}{y_{i+1}}\right) \\
&  =\left(  u_{i}+\underbrace{z_{i}}_{>x_{i}}\right)  \left(  \dfrac
{1}{u_{i+1}}+\dfrac{1}{z_{i+1}}\right) \\
&  >\left(  u_{i}+x_{i}\right)  \left(  \dfrac{1}{u_{i+1}}+\dfrac{1}{z_{i+1}%
}\right)  .
\end{align*}
Cancelling the positive number $u_{i}+x_{i}$ from this inequality, we obtain
$\dfrac{1}{u_{i+1}}+\dfrac{1}{x_{i+1}}>\dfrac{1}{u_{i+1}}+\dfrac{1}{z_{i+1}}$.
Hence, $\dfrac{1}{x_{i+1}}>\dfrac{1}{z_{i+1}}$, so that $z_{i+1}>x_{i+1}$.
Thus, from $z_{i}>x_{i}$, we have obtained $z_{i+1}>x_{i+1}$. The same
reasoning (but applied to $i+1$ instead of $i$) now yields $z_{i+2}>x_{i+2}$
(since $z_{i+1}>x_{i+1}$). Proceeding in the same way, we successively obtain
$z_{i+3}>x_{i+3}$ and $z_{i+4}>x_{i+4}$ and $z_{i+5}>x_{i+5}$ and so on.
Hence,%
\begin{equation}
\underbrace{z_{i}}_{>x_{i}}\underbrace{z_{i+1}}_{>x_{i+1}}\cdots
\underbrace{z_{i+n-1}}_{>x_{i+n-1}}>x_{i}x_{i+1}\cdots x_{i+n-1}.
\label{eq.rmk.f.invol-by-trick.7}%
\end{equation}
But Theorem \ref{thm.f.full} \textbf{(b)} yields%
\[
y_{1}y_{2}\cdots y_{n}\cdot x_{1}x_{2}\cdots x_{n}=\left(  u_{1}u_{2}\cdots
u_{n}\right)  ^{2}.
\]
Also, Theorem \ref{thm.f.full} \textbf{(b)} (applied to $y$ and $z$ instead of
$x$ and $y$) yields%
\[
z_{1}z_{2}\cdots z_{n}\cdot y_{1}y_{2}\cdots y_{n}=\left(  u_{1}u_{2}\cdots
u_{n}\right)  ^{2}%
\]
(since $z=\mathbf{f}_{u}\left(  y\right)  $). Comparing these two equalities,
we find $y_{1}y_{2}\cdots y_{n}\cdot x_{1}x_{2}\cdots x_{n}=z_{1}z_{2}\cdots
z_{n}\cdot y_{1}y_{2}\cdots y_{n}$, so that%
\begin{equation}
x_{1}x_{2}\cdots x_{n}=z_{1}z_{2}\cdots z_{n}.
\label{eq.rmk.f.invol-by-trick.9}%
\end{equation}
But Lemma \ref{lem.aprod} yields $z_{i}z_{i+1}\cdots z_{i+n-1}=z_{1}%
z_{2}\cdots z_{n}$ and $x_{i}x_{i+1}\cdots x_{i+n-1}=x_{1}x_{2}\cdots x_{n}$.
In light of these two equalities, we can rewrite
(\ref{eq.rmk.f.invol-by-trick.7}) as $z_{1}z_{2}\cdots z_{n}>x_{1}x_{2}\cdots
x_{n}$. This, however, contradicts (\ref{eq.rmk.f.invol-by-trick.9}). This
contradiction shows that our assumption was false, thus concluding our proof
of $z=x$.

Now, $\mathbf{f}_{u}\left(  \underbrace{\mathbf{f}_{u}\left(  x\right)  }%
_{=y}\right)  =\mathbf{f}_{u}\left(  y\right)  =z=x$, as we wanted to prove.
Hence, Theorem \ref{thm.f.full} \textbf{(a)} is proved again.
\end{remark}

We shall take up the study of the birational involution $\mathbf{f}_{u}$ again
in Subsection \ref{subsect.fin.fu}, where we will pose several questions about
its meaning and uniqueness properties.

\section{\label{sect.pf}Proof of the main theorem}

We shall now slowly approach the proof of Theorem \ref{thm.main} through a
long sequence of auxiliary results, some of them easy, some well-known.

\subsection{From the life of snakes}

Recall the conventions introduced in Section \ref{sect.not} and in Convention
\ref{conv.main}. Let us next introduce some further notations.

\begin{definition}
\label{def.laurent}\ \ 

\begin{enumerate}
\item[\textbf{(a)}] Let $\mathcal{L}$ denote the ring $\mathbf{k}\left[
x_{1}^{\pm1},x_{2}^{\pm1},\ldots,x_{n}^{\pm1}\right]  $ of Laurent polynomials
in the $n$ indeterminates $x_{1},x_{2},\ldots,x_{n}$ over $\mathbf{k}$.
Clearly, the polynomial ring $\mathbf{k}\left[  x_{1},x_{2},\ldots
,x_{n}\right]  $ is a subring of $\mathcal{L}$.

\item[\textbf{(b)}] We let $x_{\Pi}$ denote the monomial $x_{1}x_{2}\cdots
x_{n}\in\mathbf{k}\left[  x_{1},x_{2},\ldots,x_{n}\right]  \subseteq
\mathcal{L}$.
\end{enumerate}
\end{definition}

If $f\in\Lambda$ is a symmetric function\footnote{or, more generally, any
formal power series in $\mathbf{k}\left[  \left[  x_{1},x_{2},x_{3}%
,\ldots\right]  \right]  $ that is of bounded degree}, and if $a_{1}%
,a_{2},\ldots,a_{n}$ are $n$ elements of a commutative $\mathbf{k}$-algebra
$A$, then $f\left(  a_{1},a_{2},\ldots,a_{n},0,0,0,\ldots\right)  $ means the
result of substituting $a_{1},a_{2},\ldots,a_{n},0,0,0,\ldots$ for
$x_{1},x_{2},\ldots,x_{n},x_{n+1},x_{n+2},x_{n+3},\ldots$ in $f$. This is a
well-defined element of $A$ (see \cite[Exercise 2.1.2]{GriRei} for the proof),
and is denoted by $f\left(  a_{1},a_{2},\ldots,a_{n}\right)  $. It is called
the \emph{evaluation} of $f$ at $a_{1},a_{2},\ldots,a_{n}$.

For any symmetric function $f\in\Lambda$, the evaluation%
\[
f\left(  x_{1},x_{2},\ldots,x_{n}\right)  =f\left(  x_{1},x_{2},\ldots
,x_{n},0,0,0,\ldots\right)
\]
is a polynomial in $\mathbf{k}\left[  x_{1},x_{2},\ldots,x_{n}\right]  $ and
thus a Laurent polynomial in $\mathcal{L}$. Moreover, for any symmetric
function $f\in\Lambda$, the evaluation
\[
f\left(  x_{1}^{-1},x_{2}^{-1},\ldots,x_{n}^{-1}\right)  =f\left(  x_{1}%
^{-1},x_{2}^{-1},\ldots,x_{n}^{-1},0,0,0,\ldots\right)
\]
is a Laurent polynomial in $\mathcal{L}$ as well.

\begin{convention}
\label{conv.gammai}For the rest of Section \ref{sect.pf}, let us agree to the
following notation: If $\gamma$ is an $n$-tuple (of any objects), then we let
$\gamma_{i}$ denote the $i$-th entry of $\gamma$ whenever $i\in\left\{
1,2,\ldots,n\right\}  $. Thus, each $n$-tuple $\gamma$ satisfies
$\gamma=\left(  \gamma_{1},\gamma_{2},\ldots,\gamma_{n}\right)  $.
\end{convention}

\begin{definition}
\label{def.snakes}\ \ 

\begin{enumerate}
\item[\textbf{(a)}] A \emph{snake} means an $n$-tuple $\lambda=\left(
\lambda_{1},\lambda_{2},\ldots,\lambda_{n}\right)  $ of integers (not
necessarily nonnegative) such that $\lambda_{1}\geq\lambda_{2}\geq\cdots
\geq\lambda_{n}$.

\item[\textbf{(b)}] A snake $\lambda$ is said to be \emph{nonnegative} if it
belongs to $\mathbb{N}^{n}$ (that is, if all its entries are nonnegative).
Thus, a nonnegative snake is the same as a partition having length $\leq n$.
In other words, a nonnegative snake is the same as a partition $\lambda
\in\operatorname*{Par}\left[  n\right]  $.

\item[\textbf{(c)}] If $\lambda\in\mathbb{Z}^{n}$ is an $n$-tuple, and $d$ is
an integer, then $\lambda+d$ denotes the $n$-tuple $\left(  \lambda
_{1}+d,\lambda_{2}+d,\ldots,\lambda_{n}+d\right)  \in\mathbb{Z}^{n}$ (which is
obtained from $\lambda$ by adding $d$ to each entry), whereas $\lambda-d$
denotes the $n$-tuple $\left(  \lambda_{1}-d,\lambda_{2}-d,\ldots,\lambda
_{n}-d\right)  \in\mathbb{Z}^{n}$. (Thus, $\lambda-d=\lambda+\left(
-d\right)  $.)

\item[\textbf{(d)}] If $\lambda\in\mathbb{Z}^{n}$, then $\lambda^{\vee}$
denotes the $n$-tuple $\left(  -\lambda_{n},-\lambda_{n-1},\ldots,-\lambda
_{1}\right)  \in\mathbb{Z}^{n}$.

\item[\textbf{(e)}] We regard $\mathbb{Z}^{n}$ as a $\mathbb{Z}$-module in the
obvious way. Thus, if $\lambda\in\mathbb{Z}^{n}$ and $\mu\in\mathbb{Z}^{n}$
are two $n$-tuples of integers, then
\begin{align*}
\lambda+\mu &  =\left(  \lambda_{1}+\mu_{1},\lambda_{2}+\mu_{2},\ldots
,\lambda_{n}+\mu_{n}\right)  ,\\
\lambda-\mu &  =\left(  \lambda_{1}-\mu_{1},\lambda_{2}-\mu_{2},\ldots
,\lambda_{n}-\mu_{n}\right)  .
\end{align*}

\item[\textbf{(f)}] We let $\rho$ denote the nonnegative snake $\left(
n-1,n-2,\ldots,2,1,0\right)  $. Thus,%
\begin{equation}
\rho_{i}=n-i\ \ \ \ \ \ \ \ \ \ \text{for each }i\in\left\{  1,2,\ldots
,n\right\}  . \label{eq.snakes.f.rhoi=}%
\end{equation}

\end{enumerate}
\end{definition}

\begin{example}
In this example, let $n=3$.

\begin{enumerate}
\item[\textbf{(a)}] The four $3$-tuples $\left(  3,1,0\right)  $, $\left(
2,2,1\right)  $, $\left(  1,0,-1\right)  $ and $\left(  -1,-2,-5\right)  $ are
examples of snakes.

\item[\textbf{(b)}] The first two of these four snakes (but not the last two)
are nonnegative.

\item[\textbf{(c)}] We have $\left(  5,3,1\right)  +3=\left(  8,6,4\right)  $
and $\left(  5,3,1\right)  -3=\left(  2,0,-2\right)  $.

\item[\textbf{(d)}] We have $\left(  5,2,2\right)  ^{\vee}=\left(
-2,-2,-5\right)  $.

\item[\textbf{(e)}] We have $\left(  2,1,2\right)  +\left(  3,4,5\right)
=\left(  5,5,7\right)  $.

\item[\textbf{(f)}] We have $\rho=\left(  2,1,0\right)  $.
\end{enumerate}
\end{example}

Note that what we call a \textquotedblleft snake\textquotedblright\ here is
called a \textquotedblleft staircase of height $n$\textquotedblright\ in
Stembridge's work \cite{Stembr87}, where he uses these snakes to index
finite-dimensional polynomial representations of the group $\operatorname*{GL}%
\nolimits_{n}\left(  \mathbb{C}\right)  $. We avoid calling them
\textquotedblleft staircases\textquotedblright, as that word has since been
used for other things (in particular, $\rho$ is often called \textquotedblleft
the $n$-staircase\textquotedblright\ in the jargon of combinatorialists).

The notations introduced in Definition \ref{def.snakes} have the following properties:

\begin{proposition}
\label{prop.snakes.trivia}\ \ 

\begin{enumerate}
\item[\textbf{(a)}] If $\lambda$ is a snake, and $d$ is an integer, then
$\lambda+d$ and $\lambda-d$ are snakes as well.

\item[\textbf{(b)}] If $\lambda$ is a snake, then $\lambda^{\vee}$ is a snake
as well.

\item[\textbf{(c)}] We have $\left(  \lambda+\mu\right)  +d=\left(
\lambda+d\right)  +\mu$ for any $\lambda\in\mathbb{Z}^{n}$, $\mu\in
\mathbb{Z}^{n}$ and $d\in\mathbb{Z}$.

\item[\textbf{(d)}] We have $\lambda+\left(  d+e\right)  =\left(
\lambda+d\right)  +e$ for any $\lambda\in\mathbb{Z}^{n}$, $d\in\mathbb{Z}$ and
$e\in\mathbb{Z}$.

\item[\textbf{(e)}] We have $\left(  \lambda+d\right)  -d=\left(
\lambda-d\right)  +d=\lambda$ for any $\lambda\in\mathbb{Z}^{n}$ and
$d\in\mathbb{Z}$.
\end{enumerate}
\end{proposition}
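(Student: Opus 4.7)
The proposition is a collection of five essentially bookkeeping facts about the operations $\lambda \mapsto \lambda+d$, $\lambda \mapsto \lambda^{\vee}$, and componentwise addition. The plan is to dispatch each part by directly unfolding the relevant definitions from Definition \ref{def.snakes} and checking the claim componentwise; no nontrivial machinery is needed.

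For part \textbf{(a)}, let $\lambda = (\lambda_1, \lambda_2, \ldots, \lambda_n)$ be a snake, so $\lambda_1 \geq \lambda_2 \geq \cdots \geq \lambda_n$. I will add (resp.\ subtract) $d$ to both sides of each inequality to conclude that $\lambda_1 + d \geq \lambda_2 + d \geq \cdots \geq \lambda_n + d$ (resp.\ with $-d$), which by Definition \ref{def.snakes} \textbf{(a)} means that $\lambda + d$ (resp.\ $\lambda - d$) is a snake. For part \textbf{(b)}, I will observe that the inequalities $\lambda_1 \geq \lambda_2 \geq \cdots \geq \lambda_n$ give, upon negation, $-\lambda_n \geq -\lambda_{n-1} \geq \cdots \geq -\lambda_1$, which are exactly the entries of $\lambda^{\vee}$ in order; hence $\lambda^{\vee}$ is a snake.

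For parts \textbf{(c)}, \textbf{(d)}, and \textbf{(e)}, the plan is to verify each identity by comparing the $i$-th entries of the $n$-tuples on both sides for each $i \in \{1, 2, \ldots, n\}$. Specifically, in \textbf{(c)}, the $i$-th entry of $(\lambda + \mu) + d$ is $(\lambda_i + \mu_i) + d$, and the $i$-th entry of $(\lambda + d) + \mu$ is $(\lambda_i + d) + \mu_i$; these agree by associativity and commutativity of addition in $\mathbb{Z}$. Part \textbf{(d)} follows analogously from associativity: the $i$-th entry of $\lambda + (d+e)$ is $\lambda_i + (d+e) = (\lambda_i + d) + e$, which is the $i$-th entry of $(\lambda + d) + e$. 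Part \textbf{(e)} follows from the same unfolding together with $d + (-d) = 0$.

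There is no real obstacle here; the statement is essentially a sanity check of the notational conventions introduced in Definition \ref{def.snakes}. The only subtlety worth noting is that \textbf{(a)} and \textbf{(b)} require checking that the \emph{snake condition} (weak decrease) is preserved, whereas \textbf{(c)}--\textbf{(e)} are identities in $\mathbb{Z}^n$ that hold regardless of any ordering condition, so they do not even need $\lambda$ or $\mu$ to be snakes. I would make this distinction explicit in writing up the proof so that the reader sees at a glance which parts are about the snake structure and which are purely module-theoretic.
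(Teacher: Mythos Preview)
Your proposal is correct and matches the paper's approach: the paper simply says ``Completely straightforward'' and gives no further details, so your componentwise verification is exactly what is intended.
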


\begin{proof}
[Proof of Proposition \ref{prop.snakes.trivia}.]Completely straightforward.
\end{proof}

Let us now assign a Laurent polynomial $a_{\lambda}$ to each $\lambda
\in\mathbb{Z}^{n}$:

\begin{definition}
\label{def.alt}Let $\lambda\in\mathbb{Z}^{n}$ be any $n$-tuple. Then, we
define the Laurent polynomial%
\[
a_{\lambda}:=\sum_{w\in\mathfrak{S}_{n}}\left(  -1\right)  ^{w}x_{w\left(
1\right)  }^{\lambda_{1}}x_{w\left(  2\right)  }^{\lambda_{2}}\cdots
x_{w\left(  n\right)  }^{\lambda_{n}}\in\mathcal{L},
\]
where $\mathfrak{S}_{n}$ is the symmetric group of the set $\left\{
1,2,\ldots,n\right\}  $ (and where $\left(  -1\right)  ^{w}$ denotes the sign
of a permutation $w$). This Laurent polynomial $a_{\lambda}$ is called the
\emph{alternant} corresponding to the $n$-tuple $\lambda$.
\end{definition}

(The \textquotedblleft$a$\textquotedblright\ in the notation \textquotedblleft%
$a_{\lambda}$\textquotedblright\ has nothing to do with the $a$ in Theorem
\ref{thm.main}.)

\begin{example}
We have%
\begin{align*}
a_{\left(  5,3,2\right)  }  &  =\sum_{w\in\mathfrak{S}_{3}}\left(  -1\right)
^{w}x_{w\left(  1\right)  }^{5}x_{w\left(  2\right)  }^{3}x_{w\left(
3\right)  }^{2}\\
&  =x_{1}^{5}x_{2}^{3}x_{3}^{2}+x_{2}^{5}x_{3}^{3}x_{1}^{2}+x_{3}^{5}x_{1}%
^{3}x_{2}^{2}-x_{1}^{5}x_{3}^{3}x_{2}^{2}-x_{2}^{5}x_{1}^{3}x_{3}^{2}%
-x_{3}^{5}x_{2}^{3}x_{1}^{2}.
\end{align*}

\end{example}

The sum in Definition \ref{def.alt} is the same kind of sum that appears in
the definition of a determinant. Therefore, we can rewrite the alternant as follows:

\begin{proposition}
\label{prop.alt}Let $\lambda\in\mathbb{Z}^{n}$ be an $n$-tuple. Then, the
alternant $a_{\lambda}\in\mathcal{L}$ satisfies%
\[
a_{\lambda}=\det\left(  \left(  x_{j}^{\lambda_{i}}\right)  _{1\leq i\leq
n,\ 1\leq j\leq n}\right)  =\det\left(  \left(  x_{i}^{\lambda_{j}}\right)
_{1\leq i\leq n,\ 1\leq j\leq n}\right)  .
\]

\end{proposition}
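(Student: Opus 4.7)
This is a direct unpacking of the Leibniz formula for the determinant, so no real obstacle is expected; the only thing to be careful about is matching up indices correctly. The plan is to show the two claimed determinants equal $a_{\lambda}$ by two applications of Leibniz, and to observe that the two matrices in question are transposes of one another (so that the equality of the two determinants is automatic once one is established).

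First I would write out the Leibniz formula
\[
\det\left(\left(A_{i,j}\right)_{1\leq i\leq n,\ 1\leq j\leq n}\right)=\sum_{w\in\mathfrak{S}_{n}}\left(-1\right)^{w}\prod_{i=1}^{n}A_{i,w(i)}
\]
and apply it with $A_{i,j}=x_{j}^{\lambda_{i}}$. This gives
\[
\det\left(\left(x_{j}^{\lambda_{i}}\right)_{1\leq i\leq n,\ 1\leq j\leq n}\right)=\sum_{w\in\mathfrak{S}_{n}}\left(-1\right)^{w}\prod_{i=1}^{n}x_{w(i)}^{\lambda_{i}}=\sum_{w\in\mathfrak{S}_{n}}\left(-1\right)^{w}x_{w(1)}^{\lambda_{1}}x_{w(2)}^{\lambda_{2}}\cdots x_{w(n)}^{\lambda_{n}},
\]
which is exactly the definition of $a_{\lambda}$ from Definition~\ref{def.alt}. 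This handles the first equality.

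For the second equality, I would observe that the matrix $\left(x_{i}^{\lambda_{j}}\right)_{1\leq i\leq n,\ 1\leq j\leq n}$ is the transpose of $\left(x_{j}^{\lambda_{i}}\right)_{1\leq i\leq n,\ 1\leq j\leq n}$ (swap the roles of the row and column indices), and a matrix and its transpose have the same determinant. Alternatively, applying Leibniz in the dual form $\det A=\sum_{w\in\mathfrak{S}_{n}}(-1)^{w}\prod_{i=1}^{n}A_{w(i),i}$ to the entries $A_{i,j}=x_{i}^{\lambda_{j}}$ again produces $\sum_{w}(-1)^{w}\prod_{i}x_{w(i)}^{\lambda_{i}}$, which equals $a_{\lambda}$. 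Either route concludes the proof in a few lines.
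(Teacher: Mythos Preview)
Your proposal is correct and follows essentially the same approach as the paper: apply the Leibniz formula to the first matrix to recover the defining sum for $a_{\lambda}$, then use $\det A=\det A^{T}$ (noting that the second matrix is the transpose of the first) to obtain the second equality.
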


\begin{verlong}

\begin{proof}
[Proof of Proposition \ref{prop.alt}.]The definition of a determinant yields
\[
\det\left(  \left(  x_{j}^{\lambda_{i}}\right)  _{1\leq i\leq n,\ 1\leq j\leq
n}\right)  =\sum_{w\in\mathfrak{S}_{n}}\left(  -1\right)  ^{w}%
\underbrace{\prod_{i=1}^{n}x_{w\left(  i\right)  }^{\lambda_{i}}%
}_{=x_{w\left(  1\right)  }^{\lambda_{1}}x_{w\left(  2\right)  }^{\lambda_{2}%
}\cdots x_{w\left(  n\right)  }^{\lambda_{n}}}=\sum_{w\in\mathfrak{S}_{n}%
}\left(  -1\right)  ^{w}x_{w\left(  1\right)  }^{\lambda_{1}}x_{w\left(
2\right)  }^{\lambda_{2}}\cdots x_{w\left(  n\right)  }^{\lambda_{n}}.
\]
Comparing this with%
\[
a_{\lambda}=\sum_{w\in\mathfrak{S}_{n}}\left(  -1\right)  ^{w}x_{w\left(
1\right)  }^{\lambda_{1}}x_{w\left(  2\right)  }^{\lambda_{2}}\cdots
x_{w\left(  n\right)  }^{\lambda_{n}}\ \ \ \ \ \ \ \ \ \ \left(  \text{by the
definition of }a_{\lambda}\right)  ,
\]
we obtain%
\begin{align*}
a_{\lambda}  &  =\det\left(  \left(  x_{j}^{\lambda_{i}}\right)  _{1\leq i\leq
n,\ 1\leq j\leq n}\right)  =\det\left(  \left(  \left(  x_{j}^{\lambda_{i}%
}\right)  _{1\leq i\leq n,\ 1\leq j\leq n}\right)  ^{T}\right) \\
&  \ \ \ \ \ \ \ \ \ \ \left(  \text{since }\det A=\det\left(  A^{T}\right)
\text{ for any square matrix }A\right) \\
&  =\det\left(  \left(  x_{i}^{\lambda_{j}}\right)  _{1\leq i\leq n,\ 1\leq
j\leq n}\right)  \ \ \ \ \ \ \ \ \ \ \left(  \text{since }\left(  \left(
x_{j}^{\lambda_{i}}\right)  _{1\leq i\leq n,\ 1\leq j\leq n}\right)
^{T}=\left(  x_{i}^{\lambda_{j}}\right)  _{1\leq i\leq n,\ 1\leq j\leq
n}\right)  .
\end{align*}
This proves Proposition \ref{prop.alt}.
\end{proof}
\end{verlong}

\begin{vershort}
Thus, in particular, the alternant $a_{\rho}$ corresponding to the snake
\[
\rho=\left(  n-1,n-2,\ldots,2,1,0\right)  =\left(  n-1,n-2,\ldots,n-n\right)
\]
satisfies%
\[
a_{\rho}=\det\left(  \left(  x_{i}^{n-j}\right)  _{1\leq i\leq n,\ 1\leq j\leq
n}\right)  =\prod_{1\leq i<j\leq n}\left(  x_{i}-x_{j}\right)
\]
(by the classical formula for the Vandermonde determinant).
\end{vershort}

\begin{verlong}
Thus, in particular, we can compute the alternant $a_{\rho}$ corresponding to
the snake
\[
\rho=\left(  n-1,n-2,\ldots,2,1,0\right)  =\left(  n-1,n-2,\ldots,n-n\right)
.
\]
Indeed, Proposition \ref{prop.alt} (applied to $\lambda=\rho$) yields%
\begin{align*}
a_{\rho}  &  =\det\left(  \left(  x_{j}^{\rho_{i}}\right)  _{1\leq i\leq
n,\ 1\leq j\leq n}\right)  =\det\left(  \left(  x_{i}^{\rho_{j}}\right)
_{1\leq i\leq n,\ 1\leq j\leq n}\right) \\
&  =\det\left(  \left(  x_{i}^{n-j}\right)  _{1\leq i\leq n,\ 1\leq j\leq
n}\right)  \ \ \ \ \ \ \ \ \ \ \left(
\begin{array}
[c]{c}%
\text{since }\rho_{j}=n-j\text{ for each }j\in\left\{  1,2,\ldots,n\right\} \\
\text{(because }\rho=\left(  n-1,n-2,\ldots,n-n\right)  \text{)}%
\end{array}
\right) \\
&  =\prod_{1\leq i<j\leq n}\left(  x_{i}-x_{j}\right)
\end{align*}
(by the classical formula for the Vandermonde determinant).
\end{verlong}

We recall a standard concept from commutative algebra: An element $a$ of a
commutative ring $A$ is said to be \textit{regular} if it has the property
that every $x\in A$ satisfying $ax=0$ must satisfy $x=0$. (Thus, regular
elements are the same as elements that are not zero-divisors, if one does not
require zero-divisors to be nonzero\footnote{Unfortunately, there is no
agreement in the literature on whether zero-divisors should be required to be
nonzero. This is one of the reasons why we are avoiding this notion.}.)

\begin{lemma}
\label{lem.alt.rho-reg}The alternant $a_{\rho}$ is a regular element of
$\mathcal{L}$.
\end{lemma}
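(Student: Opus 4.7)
The plan is to exploit the Vandermonde factorization
\[
a_\rho = \prod_{1 \leq i < j \leq n}(x_i - x_j),
\]
which follows immediately from Proposition \ref{prop.alt} applied to $\lambda = \rho$. Since the product of finitely many regular elements of a commutative ring is again regular, it suffices to show that each linear factor $x_i - x_j$ (for $i < j$) is a regular element of $\mathcal{L}$.

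Next I would reduce from $\mathcal{L}$ to the polynomial ring $P := \mathbf{k}[x_1, x_2, \ldots, x_n]$. Note that $\mathcal{L}$ is the localization $S^{-1}P$, where $S$ is the multiplicative subset of $P$ generated by $x_1, x_2, \ldots, x_n$. Each $x_k$ is regular in $P$ (standard fact about polynomial rings over an arbitrary commutative ring: viewing $P$ as $(\mathbf{k}[\text{other variables}])[x_k]$, if $x_k \cdot h = 0$ then comparing coefficients of powers of $x_k$ shows $h = 0$). A standard localization lemma then says: if $a \in P$ is regular and $S$ consists of regular elements, then $a/1$ remains regular in $S^{-1}P$. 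So the problem reduces to showing that $x_i - x_j$ is regular in $P$.

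To do so, I would view $P = R[x_i]$ where $R := \mathbf{k}[x_1, \ldots, \widehat{x_i}, \ldots, x_n]$, write an arbitrary $f \in P$ uniquely as $f = \sum_{k \geq 0} g_k x_i^k$ with $g_k \in R$, and assume $(x_i - x_j) f = 0$. Comparing coefficients of $x_i^k$ yields the recurrence
\[
g_{k-1} = x_j\, g_k \quad \text{for all } k \geq 0,
\]
with the convention $g_{-1} = 0$. Since $x_j$ is regular in $R$ (same polynomial-ring fact applied to $R$), a straightforward induction on $k$ starting from $x_j g_0 = 0$ shows $g_k = 0$ for all $k$, whence $f = 0$.

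No step here looks like a serious obstacle; the argument is essentially a chain of routine reductions culminating in the one-line coefficient-comparison argument. The one place to be careful is to avoid tacitly assuming that $\mathbf{k}$ is a domain, since the hypothesis on $\mathbf{k}$ is only that it be a commutative ring; the argument above is written so as to make no such assumption.
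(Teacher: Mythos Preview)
Your argument is correct and follows essentially the same route as the paper: reduce regularity in $\mathcal{L}$ to regularity in the polynomial ring $P=\mathbf{k}[x_1,\ldots,x_n]$, and invoke the regularity of the Vandermonde product $\prod_{i<j}(x_i-x_j)$ there. The paper's reduction is slightly more hands-on (writing an arbitrary $b\in\mathcal{L}$ as a polynomial divided by a monomial, rather than invoking a general localization lemma), and the paper cites an external reference for the regularity of $\prod_{i<j}(x_i-x_j)$ in $P$, whereas you prove it from scratch by factoring into linear pieces and running the coefficient-comparison argument on each $x_i-x_j$. So your version is more self-contained, but conceptually the two proofs are the same.
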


\begin{proof}
[Proof of Lemma \ref{lem.alt.rho-reg}.]Let $b\in\mathcal{L}$ be such that
$a_{\rho}b=0$. We want to show that $b=0$.

We know that $b$ is a Laurent polynomial, and thus has the form $b=\dfrac
{c}{x_{1}^{u_{1}}x_{2}^{u_{2}}\cdots x_{n}^{u_{n}}}$ for some $u_{1}%
,u_{2},\ldots,u_{n}\in\mathbb{Z}$ and some polynomial $c\in\mathbf{k}\left[
x_{1},x_{2},\ldots,x_{n}\right]  $. Consider these $u_{1},u_{2},\ldots,u_{n}$
and this $c$. From $b=\dfrac{c}{x_{1}^{u_{1}}x_{2}^{u_{2}}\cdots x_{n}^{u_{n}%
}}$, we obtain $c=b\cdot x_{1}^{u_{1}}x_{2}^{u_{2}}\cdots x_{n}^{u_{n}}$.
Multiplying this equality by $a_{\rho}$, we obtain%
\[
a_{\rho}c=\underbrace{a_{\rho}b}_{=0}\cdot x_{1}^{u_{1}}x_{2}^{u_{2}}\cdots
x_{n}^{u_{n}}=0.
\]

But it is a well-known fact (see, e.g., \cite[Corollary 4.4]{Grinbe19}) that
the polynomial $\prod_{1\leq i<j\leq n}\left(  x_{i}-x_{j}\right)  $ is a
regular element of $\mathbf{k}\left[  x_{1},x_{2},\ldots,x_{n}\right]  $. In
other words, $a_{\rho}$ is a regular element of $\mathbf{k}\left[  x_{1}%
,x_{2},\ldots,x_{n}\right]  $ (since $a_{\rho}=\prod_{1\leq i<j\leq n}\left(
x_{i}-x_{j}\right)  $). Hence, from $a_{\rho}c=0$, we obtain $c=0$. Now,
$b=\dfrac{c}{x_{1}^{u_{1}}x_{2}^{u_{2}}\cdots x_{n}^{u_{n}}}=0$ (since $c=0$).

Forget that we fixed $b$. We thus have shown that each $b\in\mathcal{L}$
satisfying $a_{\rho}b=0$ satisfies $b=0$. In other words, $a_{\rho}$ is a
regular element of $\mathcal{L}$. This proves Lemma \ref{lem.alt.rho-reg}.
\end{proof}

Lemma \ref{lem.alt.rho-reg} shows that fractions of the form $\dfrac
{u}{a_{\rho}}$ (where $u\in\mathcal{L}$) are well-defined if $u$ is a multiple
of $a_{\rho}$. (That is, there is never more than one $b\in\mathcal{L}$ that
satisfies $a_{\rho}b=u$.)

We notice that the element $x_{\Pi}=x_{1}x_{2}\cdots x_{n}$ of $\mathcal{L}$
is invertible (since $x_{1},x_{2},\ldots,x_{n}$ are invertible in
$\mathcal{L}$).

\begin{lemma}
\label{lem.alt.move}Let $\lambda\in\mathbb{Z}^{n}$ be any $n$-tuple, and let
$d\in\mathbb{Z}$. Then, $a_{\lambda+d}=x_{\Pi}^{d}a_{\lambda}$.
\end{lemma}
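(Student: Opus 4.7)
The plan is a direct computation from Definition \ref{def.alt}. I would start by writing out
\[
a_{\lambda+d} = \sum_{w \in \mathfrak{S}_n} (-1)^w x_{w(1)}^{(\lambda+d)_1} x_{w(2)}^{(\lambda+d)_2} \cdots x_{w(n)}^{(\lambda+d)_n},
\]
then use $(\lambda+d)_i = \lambda_i + d$ for each $i \in \{1,2,\ldots,n\}$ to split each factor $x_{w(i)}^{\lambda_i + d}$ as $x_{w(i)}^d \cdot x_{w(i)}^{\lambda_i}$.

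Next I would factor the product over $i$ into two pieces: the $d$-powers, whose product is $x_{w(1)}^d x_{w(2)}^d \cdots x_{w(n)}^d$, and the remaining product $x_{w(1)}^{\lambda_1} x_{w(2)}^{\lambda_2} \cdots x_{w(n)}^{\lambda_n}$. The key observation is that because $w$ is a permutation of $\{1,2,\ldots,n\}$, the multiset $\{w(1),w(2),\ldots,w(n)\}$ equals $\{1,2,\ldots,n\}$, and so
\[
x_{w(1)}^d x_{w(2)}^d \cdots x_{w(n)}^d = (x_1 x_2 \cdots x_n)^d = x_\Pi^d,
\]
independently of $w$.

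Pulling this $w$-independent factor $x_\Pi^d$ out of the sum yields
\[
a_{\lambda+d} = x_\Pi^d \sum_{w \in \mathfrak{S}_n} (-1)^w x_{w(1)}^{\lambda_1} x_{w(2)}^{\lambda_2} \cdots x_{w(n)}^{\lambda_n} = x_\Pi^d \, a_\lambda,
\]
where the last equality is just the definition of $a_\lambda$. There is no real obstacle here; the only thing to be slightly careful about is that $d$ may be negative, but this causes no issue in $\mathcal{L}$ since $x_\Pi$ is invertible and $x_{w(i)}^d$ is a well-defined Laurent monomial for any $d \in \mathbb{Z}$.
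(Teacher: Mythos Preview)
Your proof is correct and follows essentially the same approach as the paper: expand $a_{\lambda+d}$ from Definition \ref{def.alt}, split each factor $x_{w(i)}^{\lambda_i+d}$ into $x_{w(i)}^{\lambda_i} x_{w(i)}^d$, observe that $\prod_i x_{w(i)}^d = x_\Pi^d$ because $w$ is a bijection, and pull this $w$-independent factor out of the sum.
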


\begin{vershort}

\begin{proof}
[Proof of Lemma \ref{lem.alt.move}.]The definition of $a_{\lambda}$ yields%
\begin{equation}
a_{\lambda}=\sum_{w\in\mathfrak{S}_{n}}\left(  -1\right)  ^{w}x_{w\left(
1\right)  }^{\lambda_{1}}x_{w\left(  2\right)  }^{\lambda_{2}}\cdots
x_{w\left(  n\right)  }^{\lambda_{n}}. \label{pf.lem.alt.move.short.alam=}%
\end{equation}
But the definition of $\lambda+d$ yields $\lambda+d=\left(  \lambda
_{1}+d,\lambda_{2}+d,\ldots,\lambda_{n}+d\right)  $. Hence, the definition of
$a_{\lambda+d}$ yields%
\begin{align*}
a_{\lambda+d}  &  =\sum_{w\in\mathfrak{S}_{n}}\left(  -1\right)
^{w}\underbrace{x_{w\left(  1\right)  }^{\lambda_{1}+d}x_{w\left(  2\right)
}^{\lambda_{2}+d}\cdots x_{w\left(  n\right)  }^{\lambda_{n}+d}}_{=\left(
x_{w\left(  1\right)  }^{\lambda_{1}}x_{w\left(  2\right)  }^{\lambda_{2}%
}\cdots x_{w\left(  n\right)  }^{\lambda_{n}}\right)  \left(  x_{w\left(
1\right)  }^{d}x_{w\left(  2\right)  }^{d}\cdots x_{w\left(  n\right)  }%
^{d}\right)  }\\
&  =\sum_{w\in\mathfrak{S}_{n}}\left(  -1\right)  ^{w}\left(  x_{w\left(
1\right)  }^{\lambda_{1}}x_{w\left(  2\right)  }^{\lambda_{2}}\cdots
x_{w\left(  n\right)  }^{\lambda_{n}}\right)  \underbrace{\left(  x_{w\left(
1\right)  }^{d}x_{w\left(  2\right)  }^{d}\cdots x_{w\left(  n\right)  }%
^{d}\right)  }_{\substack{=x_{1}^{d}x_{2}^{d}\cdots x_{n}^{d}\\\text{(since
}w:\left\{  1,2,\ldots,n\right\}  \rightarrow\left\{  1,2,\ldots,n\right\}
\\\text{is a bijection)}}}\\
&  =\sum_{w\in\mathfrak{S}_{n}}\left(  -1\right)  ^{w}\left(  x_{w\left(
1\right)  }^{\lambda_{1}}x_{w\left(  2\right)  }^{\lambda_{2}}\cdots
x_{w\left(  n\right)  }^{\lambda_{n}}\right)  \underbrace{\left(  x_{1}%
^{d}x_{2}^{d}\cdots x_{n}^{d}\right)  }_{\substack{=\left(  x_{1}x_{2}\cdots
x_{n}\right)  ^{d}=x_{\Pi}^{d}\\\text{(since }x_{1}x_{2}\cdots x_{n}=x_{\Pi
}\text{)}}}\\
&  =\sum_{w\in\mathfrak{S}_{n}}\left(  -1\right)  ^{w}\left(  x_{w\left(
1\right)  }^{\lambda_{1}}x_{w\left(  2\right)  }^{\lambda_{2}}\cdots
x_{w\left(  n\right)  }^{\lambda_{n}}\right)  x_{\Pi}^{d}\\
&  =x_{\Pi}^{d}\underbrace{\sum_{w\in\mathfrak{S}_{n}}\left(  -1\right)
^{w}x_{w\left(  1\right)  }^{\lambda_{1}}x_{w\left(  2\right)  }^{\lambda_{2}%
}\cdots x_{w\left(  n\right)  }^{\lambda_{n}}}_{\substack{=a_{\lambda
}\\\text{(by (\ref{pf.lem.alt.move.short.alam=}))}}}=x_{\Pi}^{d}a_{\lambda}.
\end{align*}
This proves Lemma \ref{lem.alt.move}.
\end{proof}
\end{vershort}

\begin{verlong}

\begin{proof}
[Proof of Lemma \ref{lem.alt.move}.]The definition of $a_{\lambda}$ yields%
\begin{equation}
a_{\lambda}=\sum_{w\in\mathfrak{S}_{n}}\left(  -1\right)  ^{w}x_{w\left(
1\right)  }^{\lambda_{1}}x_{w\left(  2\right)  }^{\lambda_{2}}\cdots
x_{w\left(  n\right)  }^{\lambda_{n}}. \label{pf.lem.alt.move.alam=}%
\end{equation}
But the definition of $\lambda+d$ yields $\lambda+d=\left(  \lambda
_{1}+d,\lambda_{2}+d,\ldots,\lambda_{n}+d\right)  $. Hence,
\begin{equation}
\left(  \lambda+d\right)  _{i}=\lambda_{i}+d\ \ \ \ \ \ \ \ \ \ \text{for each
}i\in\left\{  1,2,\ldots,n\right\}  . \label{pf.lem.alt.move.2}%
\end{equation}
Now, the definition of $a_{\lambda+d}$ yields%
\begin{align*}
a_{\lambda+d}  &  =\sum_{w\in\mathfrak{S}_{n}}\left(  -1\right)
^{w}\underbrace{x_{w\left(  1\right)  }^{\left(  \lambda+d\right)  _{1}%
}x_{w\left(  2\right)  }^{\left(  \lambda+d\right)  _{2}}\cdots x_{w\left(
n\right)  }^{\left(  \lambda+d\right)  _{n}}}_{=\prod_{i\in\left\{
1,2,\ldots,n\right\}  }x_{w\left(  i\right)  }^{\left(  \lambda+d\right)
_{i}}}=\sum_{w\in\mathfrak{S}_{n}}\left(  -1\right)  ^{w}\prod_{i\in\left\{
1,2,\ldots,n\right\}  }\underbrace{x_{w\left(  i\right)  }^{\left(
\lambda+d\right)  _{i}}}_{\substack{=x_{w\left(  i\right)  }^{\lambda_{i}%
+d}\\\text{(since (\ref{pf.lem.alt.move.2})}\\\text{yields }\left(
\lambda+d\right)  _{i}=\lambda_{i}+d\text{)}}}\\
&  =\sum_{w\in\mathfrak{S}_{n}}\left(  -1\right)  ^{w}\prod_{i\in\left\{
1,2,\ldots,n\right\}  }\underbrace{x_{w\left(  i\right)  }^{\lambda_{i}+d}%
}_{=x_{w\left(  i\right)  }^{\lambda_{i}}x_{w\left(  i\right)  }^{d}}\\
&  =\sum_{w\in\mathfrak{S}_{n}}\left(  -1\right)  ^{w}\underbrace{\prod
_{i\in\left\{  1,2,\ldots,n\right\}  }\left(  x_{w\left(  i\right)  }%
^{\lambda_{i}}x_{w\left(  i\right)  }^{d}\right)  }_{=\left(  \prod
_{i\in\left\{  1,2,\ldots,n\right\}  }x_{w\left(  i\right)  }^{\lambda_{i}%
}\right)  \left(  \prod_{i\in\left\{  1,2,\ldots,n\right\}  }x_{w\left(
i\right)  }^{d}\right)  }\\
&  =\sum_{w\in\mathfrak{S}_{n}}\left(  -1\right)  ^{w}\left(  \prod
_{i\in\left\{  1,2,\ldots,n\right\}  }x_{w\left(  i\right)  }^{\lambda_{i}%
}\right)  \underbrace{\left(  \prod_{i\in\left\{  1,2,\ldots,n\right\}
}x_{w\left(  i\right)  }^{d}\right)  }_{\substack{=\prod_{i\in\left\{
1,2,\ldots,n\right\}  }x_{i}^{d}\\\text{(here, we have substituted }i\text{
for }w\left(  i\right)  \\\text{in the product, since }w:\left\{
1,2,\ldots,n\right\}  \rightarrow\left\{  1,2,\ldots,n\right\}  \\\text{is a
bijection)}}}\\
&  =\sum_{w\in\mathfrak{S}_{n}}\left(  -1\right)  ^{w}\underbrace{\left(
\prod_{i\in\left\{  1,2,\ldots,n\right\}  }x_{w\left(  i\right)  }%
^{\lambda_{i}}\right)  }_{=x_{w\left(  1\right)  }^{\lambda_{1}}x_{w\left(
2\right)  }^{\lambda_{2}}\cdots x_{w\left(  n\right)  }^{\lambda_{n}}%
}\underbrace{\prod_{i\in\left\{  1,2,\ldots,n\right\}  }x_{i}^{d}%
}_{\substack{=x_{1}^{d}x_{2}^{d}\cdots x_{n}^{d}=\left(  x_{1}x_{2}\cdots
x_{n}\right)  ^{d}=x_{\Pi}^{d}\\\text{(since }x_{1}x_{2}\cdots x_{n}=x_{\Pi
}\text{)}}}\\
&  =\sum_{w\in\mathfrak{S}_{n}}\left(  -1\right)  ^{w}\left(  x_{w\left(
1\right)  }^{\lambda_{1}}x_{w\left(  2\right)  }^{\lambda_{2}}\cdots
x_{w\left(  n\right)  }^{\lambda_{n}}\right)  x_{\Pi}^{d}\\
&  =x_{\Pi}^{d}\underbrace{\sum_{w\in\mathfrak{S}_{n}}\left(  -1\right)
^{w}x_{w\left(  1\right)  }^{\lambda_{1}}x_{w\left(  2\right)  }^{\lambda_{2}%
}\cdots x_{w\left(  n\right)  }^{\lambda_{n}}}_{\substack{=a_{\lambda
}\\\text{(by (\ref{pf.lem.alt.move.alam=}))}}}=x_{\Pi}^{d}a_{\lambda}.
\end{align*}
This proves Lemma \ref{lem.alt.move}.
\end{proof}
\end{verlong}

\begin{verlong}
Clearly, $\mathbb{N}^{n}\subseteq\mathbb{Z}^{n}$. For any $\alpha\in
\mathbb{N}^{n}$, a polynomial $a_{\alpha}$ has been defined in
\cite[Definition 2.6.2]{GriRei}. This polynomial $a_{\alpha}$ is identical
with the alternant $a_{\alpha}$ we have defined in Definition \ref{def.alt},
since%
\begin{align*}
&  \left(  \text{the polynomial }a_{\alpha}\text{ as defined in
\cite[Definition 2.6.2]{GriRei}}\right) \\
&  =\sum_{w\in\mathfrak{S}_{n}}\underbrace{\operatorname*{sgn}\left(
w\right)  }_{=\left(  -1\right)  ^{w}}w\left(  \underbrace{\mathbf{x}^{\alpha
}}_{=x_{1}^{\alpha_{1}}x_{2}^{\alpha_{2}}\cdots x_{n}^{\alpha_{n}}=\prod
_{i=1}^{n}x_{i}^{\alpha_{i}}}\right) \\
&  \ \ \ \ \ \ \ \ \ \ \left(  \text{where we are using the notation of
\cite[Definition 2.6.2]{GriRei}}\right) \\
&  =\sum_{w\in\mathfrak{S}_{n}}\left(  -1\right)  ^{w}\underbrace{w\left(
\prod_{i=1}^{n}x_{i}^{\alpha_{i}}\right)  }_{\substack{=\prod_{i=1}^{n}\left(
w\left(  x_{i}\right)  \right)  ^{\alpha_{i}}\\\text{(since }\mathfrak{S}%
_{n}\text{ acts on }\mathbf{k}\left[  x_{1},x_{2},\ldots,x_{n}\right]
\\\text{by }\mathbf{k}\text{-algebra automorphisms)}}}=\sum_{w\in
\mathfrak{S}_{n}}\left(  -1\right)  ^{w}\prod_{i=1}^{n}\left(
\underbrace{w\left(  x_{i}\right)  }_{=x_{w\left(  i\right)  }}\right)
^{\alpha_{i}}\\
&  =\sum_{w\in\mathfrak{S}_{n}}\left(  -1\right)  ^{w}\underbrace{\prod
_{i=1}^{n}x_{w\left(  i\right)  }^{\alpha_{i}}}_{=x_{w\left(  1\right)
}^{\alpha_{1}}x_{w\left(  2\right)  }^{\alpha_{2}}\cdots x_{w\left(  n\right)
}^{\alpha_{n}}}=\sum_{w\in\mathfrak{S}_{n}}\left(  -1\right)  ^{w}x_{w\left(
1\right)  }^{\alpha_{1}}x_{w\left(  2\right)  }^{\alpha_{2}}\cdots x_{w\left(
n\right)  }^{\alpha_{n}}\\
&  =\left(  \text{the alternant }a_{\alpha}\text{ as defined in Definition
\ref{def.alt}}\right)  .
\end{align*}
Thus, we can freely use results from \cite[\S 2.6]{GriRei} without worrying
about conflicting definitions of $a_{\alpha}$. (But we need to keep in mind
that what is called $\mathbf{x}$ in \cite[\S 2.6]{GriRei} is $\left(
x_{1},x_{2},\ldots,x_{n}\right)  $ in our terminology, and that \cite[\S 2.6]%
{GriRei} only studies alternants $a_{\alpha}$ for $\alpha\in\mathbb{N}^{n}$,
while we are also interested in $a_{\lambda}$ with $\lambda\in\mathbb{Z}^{n}$.)
\end{verlong}

\begin{lemma}
\label{lem.alt.L}Let $\lambda$ be a snake. Then, $a_{\lambda+\rho}$ is a
multiple of $a_{\rho}$ in $\mathcal{L}$.
\end{lemma}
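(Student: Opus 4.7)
My plan is to reduce the general snake case to the case of a nonnegative snake (i.e., a partition in $\operatorname{Par}[n]$) by shifting, and then invoke the classical bialternant identity.

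First, I would handle the partition case. Suppose $\lambda \in \operatorname{Par}[n]$ is a nonnegative snake. Then the entries of $\lambda + \rho$ are strictly decreasing nonnegative integers, and it is a classical result (the Jacobi bialternant formula, as proved, e.g., in \cite[\S 2.6]{GriRei}) that
\[
a_{\lambda+\rho} = s_{\lambda}(x_1, x_2, \ldots, x_n) \cdot a_{\rho}
\]
as an identity in $\mathbf{k}[x_1, x_2, \ldots, x_n]$, where $s_{\lambda}(x_1, \ldots, x_n)$ is the Schur polynomial. In particular, $a_{\lambda+\rho}$ is a multiple of $a_{\rho}$ in $\mathbf{k}[x_1, \ldots, x_n]$, and hence also in $\mathcal{L}$.

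Next, I would handle an arbitrary snake $\lambda$. Choose an integer $d \geq 0$ large enough that $\lambda_n + d \geq 0$ (for instance, $d = \max\{0, -\lambda_n\}$ works). Then $\lambda + d$ is still a snake (by Proposition \ref{prop.snakes.trivia} \textbf{(a)}), and all its entries are nonnegative, so $\lambda + d \in \operatorname{Par}[n]$. By Proposition \ref{prop.snakes.trivia} \textbf{(c)}, we have $(\lambda + d) + \rho = (\lambda + \rho) + d$, and hence
\[
a_{(\lambda+d)+\rho} = a_{(\lambda+\rho)+d} = x_{\Pi}^{d}\, a_{\lambda+\rho}
\]
by Lemma \ref{lem.alt.move}. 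By the partition case applied to $\lambda + d$, the left-hand side is a multiple of $a_{\rho}$ in $\mathcal{L}$; write it as $p \cdot a_{\rho}$ for some $p \in \mathcal{L}$. Since $x_{\Pi}$ is invertible in $\mathcal{L}$, we conclude
\[
a_{\lambda+\rho} = x_{\Pi}^{-d}\, p \cdot a_{\rho},
\]
which is a multiple of $a_{\rho}$ in $\mathcal{L}$.

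The only nontrivial ingredient is the partition case, which is the classical bialternant formula for Schur polynomials. This is not really an obstacle since it is available in \cite[\S 2.6]{GriRei}; if one wanted a self-contained argument, one could instead argue that $a_{\lambda+\rho}$ is antisymmetric under $\mathfrak{S}_n$ acting on the $x_i$, and then apply the standard lemma that any antisymmetric element of $\mathbf{k}[x_1, \ldots, x_n]$ is divisible by $a_{\rho} = \prod_{1 \leq i < j \leq n}(x_i - x_j)$ (proved by successively factoring out each $x_i - x_j$, which works over an arbitrary commutative ring $\mathbf{k}$). The shifting trick via Lemma \ref{lem.alt.move} is the only reason we need to work in $\mathcal{L}$ rather than in $\mathbf{k}[x_1, \ldots, x_n]$, and it is what makes snakes (as opposed to only partitions) accessible.
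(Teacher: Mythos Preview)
Your proof is correct and follows essentially the same approach as the paper: both prove the nonnegative-snake case via the bialternant formula from \cite[\S 2.6]{GriRei}, and then reduce the general case to it by shifting with Lemma~\ref{lem.alt.move} (the paper calls this the ``right-shift strategy''). Your specific choice $d = \max\{0, -\lambda_n\}$ is a minor variant of the paper's $d = -\lambda_n$, and your aside about the antisymmetry argument is a valid alternative for the partition case.
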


\begin{proof}
[Proof of Lemma \ref{lem.alt.L}.]Our proof will consist of two steps:

\begin{statement}
\textit{Step 1:} We will prove Lemma \ref{lem.alt.L} in the particular case
when $\lambda$ is nonnegative.
\end{statement}

\begin{statement}
\textit{Step 2:} We will use Lemma \ref{lem.alt.move} to derive the general
case of Lemma \ref{lem.alt.L} from this particular case.
\end{statement}

We will use this strategy again further on; we shall refer to it as the
\emph{right-shift strategy}.

Here are the details of the two steps:

\textit{Step 1:} Let us prove that Lemma \ref{lem.alt.L} holds in the
particular case when $\lambda$ is nonnegative.

Indeed, let us assume that $\lambda$ is nonnegative. We must show that
$a_{\lambda+\rho}$ is a multiple of $a_{\rho}$ in $\mathcal{L}$.

We know that $\lambda$ is a nonnegative snake, thus a partition of length
$\leq n$. Hence, \cite[Corollary 2.6.7]{GriRei} shows that $s_{\lambda}\left(
x_{1},x_{2},\ldots,x_{n}\right)  =\dfrac{a_{\lambda+\rho}}{a_{\rho}}$. Thus,
$a_{\lambda+\rho}=a_{\rho}\cdot s_{\lambda}\left(  x_{1},x_{2},\ldots
,x_{n}\right)  $. This shows that $a_{\lambda+\rho}$ is a multiple of
$a_{\rho}$ in $\mathcal{L}$ (since $s_{\lambda}\left(  x_{1},x_{2}%
,\ldots,x_{n}\right)  \in\mathbf{k}\left[  x_{1},x_{2},\ldots,x_{n}\right]
\subseteq\mathcal{L}$). Thus, Lemma \ref{lem.alt.L} is proved under the
assumption that $\lambda$ is nonnegative. This completes Step 1.

\textit{Step 2:} Let us now prove Lemma \ref{lem.alt.L} in the general case.

We know that $\lambda$ is a snake. Thus, $\lambda_{1}\geq\lambda_{2}\geq
\cdots\geq\lambda_{n}$. Hence, each $i\in\left\{  1,2,\ldots,n\right\}  $
satisfies $\lambda_{i}\geq\lambda_{n}$ and thus%
\begin{equation}
\lambda_{i}-\lambda_{n}\geq0. \label{pf.lem.alt.L.s2.1}%
\end{equation}

The snake $\lambda$ may or may not be nonnegative. However, there exists some
integer $d$ such that the snake $\lambda+d$ is nonnegative\footnote{Indeed,
for example, we can take $d=-\lambda_{n}$. Then, all entries of $\lambda+d$
have the form $\lambda_{i}+\underbrace{d}_{=-\lambda_{n}}=\lambda_{i}%
-\lambda_{n}$ for some $i\in\left\{  1,2,\ldots,n\right\}  $, and thus are
nonnegative (because of (\ref{pf.lem.alt.L.s2.1})); this shows that the snake
$\lambda+d$ is nonnegative.}. Consider this $d$. Proposition
\ref{prop.snakes.trivia} \textbf{(c)} (applied to $\mu=\rho$) yields $\left(
\lambda+\rho\right)  +d=\left(  \lambda+d\right)  +\rho$.

The snake $\lambda+d$ is nonnegative; thus, we can apply Lemma \ref{lem.alt.L}
to $\lambda+d$ instead of $\lambda$ (because in Step 1, we have proved that
Lemma \ref{lem.alt.L} holds in the particular case when $\lambda$ is
nonnegative). Thus we conclude that $a_{\left(  \lambda+d\right)  +\rho}$ is a
multiple of $a_{\rho}$ in $\mathcal{L}$. In other words, there exists some
$u\in\mathcal{L}$ such that $a_{\left(  \lambda+d\right)  +\rho}=a_{\rho}u$.
Consider this $u$. From $\left(  \lambda+\rho\right)  +d=\left(
\lambda+d\right)  +\rho$, we obtain $a_{\left(  \lambda+\rho\right)
+d}=a_{\left(  \lambda+d\right)  +\rho}=a_{\rho}u$.

Lemma \ref{lem.alt.move} (applied to $\lambda+\rho$ instead of $\lambda$)
yields $a_{\left(  \lambda+\rho\right)  +d}=x_{\Pi}^{d}a_{\lambda+\rho}$.
Since the element $x_{\Pi}$ of $\mathcal{L}$ is invertible, we thus obtain
\[
a_{\lambda+\rho}=\underbrace{\left(  x_{\Pi}^{d}\right)  ^{-1}}_{=x_{\Pi}%
^{-d}}\underbrace{a_{\left(  \lambda+\rho\right)  +d}}_{=a_{\rho}u}=x_{\Pi
}^{-d}a_{\rho}u=a_{\rho}\cdot x_{\Pi}^{-d}u.
\]
Hence, $a_{\lambda+\rho}$ is a multiple of $a_{\rho}$ (since $x_{\Pi}^{-d}%
u\in\mathcal{L}$). This proves Lemma \ref{lem.alt.L}. Thus, Step 2 is
complete, and Lemma \ref{lem.alt.L} is proven.
\end{proof}

\begin{definition}
\label{def.alt.sbar}Let $\lambda$ be a snake. We define an element
$\overline{s}_{\lambda}\in\mathcal{L}$ by $\overline{s}_{\lambda}%
=\dfrac{a_{\lambda+\rho}}{a_{\rho}}$. (This is well-defined, because Lemma
\ref{lem.alt.L} shows that $a_{\lambda+\rho}$ is a multiple of $a_{\rho}$ in
$\mathcal{L}$, and because Lemma \ref{lem.alt.rho-reg} shows that the fraction
$\dfrac{a_{\lambda+\rho}}{a_{\rho}}$ is uniquely defined.)
\end{definition}

It makes sense to refer to the elements $\overline{s}_{\lambda}$ just defined
as \textquotedblleft\textit{Schur Laurent polynomials}\textquotedblright. In
fact, as the following lemma shows, they are identical with the Schur
polynomials $s_{\lambda}\left(  x_{1},x_{2},\ldots,x_{n}\right)  $ when the
snake $\lambda$ is nonnegative:

\begin{lemma}
\label{lem.alt.sbar}Let $\lambda\in\operatorname*{Par}\left[  n\right]  $.
Then,%
\[
\overline{s}_{\lambda}=s_{\lambda}\left(  x_{1},x_{2},\ldots,x_{n}\right)  .
\]

\end{lemma}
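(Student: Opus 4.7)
The statement is essentially a restatement of the classical bialternant (Jacobi--Trudi-style) formula for Schur polynomials, which has already been invoked once in the excerpt (in Step 1 of the proof of Lemma \ref{lem.alt.L}). So my plan is quite short.

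First I would unfold the definition: by Definition \ref{def.alt.sbar}, we have $\overline{s}_{\lambda} = \dfrac{a_{\lambda+\rho}}{a_{\rho}}$, where the fraction is interpreted in $\mathcal{L}$ via Lemma \ref{lem.alt.rho-reg} (which guarantees uniqueness) and Lemma \ref{lem.alt.L} (which guarantees existence, since $\lambda \in \operatorname{Par}[n]$ is a nonnegative snake).

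Next I would cite \cite[Corollary 2.6.7]{GriRei}, which states precisely that for a partition $\lambda$ of length $\leq n$ one has $s_{\lambda}(x_1, x_2, \ldots, x_n) = \dfrac{a_{\lambda+\rho}}{a_{\rho}}$ in $\mathbf{k}[x_1, \ldots, x_n]$ (and therefore in $\mathcal{L}$). Since $\lambda \in \operatorname{Par}[n]$ means exactly that $\lambda$ is a partition of length $\leq n$, this hypothesis is satisfied. Comparing the two equalities gives $\overline{s}_{\lambda} = s_{\lambda}(x_1, x_2, \ldots, x_n)$, finishing the proof.

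There is no real obstacle here; the only small subtlety is to make sure the quotient $a_{\lambda+\rho}/a_{\rho}$ defined in Definition \ref{def.alt.sbar} (as an element of $\mathcal{L}$) matches the quotient in \cite[Corollary 2.6.7]{GriRei} (which is a priori a polynomial in $\mathbf{k}[x_1,\ldots,x_n]$). But this is immediate because $\mathbf{k}[x_1,\ldots,x_n] \subseteq \mathcal{L}$ and because the regularity of $a_{\rho}$ in $\mathcal{L}$ (Lemma \ref{lem.alt.rho-reg}) means that any element $b$ of $\mathcal{L}$ satisfying $a_{\rho} b = a_{\lambda+\rho}$ is unique, so both interpretations of the quotient coincide. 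Thus the proof is essentially a one-line citation.
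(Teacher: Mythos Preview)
Your proof is correct and follows essentially the same approach as the paper: both unfold Definition \ref{def.alt.sbar} and cite \cite[Corollary 2.6.7]{GriRei} to identify $\dfrac{a_{\lambda+\rho}}{a_{\rho}}$ with $s_{\lambda}(x_1,\ldots,x_n)$. Your extra remark about the uniqueness of the quotient in $\mathcal{L}$ is a welcome clarification that the paper leaves implicit.
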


\begin{proof}
[Proof of Lemma \ref{lem.alt.sbar}.]We know that $\lambda$ is a partition of
length $\leq n$ (since $\lambda\in\operatorname*{Par}\left[  n\right]  $).
Hence, $\lambda$ is a nonnegative snake. Furthermore, since $\lambda$ is a
partition of length $\leq n$, we can apply \cite[Corollary 2.6.7]{GriRei} and
obtain $s_{\lambda}\left(  x_{1},x_{2},\ldots,x_{n}\right)  =\dfrac
{a_{\lambda+\rho}}{a_{\rho}}=\overline{s}_{\lambda}$ (since $\overline
{s}_{\lambda}$ was defined to be $\dfrac{a_{\lambda+\rho}}{a_{\rho}}$). This
proves Lemma \ref{lem.alt.sbar}.
\end{proof}

The Schur Laurent polynomials $\overline{s}_{\lambda}$ appear in Stembridge's
\cite{Stembr87}, where they are named $s_{\lambda}$. (The equivalence of our
definition with his follows from \cite[Theorem 7.1]{Stembr87}.)

Furthermore, from Lemma \ref{lem.alt.move}, we can easily obtain an analogous
property for Schur Laurent polynomials:

\begin{lemma}
\label{lem.alt.smove}Let $\lambda\in\mathbb{Z}^{n}$ be any snake, and let
$d\in\mathbb{Z}$. Then, $\overline{s}_{\lambda+d}=x_{\Pi}^{d}\overline
{s}_{\lambda}$.
\end{lemma}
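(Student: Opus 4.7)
The plan is to reduce this to Lemma \ref{lem.alt.move} via the definition of $\overline{s}_\lambda$. By Definition \ref{def.alt.sbar}, we have $\overline{s}_{\lambda+d} = \dfrac{a_{(\lambda+d)+\rho}}{a_\rho}$, so the task is to rewrite $a_{(\lambda+d)+\rho}$ as $x_\Pi^d \cdot a_{\lambda+\rho}$.

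First I would invoke Proposition \ref{prop.snakes.trivia} \textbf{(c)} (with $\mu = \rho$) to swap the order of shifting and adding $\rho$: this yields $(\lambda+d)+\rho = (\lambda+\rho)+d$. Then I would apply Lemma \ref{lem.alt.move} to the $n$-tuple $\lambda+\rho$, obtaining $a_{(\lambda+\rho)+d} = x_\Pi^d \, a_{\lambda+\rho}$. Combining these two equalities gives $a_{(\lambda+d)+\rho} = x_\Pi^d \, a_{\lambda+\rho}$.

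Finally, dividing by $a_\rho$ (which is legitimate because Lemma \ref{lem.alt.rho-reg} ensures regularity and Lemma \ref{lem.alt.L}, applied to the snake $\lambda+d$, ensures that $a_{(\lambda+d)+\rho}$ is indeed a multiple of $a_\rho$) gives
\[
\overline{s}_{\lambda+d} = \dfrac{a_{(\lambda+d)+\rho}}{a_\rho} = \dfrac{x_\Pi^d \, a_{\lambda+\rho}}{a_\rho} = x_\Pi^d \cdot \dfrac{a_{\lambda+\rho}}{a_\rho} = x_\Pi^d \, \overline{s}_\lambda,
\]
as desired. There is no real obstacle here; the only subtlety worth flagging is the implicit check that $\lambda+d$ really is a snake (which is Proposition \ref{prop.snakes.trivia} \textbf{(a)}) so that $\overline{s}_{\lambda+d}$ is defined in the first place.
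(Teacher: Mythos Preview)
Your proof is correct and follows essentially the same route as the paper's own proof: both apply Proposition~\ref{prop.snakes.trivia}~\textbf{(c)} to rewrite $(\lambda+d)+\rho$ as $(\lambda+\rho)+d$, invoke Lemma~\ref{lem.alt.move} for the alternant, and then divide by $a_\rho$. Your version is, if anything, slightly more explicit about why $\overline{s}_{\lambda+d}$ is well-defined in the first place.
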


\begin{proof}
[Proof of Lemma \ref{lem.alt.smove}.]Proposition \ref{prop.snakes.trivia}
\textbf{(c)} (applied to $\mu=\rho$) yields $\left(  \lambda+\rho\right)
+d=\left(  \lambda+d\right)  +\rho$. But Lemma \ref{lem.alt.move} (applied to
$\lambda+\rho$ instead of $\lambda$) yields $a_{\left(  \lambda+\rho\right)
+d}=x_{\Pi}^{d}a_{\lambda+\rho}$. This rewrites as $a_{\left(  \lambda
+d\right)  +\rho}=x_{\Pi}^{d}a_{\lambda+\rho}$ (since $\left(  \lambda
+\rho\right)  +d=\left(  \lambda+d\right)  +\rho$).

The definition of $\overline{s}_{\lambda+d}$ yields%
\[
\overline{s}_{\lambda+d}=\dfrac{a_{\left(  \lambda+d\right)  +\rho}}{a_{\rho}%
}=\dfrac{x_{\Pi}^{d}a_{\lambda+\rho}}{a_{\rho}}\ \ \ \ \ \ \ \ \ \ \left(
\text{since }a_{\left(  \lambda+d\right)  +\rho}=x_{\Pi}^{d}a_{\lambda+\rho
}\right)  .
\]
Comparing this with%
\[
x_{\Pi}^{d}\underbrace{\overline{s}_{\lambda}}_{\substack{=\dfrac
{a_{\lambda+\rho}}{a_{\rho}}\\\text{(by the definition of }\overline
{s}_{\lambda}\text{)}}}=x_{\Pi}^{d}\cdot\dfrac{a_{\lambda+\rho}}{a_{\rho}%
}=\dfrac{x_{\Pi}^{d}a_{\lambda+\rho}}{a_{\rho}},
\]
we obtain $\overline{s}_{\lambda+d}=x_{\Pi}^{d}\overline{s}_{\lambda}$. This
proves Lemma \ref{lem.alt.smove}.
\end{proof}

\begin{lemma}
\label{lem.alt.lrr}Let $\mu,\nu\in\operatorname*{Par}\left[  n\right]  $.
Then,%
\[
\overline{s}_{\mu}\overline{s}_{\nu}=\sum_{\lambda\in\operatorname*{Par}%
\left[  n\right]  }c_{\mu,\nu}^{\lambda}\overline{s}_{\lambda}.
\]

\end{lemma}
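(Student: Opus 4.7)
The plan is to deduce this lemma from the corresponding identity for symmetric functions in $\Lambda$, namely (\ref{eq.lrcoeff.def}), by evaluating everything at $(x_1, x_2, \ldots, x_n, 0, 0, \ldots)$ and using Lemma \ref{lem.alt.sbar} to pass from Schur polynomials to Schur Laurent polynomials.

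In detail: First, I would apply (\ref{eq.lrcoeff.def}) to get $s_\mu s_\nu = \sum_{\lambda \in \operatorname{Par}} c_{\mu,\nu}^\lambda s_\lambda$ in $\Lambda$. Then I would substitute $x_{n+1} = x_{n+2} = \cdots = 0$ into both sides; evaluation at finitely many variables is a well-defined $\mathbf{k}$-algebra homomorphism from $\Lambda$ to $\mathbf{k}[x_1, \ldots, x_n] \subseteq \mathcal{L}$, so this yields
\[
s_\mu(x_1, \ldots, x_n) \cdot s_\nu(x_1, \ldots, x_n) = \sum_{\lambda \in \operatorname{Par}} c_{\mu,\nu}^\lambda \, s_\lambda(x_1, \ldots, x_n)
\]
in $\mathcal{L}$.

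Next, I would invoke the standard fact that $s_\lambda(x_1, \ldots, x_n) = 0$ whenever $\ell(\lambda) > n$ (this follows, e.g., from the tableau definition, since there are no semistandard tableaux of shape $\lambda$ with entries in $\{1, \ldots, n\}$ when $\ell(\lambda) > n$). This lets me restrict the sum on the right to partitions $\lambda \in \operatorname{Par}[n]$, giving
\[
s_\mu(x_1, \ldots, x_n) \cdot s_\nu(x_1, \ldots, x_n) = \sum_{\lambda \in \operatorname{Par}[n]} c_{\mu,\nu}^\lambda \, s_\lambda(x_1, \ldots, x_n).
\]

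Finally, Lemma \ref{lem.alt.sbar} gives $\overline{s}_\kappa = s_\kappa(x_1, \ldots, x_n)$ for every $\kappa \in \operatorname{Par}[n]$; applying this with $\kappa = \mu$, $\kappa = \nu$, and $\kappa = \lambda$ transforms the above identity into the desired $\overline{s}_\mu \overline{s}_\nu = \sum_{\lambda \in \operatorname{Par}[n]} c_{\mu,\nu}^\lambda \overline{s}_\lambda$. There is no substantive obstacle here: the only nontrivial input is the vanishing of $s_\lambda(x_1, \ldots, x_n)$ for $\ell(\lambda) > n$, which is standard and is recorded in \cite{GriRei} (and in \cite{Stanley-EC2}).
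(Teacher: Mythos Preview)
Your proof is correct and follows essentially the same approach as the paper's: both evaluate the identity (\ref{eq.lrcoeff.def}) at $(x_1,\ldots,x_n)$, drop the terms with $\ell(\lambda)>n$ using the standard vanishing $s_\lambda(x_1,\ldots,x_n)=0$, and then invoke Lemma~\ref{lem.alt.sbar} to rewrite each surviving $s_\kappa(x_1,\ldots,x_n)$ as $\overline{s}_\kappa$.
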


\begin{proof}
[Proof of Lemma \ref{lem.alt.lrr}.]It is well-known (see, e.g., \cite[Exercise
2.3.8(b)]{GriRei}) that if $\lambda$ is a partition having length $>n$, then%
\begin{equation}
s_{\lambda}\left(  x_{1},x_{2},\ldots,x_{n}\right)  =0.
\label{pf.lem.alt.lrr.=0}%
\end{equation}

\begin{vershort}
We know that $\mu\in\operatorname*{Par}\left[  n\right]  $. Hence, Lemma
\ref{lem.alt.sbar} (applied to $\lambda=\mu$) yields $\overline{s}_{\mu
}=s_{\mu}\left(  x_{1},x_{2},\ldots,x_{n}\right)  $. Likewise, $\overline
{s}_{\nu}=s_{\nu}\left(  x_{1},x_{2},\ldots,x_{n}\right)  $. Multiplying these
two equalities, we obtain%
\begin{align*}
\overline{s}_{\mu}\overline{s}_{\nu}  &  =s_{\mu}\left(  x_{1},x_{2}%
,\ldots,x_{n}\right)  \cdot s_{\nu}\left(  x_{1},x_{2},\ldots,x_{n}\right) \\
&  =\underbrace{\left(  s_{\mu}s_{\nu}\right)  }_{\substack{=\sum_{\lambda
\in\operatorname*{Par}}c_{\mu,\nu}^{\lambda}s_{\lambda}\\\text{(by
(\ref{eq.lrcoeff.def}))}}}\left(  x_{1},x_{2},\ldots,x_{n}\right)
=\sum_{\lambda\in\operatorname*{Par}}c_{\mu,\nu}^{\lambda}s_{\lambda}\left(
x_{1},x_{2},\ldots,x_{n}\right) \\
&  =\underbrace{\sum_{\substack{\lambda\in\operatorname*{Par};\\\lambda\text{
has length }\leq n}}}_{\substack{=\sum_{\lambda\in\operatorname*{Par}\left[
n\right]  }\\\text{(by the definition of }\operatorname*{Par}\left[  n\right]
\text{)}}}c_{\mu,\nu}^{\lambda}s_{\lambda}\left(  x_{1},x_{2},\ldots
,x_{n}\right)  +\sum_{\substack{\lambda\in\operatorname*{Par};\\\lambda\text{
has length }>n}}c_{\mu,\nu}^{\lambda}\underbrace{s_{\lambda}\left(
x_{1},x_{2},\ldots,x_{n}\right)  }_{\substack{=0\\\text{(by
(\ref{pf.lem.alt.lrr.=0}))}}}\\
&  =\sum_{\lambda\in\operatorname*{Par}\left[  n\right]  }c_{\mu,\nu}%
^{\lambda}\underbrace{s_{\lambda}\left(  x_{1},x_{2},\ldots,x_{n}\right)
}_{\substack{=\overline{s}_{\lambda}\\\text{(by Lemma \ref{lem.alt.sbar})}%
}}+\underbrace{\sum_{\substack{\lambda\in\operatorname*{Par};\\\lambda\text{
has length }>n}}c_{\mu,\nu}^{\lambda}0}_{=0}=\sum_{\lambda\in
\operatorname*{Par}\left[  n\right]  }c_{\mu,\nu}^{\lambda}\overline
{s}_{\lambda}.
\end{align*}

\end{vershort}

\begin{verlong}
We know that $\mu\in\operatorname*{Par}\left[  n\right]  $. Hence, Lemma
\ref{lem.alt.sbar} (applied to $\lambda=\mu$) yields $\overline{s}_{\mu
}=s_{\mu}\left(  x_{1},x_{2},\ldots,x_{n}\right)  $. Likewise, $\overline
{s}_{\nu}=s_{\nu}\left(  x_{1},x_{2},\ldots,x_{n}\right)  $. Multiplying these
two equalities, we obtain%
\begin{align*}
\overline{s}_{\mu}\overline{s}_{\nu}  &  =s_{\mu}\left(  x_{1},x_{2}%
,\ldots,x_{n}\right)  \cdot s_{\nu}\left(  x_{1},x_{2},\ldots,x_{n}\right)
=\underbrace{\left(  s_{\mu}s_{\nu}\right)  }_{\substack{=\sum_{\lambda
\in\operatorname*{Par}}c_{\mu,\nu}^{\lambda}s_{\lambda}\\\text{(by
(\ref{eq.lrcoeff.def}))}}}\left(  x_{1},x_{2},\ldots,x_{n}\right) \\
&  =\left(  \sum_{\lambda\in\operatorname*{Par}}c_{\mu,\nu}^{\lambda
}s_{\lambda}\right)  \left(  x_{1},x_{2},\ldots,x_{n}\right)  =\sum
_{\lambda\in\operatorname*{Par}}c_{\mu,\nu}^{\lambda}s_{\lambda}\left(
x_{1},x_{2},\ldots,x_{n}\right) \\
&  =\underbrace{\sum_{\substack{\lambda\in\operatorname*{Par};\\\lambda\text{
has length }\leq n}}}_{\substack{=\sum_{\lambda\in\operatorname*{Par}\left[
n\right]  }\\\text{(by the definition of }\operatorname*{Par}\left[  n\right]
\text{)}}}c_{\mu,\nu}^{\lambda}s_{\lambda}\left(  x_{1},x_{2},\ldots
,x_{n}\right)  +\sum_{\substack{\lambda\in\operatorname*{Par};\\\lambda\text{
has length }>n}}c_{\mu,\nu}^{\lambda}\underbrace{s_{\lambda}\left(
x_{1},x_{2},\ldots,x_{n}\right)  }_{\substack{=0\\\text{(by
(\ref{pf.lem.alt.lrr.=0}))}}}\\
&  \ \ \ \ \ \ \ \ \ \ \ \ \ \ \ \ \ \ \ \ \left(
\begin{array}
[c]{c}%
\text{since each }\lambda\in\operatorname*{Par}\text{ either has length }\leq
n\text{ or has length }>n\\
\text{(but not both at the same time)}%
\end{array}
\right) \\
&  =\sum_{\lambda\in\operatorname*{Par}\left[  n\right]  }c_{\mu,\nu}%
^{\lambda}\underbrace{s_{\lambda}\left(  x_{1},x_{2},\ldots,x_{n}\right)
}_{\substack{=\overline{s}_{\lambda}\\\text{(by Lemma \ref{lem.alt.sbar})}%
}}+\underbrace{\sum_{\substack{\lambda\in\operatorname*{Par};\\\lambda\text{
has length }>n}}c_{\mu,\nu}^{\lambda}0}_{=0}=\sum_{\lambda\in
\operatorname*{Par}\left[  n\right]  }c_{\mu,\nu}^{\lambda}\overline
{s}_{\lambda}.
\end{align*}

\end{verlong}

\noindent This proves Lemma \ref{lem.alt.lrr}.
\end{proof}

\begin{lemma}
\label{lem.alt.linind}The family $\left(  \overline{s}_{\lambda}\right)
_{\lambda\in\left\{  \text{snakes}\right\}  }$ of elements of $\mathcal{L}$ is
$\mathbf{k}$-linearly independent.
\end{lemma}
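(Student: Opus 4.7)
The plan is to reduce the general linear-independence claim to the classical fact that the Schur polynomials $s_{\mu}(x_1, x_2, \ldots, x_n)$ for $\mu \in \operatorname{Par}[n]$ are $\mathbf{k}$-linearly independent in $\mathbf{k}[x_1, \ldots, x_n]$, using the same right-shift strategy that was introduced for Lemma \ref{lem.alt.L}. The classical fact itself is a standard consequence of the Jacobi--Trudi / alternant presentation: the alternants $a_{\mu+\rho}$ for $\mu \in \operatorname{Par}[n]$ have pairwise distinct leading monomials (the tuples $\mu + \rho$ are pairwise distinct strictly decreasing sequences of nonnegative integers), hence are linearly independent in $\mathcal{L}$; combined with regularity of $a_\rho$ (Lemma \ref{lem.alt.rho-reg}), this gives linear independence of the quotients $\overline{s}_\mu = a_{\mu+\rho}/a_\rho$ as $\mu$ ranges over $\operatorname{Par}[n]$.

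The reduction itself is a short computation. Suppose we are given a finite $\mathbf{k}$-linear relation $\sum_{\lambda \in S} c_\lambda \overline{s}_\lambda = 0$, where $S$ is a finite set of snakes and the $c_\lambda$ lie in $\mathbf{k}$. Since $S$ is finite, we can choose an integer $d$ large enough that $\lambda + d$ is a nonnegative snake, i.e., lies in $\operatorname{Par}[n]$, for every $\lambda \in S$ (for instance, $d = -\min_{\lambda \in S} \lambda_n$ will do). Multiplying the relation by the invertible element $x_{\Pi}^d \in \mathcal{L}$ and applying Lemma \ref{lem.alt.smove} to each term yields
\[
0 = x_{\Pi}^d \sum_{\lambda \in S} c_\lambda \overline{s}_\lambda = \sum_{\lambda \in S} c_\lambda \overline{s}_{\lambda + d}.
\]
By Lemma \ref{lem.alt.sbar}, every $\overline{s}_{\lambda + d}$ equals $s_{\lambda + d}(x_1, x_2, \ldots, x_n)$, and the partitions $\lambda + d$ (for $\lambda \in S$) are pairwise distinct since the shift $\lambda \mapsto \lambda + d$ is injective. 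The classical linear independence of Schur polynomials in $n$ variables over $\mathbf{k}$ then forces $c_\lambda = 0$ for every $\lambda \in S$, which is exactly the desired conclusion.

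There is no real obstacle here: the shifting trick is tailor-made for this kind of statement, and Lemma \ref{lem.alt.smove} does all the heavy lifting. If anything, the only small subtlety is justifying the classical linear independence of $(s_\mu(x_1, \ldots, x_n))_{\mu \in \operatorname{Par}[n]}$ if the paper wants to be self-contained; this can either be cited from \cite{GriRei} (it follows from \cite[Corollary 2.6.7]{GriRei} together with the distinct-leading-monomial argument for alternants) or re-derived in a sentence by noting that a nontrivial relation among the $s_\mu(x_1, \ldots, x_n)$ would, after multiplying by $a_\rho$, produce a nontrivial relation among the alternants $a_{\mu + \rho}$, contradicting the fact that those alternants contain distinct monomials $x^{\mu + \rho}$ with coefficient $1$.
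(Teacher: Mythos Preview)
Your proof is correct, but it takes a different route from the paper's. The paper argues directly with alternants in $\mathcal{L}$: it observes that for any strict snakes $\alpha,\beta$ (i.e., $n$-tuples with strictly decreasing integer entries), the coefficient of $x_1^{\beta_1}\cdots x_n^{\beta_n}$ in $a_\alpha$ is $\delta_{\alpha,\beta}$; since $\lambda\mapsto\lambda+\rho$ is a bijection from snakes to strict snakes, a relation $\sum_\lambda u_\lambda \overline{s}_\lambda = 0$ becomes, after multiplying by $a_\rho$, a relation $\sum_\lambda u_\lambda a_{\lambda+\rho} = 0$, and extracting the coefficient of $x^{\mu+\rho}$ kills all terms but $u_\mu$. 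No shifting is needed because this monomial-extraction argument works verbatim in the Laurent setting. Your approach instead uses the right-shift strategy to reduce to the classical linear independence of Schur polynomials in $\operatorname{Par}[n]$, which you then justify by (essentially) the same alternant argument restricted to nonnegative snakes. Your version is more modular (you can cite the classical result as a black box), while the paper's is a bit more direct and avoids the detour through $\operatorname{Par}[n]$ entirely.
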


\begin{proof}
[Proof of Lemma \ref{lem.alt.linind}.]Let us define a \emph{strict snake} to
be an $n$-tuple $\alpha\in\mathbb{Z}^{n}$ of integers satisfying $\alpha
_{1}>\alpha_{2}>\cdots>\alpha_{n}$. It is easy to see that the map
\begin{align}
\left\{  \text{snakes}\right\}   &  \rightarrow\left\{  \text{strict
snakes}\right\}  ,\nonumber\\
\lambda &  \mapsto\lambda+\rho\label{pf.lem.alt.linind.strict-bij}%
\end{align}
is a bijection.

It is also easy to see that if $\alpha$ and $\beta$ are two strict snakes,
then%
\begin{equation}
\left(  \text{the coefficient of }x_{1}^{\beta_{1}}x_{2}^{\beta_{2}}\cdots
x_{n}^{\beta_{n}}\text{ in }a_{\alpha}\right)  =\delta_{\alpha,\beta},
\label{pf.lem.alt.linind.coeff-kron}%
\end{equation}
where $\delta_{\alpha,\beta}$ is the Kronecker delta of $\alpha$ and $\beta$
(that is, the integer $%
\begin{cases}
1, & \text{if }\alpha=\beta;\\
0, & \text{if }\alpha\neq\beta
\end{cases}
$\ \ ).

\begin{verlong}
[\textit{Proof of (\ref{pf.lem.alt.linind.coeff-kron}):} Let $\alpha$ and
$\beta$ be two strict snakes. Thus, $\alpha_{1}>\alpha_{2}>\cdots>\alpha_{n}$
and $\beta_{1}>\beta_{2}>\cdots>\beta_{n}$ (by the definition of a strict snake).

The definition of $a_{\alpha}$ says that%
\begin{align}
a_{\alpha}  &  =\sum_{w\in\mathfrak{S}_{n}}\left(  -1\right)  ^{w}x_{w\left(
1\right)  }^{\alpha_{1}}x_{w\left(  2\right)  }^{\alpha_{2}}\cdots x_{w\left(
n\right)  }^{\alpha_{n}}=\sum_{w\in\mathfrak{S}_{n}}\left(  -1\right)
^{w^{-1}}x_{w^{-1}\left(  1\right)  }^{\alpha_{1}}x_{w^{-1}\left(  2\right)
}^{\alpha_{2}}\cdots x_{w^{-1}\left(  n\right)  }^{\alpha_{n}}\nonumber\\
&  \ \ \ \ \ \ \ \ \ \ \left(
\begin{array}
[c]{c}%
\text{here, we have substituted }w^{-1}\text{ for }w\text{ in the sum,}\\
\text{since the map }\mathfrak{S}_{n}\rightarrow\mathfrak{S}_{n},\ w\mapsto
w^{-1}\text{ is a bijection}%
\end{array}
\right) \nonumber\\
&  =\underbrace{\left(  -1\right)  ^{\operatorname*{id}\nolimits^{-1}}%
}_{\substack{=\left(  -1\right)  ^{\operatorname*{id}}\\=1}%
}\underbrace{x_{\operatorname*{id}\nolimits^{-1}\left(  1\right)  }%
^{\alpha_{1}}x_{\operatorname*{id}\nolimits^{-1}\left(  2\right)  }%
^{\alpha_{2}}\cdots x_{\operatorname*{id}\nolimits^{-1}\left(  n\right)
}^{\alpha_{n}}}_{\substack{=x_{1}^{\alpha_{1}}x_{2}^{\alpha_{2}}\cdots
x_{n}^{\alpha_{n}}\\\text{(since }\operatorname*{id}\nolimits^{-1}\left(
i\right)  =i\text{ for each }i\in\left\{  1,2,\ldots,n\right\}  \text{)}%
}}+\sum_{\substack{w\in\mathfrak{S}_{n};\\w\neq\operatorname*{id}%
}}\underbrace{\left(  -1\right)  ^{w^{-1}}}_{=\left(  -1\right)  ^{w}%
}x_{w^{-1}\left(  1\right)  }^{\alpha_{1}}x_{w^{-1}\left(  2\right)  }%
^{\alpha_{2}}\cdots x_{w^{-1}\left(  n\right)  }^{\alpha_{n}}\nonumber\\
&  \ \ \ \ \ \ \ \ \ \ \left(  \text{here, we have split off the addend for
}w=\operatorname*{id}\text{ from the sum}\right) \nonumber\\
&  =x_{1}^{\alpha_{1}}x_{2}^{\alpha_{2}}\cdots x_{n}^{\alpha_{n}}%
+\sum_{\substack{w\in\mathfrak{S}_{n};\\w\neq\operatorname*{id}}}\left(
-1\right)  ^{w}x_{w^{-1}\left(  1\right)  }^{\alpha_{1}}x_{w^{-1}\left(
2\right)  }^{\alpha_{2}}\cdots x_{w^{-1}\left(  n\right)  }^{\alpha_{n}}.
\label{pf.lem.alt.linind.coeff-kron.pf.1}%
\end{align}
But every permutation $w\in\mathfrak{S}_{n}$ satisfies%
\begin{align}
x_{w^{-1}\left(  1\right)  }^{\alpha_{1}}x_{w^{-1}\left(  2\right)  }%
^{\alpha_{2}}\cdots x_{w^{-1}\left(  n\right)  }^{\alpha_{n}}  &  =\prod
_{i\in\left\{  1,2,\ldots,n\right\}  }x_{w^{-1}\left(  i\right)  }^{\alpha
_{i}}=\prod_{i\in\left\{  1,2,\ldots,n\right\}  }\underbrace{x_{w^{-1}\left(
w\left(  i\right)  \right)  }^{\alpha_{w\left(  i\right)  }}}%
_{\substack{=x_{i}^{\alpha_{w\left(  i\right)  }}\\\text{(since }w^{-1}\left(
w\left(  i\right)  \right)  =i\text{)}}}\nonumber\\
&  \ \ \ \ \ \ \ \ \ \ \left(
\begin{array}
[c]{c}%
\text{here, we have substituted }w\left(  i\right)  \text{ for }i\text{ in the
product,}\\
\text{since }w:\left\{  1,2,\ldots,n\right\}  \rightarrow\left\{
1,2,\ldots,n\right\}  \text{ is a bijection}%
\end{array}
\right) \nonumber\\
&  =\prod_{i\in\left\{  1,2,\ldots,n\right\}  }x_{i}^{\alpha_{w\left(
i\right)  }}=x_{1}^{\alpha_{w\left(  1\right)  }}x_{2}^{\alpha_{w\left(
2\right)  }}\cdots x_{n}^{\alpha_{w\left(  n\right)  }}.
\label{pf.lem.alt.linind.coeff-kron.pf.w}%
\end{align}
Hence, (\ref{pf.lem.alt.linind.coeff-kron.pf.1}) becomes%
\begin{align}
a_{\alpha}  &  =x_{1}^{\alpha_{1}}x_{2}^{\alpha_{2}}\cdots x_{n}^{\alpha_{n}%
}+\sum_{\substack{w\in\mathfrak{S}_{n};\\w\neq\operatorname*{id}}}\left(
-1\right)  ^{w}\underbrace{x_{w^{-1}\left(  1\right)  }^{\alpha_{1}}%
x_{w^{-1}\left(  2\right)  }^{\alpha_{2}}\cdots x_{w^{-1}\left(  n\right)
}^{\alpha_{n}}}_{\substack{=x_{1}^{\alpha_{w\left(  1\right)  }}x_{2}%
^{\alpha_{w\left(  2\right)  }}\cdots x_{n}^{\alpha_{w\left(  n\right)  }%
}\\\text{(by (\ref{pf.lem.alt.linind.coeff-kron.pf.w}))}}}\nonumber\\
&  =x_{1}^{\alpha_{1}}x_{2}^{\alpha_{2}}\cdots x_{n}^{\alpha_{n}}%
+\sum_{\substack{w\in\mathfrak{S}_{n};\\w\neq\operatorname*{id}}}\left(
-1\right)  ^{w}x_{1}^{\alpha_{w\left(  1\right)  }}x_{2}^{\alpha_{w\left(
2\right)  }}\cdots x_{n}^{\alpha_{w\left(  n\right)  }}.
\label{pf.lem.alt.linind.coeff-kron.pf.2}%
\end{align}

Now, let us fix a permutation $w\in\mathfrak{S}_{n}$ satisfying $w\neq
\operatorname*{id}$. Then, the two $n$-tuples $\left(  \alpha_{w\left(
1\right)  },\alpha_{w\left(  2\right)  },\ldots,\alpha_{w\left(  n\right)
}\right)  $ and $\left(  \beta_{1},\beta_{2},\ldots,\beta_{n}\right)  $ are
distinct\footnote{\textit{Proof.} Assume the contrary. Thus, $\left(
\alpha_{w\left(  1\right)  },\alpha_{w\left(  2\right)  },\ldots
,\alpha_{w\left(  n\right)  }\right)  =\left(  \beta_{1},\beta_{2}%
,\ldots,\beta_{n}\right)  $.
\par
If we had $w\left(  1\right)  <w\left(  2\right)  <\cdots<w\left(  n\right)
$, then we would have $w=\operatorname*{id}$ (since $w$ is a permutation of
$\left\{  1,2,\ldots,n\right\}  $), which would contradict $w\neq
\operatorname*{id}$. Thus, we cannot have $w\left(  1\right)  <w\left(
2\right)  <\cdots<w\left(  n\right)  $. Hence, there exists some $i\in\left\{
1,2,\ldots,n-1\right\}  $ such that $w\left(  i\right)  \geq w\left(
i+1\right)  $. Consider this $i$.
\par
But if $u$ and $v$ are two elements of $\left\{  1,2,\ldots,n\right\}  $
satisfying $u\geq v$, then $\alpha_{u}\leq\alpha_{v}$ (since $\alpha
_{1}>\alpha_{2}>\cdots>\alpha_{n}$). Applying this to $u=w\left(  i\right)  $
and $v=w\left(  i+1\right)  $, we obtain $\alpha_{w\left(  i\right)  }%
\leq\alpha_{w\left(  i+1\right)  }$ (since $w\left(  i\right)  \geq w\left(
i+1\right)  $). But $\alpha_{w\left(  i\right)  }=\beta_{i}$ (since $\left(
\alpha_{w\left(  1\right)  },\alpha_{w\left(  2\right)  },\ldots
,\alpha_{w\left(  n\right)  }\right)  =\left(  \beta_{1},\beta_{2}%
,\ldots,\beta_{n}\right)  $) and $\alpha_{w\left(  i+1\right)  }=\beta_{i+1}$
(for the same reason). Hence, $\beta_{i}=\alpha_{w\left(  i\right)  }%
\leq\alpha_{w\left(  i+1\right)  }=\beta_{i+1}$. However, from $\beta
_{1}>\beta_{2}>\cdots>\beta_{n}$, we obtain $\beta_{i}>\beta_{i+1}$. This
contradicts $\beta_{i}\leq\beta_{i+1}$. This contradiction shows that our
assumption was false. Qed.}. Hence, $x_{1}^{\alpha_{w\left(  1\right)  }}%
x_{2}^{\alpha_{w\left(  2\right)  }}\cdots x_{n}^{\alpha_{w\left(  n\right)
}}$ and $x_{1}^{\beta_{1}}x_{2}^{\beta_{2}}\cdots x_{n}^{\beta_{n}}$ are two
distinct monomials. Thus,%
\begin{align}
&  \left(  \text{the coefficient of }x_{1}^{\beta_{1}}x_{2}^{\beta_{2}}\cdots
x_{n}^{\beta_{n}}\text{ in }x_{1}^{\alpha_{w\left(  1\right)  }}x_{2}%
^{\alpha_{w\left(  2\right)  }}\cdots x_{n}^{\alpha_{w\left(  n\right)  }%
}\right) \nonumber\\
&  =0. \label{pf.lem.alt.linind.coeff-kron.pf.4}%
\end{align}

Now, forget that we fixed $w$. We thus have proved
(\ref{pf.lem.alt.linind.coeff-kron.pf.4}) for every permutation $w\in
\mathfrak{S}_{n}$ satisfying $w\neq\operatorname*{id}$. Now,%
\begin{align*}
&  \left(  \text{the coefficient of }x_{1}^{\beta_{1}}x_{2}^{\beta_{2}}\cdots
x_{n}^{\beta_{n}}\text{ in }a_{\alpha}\right) \\
&  =\left(  \text{the coefficient of }x_{1}^{\beta_{1}}x_{2}^{\beta_{2}}\cdots
x_{n}^{\beta_{n}}\text{ in }x_{1}^{\alpha_{1}}x_{2}^{\alpha_{2}}\cdots
x_{n}^{\alpha_{n}}+\sum_{\substack{w\in\mathfrak{S}_{n};\\w\neq
\operatorname*{id}}}\left(  -1\right)  ^{w}x_{1}^{\alpha_{w\left(  1\right)
}}x_{2}^{\alpha_{w\left(  2\right)  }}\cdots x_{n}^{\alpha_{w\left(  n\right)
}}\right) \\
&  \ \ \ \ \ \ \ \ \ \ \ \ \ \ \ \ \ \ \ \ \left(  \text{by
(\ref{pf.lem.alt.linind.coeff-kron.pf.2})}\right) \\
&  =\underbrace{\left(  \text{the coefficient of }x_{1}^{\beta_{1}}%
x_{2}^{\beta_{2}}\cdots x_{n}^{\beta_{n}}\text{ in }x_{1}^{\alpha_{1}}%
x_{2}^{\alpha_{2}}\cdots x_{n}^{\alpha_{n}}\right)  }_{\substack{=\delta
_{\left(  \alpha_{1},\alpha_{2},\ldots,\alpha_{n}\right)  ,\left(  \beta
_{1},\beta_{2},\ldots,\beta_{n}\right)  }=\delta_{\alpha,\beta}\\\text{(since
}\left(  \alpha_{1},\alpha_{2},\ldots,\alpha_{n}\right)  =\alpha\text{ and
}\left(  \beta_{1},\beta_{2},\ldots,\beta_{n}\right)  =\beta\text{)}}}\\
&  \ \ \ \ \ \ \ \ \ \ +\sum_{\substack{w\in\mathfrak{S}_{n};\\w\neq
\operatorname*{id}}}\left(  -1\right)  ^{w}\underbrace{\left(  \text{the
coefficient of }x_{1}^{\beta_{1}}x_{2}^{\beta_{2}}\cdots x_{n}^{\beta_{n}%
}\text{ in }x_{1}^{\alpha_{w\left(  1\right)  }}x_{2}^{\alpha_{w\left(
2\right)  }}\cdots x_{n}^{\alpha_{w\left(  n\right)  }}\right)  }%
_{\substack{=0\\\text{(by (\ref{pf.lem.alt.linind.coeff-kron.pf.4}))}}}\\
&  =\delta_{\alpha,\beta}+\underbrace{\sum_{\substack{w\in\mathfrak{S}%
_{n};\\w\neq\operatorname*{id}}}\left(  -1\right)  ^{w}0}_{=0}=\delta
_{\alpha,\beta}.
\end{align*}
This proves (\ref{pf.lem.alt.linind.coeff-kron}).]
\end{verlong}

Now, assume that $\left(  u_{\lambda}\right)  _{\lambda\in\left\{
\text{snakes}\right\}  }\in\mathbf{k}^{\left\{  \text{snakes}\right\}  }$ be a
family of scalars with the property that $\left(  \text{all but finitely many
snakes }\lambda\text{ satisfy }u_{\lambda}=0\right)  $ and
\begin{equation}
\sum_{\lambda\in\left\{  \text{snakes}\right\}  }u_{\lambda}\overline
{s}_{\lambda}=0. \label{pf.lem.alt.linind.ass}%
\end{equation}
We shall show that $u_{\lambda}=0$ for all snakes $\lambda$.

Indeed, fix a snake $\mu$. Then, $\mu+\rho$ is a strict snake (since the map
(\ref{pf.lem.alt.linind.strict-bij}) is a bijection). Let us denote this
strict snake by $\beta$. Thus, $\beta=\mu+\rho$.

If $\lambda$ is any snake, then $\lambda+\rho$ is a strict snake (since the
map (\ref{pf.lem.alt.linind.strict-bij}) is a bijection), and thus satisfies%
\begin{align}
&  \left(  \text{the coefficient of }x_{1}^{\beta_{1}}x_{2}^{\beta_{2}}\cdots
x_{n}^{\beta_{n}}\text{ in }a_{\lambda+\rho}\right) \nonumber\\
&  =\delta_{\lambda+\rho,\beta}\ \ \ \ \ \ \ \ \ \ \left(  \text{by
(\ref{pf.lem.alt.linind.coeff-kron}), applied to }\alpha=\lambda+\rho\right)
\nonumber\\
&  =\delta_{\lambda+\rho,\mu+\rho}\ \ \ \ \ \ \ \ \ \ \left(  \text{since
}\beta=\mu+\rho\right) \nonumber\\
&  =\delta_{\lambda,\mu} \label{pf.lem.alt.linind.5}%
\end{align}
(since $\lambda+\rho=\mu+\rho$ holds if and only if $\lambda=\mu$ holds).

From (\ref{pf.lem.alt.linind.ass}), we obtain%
\[
0=\sum_{\lambda\in\left\{  \text{snakes}\right\}  }u_{\lambda}%
\underbrace{\overline{s}_{\lambda}}_{\substack{=\dfrac{a_{\lambda+\rho}%
}{a_{\rho}}\\\text{(by the definition of }\overline{s}_{\lambda}\text{)}%
}}=\sum_{\lambda\in\left\{  \text{snakes}\right\}  }u_{\lambda}\dfrac
{a_{\lambda+\rho}}{a_{\rho}}=\dfrac{1}{a_{\rho}}\sum_{\lambda\in\left\{
\text{snakes}\right\}  }u_{\lambda}a_{\lambda+\rho}.
\]
Multiplying both sides of this equality by $a_{\rho}$, we obtain%
\[
0=\sum_{\lambda\in\left\{  \text{snakes}\right\}  }u_{\lambda}a_{\lambda+\rho
}.
\]
Hence,%
\begin{align*}
&  \left(  \text{the coefficient of }x_{1}^{\beta_{1}}x_{2}^{\beta_{2}}\cdots
x_{n}^{\beta_{n}}\text{ in }0\right) \\
&  =\left(  \text{the coefficient of }x_{1}^{\beta_{1}}x_{2}^{\beta_{2}}\cdots
x_{n}^{\beta_{n}}\text{ in }\sum_{\lambda\in\left\{  \text{snakes}\right\}
}u_{\lambda}a_{\lambda+\rho}\right) \\
&  =\sum_{\lambda\in\left\{  \text{snakes}\right\}  }u_{\lambda}%
\underbrace{\left(  \text{the coefficient of }x_{1}^{\beta_{1}}x_{2}%
^{\beta_{2}}\cdots x_{n}^{\beta_{n}}\text{ in }a_{\lambda+\rho}\right)
}_{\substack{=\delta_{\lambda,\mu}\\\text{(by (\ref{pf.lem.alt.linind.5}))}%
}}\\
&  =\sum_{\lambda\in\left\{  \text{snakes}\right\}  }u_{\lambda}%
\delta_{\lambda,\mu}=u_{\mu},
\end{align*}
so that%
\[
u_{\mu}=\left(  \text{the coefficient of }x_{1}^{\beta_{1}}x_{2}^{\beta_{2}%
}\cdots x_{n}^{\beta_{n}}\text{ in }0\right)  =0.
\]

Now, forget that we fixed $\mu$. We thus have proved that $u_{\mu}=0$ for all
snakes $\mu$. In other words, $u_{\lambda}=0$ for all snakes $\lambda$.

Forget that we fixed $\left(  u_{\lambda}\right)  _{\lambda\in\left\{
\text{snakes}\right\}  }$. We thus have shown that if $\left(  u_{\lambda
}\right)  _{\lambda\in\left\{  \text{snakes}\right\}  }\in\mathbf{k}^{\left\{
\text{snakes}\right\}  }$ is a family of scalars with the property that
\newline$\left(  \text{all but finitely many snakes }\lambda\text{ satisfy
}u_{\lambda}=0\right)  $ and $\sum_{\lambda\in\left\{  \text{snakes}\right\}
}u_{\lambda}\overline{s}_{\lambda}=0$, then $u_{\lambda}=0$ for all snakes
$\lambda$. In other words, the family $\left(  \overline{s}_{\lambda}\right)
_{\lambda\in\left\{  \text{snakes}\right\}  }$ of elements of $\mathcal{L}$ is
$\mathbf{k}$-linearly independent. This proves Lemma \ref{lem.alt.linind}.
\end{proof}

Lemma \ref{lem.alt.linind} is actually part of a stronger claim: The family
$\left(  \overline{s}_{\lambda}\right)  _{\lambda\in\left\{  \text{snakes}%
\right\}  }$ is a basis of the $\mathbf{k}$-module of symmetric Laurent
polynomials in $x_{1},x_{2},\ldots,x_{n}$. We shall not need this, however, so
we omit the proof\footnote{Just in case: It follows easily from Lemma
\ref{lem.alt.smove} and \cite[Remark 2.3.9(d)]{GriRei}.}.

Recall Definition \ref{def.snakes} \textbf{(d)}. Our next lemma connects the
Laurent polynomials $\overline{s}_{\lambda}$ and $\overline{s}_{\lambda^{\vee
}}$ for every snake $\lambda$; it is folklore (see \cite[Exercise
2.9.15(d)]{GriRei} for an equivalent version), but we shall prove it for the
sake of completeness.

\begin{lemma}
\label{lem.alt.inverses}Let $\lambda$ be a snake. Then,%
\[
\overline{s}_{\lambda^{\vee}}=\overline{s}_{\lambda}\left(  x_{1}^{-1}%
,x_{2}^{-1},\ldots,x_{n}^{-1}\right)  .
\]

\end{lemma}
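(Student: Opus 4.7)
The plan is to prove the identity by direct computation from the definition $\overline{s}_\lambda = a_{\lambda+\rho}/a_\rho$, relating each alternant to its counterpart on the other side of the equation via two elementary manipulations.

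First, I would establish two auxiliary facts about alternants. The first is a substitution formula: for any $\mu \in \mathbb{Z}^n$,
\[
a_\mu\bigl(x_1^{-1}, x_2^{-1}, \ldots, x_n^{-1}\bigr) = a_{-\mu},
\]
where $-\mu := (-\mu_1, -\mu_2, \ldots, -\mu_n)$. This is immediate from the defining sum for $a_\mu$. The second is a reversal formula: letting $\operatorname{rev}(\mu) := (\mu_n, \mu_{n-1}, \ldots, \mu_1)$, we have $a_{\operatorname{rev}(\mu)} = (-1)^{\binom{n}{2}} a_\mu$. This follows from the determinantal expression in Proposition \ref{prop.alt}: reversing the order of the entries of $\mu$ corresponds to reversing the columns of the matrix $\left(x_i^{\mu_j}\right)_{1 \le i,j \le n}$, which multiplies the determinant by the sign of the longest permutation in $\mathfrak{S}_n$.

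Next, I would verify the key combinatorial identity between the snakes appearing on each side, namely
\[
\lambda^{\vee} + \rho \;=\; \operatorname{rev}\bigl(-(\lambda+\rho)\bigr) + (n-1).
\]
This is a routine entry-by-entry check: the $i$-th entries of both tuples simplify to $-\lambda_{n+1-i} + (n-i)$.

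Combining these ingredients, Lemma \ref{lem.alt.move} and the two auxiliary facts give
\[
a_{\lambda^{\vee} + \rho} \;=\; x_{\Pi}^{\,n-1}\, a_{\operatorname{rev}(-(\lambda+\rho))} \;=\; x_{\Pi}^{\,n-1}\, (-1)^{\binom{n}{2}}\, a_{-(\lambda+\rho)} \;=\; x_{\Pi}^{\,n-1}\, (-1)^{\binom{n}{2}}\, a_{\lambda+\rho}\bigl(x_1^{-1},\ldots,x_n^{-1}\bigr).
\]
Specializing to $\lambda = (0, \ldots, 0)$ (a valid snake, with $\lambda^{\vee} = \lambda$) yields the analogous formula for $\rho$ alone:
\[
a_{\rho}\bigl(x_1^{-1},\ldots,x_n^{-1}\bigr) \;=\; (-1)^{\binom{n}{2}}\, x_{\Pi}^{-(n-1)}\, a_{\rho}.
\]
Dividing the first equality by $a_\rho$ and the second into $a_{\lambda+\rho}(x^{-1})/a_\rho(x^{-1})$, the factors $x_{\Pi}^{\,n-1}$ and $(-1)^{\binom{n}{2}}$ cancel (since $(-1)^{2\binom{n}{2}} = 1$), and we obtain
\[
\overline{s}_{\lambda^{\vee}} \;=\; \frac{a_{\lambda^{\vee}+\rho}}{a_{\rho}} \;=\; \frac{a_{\lambda+\rho}\bigl(x_1^{-1},\ldots,x_n^{-1}\bigr)}{a_{\rho}\bigl(x_1^{-1},\ldots,x_n^{-1}\bigr)} \;=\; \overline{s}_{\lambda}\bigl(x_1^{-1}, x_2^{-1}, \ldots, x_n^{-1}\bigr),
\]
as required. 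There is no genuine obstacle here; the only point requiring care is the sign bookkeeping and the verification of the tuple identity relating $\lambda^{\vee} + \rho$, $-(\lambda + \rho)$, and the reversal operation.
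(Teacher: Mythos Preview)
Your proof is correct and follows essentially the same approach as the paper's: both compute $a_{\lambda+\rho}(x_1^{-1},\ldots,x_n^{-1})$ in terms of $a_{\lambda^{\vee}+\rho}$ up to a sign and a power of $x_\Pi$, specialize to the empty snake to get the corresponding formula for $a_\rho$, and then divide. Your organization into the two auxiliary facts $a_\mu(x^{-1}) = a_{-\mu}$ and $a_{\operatorname{rev}(\mu)} = (-1)^{\binom{n}{2}} a_\mu$ is a clean way to package what the paper does in one long chain via the substitution $w \mapsto w \circ w_0$ and the reindexing $i \mapsto n+1-i$; the sign $(-1)^{\binom{n}{2}}$ is exactly the paper's $(-1)^{w_0}$.
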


Here, of course, $\overline{s}_{\lambda}\left(  x_{1}^{-1},x_{2}^{-1}%
,\ldots,x_{n}^{-1}\right)  $ means the result of substituting $x_{1}%
^{-1},x_{2}^{-1},\ldots,x_{n}^{-1}$ for $x_{1},x_{2},\ldots,x_{n}$ in the
Laurent polynomial $\overline{s}_{\lambda}\in\mathcal{L}$.

\begin{proof}
[Proof of Lemma \ref{lem.alt.inverses}.]Let $w_{0}\in\mathfrak{S}_{n}$ be the
permutation of $\left\{  1,2,\ldots,n\right\}  $ that sends each $i\in\left\{
1,2,\ldots,n\right\}  $ to $n+1-i$. The map $\mathfrak{S}_{n}\rightarrow
\mathfrak{S}_{n},\ w\mapsto w\circ w_{0}$ is a bijection (since $\mathfrak{S}%
_{n}$ is a group).

For each $i\in\left\{  1,2,\ldots,n\right\}  $, we have
\begin{equation}
\left(  \lambda+\rho\right)  _{i}=\lambda_{i}+\underbrace{\rho_{i}%
}_{\substack{=n-i\\\text{(by (\ref{eq.snakes.f.rhoi=}))}}}=\lambda_{i}+n-i.
\label{pf.lem.alt.inverses.rho2}%
\end{equation}

Proposition \ref{prop.snakes.trivia} \textbf{(b)} shows that $\lambda^{\vee}$
is a snake. Thus, using Proposition \ref{prop.snakes.trivia} \textbf{(a)}, we
conclude that $\lambda^{\vee}+\left(  1-n\right)  $ is a snake. Let us denote
this snake by $\mu$. Thus, $\mu=\lambda^{\vee}+\left(  1-n\right)  $. Hence,
$\mu+\rho=\left(  \lambda^{\vee}+\left(  1-n\right)  \right)  +\rho=\left(
\lambda^{\vee}+\rho\right)  +\left(  1-n\right)  $ (this follows from
Proposition \ref{prop.snakes.trivia} \textbf{(c)}, applied to $\lambda^{\vee}%
$, $\rho$ and $1-n$ instead of $\lambda$, $\mu$ and $d$). Therefore,%
\begin{equation}
a_{\mu+\rho}=a_{\left(  \lambda^{\vee}+\rho\right)  +\left(  1-n\right)
}=x_{\Pi}^{1-n}a_{\lambda^{\vee}+\rho} \label{pf.lem.alt.inverses.0}%
\end{equation}
(by Lemma \ref{lem.alt.move}, applied to $\lambda^{\vee}+\rho$ and $1-n$
instead of $\lambda$ and $d$).

The definition of $\lambda^{\vee}$ yields $\lambda^{\vee}=\left(  -\lambda
_{n},-\lambda_{n-1},\ldots,-\lambda_{1}\right)  $. Hence, for each
$i\in\left\{  1,2,\ldots,n\right\}  $, we have%
\begin{equation}
\left(  \lambda^{\vee}\right)  _{i}=-\lambda_{n+1-i}.
\label{pf.lem.alt.inverses.check}%
\end{equation}
Thus, for each $i\in\left\{  1,2,\ldots,n\right\}  $, we have%
\begin{align}
\left(  \mu+\rho\right)  _{i}  &  =\underbrace{\mu_{i}}_{\substack{=\left(
\lambda^{\vee}+\left(  1-n\right)  \right)  _{i}\\\text{(since }\mu
=\lambda^{\vee}+\left(  1-n\right)  \text{)}}}+\underbrace{\rho_{i}%
}_{\substack{=n-i\\\text{(by (\ref{eq.snakes.f.rhoi=}))}}}=\underbrace{\left(
\lambda^{\vee}+\left(  1-n\right)  \right)  _{i}}_{\substack{=\left(
\lambda^{\vee}\right)  _{i}+\left(  1-n\right)  \\\text{(by the definition of
}\lambda^{\vee}+\left(  1-n\right)  \text{)}}}+n-i\nonumber\\
&  =\underbrace{\left(  \lambda^{\vee}\right)  _{i}}_{\substack{=-\lambda
_{n+1-i}\\\text{(by (\ref{pf.lem.alt.inverses.check}))}}}+\underbrace{\left(
1-n\right)  +n-i}_{=1-i}\nonumber\\
&  =-\lambda_{n+1-i}+1-i. \label{pf.lem.alt.inverses.chec2}%
\end{align}
Hence, the definition of $a_{\mu+\rho}$ yields%
\begin{align}
a_{\mu+\rho}  &  =\sum_{w\in\mathfrak{S}_{n}}\left(  -1\right)  ^{w}%
\underbrace{x_{w\left(  1\right)  }^{\left(  \mu+\rho\right)  _{1}}x_{w\left(
2\right)  }^{\left(  \mu+\rho\right)  _{2}}\cdots x_{w\left(  n\right)
}^{\left(  \mu+\rho\right)  _{n}}}_{=\prod_{i\in\left\{  1,2,\ldots,n\right\}
}x_{w\left(  i\right)  }^{\left(  \mu+\rho\right)  _{i}}}\nonumber\\
&  =\sum_{w\in\mathfrak{S}_{n}}\left(  -1\right)  ^{w}\prod_{i\in\left\{
1,2,\ldots,n\right\}  }\underbrace{x_{w\left(  i\right)  }^{\left(  \mu
+\rho\right)  _{i}}}_{\substack{=x_{w\left(  i\right)  }^{-\lambda
_{n+1-i}+1-i}\\\text{(since (\ref{pf.lem.alt.inverses.chec2}) yields }\left(
\mu+\rho\right)  _{i}=-\lambda_{n+1-i}+1-i\text{)}}}\nonumber\\
&  =\sum_{w\in\mathfrak{S}_{n}}\left(  -1\right)  ^{w}\prod_{i\in\left\{
1,2,\ldots,n\right\}  }x_{w\left(  i\right)  }^{-\lambda_{n+1-i}+1-i}.
\label{pf.lem.alt.inverses.ampr}%
\end{align}

The definition of $a_{\lambda+\rho}$ yields%
\begin{align*}
a_{\lambda+\rho}  &  =\sum_{w\in\mathfrak{S}_{n}}\left(  -1\right)
^{w}\underbrace{x_{w\left(  1\right)  }^{\left(  \lambda+\rho\right)  _{1}%
}x_{w\left(  2\right)  }^{\left(  \lambda+\rho\right)  _{2}}\cdots x_{w\left(
n\right)  }^{\left(  \lambda+\rho\right)  _{n}}}_{=\prod_{i\in\left\{
1,2,\ldots,n\right\}  }x_{w\left(  i\right)  }^{\left(  \lambda+\rho\right)
_{i}}}\\
&  =\sum_{w\in\mathfrak{S}_{n}}\left(  -1\right)  ^{w}\prod_{i\in\left\{
1,2,\ldots,n\right\}  }\underbrace{x_{w\left(  i\right)  }^{\left(
\lambda+\rho\right)  _{i}}}_{\substack{=x_{w\left(  i\right)  }^{\lambda
_{i}+n-i}\\\text{(since (\ref{pf.lem.alt.inverses.rho2}) yields }\left(
\lambda+\rho\right)  _{i}=\lambda_{i}+n-i\text{)}}}\\
&  =\sum_{w\in\mathfrak{S}_{n}}\left(  -1\right)  ^{w}\prod_{i\in\left\{
1,2,\ldots,n\right\}  }x_{w\left(  i\right)  }^{\lambda_{i}+n-i}.
\end{align*}
Substituting $x_{1}^{-1},x_{2}^{-1},\ldots,x_{n}^{-1}$ for $x_{1},x_{2}%
,\ldots,x_{n}$ on both sides of this equality, we obtain%
\begin{align}
&  a_{\lambda+\rho}\left(  x_{1}^{-1},x_{2}^{-1},\ldots,x_{n}^{-1}\right)
\nonumber\\
&  =\sum_{w\in\mathfrak{S}_{n}}\left(  -1\right)  ^{w}\prod_{i\in\left\{
1,2,\ldots,n\right\}  }\underbrace{\left(  x_{w\left(  i\right)  }%
^{-1}\right)  ^{\lambda_{i}+n-i}}_{=x_{w\left(  i\right)  }^{-\left(
\lambda_{i}+n-i\right)  }}=\sum_{w\in\mathfrak{S}_{n}}\left(  -1\right)
^{w}\prod_{i\in\left\{  1,2,\ldots,n\right\}  }x_{w\left(  i\right)
}^{-\left(  \lambda_{i}+n-i\right)  }\nonumber\\
&  =\sum_{w\in\mathfrak{S}_{n}}\underbrace{\left(  -1\right)  ^{w\circ w_{0}}%
}_{=\left(  -1\right)  ^{w}\left(  -1\right)  ^{w_{0}}}\prod_{i\in\left\{
1,2,\ldots,n\right\}  }\underbrace{x_{\left(  w\circ w_{0}\right)  \left(
i\right)  }^{-\left(  \lambda_{i}+n-i\right)  }}_{\substack{=x_{w\left(
n+1-i\right)  }^{-\lambda_{i}-n+i}\\\text{(since }-\left(  \lambda
_{i}+n-i\right)  =-\lambda_{i}-n+i\\\text{and }\left(  w\circ w_{0}\right)
\left(  i\right)  =w\left(  w_{0}\left(  i\right)  \right)  =w\left(
n+1-i\right)  \\\text{(because }w_{0}\left(  i\right)  =n+1-i\text{))}%
}}\nonumber\\
&  \ \ \ \ \ \ \ \ \ \ \left(
\begin{array}
[c]{c}%
\text{here, we have substituted }w\circ w_{0}\text{ for }w\text{ in the
sum,}\\
\text{since the map }\mathfrak{S}_{n}\rightarrow\mathfrak{S}_{n},\ w\mapsto
w\circ w_{0}\text{ is a bijection}%
\end{array}
\right) \nonumber\\
&  =\sum_{w\in\mathfrak{S}_{n}}\left(  -1\right)  ^{w}\left(  -1\right)
^{w_{0}}\prod_{i\in\left\{  1,2,\ldots,n\right\}  }x_{w\left(  n+1-i\right)
}^{-\lambda_{i}-n+i}\nonumber\\
&  =\left(  -1\right)  ^{w_{0}}\sum_{w\in\mathfrak{S}_{n}}\left(  -1\right)
^{w}\underbrace{\prod_{i\in\left\{  1,2,\ldots,n\right\}  }x_{w\left(
n+1-i\right)  }^{-\lambda_{i}-n+i}}_{\substack{=\prod_{i\in\left\{
1,2,\ldots,n\right\}  }x_{w\left(  n+1-\left(  n+1-i\right)  \right)
}^{-\lambda_{n+1-i}-n+\left(  n+1-i\right)  }\\\text{(here, we have
substituted }n+1-i\text{ for }i\\\text{in the product)}}}\nonumber\\
&  =\left(  -1\right)  ^{w_{0}}\sum_{w\in\mathfrak{S}_{n}}\left(  -1\right)
^{w}\prod_{i\in\left\{  1,2,\ldots,n\right\}  }\underbrace{x_{w\left(
n+1-\left(  n+1-i\right)  \right)  }^{-\lambda_{n+1-i}-n+\left(  n+1-i\right)
}}_{\substack{=x_{w\left(  i\right)  }^{-\lambda_{n+1-i}+1-i}\\\text{(since
}-\lambda_{n+1-i}-n+\left(  n+1-i\right)  =-\lambda_{n+1-i}+1-i\\\text{and
}n+1-\left(  n+1-i\right)  =i\text{)}}}\nonumber\\
&  =\left(  -1\right)  ^{w_{0}}\underbrace{\sum_{w\in\mathfrak{S}_{n}}\left(
-1\right)  ^{w}\prod_{i\in\left\{  1,2,\ldots,n\right\}  }x_{w\left(
i\right)  }^{-\lambda_{n+1-i}+1-i}}_{\substack{=a_{\mu+\rho}\\\text{(by
(\ref{pf.lem.alt.inverses.ampr}))}}}=\left(  -1\right)  ^{w_{0}}%
\underbrace{a_{\mu+\rho}}_{\substack{=x_{\Pi}^{1-n}a_{\lambda^{\vee}+\rho
}\\\text{(by (\ref{pf.lem.alt.inverses.0}))}}}\nonumber\\
&  =\left(  -1\right)  ^{w_{0}}x_{\Pi}^{1-n}a_{\lambda^{\vee}+\rho}.
\label{pf.lem.alt.inverses.a1}%
\end{align}

On the other hand, let us denote the snake $\left(  \underbrace{0,0,\ldots
,0}_{n\text{ times}}\right)  \in\mathbb{Z}^{n}$ by $\varnothing$; note that it
satisfies $\varnothing^{\vee}=\left(  \underbrace{-0,-0,\ldots,-0}_{n\text{
times}}\right)  =\left(  \underbrace{0,0,\ldots,0}_{n\text{ times}}\right)
=\varnothing$. We have proved (\ref{pf.lem.alt.inverses.a1}) for any snake
$\lambda$; thus, we can apply (\ref{pf.lem.alt.inverses.a1}) to $\varnothing$
instead of $\lambda$. We thus obtain%
\[
a_{\varnothing+\rho}\left(  x_{1}^{-1},x_{2}^{-1},\ldots,x_{n}^{-1}\right)
=\left(  -1\right)  ^{w_{0}}x_{\Pi}^{1-n}a_{\varnothing^{\vee}+\rho}.
\]
In view of $\varnothing+\rho=\rho$ and $\underbrace{\varnothing^{\vee}%
}_{=\varnothing}+\rho=\varnothing+\rho=\rho$, we can rewrite this as%
\begin{equation}
a_{\rho}\left(  x_{1}^{-1},x_{2}^{-1},\ldots,x_{n}^{-1}\right)  =\left(
-1\right)  ^{w_{0}}x_{\Pi}^{1-n}a_{\rho}. \label{pf.lem.alt.inverses.a2}%
\end{equation}

The definition of $\overline{s}_{\lambda}$ yields $\overline{s}_{\lambda
}=\dfrac{a_{\lambda+\rho}}{a_{\rho}}$. Hence, $a_{\lambda+\rho}=\overline
{s}_{\lambda}a_{\rho}$. Substituting $x_{1}^{-1},x_{2}^{-1},\ldots,x_{n}^{-1}$
for $x_{1},x_{2},\ldots,x_{n}$ on both sides of this equality, we obtain%
\begin{align*}
a_{\lambda+\rho}\left(  x_{1}^{-1},x_{2}^{-1},\ldots,x_{n}^{-1}\right)   &
=\left(  \overline{s}_{\lambda}a_{\rho}\right)  \left(  x_{1}^{-1},x_{2}%
^{-1},\ldots,x_{n}^{-1}\right) \\
&  =\overline{s}_{\lambda}\left(  x_{1}^{-1},x_{2}^{-1},\ldots,x_{n}%
^{-1}\right)  \cdot\underbrace{a_{\rho}\left(  x_{1}^{-1},x_{2}^{-1}%
,\ldots,x_{n}^{-1}\right)  }_{\substack{=\left(  -1\right)  ^{w_{0}}x_{\Pi
}^{1-n}a_{\rho}\\\text{(by (\ref{pf.lem.alt.inverses.a2}))}}}\\
&  =\overline{s}_{\lambda}\left(  x_{1}^{-1},x_{2}^{-1},\ldots,x_{n}%
^{-1}\right)  \cdot\left(  -1\right)  ^{w_{0}}x_{\Pi}^{1-n}a_{\rho}.
\end{align*}
Comparing this with (\ref{pf.lem.alt.inverses.a1}), we find%
\begin{equation}
\overline{s}_{\lambda}\left(  x_{1}^{-1},x_{2}^{-1},\ldots,x_{n}^{-1}\right)
\cdot\left(  -1\right)  ^{w_{0}}x_{\Pi}^{1-n}a_{\rho}=\left(  -1\right)
^{w_{0}}x_{\Pi}^{1-n}a_{\lambda^{\vee}+\rho}. \label{pf.lem.alt.inverses.at2}%
\end{equation}
The element $\left(  -1\right)  ^{w_{0}}x_{\Pi}^{1-n}$ of $\mathcal{L}$ is
invertible (since $x_{\Pi}$ is invertible), and thus we can cancel it from the
equality (\ref{pf.lem.alt.inverses.at2}). As a result, we obtain%
\begin{equation}
\overline{s}_{\lambda}\left(  x_{1}^{-1},x_{2}^{-1},\ldots,x_{n}^{-1}\right)
\cdot a_{\rho}=a_{\lambda^{\vee}+\rho}. \label{pf.lem.alt.inverses.at3}%
\end{equation}
But the definition of $\overline{s}_{\lambda^{\vee}}$ yields%
\[
\overline{s}_{\lambda^{\vee}}=\dfrac{a_{\lambda^{\vee}+\rho}}{a_{\rho}%
}=\overline{s}_{\lambda}\left(  x_{1}^{-1},x_{2}^{-1},\ldots,x_{n}%
^{-1}\right)
\]
(by (\ref{pf.lem.alt.inverses.at3})). This proves Lemma \ref{lem.alt.inverses}.
\end{proof}

\subsection{$h_{k}^{+}$, $h_{k}^{-}$ and the Pieri rule}

\begin{definition}
\label{def.hkpm}Let $k\in\mathbb{Z}$. Then, we define two Laurent polynomials
$h_{k}^{+}\in\mathcal{L}$ and $h_{k}^{-}\in\mathcal{L}$ by%
\begin{align*}
h_{k}^{+}  &  =h_{k}\left(  x_{1},x_{2},\ldots,x_{n}\right)
\ \ \ \ \ \ \ \ \ \ \text{and}\\
h_{k}^{-}  &  =h_{k}\left(  x_{1}^{-1},x_{2}^{-1},\ldots,x_{n}^{-1}\right)  .
\end{align*}

\end{definition}

Note that if $k\in\mathbb{Z}$ is negative, then $h_{k}^{+}=\underbrace{h_{k}%
}_{=0}\left(  x_{1},x_{2},\ldots,x_{n}\right)  =0$ and $h_{k}^{-}=0$ (similarly).

We begin by describing $h_{k}^{+}$ as a Schur Laurent polynomial:

\begin{lemma}
\label{lem.alt.h}Let $k\in\mathbb{N}$. Then, the partition $\left(  k\right)
$ is a nonnegative snake (when regarded as the $n$-tuple $\left(
k,0,0,\ldots,0\right)  $), and satisfies%
\[
\overline{s}_{\left(  k\right)  }=h_{k}^{+}.
\]

\end{lemma}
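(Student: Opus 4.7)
The plan is to reduce the claim immediately to previously established facts about Schur functions and their specializations. First I would observe that the partition $(k)$, viewed as the $n$-tuple $(k,0,0,\ldots,0)$, is weakly decreasing and nonnegative, so it is a nonnegative snake; equivalently, it lies in $\operatorname{Par}[n]$ (its length is $0$ or $1$, which is $\leq n$ since $n$ is a positive integer).

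Next I would apply Lemma \ref{lem.alt.sbar} to $\lambda = (k)$ to obtain
\[
\overline{s}_{(k)} = s_{(k)}(x_1, x_2, \ldots, x_n).
\]
From Section \ref{sect.not} we recall that each $n \in \mathbb{N}$ satisfies $s_{(n)} = h_n$ in $\Lambda$; in particular $s_{(k)} = h_k$. Evaluating this equality of symmetric functions at $(x_1, x_2, \ldots, x_n)$ gives $s_{(k)}(x_1, x_2, \ldots, x_n) = h_k(x_1, x_2, \ldots, x_n)$, and the right-hand side is exactly $h_k^+$ by Definition \ref{def.hkpm}. Chaining these equalities yields $\overline{s}_{(k)} = h_k^+$, which is the desired identity.

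There is essentially no obstacle here: the lemma is a direct combination of Lemma \ref{lem.alt.sbar}, the identity $s_{(k)} = h_k$, and the definition of $h_k^+$. The only point worth a sentence of verification is that $(k)$ genuinely qualifies as an element of $\operatorname{Par}[n]$, which is immediate from $n \geq 1$.
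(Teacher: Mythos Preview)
Your proof is correct and follows essentially the same approach as the paper: verify that $(k)\in\operatorname{Par}[n]$, apply Lemma~\ref{lem.alt.sbar} to get $\overline{s}_{(k)}=s_{(k)}(x_1,\ldots,x_n)$, then use $s_{(k)}=h_k$ and the definition of $h_k^+$.
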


\begin{proof}
[Proof of Lemma \ref{lem.alt.h}.]The partition $\left(  k\right)  $ has length
$\leq n$ (since it has length $\leq1$, but we have $n\geq1$). Thus, it is a
nonnegative snake (since every partition having length $\leq n$ is a
nonnegative snake) and belongs to $\operatorname*{Par}\left[  n\right]  $.
Hence, Lemma \ref{lem.alt.sbar} (applied to $\lambda=\left(  k\right)  $)
yields $\overline{s}_{\left(  k\right)  }=s_{\left(  k\right)  }\left(
x_{1},x_{2},\ldots,x_{n}\right)  =h_{k}\left(  x_{1},x_{2},\ldots
,x_{n}\right)  $ (since $s_{\left(  k\right)  }=h_{k}$). Comparing this with
$h_{k}^{+}=h_{k}\left(  x_{1},x_{2},\ldots,x_{n}\right)  $, we obtain
$\overline{s}_{\left(  k\right)  }=h_{k}^{+}$. This proves Lemma
\ref{lem.alt.h}.
\end{proof}

Next, we need to know what happens when a Schur Laurent polynomial
$\overline{s}_{\lambda}$ is multiplied by some $h_{k}^{+}$. The answer to this
question is classically given by the \textit{first Pieri rule}; we shall state
it in a form that will be most convenient to us. To do so, we introduce some
more notation:

\begin{definition}
\label{def.size-snake}Let $\lambda\in\mathbb{Z}^{n}$. Then, we define the
\emph{size} $\left\vert \lambda\right\vert $ of $\lambda$ to be the integer
$\lambda_{1}+\lambda_{2}+\cdots+\lambda_{n}$.
\end{definition}

\begin{definition}
\label{def.horstr}Let $\lambda,\mu\in\mathbb{Z}^{n}$. Then, we write that
$\mu\rightharpoonup\lambda$ if and only if we have%
\begin{equation}
\mu_{1}\geq\lambda_{1}\geq\mu_{2}\geq\lambda_{2}\geq\cdots\geq\mu_{n}%
\geq\lambda_{n}. \label{eq.def.horstr.chain}%
\end{equation}
In other words, we write that $\mu\rightharpoonup\lambda$ if and only if we
have%
\begin{align*}
&  \left(  \mu_{i}\geq\lambda_{i}\text{ for each }i\in\left\{  1,2,\ldots
,n\right\}  \right)  \ \ \ \ \ \ \ \ \ \ \text{and}\\
&  \left(  \lambda_{i}\geq\mu_{i+1}\text{ for each }i\in\left\{
1,2,\ldots,n-1\right\}  \right)  .
\end{align*}

\end{definition}

The following properties of the sizes of $n$-tuples are obvious:

\begin{proposition}
\label{prop.size-snake.easies}\ \ 

\begin{enumerate}
\item[\textbf{(a)}] If $\lambda,\mu\in\mathbb{Z}^{n}$, then $\left\vert
\lambda+\mu\right\vert =\left\vert \lambda\right\vert +\left\vert
\mu\right\vert $.

\item[\textbf{(b)}] If $\lambda\in\mathbb{Z}^{n}$ and $d\in\mathbb{Z}$, then
$\left\vert \lambda+d\right\vert =\left\vert \lambda\right\vert +nd$.

\item[\textbf{(c)}] If $\lambda\in\mathbb{Z}^{n}$, then $\left\vert
\lambda^{\vee}\right\vert =-\left\vert \lambda\right\vert $.
\end{enumerate}
\end{proposition}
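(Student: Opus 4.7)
The plan is to prove each of the three parts by directly expanding the relevant definitions; no genuine obstacle is present, since the statement is purely bookkeeping about sums of entries.

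For part \textbf{(a)}, I would note that the definition of $\lambda+\mu$ (given in Definition \ref{def.snakes} \textbf{(e)}) says that its $i$-th entry equals $\lambda_i + \mu_i$ for each $i \in \{1,2,\ldots,n\}$. Summing these entries and splitting the sum into two pieces gives
\[
|\lambda+\mu| = \sum_{i=1}^{n}(\lambda_i+\mu_i) = \sum_{i=1}^{n}\lambda_i + \sum_{i=1}^{n}\mu_i = |\lambda| + |\mu|,
\]
using Definition \ref{def.size-snake}.

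For part \textbf{(b)}, I would similarly use Definition \ref{def.snakes} \textbf{(c)}, which says that the $i$-th entry of $\lambda+d$ is $\lambda_i + d$. Then
\[
|\lambda+d| = \sum_{i=1}^{n}(\lambda_i + d) = \sum_{i=1}^{n}\lambda_i + nd = |\lambda| + nd.
\]

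For part \textbf{(c)}, I would expand using Definition \ref{def.snakes} \textbf{(d)}: $\lambda^\vee = (-\lambda_n, -\lambda_{n-1}, \ldots, -\lambda_1)$. Summing the entries of $\lambda^\vee$ yields
\[
|\lambda^\vee| = \sum_{i=1}^{n}(-\lambda_{n+1-i}) = -\sum_{j=1}^{n}\lambda_j = -|\lambda|,
\]
where I reindexed $j = n+1-i$. Since every step is a trivial application of a definition, the only potential pitfall is a miscount of indices in part \textbf{(c)}, and even that is avoided by the explicit reindexing. There is no substantial obstacle to speak of.
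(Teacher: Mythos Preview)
Your proof is correct. The paper does not actually give a proof of this proposition at all; it simply introduces it with the words ``The following properties of the sizes of $n$-tuples are obvious'' and moves on. Your direct expansion of the definitions is precisely the trivial verification the paper has in mind.
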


The relation $\rightharpoonup$ defined in Definition \ref{def.horstr} has the
following simple properties:

\begin{proposition}
\label{prop.horstr.easies}Let $\lambda,\mu\in\mathbb{Z}^{n}$.

\begin{enumerate}
\item[\textbf{(a)}] If $\mu\rightharpoonup\lambda$, then both $\lambda$ and
$\mu$ are snakes.

\item[\textbf{(b)}] We have $\mu\rightharpoonup\lambda$ if and only if
$\lambda^{\vee}\rightharpoonup\mu^{\vee}$.

\item[\textbf{(c)}] Let $d\in\mathbb{Z}$. Then, we have $\mu\rightharpoonup
\lambda$ if and only if $\mu+d\rightharpoonup\lambda+d$.
\end{enumerate}
\end{proposition}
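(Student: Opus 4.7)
All three parts of Proposition \ref{prop.horstr.easies} are direct consequences of unpacking the definition of $\rightharpoonup$ (namely, the chain of inequalities in (\ref{eq.def.horstr.chain})), so the plan is simply to translate each claim into a statement about that chain and verify it by inspection. There is no real obstacle here; the main thing to watch is the indexing convention for $\lambda^\vee$ in part \textbf{(b)}.

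For part \textbf{(a)}, I would assume $\mu\rightharpoonup\lambda$, write out (\ref{eq.def.horstr.chain}), and simply read off the two subchains $\mu_1\geq\mu_2\geq\cdots\geq\mu_n$ (obtained by keeping only the $\mu_i$'s) and $\lambda_1\geq\lambda_2\geq\cdots\geq\lambda_n$ (obtained by keeping only the $\lambda_i$'s). By Definition \ref{def.snakes} \textbf{(a)}, these say exactly that $\mu$ and $\lambda$ are snakes.

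For part \textbf{(b)}, I would recall from Definition \ref{def.snakes} \textbf{(d)} that $(\lambda^\vee)_i = -\lambda_{n+1-i}$ and $(\mu^\vee)_i = -\mu_{n+1-i}$ for $i\in\{1,2,\ldots,n\}$. Thus the condition $\lambda^\vee\rightharpoonup\mu^\vee$ becomes
\[
-\lambda_n\geq -\mu_n\geq -\lambda_{n-1}\geq -\mu_{n-1}\geq\cdots\geq -\lambda_1\geq -\mu_1,
\]
which, after reversing the chain and negating each term, is precisely (\ref{eq.def.horstr.chain}). Hence the two conditions are equivalent. I would state this as a single chain of equivalences, being careful to note that negation reverses inequalities and that reversing a chain of weak inequalities yields an equivalent chain.

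For part \textbf{(c)}, I would note that $(\mu+d)_i=\mu_i+d$ and $(\lambda+d)_i=\lambda_i+d$ for each $i$, so the defining chain for $\mu+d\rightharpoonup\lambda+d$ is obtained from (\ref{eq.def.horstr.chain}) by adding $d$ to every term. Since adding a common integer preserves weak inequalities in both directions, the two conditions are equivalent. This completes the proof of Proposition \ref{prop.horstr.easies}.
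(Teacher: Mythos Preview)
Your proposal is correct and follows essentially the same approach as the paper's own proof: unpacking the chain (\ref{eq.def.horstr.chain}) and reading off subchains for \textbf{(a)}, negating and reversing for \textbf{(b)}, and shifting by $d$ for \textbf{(c)}. The paper dispatches \textbf{(c)} with a one-line remark, but your slightly more explicit version is fine.
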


\begin{proof}
[Proof of Proposition \ref{prop.horstr.easies}.]\textbf{(a)} Assume that
$\mu\rightharpoonup\lambda$. Thus, the chain of inequalities
(\ref{eq.def.horstr.chain}) holds (by Definition \ref{def.horstr}). But this
chain of inequalities implies both $\mu_{1}\geq\mu_{2}\geq\cdots\geq\mu_{n}$
and $\lambda_{1}\geq\lambda_{2}\geq\cdots\geq\lambda_{n}$. Thus, $\mu$ and
$\lambda$ are snakes. This proves Proposition \ref{prop.horstr.easies}
\textbf{(a)}.

\textbf{(b)} The definition of $\mu^{\vee}$ yields $\mu^{\vee}=\left(
-\mu_{n},-\mu_{n-1},\ldots,-\mu_{1}\right)  $. Similarly, $\lambda^{\vee
}=\left(  -\lambda_{n},-\lambda_{n-1},\ldots,-\lambda_{1}\right)  $. Hence, we
have $\lambda^{\vee}\rightharpoonup\mu^{\vee}$ if and only if we have%
\[
-\lambda_{n}\geq-\mu_{n}\geq-\lambda_{n-1}\geq-\mu_{n-1}\geq\cdots\geq
-\lambda_{1}\geq-\mu_{1}%
\]
(because of Definition \ref{def.horstr}).

Thus, we have the following chain of equivalences:%
\begin{align*}
\left(  \lambda^{\vee}\rightharpoonup\mu^{\vee}\right)  \  &
\Longleftrightarrow\ \left(  -\lambda_{n}\geq-\mu_{n}\geq-\lambda_{n-1}%
\geq-\mu_{n-1}\geq\cdots\geq-\lambda_{1}\geq-\mu_{1}\right) \\
&  \Longleftrightarrow\ \left(  \lambda_{n}\leq\mu_{n}\leq\lambda_{n-1}\leq
\mu_{n-1}\leq\cdots\leq\lambda_{1}\leq\mu_{1}\right) \\
&  \Longleftrightarrow\ \left(  \mu_{1}\geq\lambda_{1}\geq\mu_{2}\geq
\lambda_{2}\geq\cdots\geq\mu_{n}\geq\lambda_{n}\right) \\
&  \Longleftrightarrow\ \left(  \mu\rightharpoonup\lambda\right)
\ \ \ \ \ \ \ \ \ \ \left(  \text{by Definition \ref{def.horstr}}\right)  .
\end{align*}
In other words, we have $\mu\rightharpoonup\lambda$ if and only if
$\lambda^{\vee}\rightharpoonup\mu^{\vee}$. This proves Proposition
\ref{prop.horstr.easies} \textbf{(b)}.

\textbf{(c)} This follows easily from Definition \ref{def.horstr}.
\end{proof}

We can now state the Pieri rule in the form we need:

\begin{proposition}
\label{prop.alt.pieri1}Let $\lambda$ be a snake. Let $k\in\mathbb{Z}$. Then,%
\begin{equation}
h_{k}^{+}\cdot\overline{s}_{\lambda}=\sum_{\substack{\mu\text{ is a
snake;}\\\mu\rightharpoonup\lambda;\ \left\vert \mu\right\vert -\left\vert
\lambda\right\vert =k}}\overline{s}_{\mu}. \label{eq.prop.alt.pieri1.eq}%
\end{equation}

\end{proposition}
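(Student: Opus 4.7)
My plan is to prove Proposition \ref{prop.alt.pieri1} via the \emph{right-shift strategy} introduced just before Lemma \ref{lem.alt.L}: first reduce to the nonnegative case $\lambda \in \operatorname{Par}[n]$, then invoke the classical Pieri rule. I will also dispose of the trivial case $k < 0$ up front.

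First, suppose $k < 0$. Then $h_k = 0$ by definition, so $h_k^+ = 0$ and the left-hand side vanishes. For the right-hand side, any snake $\mu$ with $\mu \rightharpoonup \lambda$ satisfies $\mu_i \geq \lambda_i$ for all $i \in \{1,2,\ldots,n\}$ (by Definition \ref{def.horstr}), so $|\mu| \geq |\lambda|$, hence $|\mu| - |\lambda| \geq 0 > k$ and the indexing set is empty. Both sides equal $0$.

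Next, assume $k \geq 0$ and that $\lambda$ is a nonnegative snake, i.e.\ $\lambda \in \operatorname{Par}[n]$. Then Lemma \ref{lem.alt.sbar} gives $\overline{s}_\lambda = s_\lambda(x_1,\ldots,x_n)$, and Lemma \ref{lem.alt.h} identifies $h_k^+ = s_{(k)}(x_1,\ldots,x_n)$. The classical Pieri rule (see, e.g., \cite[(2.7.1)]{GriRei}) gives $h_k \cdot s_\lambda = \sum_{\mu} s_\mu$ in $\Lambda$, where $\mu$ ranges over partitions such that $\mu/\lambda$ is a horizontal strip of size $k$, i.e.\ $\mu_1 \geq \lambda_1 \geq \mu_2 \geq \lambda_2 \geq \cdots$ and $|\mu|-|\lambda|=k$. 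Evaluating at $(x_1,\ldots,x_n)$ and using that $s_\mu(x_1,\ldots,x_n)=0$ when $\ell(\mu) > n$, only partitions $\mu \in \operatorname{Par}[n]$ survive. For such $\mu$, the horizontal strip condition is exactly $\mu \rightharpoonup \lambda$ (with the final inequality $\mu_n \geq \lambda_n$ being part of the chain, and automatically $\mu_n \geq \lambda_n \geq 0$ so $\mu$ is indeed nonnegative). Applying Lemma \ref{lem.alt.sbar} to each such $\mu$, we get the desired identity in $\mathcal{L}$.

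For the general case (still $k \geq 0$), let $\lambda$ be an arbitrary snake and choose an integer $d$ with $\lambda_n + d \geq 0$, so that $\lambda + d \in \operatorname{Par}[n]$. By the nonnegative case applied to $\lambda + d$,
\begin{equation}
h_k^+ \cdot \overline{s}_{\lambda+d} = \sum_{\substack{\mu' \text{ snake};\\ \mu' \rightharpoonup \lambda+d;\ |\mu'|-|\lambda+d|=k}} \overline{s}_{\mu'}. \label{eq.pf.pieri.shifted}
\end{equation}
By Proposition \ref{prop.horstr.easies}\textbf{(c)}, the map $\mu \mapsto \mu + d$ is a bijection between snakes $\mu$ with $\mu \rightharpoonup \lambda$ and snakes $\mu'$ with $\mu' \rightharpoonup \lambda + d$; and by Proposition \ref{prop.size-snake.easies}\textbf{(b)}, $|\mu + d| - |\lambda + d| = (|\mu|+nd) - (|\lambda|+nd) = |\mu|-|\lambda|$, so the size condition $|\mu'|-|\lambda+d|=k$ corresponds to $|\mu|-|\lambda|=k$. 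Using Lemma \ref{lem.alt.smove} to rewrite $\overline{s}_{\lambda+d} = x_\Pi^d \overline{s}_\lambda$ and $\overline{s}_{\mu+d} = x_\Pi^d \overline{s}_\mu$, equation \eqref{eq.pf.pieri.shifted} becomes $x_\Pi^d \cdot h_k^+ \cdot \overline{s}_\lambda = x_\Pi^d \sum_{\mu \rightharpoonup \lambda,\ |\mu|-|\lambda|=k} \overline{s}_\mu$. Cancelling the invertible element $x_\Pi^d \in \mathcal{L}$ yields \eqref{eq.prop.alt.pieri1.eq}, completing the proof.

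There is really no hard step here; the only thing to watch is that the classical Pieri rule is stated for partitions (horizontal strips), while our relation $\rightharpoonup$ is stated for arbitrary $n$-tuples of integers, and one must verify that in the nonnegative case these two descriptions of the index set agree (including the implicit nonnegativity $\lambda_n \geq 0$ playing the role of the phantom entry $\mu_{n+1}=0$). Everything else is bookkeeping already provided by Lemmas \ref{lem.alt.sbar}, \ref{lem.alt.h}, \ref{lem.alt.smove} and Propositions \ref{prop.horstr.easies}, \ref{prop.size-snake.easies}.
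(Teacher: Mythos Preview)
Your proof is correct and follows essentially the same approach as the paper's: both handle the case $k<0$ trivially, establish the nonnegative case via the classical Pieri rule together with the identification of horizontal $k$-strips with the $\rightharpoonup$ relation (the paper's Claim 1), and then deduce the general case by the right-shift strategy using Lemma~\ref{lem.alt.smove}, Proposition~\ref{prop.horstr.easies}\textbf{(c)}, and Proposition~\ref{prop.size-snake.easies}\textbf{(b)}.
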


This can be proven directly using alternants; but let us give a proof based on
known theory:

\begin{proof}
[Proof of Proposition \ref{prop.alt.pieri1}.]We follow the same right-shift
strategy as we did in our proof of Lemma \ref{lem.alt.L}. Thus, our proof
shall consist of two steps:

\begin{statement}
\textit{Step 1:} We will prove Proposition \ref{prop.alt.pieri1} in the
particular case when $\lambda$ is nonnegative.
\end{statement}

\begin{statement}
\textit{Step 2:} We will use Lemma \ref{lem.alt.smove} to derive the general
case of Proposition \ref{prop.alt.pieri1} from this particular case.
\end{statement}

Here are the details of the two steps:

\textit{Step 1:} Let us prove that Proposition \ref{prop.alt.pieri1} holds in
the particular case when $\lambda$ is nonnegative.

Indeed, let us assume that $\lambda$ is nonnegative. We must prove the
equality (\ref{eq.prop.alt.pieri1.eq}).

If $k<0$, then both sides of this equality are $0$%
\ \ \ \ \footnote{\textit{Proof.} Assume that $k<0$. We must show that both
sides of (\ref{eq.prop.alt.pieri1.eq}) are $0$.
\par
Indeed, from $k<0$, we obtain $h_{k}=0$, thus $h_{k}^{+}=\underbrace{h_{k}%
}_{=0}\left(  x_{1},x_{2},\ldots,x_{n}\right)  =0$. Hence, $\underbrace{h_{k}%
^{+}}_{=0}\cdot\overline{s}_{\lambda}=0$. In other words, the left hand side
of (\ref{eq.prop.alt.pieri1.eq}) is $0$.
\par
It remains to show that the right hand side of (\ref{eq.prop.alt.pieri1.eq})
is $0$. This will follow if we can show that the sum on this right hand side
is empty, i.e., that there exists no snake $\mu$ such that $\mu\rightharpoonup
\lambda$ and $\left\vert \mu\right\vert -\left\vert \lambda\right\vert =k$. So
let us show this.
\par
Let $\mu$ be a snake such that $\mu\rightharpoonup\lambda$ and $\left\vert
\mu\right\vert -\left\vert \lambda\right\vert =k$. We shall derive a
contradiction.
\par
From $\mu\rightharpoonup\lambda$, we obtain $\mu_{1}\geq\lambda_{1}\geq\mu
_{2}\geq\lambda_{2}\geq\cdots\geq\mu_{n}\geq\lambda_{n}$ (by the definition of
\textquotedblleft$\mu\rightharpoonup\lambda$\textquotedblright). Hence,
$\mu_{i}\geq\lambda_{i}$ for each $i\in\left\{  1,2,\ldots,n\right\}  $. Thus,
$\sum_{i=1}^{n}\underbrace{\mu_{i}}_{\geq\lambda_{i}}\geq\sum_{i=1}^{n}%
\lambda_{i}$. In other words, $\left\vert \mu\right\vert \geq\left\vert
\lambda\right\vert $ (since $\left\vert \mu\right\vert =\mu_{1}+\mu_{2}%
+\cdots+\mu_{n}=\sum_{i=1}^{n}\mu_{i}$ and similarly $\left\vert
\lambda\right\vert =\sum_{i=1}^{n}\lambda_{i}$). Hence, $\left\vert
\mu\right\vert -\left\vert \lambda\right\vert \geq0$. But this contradicts
$\left\vert \mu\right\vert -\left\vert \lambda\right\vert =k<0$.
\par
Forget that we fixed $\mu$. We thus have found a contradiction whenever $\mu$
is a snake such that $\mu\rightharpoonup\lambda$ and $\left\vert
\mu\right\vert -\left\vert \lambda\right\vert =k$. Hence, there exists no such
snake $\mu$. In other words, the sum on the right hand side of
(\ref{eq.prop.alt.pieri1.eq}) is empty. Hence, the right hand side of
(\ref{eq.prop.alt.pieri1.eq}) is $0$. This completes our proof.}. Thus, the
equality (\ref{eq.prop.alt.pieri1.eq}) holds if $k<0$. Therefore, for the rest
of Step 1, we WLOG assume that $k\geq0$. In other words, $k\in\mathbb{N}$.

Note that $\lambda$ is a partition of length $\leq n$ (since $\lambda$ is a
nonnegative snake). In other words, $\lambda\in\operatorname*{Par}\left[
n\right]  $.

\begin{verlong}
We note that if $\mu$ is a partition of length $\leq n$, then we have defined
$\left\vert \mu\right\vert $ in two different ways: On the one hand,
$\left\vert \mu\right\vert $ was defined as the infinite sum $\mu_{1}+\mu
_{2}+\mu_{3}+\cdots$ (in the definition of the size of a partition); on the
other hand, $\left\vert \mu\right\vert $ was defined as the finite sum
$\mu_{1}+\mu_{2}+\cdots+\mu_{n}$ (because we can regard $\mu$ as a snake, and
then interpret $\left\vert \mu\right\vert $ according to Definition
\ref{def.size-snake}). Fortunately, these two definitions do not clash, since
the infinite sum $\mu_{1}+\mu_{2}+\mu_{3}+\cdots$ equals the finite sum
$\mu_{1}+\mu_{2}+\cdots+\mu_{n}$ whenever $\mu$ is a partition of length $\leq
n$. (In fact, if $\mu$ is a partition of length $\leq n$, then
\begin{align*}
&  \mu_{1}+\mu_{2}+\mu_{3}+\cdots\\
&  =\left(  \mu_{1}+\mu_{2}+\cdots+\mu_{n}\right)  +\underbrace{\left(
\mu_{n+1}+\mu_{n+2}+\mu_{n+3}+\cdots\right)  }_{\substack{=0+0+0+\cdots
\\\text{(since all of }\mu_{n+1},\mu_{n+2},\mu_{n+3},\ldots\text{ are
}0\\\text{(because }\mu\text{ has length }\leq n\text{))}}}\\
&  =\left(  \mu_{1}+\mu_{2}+\cdots+\mu_{n}\right)  +\underbrace{\left(
0+0+0+\cdots\right)  }_{=0}=\mu_{1}+\mu_{2}+\cdots+\mu_{n}.
\end{align*}
Thus, the infinite sum $\mu_{1}+\mu_{2}+\mu_{3}+\cdots$ equals the finite sum
$\mu_{1}+\mu_{2}+\cdots+\mu_{n}$.)
\end{verlong}

We will use some standard notations concerning partitions. Specifically:

\begin{itemize}
\item If $\alpha=\left(  \alpha_{1},\alpha_{2},\alpha_{3},\ldots\right)  $ and
$\beta=\left(  \beta_{1},\beta_{2},\beta_{3},\ldots\right)  $ are two
partitions, then we will write $\alpha\subseteq\beta$ if and only if each
$i\in\left\{  1,2,3,\ldots\right\}  $ satisfies $\alpha_{i}\leq\beta_{i}$.
(This is precisely the definition of $\alpha\subseteq\beta$ given in
\cite[Definition 2.3.1]{GriRei}.)

\item If $\alpha=\left(  \alpha_{1},\alpha_{2},\alpha_{3},\ldots\right)  $ and
$\beta=\left(  \beta_{1},\beta_{2},\beta_{3},\ldots\right)  $ are two
partitions, then we say that $\alpha/\beta$ \emph{is a horizontal strip} if
they satisfy
\[
\beta\subseteq\alpha\text{ and }\left(  \text{every }i\in\left\{
1,2,3,\ldots\right\}  \text{ satisfies }\beta_{i}\geq\alpha_{i+1}\right)  .
\]
(This is not literally how a \textquotedblleft horizontal
strip\textquotedblright\ is defined in \cite{GriRei}, but it is equivalent to
that definition; the equivalence follows from \cite[Exercise 2.7.5(a)]{GriRei}.)

\item If $\alpha$ and $\beta$ are two partitions, and if $k\in\mathbb{N}$,
then we say that $\alpha/\beta$ \emph{is a horizontal }$k$\emph{-strip} if
$\alpha/\beta$ is a horizontal strip and we have $\left\vert \alpha\right\vert
-\left\vert \beta\right\vert =k$. (This is equivalent to the definition of a
\textquotedblleft horizontal $k$-strip\textquotedblright\ in \cite[\S 2.7]%
{GriRei}).
\end{itemize}

We note the following claim:

\begin{statement}
\textit{Claim 1:} We have%
\begin{align*}
&  \left\{  \text{partitions }\mu\in\operatorname*{Par}\left[  n\right]
\text{ such that }\mu/\lambda\text{ is a horizontal }k\text{-strip}\right\} \\
&  =\left\{  \text{snakes }\mu\text{ such that }\mu\rightharpoonup
\lambda\text{ and}\ \left\vert \mu\right\vert -\left\vert \lambda\right\vert
=k\right\}  .
\end{align*}

\end{statement}

\begin{vershort}
[\textit{Proof of Claim 1:} This is an exercise in unraveling definitions and
recalling that partitions in $\operatorname*{Par}\left[  n\right]  $ are the
same as nonnegative snakes. We leave the details to the reader (who can also
find them expanded in the detailed version \cite{verlong} of this paper).]
\end{vershort}

\begin{verlong}
[\textit{Proof of Claim 1:} Let us notice that $\lambda_{n}\geq0$ (since the
snake $\lambda$ is nonnegative).

Any snake $\mu$ satisfying $\mu\rightharpoonup\lambda$ must be
nonnegative\footnote{\textit{Proof.} Let $\mu$ be a snake satisfying
$\mu\rightharpoonup\lambda$. Thus, we have the chain of inequalities $\mu
_{1}\geq\lambda_{1}\geq\mu_{2}\geq\lambda_{2}\geq\cdots\geq\mu_{n}\geq
\lambda_{n}$ (since $\mu\rightharpoonup\lambda$ was defined to be equivalent
to this chain of inequalities). Hence, in particular, we have $\mu_{i}%
\geq\lambda_{i}$ for each $i\in\left\{  1,2,\ldots,n\right\}  $. Thus, for
each $i\in\left\{  1,2,\ldots,n\right\}  $, we have $\mu_{i}\geq\lambda
_{i}\geq0$ (since $\lambda$ is nonnegative). In other words, $\mu$ is
nonnegative. Qed.}. Hence,%
\begin{align}
&  \left\{  \text{snakes }\mu\text{ such that }\mu\rightharpoonup\lambda\text{
and}\ \left\vert \mu\right\vert -\left\vert \lambda\right\vert =k\right\}
\nonumber\\
&  =\left\{  \text{nonnegative snakes }\mu\text{ such that }\mu\rightharpoonup
\lambda\text{ and}\ \left\vert \mu\right\vert -\left\vert \lambda\right\vert
=k\right\} \nonumber\\
&  =\left\{  \text{partitions }\mu\text{ of length }\leq n\text{ such that
}\mu\rightharpoonup\lambda\text{ and}\ \left\vert \mu\right\vert -\left\vert
\lambda\right\vert =k\right\} \nonumber\\
&  \ \ \ \ \ \ \ \ \ \ \left(  \text{since the nonnegative snakes are
precisely the partitions of length }\leq n\right) \nonumber\\
&  =\left\{  \text{partitions }\mu\in\operatorname*{Par}\left[  n\right]
\text{ such that }\mu\rightharpoonup\lambda\text{ and}\ \left\vert
\mu\right\vert -\left\vert \lambda\right\vert =k\right\}
\label{pf.prop.alt.pieri1.c1.pf.1}\\
&  \ \ \ \ \ \ \ \ \ \ \left(  \text{since the partitions of length }\leq
n\text{ are precisely the partitions }\mu\in\operatorname*{Par}\left[
n\right]  \right)  .\nonumber
\end{align}

However, if $\mu\in\operatorname*{Par}\left[  n\right]  $ is a partition, then
the statement \textquotedblleft$\mu\rightharpoonup\lambda$\textquotedblright%
\ is equivalent to the statement \textquotedblleft$\mu/\lambda$ is a
horizontal strip\textquotedblright\ \ \footnote{\textit{Proof.} Let $\mu
\in\operatorname*{Par}\left[  n\right]  $ be a partition. Let us write both
partitions $\mu\in\operatorname*{Par}\left[  n\right]  $ and $\lambda
\in\operatorname*{Par}\left[  n\right]  $ as infinite sequences%
\[
\mu=\left(  \mu_{1},\mu_{2},\mu_{3},\ldots\right)  \text{ and }\lambda=\left(
\lambda_{1},\lambda_{2},\lambda_{3},\ldots\right)  .
\]
Then, every integer $i>n$ satisfies $\mu_{i}=0$ (since $\mu\in
\operatorname*{Par}\left[  n\right]  $) and $\lambda_{i}=0$ (since $\lambda
\in\operatorname*{Par}\left[  n\right]  $). In other words, all the numbers
$\mu_{n+1},\mu_{n+2},\mu_{n+3},\ldots$ as well as $\lambda_{n+1},\lambda
_{n+2},\lambda_{n+3},\ldots$ equal $0$.
\par
Now, we have the following chain of equivalences:%
\begin{align*}
&  \ \left(  \mu/\lambda\text{ is a horizontal strip}\right) \\
&  \Longleftrightarrow\ \left(  \lambda\subseteq\mu\text{ and }\left(
\text{every }i\in\left\{  1,2,3,\ldots\right\}  \text{ satisfies }\lambda
_{i}\geq\mu_{i+1}\right)  \right) \\
&  \ \ \ \ \ \ \ \ \ \ \ \ \ \ \ \ \ \ \ \ \left(  \text{by the definition of
a \textquotedblleft horizontal strip\textquotedblright}\right) \\
&  \Longleftrightarrow\ \underbrace{\left(  \lambda\subseteq\mu\right)
}_{\substack{\Longleftrightarrow\ \left(  \text{each }i\in\left\{
1,2,3,\ldots\right\}  \text{ satisfies }\lambda_{i}\leq\mu_{i}\right)
\\\text{(by the definition of \textquotedblleft}\lambda\subseteq
\mu\text{\textquotedblright)}}}\wedge\left(  \text{every }i\in\left\{
1,2,3,\ldots\right\}  \text{ satisfies }\lambda_{i}\geq\mu_{i+1}\right) \\
&  \Longleftrightarrow\ \left(  \text{each }i\in\left\{  1,2,3,\ldots\right\}
\text{ satisfies }\underbrace{\lambda_{i}\leq\mu_{i}}_{\Longleftrightarrow
\ \left(  \mu_{i}\geq\lambda_{i}\right)  }\right)  \wedge\left(  \text{every
}i\in\left\{  1,2,3,\ldots\right\}  \text{ satisfies }\lambda_{i}\geq\mu
_{i+1}\right) \\
&  \Longleftrightarrow\ \left(  \text{each }i\in\left\{  1,2,3,\ldots\right\}
\text{ satisfies }\mu_{i}\geq\lambda_{i}\right)  \wedge\left(  \text{every
}i\in\left\{  1,2,3,\ldots\right\}  \text{ satisfies }\lambda_{i}\geq\mu
_{i+1}\right) \\
&  \Longleftrightarrow\ \left(  \mu_{1}\geq\lambda_{1}\geq\mu_{2}\geq
\lambda_{2}\geq\mu_{3}\geq\lambda_{3}\geq\cdots\right) \\
&  \Longleftrightarrow\ \left(  \mu_{1}\geq\lambda_{1}\geq\mu_{2}\geq
\lambda_{2}\geq\cdots\geq\mu_{n}\geq\lambda_{n}\right)  \wedge
\underbrace{\left(  \lambda_{n}\geq\mu_{n+1}\geq\lambda_{n+1}\geq\mu_{n+2}%
\geq\lambda_{n+2}\geq\mu_{n+3}\geq\lambda_{n+3}\geq\cdots\right)
}_{\substack{\Longleftrightarrow\ \left(  \lambda_{n}\geq0\geq0\geq0\geq
0\geq0\geq0\geq\cdots\right)  \\\text{(since all the numbers }\mu_{n+1}%
,\mu_{n+2},\mu_{n+3},\ldots\\\text{as well as }\lambda_{n+1},\lambda
_{n+2},\lambda_{n+3},\ldots\text{ equal }0\text{)}}}\\
&  \Longleftrightarrow\ \left(  \mu_{1}\geq\lambda_{1}\geq\mu_{2}\geq
\lambda_{2}\geq\cdots\geq\mu_{n}\geq\lambda_{n}\right)  \wedge
\underbrace{\left(  \lambda_{n}\geq0\geq0\geq0\geq0\geq0\geq0\geq
\cdots\right)  }_{\substack{\Longleftrightarrow\ \left(  \lambda_{n}%
\geq0\right)  \ \Longleftrightarrow\ \left(  \text{true}\right)
\\\text{(since }\lambda_{n}\geq0\text{)}}}\\
&  \Longleftrightarrow\ \left(  \mu_{1}\geq\lambda_{1}\geq\mu_{2}\geq
\lambda_{2}\geq\cdots\geq\mu_{n}\geq\lambda_{n}\right)  \ \Longleftrightarrow
\ \left(  \mu\rightharpoonup\lambda\right)
\end{align*}
(since the statement \textquotedblleft$\mu\rightharpoonup\lambda
$\textquotedblright\ was defined to mean \textquotedblleft$\mu_{1}\geq
\lambda_{1}\geq\mu_{2}\geq\lambda_{2}\geq\cdots\geq\mu_{n}\geq\lambda_{n}%
$\textquotedblright). In other words, the statement \textquotedblleft%
$\mu\rightharpoonup\lambda$\textquotedblright\ is equivalent to the statement
\textquotedblleft$\mu/\lambda$ is a horizontal strip\textquotedblright. Qed.}.
Hence,
\begin{align*}
&  \left\{  \text{partitions }\mu\in\operatorname*{Par}\left[  n\right]
\text{ such that }\mu\rightharpoonup\lambda\text{ and}\ \left\vert
\mu\right\vert -\left\vert \lambda\right\vert =k\right\} \\
&  =\left\{  \text{partitions }\mu\in\operatorname*{Par}\left[  n\right]
\text{ such that }\mu/\lambda\text{ is a horizontal strip and}\ \left\vert
\mu\right\vert -\left\vert \lambda\right\vert =k\right\} \\
&  =\left\{  \text{partitions }\mu\in\operatorname*{Par}\left[  n\right]
\text{ such that }\mu/\lambda\text{ is a horizontal }k\text{-strip}\right\}
\end{align*}
(because the statement \textquotedblleft$\mu/\lambda$ is a horizontal
$k$-strip\textquotedblright\ was defined to mean \textquotedblleft$\mu
/\lambda$ is a horizontal strip and$\ \left\vert \mu\right\vert -\left\vert
\lambda\right\vert =k$\textquotedblright). Thus,
(\ref{pf.prop.alt.pieri1.c1.pf.1}) becomes%
\begin{align*}
&  \left\{  \text{snakes }\mu\text{ such that }\mu\rightharpoonup\lambda\text{
and}\ \left\vert \mu\right\vert -\left\vert \lambda\right\vert =k\right\} \\
&  =\left\{  \text{partitions }\mu\in\operatorname*{Par}\left[  n\right]
\text{ such that }\mu\rightharpoonup\lambda\text{ and}\ \left\vert
\mu\right\vert -\left\vert \lambda\right\vert =k\right\} \\
&  =\left\{  \text{partitions }\mu\in\operatorname*{Par}\left[  n\right]
\text{ such that }\mu/\lambda\text{ is a horizontal }k\text{-strip}\right\}  .
\end{align*}
This proves Claim 1.]
\end{verlong}

From the first Pieri rule (\cite[(2.7.1)]{GriRei}\footnote{This also appears
in \cite[Theorem 5.3]{MenRem15}, in \cite[Theorem 7.15.7]{Stanley-EC2} and in
\cite[Theorem 9.3]{Egge19}.}, applied to $k$ instead of $n$), we obtain%
\[
s_{\lambda}h_{k}=\sum_{\substack{\lambda^{+}\in\operatorname*{Par}%
;\\\lambda^{+}/\lambda\text{ is a horizontal }k\text{-strip}}}s_{\lambda^{+}%
}=\sum_{\substack{\mu\in\operatorname*{Par};\\\mu/\lambda\text{ is a
horizontal }k\text{-strip}}}s_{\mu}%
\]
(here, we have renamed the summation index $\lambda^{+}$ as $\mu$).

\begin{vershort}
Evaluating both sides of this equality at $x_{1},x_{2},\ldots,x_{n}$, we find%
\begin{align*}
&  \left(  s_{\lambda}h_{k}\right)  \left(  x_{1},x_{2},\ldots,x_{n}\right) \\
&  =\sum_{\substack{\mu\in\operatorname*{Par};\\\mu/\lambda\text{ is a
horizontal }k\text{-strip}}}s_{\mu}\left(  x_{1},x_{2},\ldots,x_{n}\right) \\
&  =\sum_{\substack{\mu\in\operatorname*{Par};\\\mu/\lambda\text{ is a
horizontal }k\text{-strip;}\\\mu\text{ has length }\leq n}}s_{\mu}\left(
x_{1},x_{2},\ldots,x_{n}\right)  +\sum_{\substack{\mu\in\operatorname*{Par}%
;\\\mu/\lambda\text{ is a horizontal }k\text{-strip;}\\\mu\text{ has length
}>n}}\ \ \underbrace{s_{\mu}\left(  x_{1},x_{2},\ldots,x_{n}\right)
}_{\substack{=0\\\text{(by (\ref{pf.lem.alt.lrr.=0}),}\\\text{applied to }%
\mu\text{ instead of }\lambda\text{)}}}\\
&  =\sum_{\substack{\mu\in\operatorname*{Par};\\\mu/\lambda\text{ is a
horizontal }k\text{-strip;}\\\mu\text{ has length }\leq n}}s_{\mu}\left(
x_{1},x_{2},\ldots,x_{n}\right)  +\underbrace{\sum_{\substack{\mu
\in\operatorname*{Par};\\\mu/\lambda\text{ is a horizontal }k\text{-strip;}%
\\\mu\text{ has length }>n}}0}_{=0}\\
&  =\sum_{\substack{\mu\in\operatorname*{Par};\\\mu/\lambda\text{ is a
horizontal }k\text{-strip;}\\\mu\text{ has length }\leq n}}s_{\mu}\left(
x_{1},x_{2},\ldots,x_{n}\right)  .
\end{align*}

\end{vershort}

\begin{verlong}
Evaluating both sides of this equality at $x_{1},x_{2},\ldots,x_{n}$, we find%
\begin{align*}
&  \left(  s_{\lambda}h_{k}\right)  \left(  x_{1},x_{2},\ldots,x_{n}\right) \\
&  =\left(  \sum_{\substack{\mu\in\operatorname*{Par};\\\mu/\lambda\text{ is a
horizontal }k\text{-strip}}}s_{\mu}\right)  \left(  x_{1},x_{2},\ldots
,x_{n}\right) \\
&  =\sum_{\substack{\mu\in\operatorname*{Par};\\\mu/\lambda\text{ is a
horizontal }k\text{-strip}}}s_{\mu}\left(  x_{1},x_{2},\ldots,x_{n}\right) \\
&  =\sum_{\substack{\mu\in\operatorname*{Par};\\\mu/\lambda\text{ is a
horizontal }k\text{-strip;}\\\mu\text{ has length }\leq n}}s_{\mu}\left(
x_{1},x_{2},\ldots,x_{n}\right)  +\sum_{\substack{\mu\in\operatorname*{Par}%
;\\\mu/\lambda\text{ is a horizontal }k\text{-strip;}\\\mu\text{ has length
}>n}}\ \ \underbrace{s_{\mu}\left(  x_{1},x_{2},\ldots,x_{n}\right)
}_{\substack{=0\\\text{(by (\ref{pf.lem.alt.lrr.=0}),}\\\text{applied to }%
\mu\text{ instead of }\lambda\text{)}}}\\
&  \ \ \ \ \ \ \ \ \ \ \ \ \ \ \ \ \ \ \ \ \left(
\begin{array}
[c]{c}%
\text{since each }\mu\in\operatorname*{Par}\text{ either has length }\leq
n\text{ or has length }>n\\
\text{(but not both at the same time)}%
\end{array}
\right) \\
&  =\sum_{\substack{\mu\in\operatorname*{Par};\\\mu/\lambda\text{ is a
horizontal }k\text{-strip;}\\\mu\text{ has length }\leq n}}s_{\mu}\left(
x_{1},x_{2},\ldots,x_{n}\right)  +\underbrace{\sum_{\substack{\mu
\in\operatorname*{Par};\\\mu/\lambda\text{ is a horizontal }k\text{-strip;}%
\\\mu\text{ has length }>n}}0}_{=0}\\
&  =\sum_{\substack{\mu\in\operatorname*{Par};\\\mu/\lambda\text{ is a
horizontal }k\text{-strip;}\\\mu\text{ has length }\leq n}}s_{\mu}\left(
x_{1},x_{2},\ldots,x_{n}\right)  .
\end{align*}

\end{verlong}

\noindent In view of%
\begin{align*}
\left(  s_{\lambda}h_{k}\right)  \left(  x_{1},x_{2},\ldots,x_{n}\right)   &
=\underbrace{s_{\lambda}\left(  x_{1},x_{2},\ldots,x_{n}\right)
}_{\substack{=\overline{s}_{\lambda}\\\text{(by Lemma \ref{lem.alt.sbar})}%
}}\cdot\underbrace{h_{k}\left(  x_{1},x_{2},\ldots,x_{n}\right)
}_{\substack{=h_{k}^{+}\\\text{(since }h_{k}^{+}=h_{k}\left(  x_{1}%
,x_{2},\ldots,x_{n}\right)  \text{)}}}\\
&  =\overline{s}_{\lambda}\cdot h_{k}^{+}=h_{k}^{+}\cdot\overline{s}_{\lambda
},
\end{align*}
we can rewrite this as%
\begin{equation}
h_{k}^{+}\cdot\overline{s}_{\lambda}=\sum_{\substack{\mu\in\operatorname*{Par}%
;\\\mu/\lambda\text{ is a horizontal }k\text{-strip;}\\\mu\text{ has length
}\leq n}}s_{\mu}\left(  x_{1},x_{2},\ldots,x_{n}\right)  .
\label{pf.prop.alt.pieri1.s1.3}%
\end{equation}
But we have the following equality of summation signs:%
\begin{align*}
\sum_{\substack{\mu\in\operatorname*{Par};\\\mu/\lambda\text{ is a horizontal
}k\text{-strip;}\\\mu\text{ has length }\leq n}}  &  =\sum_{\substack{\mu
\in\operatorname*{Par};\\\mu\text{ has length }\leq n\text{;}\\\mu
/\lambda\text{ is a horizontal }k\text{-strip}}}=\sum_{\substack{\mu
\in\operatorname*{Par}\left[  n\right]  ;\\\mu/\lambda\text{ is a horizontal
}k\text{-strip}}}\\
&  \ \ \ \ \ \ \ \ \ \ \left(
\begin{array}
[c]{c}%
\text{since the partitions }\mu\in\operatorname*{Par}\text{ that have length
}\leq n\\
\text{are precisely the partitions }\mu\in\operatorname*{Par}\left[  n\right]
\end{array}
\right)  .
\end{align*}
Thus, we can rewrite (\ref{pf.prop.alt.pieri1.s1.3}) as%
\begin{align*}
h_{k}^{+}\cdot\overline{s}_{\lambda}  &  =\sum_{\substack{\mu\in
\operatorname*{Par}\left[  n\right]  ;\\\mu/\lambda\text{ is a horizontal
}k\text{-strip}}}\underbrace{s_{\mu}\left(  x_{1},x_{2},\ldots,x_{n}\right)
}_{\substack{=\overline{s}_{\mu}\\\text{(since Lemma \ref{lem.alt.sbar}%
}\\\text{(applied to }\mu\text{ instead of }\lambda\text{)}\\\text{yields
}\overline{s}_{\mu}=s_{\mu}\left(  x_{1},x_{2},\ldots,x_{n}\right)  \text{)}%
}}\\
&  =\underbrace{\sum_{\substack{\mu\in\operatorname*{Par}\left[  n\right]
;\\\mu/\lambda\text{ is a horizontal }k\text{-strip}}}}_{\substack{=\sum
_{\substack{\mu\text{ is a snake;}\\\mu\rightharpoonup\lambda;\ \left\vert
\mu\right\vert -\left\vert \lambda\right\vert =k}}\\\text{(by Claim 1)}%
}}\overline{s}_{\mu}=\sum_{\substack{\mu\text{ is a snake;}\\\mu
\rightharpoonup\lambda;\ \left\vert \mu\right\vert -\left\vert \lambda
\right\vert =k}}\overline{s}_{\mu}.
\end{align*}
This proves (\ref{eq.prop.alt.pieri1.eq}). Thus, Proposition
\ref{prop.alt.pieri1} is proved under the assumption that $\lambda$ is
nonnegative. This completes Step 1.

\textit{Step 2:} Let us now prove Proposition \ref{prop.alt.pieri1} in the
general case.

The snake $\lambda$ may or may not be nonnegative. However, there exists some
integer $d$ such that the snake $\lambda+d$ is nonnegative\footnote{Indeed,
this can be proved in the same way as it was proved during Step 2 of the proof
of Lemma \ref{lem.alt.L} above.}. Consider this $d$.

The map $\left\{  \text{snakes}\right\}  \rightarrow\left\{  \text{snakes}%
\right\}  ,\ \mu\mapsto\mu+d$ is a bijection. (Indeed, its inverse is the map
$\left\{  \text{snakes}\right\}  \rightarrow\left\{  \text{snakes}\right\}
,\ \mu\mapsto\mu-d$.) Moreover, every snake $\mu$ satisfies
\begin{align}
\underbrace{\left\vert \mu+d\right\vert }_{\substack{=\left\vert
\mu\right\vert +nd\\\text{(by Proposition \ref{prop.size-snake.easies}
\textbf{(b)})}}}-\underbrace{\left\vert \lambda+d\right\vert }%
_{\substack{=\left\vert \lambda\right\vert +nd\\\text{(by Proposition
\ref{prop.size-snake.easies} \textbf{(b)})}}}  &  =\left(  \left\vert
\mu\right\vert +nd\right)  -\left(  \left\vert \lambda\right\vert +nd\right)
\nonumber\\
&  =\left\vert \mu\right\vert -\left\vert \lambda\right\vert .
\label{pf.prop.alt.pieri1.s2.0}%
\end{align}

For any snake $\mu$, we have the logical equivalence%
\[
\left(  \mu\rightharpoonup\lambda\right)  \ \Longleftrightarrow\ \left(
\mu+d\rightharpoonup\lambda+d\right)
\]
(by Proposition \ref{prop.horstr.easies} \textbf{(c)}). In other words, for
any snake $\mu$, we have the logical equivalence%
\[
\left(  \mu+d\rightharpoonup\lambda+d\right)  \ \Longleftrightarrow\ \left(
\mu\rightharpoonup\lambda\right)  .
\]
Hence, we have the following equality of summation signs:%
\begin{equation}
\sum_{\substack{\mu\text{ is a snake;}\\\mu+d\rightharpoonup\lambda
+d;\ \left\vert \mu+d\right\vert -\left\vert \lambda+d\right\vert =k}%
}=\sum_{\substack{\mu\text{ is a snake;}\\\mu\rightharpoonup\lambda
;\ \left\vert \mu+d\right\vert -\left\vert \lambda+d\right\vert =k}%
}=\sum_{\substack{\mu\text{ is a snake;}\\\mu\rightharpoonup\lambda
;\ \left\vert \mu\right\vert -\left\vert \lambda\right\vert =k}}
\label{pf.prop.alt.pieri1.s2.sumeq}%
\end{equation}
(by (\ref{pf.prop.alt.pieri1.s2.0})).

The snake $\lambda+d$ is nonnegative; thus, we can apply Proposition
\ref{prop.alt.pieri1} to $\lambda+d$ instead of $\lambda$ (because in Step 1,
we have proved that Proposition \ref{prop.alt.pieri1} holds in the particular
case when $\lambda$ is nonnegative). Thus we conclude that
\begin{align*}
h_{k}^{+}\cdot\overline{s}_{\lambda+d}  &  =\sum_{\substack{\mu\text{ is a
snake;}\\\mu\rightharpoonup\lambda+d;\ \left\vert \mu\right\vert -\left\vert
\lambda+d\right\vert =k}}\overline{s}_{\mu}=\underbrace{\sum_{\substack{\mu
\text{ is a snake;}\\\mu+d\rightharpoonup\lambda+d;\ \left\vert \mu
+d\right\vert -\left\vert \lambda+d\right\vert =k}}}_{\substack{=\sum
_{\substack{\mu\text{ is a snake;}\\\mu\rightharpoonup\lambda;\ \left\vert
\mu\right\vert -\left\vert \lambda\right\vert =k}}\\\text{(by
(\ref{pf.prop.alt.pieri1.s2.sumeq}))}}}\underbrace{\overline{s}_{\mu+d}%
}_{\substack{=x_{\Pi}^{d}\overline{s}_{\mu}\\\text{(by Lemma
\ref{lem.alt.smove},}\\\text{applied to }\mu\text{ instead of }\lambda
\text{)}}}\\
&  \ \ \ \ \ \ \ \ \ \ \left(
\begin{array}
[c]{c}%
\text{here, we have substituted }\mu+d\text{ for }\mu\text{ in the sum,}\\
\text{since the map }\left\{  \text{snakes}\right\}  \rightarrow\left\{
\text{snakes}\right\}  ,\ \mu\mapsto\mu+d\\
\text{is a bijection}%
\end{array}
\right) \\
&  =\sum_{\substack{\mu\text{ is a snake;}\\\mu\rightharpoonup\lambda
;\ \left\vert \mu\right\vert -\left\vert \lambda\right\vert =k}}x_{\Pi}%
^{d}\overline{s}_{\mu}=x_{\Pi}^{d}\sum_{\substack{\mu\text{ is a snake;}%
\\\mu\rightharpoonup\lambda;\ \left\vert \mu\right\vert -\left\vert
\lambda\right\vert =k}}\overline{s}_{\mu}.
\end{align*}
Comparing this with%
\[
h_{k}^{+}\cdot\underbrace{\overline{s}_{\lambda+d}}_{\substack{=x_{\Pi}%
^{d}\overline{s}_{\lambda}\\\text{(by Lemma \ref{lem.alt.smove})}}}=h_{k}%
^{+}\cdot x_{\Pi}^{d}\overline{s}_{\lambda},
\]
we obtain%
\[
h_{k}^{+}\cdot x_{\Pi}^{d}\overline{s}_{\lambda}=x_{\Pi}^{d}\sum
_{\substack{\mu\text{ is a snake;}\\\mu\rightharpoonup\lambda;\ \left\vert
\mu\right\vert -\left\vert \lambda\right\vert =k}}\overline{s}_{\mu}.
\]
We can divide both sides of this equality by $x_{\Pi}^{d}$ (since $x_{\Pi}%
^{d}\in\mathcal{L}$ is invertible (because $x_{\Pi}\in\mathcal{L}$ is
invertible)), and thus obtain%
\[
h_{k}^{+}\cdot\overline{s}_{\lambda}=\sum_{\substack{\mu\text{ is a
snake;}\\\mu\rightharpoonup\lambda;\ \left\vert \mu\right\vert -\left\vert
\lambda\right\vert =k}}\overline{s}_{\mu}.
\]
This proves Proposition \ref{prop.alt.pieri1}. Thus, Step 2 is complete, and
Proposition \ref{prop.alt.pieri1} is proven.
\end{proof}

Using Lemma \ref{lem.alt.inverses}, we can \textquotedblleft turn Proposition
\ref{prop.alt.pieri1} upside down\textquotedblright, obtaining the following
analogous result for $h_{k}^{-}$ instead of $h_{k}^{+}$:

\begin{proposition}
\label{prop.alt.pieri2}Let $\lambda$ be a snake. Let $k\in\mathbb{Z}$. Then,%
\begin{equation}
h_{k}^{-}\cdot\overline{s}_{\lambda}=\sum_{\substack{\mu\text{ is a
snake;}\\\lambda\rightharpoonup\mu;\ \left\vert \lambda\right\vert -\left\vert
\mu\right\vert =k}}\overline{s}_{\mu}. \label{eq.prop.alt.pieri2.eq}%
\end{equation}

\end{proposition}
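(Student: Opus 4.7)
The plan is to derive Proposition \ref{prop.alt.pieri2} from Proposition \ref{prop.alt.pieri1} by the substitution $x_i \mapsto x_i^{-1}$, using Lemma \ref{lem.alt.inverses} as the key dictionary. Concretely, I would start by applying Proposition \ref{prop.alt.pieri1} to the snake $\lambda^\vee$ (which is indeed a snake by Proposition \ref{prop.snakes.trivia} \textbf{(b)}) in place of $\lambda$, obtaining
\[
h_k^+ \cdot \overline{s}_{\lambda^\vee} = \sum_{\substack{\nu \text{ is a snake};\\ \nu \rightharpoonup \lambda^\vee;\ |\nu| - |\lambda^\vee| = k}} \overline{s}_\nu.
\]

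Next, I would substitute $x_1^{-1}, x_2^{-1}, \ldots, x_n^{-1}$ for $x_1, x_2, \ldots, x_n$ throughout this equality. The factor $h_k^+$ turns into $h_k\bigl(x_1^{-1}, \ldots, x_n^{-1}\bigr) = h_k^-$; by Lemma \ref{lem.alt.inverses}, $\overline{s}_{\lambda^\vee}$ becomes $\overline{s}_{(\lambda^\vee)^\vee} = \overline{s}_\lambda$, and each $\overline{s}_\nu$ on the right becomes $\overline{s}_{\nu^\vee}$. The result is
\[
h_k^- \cdot \overline{s}_\lambda = \sum_{\substack{\nu \text{ is a snake};\\ \nu \rightharpoonup \lambda^\vee;\ |\nu| - |\lambda^\vee| = k}} \overline{s}_{\nu^\vee}.
\]

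Finally, I would reindex the sum by $\mu = \nu^\vee$. The map $\nu \mapsto \nu^\vee$ is a bijection on the set of snakes (since $(\nu^\vee)^\vee = \nu$ and Proposition \ref{prop.snakes.trivia} \textbf{(b)} shows it preserves the snake property). Under this substitution, the condition $\nu \rightharpoonup \lambda^\vee$ becomes $\mu^\vee \rightharpoonup \lambda^\vee$, which by Proposition \ref{prop.horstr.easies} \textbf{(b)} is equivalent to $\lambda \rightharpoonup \mu$; and the condition $|\nu| - |\lambda^\vee| = k$ becomes $|\mu^\vee| - |\lambda^\vee| = k$, which by Proposition \ref{prop.size-snake.easies} \textbf{(c)} rewrites as $-|\mu| + |\lambda| = k$, i.e.\ $|\lambda| - |\mu| = k$. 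Substituting, the right-hand side becomes $\sum_{\mu \text{ snake},\ \lambda \rightharpoonup \mu,\ |\lambda| - |\mu| = k} \overline{s}_\mu$, which is exactly (\ref{eq.prop.alt.pieri2.eq}).

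I do not expect any substantial obstacle: all three transformations (the substitution, the Lemma \ref{lem.alt.inverses} identification, and the reindexing) are purely bookkeeping, and the only care needed is to verify that the two conditions under the summation sign translate correctly via Propositions \ref{prop.horstr.easies} \textbf{(b)} and \ref{prop.size-snake.easies} \textbf{(c)}. The mild subtlety is that the substitution $x_i \mapsto x_i^{-1}$ must be justified as a ring endomorphism of $\mathcal{L}$ (which it is, since each $x_i$ is invertible in $\mathcal{L}$), so that it distributes over products and sums and preserves the identity we started from.
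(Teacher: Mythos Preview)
Your proposal is correct and follows essentially the same approach as the paper: both proofs derive the result from Proposition~\ref{prop.alt.pieri1} via the substitution $x_i \mapsto x_i^{-1}$, using Lemma~\ref{lem.alt.inverses} together with Propositions~\ref{prop.horstr.easies}~\textbf{(b)} and~\ref{prop.size-snake.easies}~\textbf{(c)} to translate the summation conditions. The only difference is a minor reordering: you apply Proposition~\ref{prop.alt.pieri1} to $\lambda^\vee$ first and then substitute, whereas the paper substitutes into the identity for $\lambda$ and then replaces $\lambda$ by $\lambda^\vee$ at the end.
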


\begin{proof}
[Proof of Proposition \ref{prop.alt.pieri2}.]It is easy to see (from
Definition \ref{def.snakes} \textbf{(d)}) that $\left(  \lambda^{\vee}\right)
^{\vee}=\lambda$. Likewise, $\left(  \mu^{\vee}\right)  ^{\vee}=\mu$ for any
snake $\mu$. Hence, the map $\left\{  \text{snakes}\right\}  \rightarrow
\left\{  \text{snakes}\right\}  ,\ \mu\mapsto\mu^{\vee}$ (which is
well-defined because of Proposition \ref{prop.snakes.trivia} \textbf{(b)}) is
inverse to itself. Thus, this map is a bijection.

If $\mu$ is a snake, then
\begin{equation}
\text{we have }\mu\rightharpoonup\lambda\text{ if and only if }\lambda^{\vee
}\rightharpoonup\mu^{\vee} \label{pf.prop.alt.pieri2.1}%
\end{equation}
(by Proposition \ref{prop.horstr.easies} \textbf{(b)}). Moreover, if $\mu$ is
a snake, then%
\begin{align}
&  \underbrace{\left\vert \lambda^{\vee}\right\vert }_{\substack{=-\left\vert
\lambda\right\vert \\\text{(by Proposition \ref{prop.size-snake.easies}
\textbf{(c)})}}}-\underbrace{\left\vert \mu^{\vee}\right\vert }%
_{\substack{=-\left\vert \mu\right\vert \\\text{(by Proposition
\ref{prop.size-snake.easies} \textbf{(c)})}}}\nonumber\\
&  =\left(  -\left\vert \lambda\right\vert \right)  -\left(  -\left\vert
\mu\right\vert \right)  =\left\vert \mu\right\vert -\left\vert \lambda
\right\vert . \label{pf.prop.alt.pieri2.2}%
\end{align}

\begin{vershort}
But comparing the definitions of $h_{k}^{-}$ and $h_{k}^{+}$ easily yields%
\[
h_{k}^{-}=h_{k}^{+}\left(  x_{1}^{-1},x_{2}^{-1},\ldots,x_{n}^{-1}\right)  .
\]
Also, Lemma \ref{lem.alt.inverses} yields $\overline{s}_{\lambda^{\vee}%
}=\overline{s}_{\lambda}\left(  x_{1}^{-1},x_{2}^{-1},\ldots,x_{n}%
^{-1}\right)  $. Multiplying these two equalities, we obtain%
\begin{align*}
h_{k}^{-}\cdot\overline{s}_{\lambda^{\vee}}  &  =h_{k}^{+}\left(  x_{1}%
^{-1},x_{2}^{-1},\ldots,x_{n}^{-1}\right)  \cdot\overline{s}_{\lambda}\left(
x_{1}^{-1},x_{2}^{-1},\ldots,x_{n}^{-1}\right) \\
&  =\underbrace{\left(  h_{k}^{+}\cdot\overline{s}_{\lambda}\right)
}_{\substack{=\sum_{\substack{\mu\text{ is a snake;}\\\mu\rightharpoonup
\lambda;\ \left\vert \mu\right\vert -\left\vert \lambda\right\vert
=k}}\overline{s}_{\mu}\\\text{(by Proposition \ref{prop.alt.pieri1})}}}\left(
x_{1}^{-1},x_{2}^{-1},\ldots,x_{n}^{-1}\right) \\
&  =\underbrace{\sum_{\substack{\mu\text{ is a snake;}\\\mu\rightharpoonup
\lambda;\ \left\vert \mu\right\vert -\left\vert \lambda\right\vert =k}%
}}_{\substack{=\sum_{\substack{\mu\text{ is a snake;}\\\lambda^{\vee
}\rightharpoonup\mu^{\vee};\ \left\vert \mu\right\vert -\left\vert
\lambda\right\vert =k}}\\\text{(by (\ref{pf.prop.alt.pieri2.1}))}}%
}\overline{s}_{\mu}\left(  x_{1}^{-1},x_{2}^{-1},\ldots,x_{n}^{-1}\right) \\
&  =\sum_{\substack{\mu\text{ is a snake;}\\\lambda^{\vee}\rightharpoonup
\mu^{\vee};\ \left\vert \mu\right\vert -\left\vert \lambda\right\vert
=k}}\overline{s}_{\mu}\left(  x_{1}^{-1},x_{2}^{-1},\ldots,x_{n}^{-1}\right)
.
\end{align*}

\end{vershort}

\begin{verlong}
But we have
\begin{align*}
h_{k}^{-}  &  =h_{k}\left(  x_{1}^{-1},x_{2}^{-1},\ldots,x_{n}^{-1}\right)
=h_{k}^{+}\left(  x_{1}^{-1},x_{2}^{-1},\ldots,x_{n}^{-1}\right) \\
&  \ \ \ \ \ \ \ \ \ \ \left(
\begin{array}
[c]{c}%
\text{since }h_{k}^{+}=h_{k}\left(  x_{1},x_{2},\ldots,x_{n}\right)  \text{
and}\\
\text{thus }h_{k}^{+}\left(  x_{1}^{-1},x_{2}^{-1},\ldots,x_{n}^{-1}\right)
=h_{k}\left(  x_{1}^{-1},x_{2}^{-1},\ldots,x_{n}^{-1}\right)
\end{array}
\right)  .
\end{align*}
Also, Lemma \ref{lem.alt.inverses} yields $\overline{s}_{\lambda^{\vee}%
}=\overline{s}_{\lambda}\left(  x_{1}^{-1},x_{2}^{-1},\ldots,x_{n}%
^{-1}\right)  $. Multiplying these two equalities, we obtain%
\begin{align*}
h_{k}^{-}\cdot\overline{s}_{\lambda^{\vee}}  &  =h_{k}^{+}\left(  x_{1}%
^{-1},x_{2}^{-1},\ldots,x_{n}^{-1}\right)  \cdot\overline{s}_{\lambda}\left(
x_{1}^{-1},x_{2}^{-1},\ldots,x_{n}^{-1}\right) \\
&  =\underbrace{\left(  h_{k}^{+}\cdot\overline{s}_{\lambda}\right)
}_{\substack{=\sum_{\substack{\mu\text{ is a snake;}\\\mu\rightharpoonup
\lambda;\ \left\vert \mu\right\vert -\left\vert \lambda\right\vert
=k}}\overline{s}_{\mu}\\\text{(by Proposition \ref{prop.alt.pieri1})}}}\left(
x_{1}^{-1},x_{2}^{-1},\ldots,x_{n}^{-1}\right) \\
&  =\left(  \sum_{\substack{\mu\text{ is a snake;}\\\mu\rightharpoonup
\lambda;\ \left\vert \mu\right\vert -\left\vert \lambda\right\vert
=k}}\overline{s}_{\mu}\right)  \left(  x_{1}^{-1},x_{2}^{-1},\ldots,x_{n}%
^{-1}\right) \\
&  =\underbrace{\sum_{\substack{\mu\text{ is a snake;}\\\mu\rightharpoonup
\lambda;\ \left\vert \mu\right\vert -\left\vert \lambda\right\vert =k}%
}}_{\substack{=\sum_{\substack{\mu\text{ is a snake;}\\\lambda^{\vee
}\rightharpoonup\mu^{\vee};\ \left\vert \mu\right\vert -\left\vert
\lambda\right\vert =k}}\\\text{(since the statement \textquotedblleft}%
\mu\rightharpoonup\lambda\text{\textquotedblright}\\\text{is equivalent to
\textquotedblleft}\lambda^{\vee}\rightharpoonup\mu^{\vee}%
\text{\textquotedblright}\\\text{(by (\ref{pf.prop.alt.pieri2.1})))}%
}}\overline{s}_{\mu}\left(  x_{1}^{-1},x_{2}^{-1},\ldots,x_{n}^{-1}\right) \\
&  =\sum_{\substack{\mu\text{ is a snake;}\\\lambda^{\vee}\rightharpoonup
\mu^{\vee};\ \left\vert \mu\right\vert -\left\vert \lambda\right\vert
=k}}\overline{s}_{\mu}\left(  x_{1}^{-1},x_{2}^{-1},\ldots,x_{n}^{-1}\right)
.
\end{align*}

\end{verlong}

\noindent Comparing this with%
\begin{align*}
\sum_{\substack{\mu\text{ is a snake;}\\\lambda^{\vee}\rightharpoonup
\mu;\ \left\vert \lambda^{\vee}\right\vert -\left\vert \mu\right\vert
=k}}\overline{s}_{\mu}  &  =\underbrace{\sum_{\substack{\mu\text{ is a
snake;}\\\lambda^{\vee}\rightharpoonup\mu^{\vee};\ \left\vert \lambda^{\vee
}\right\vert -\left\vert \mu^{\vee}\right\vert =k}}}_{\substack{=\sum
_{\substack{\mu\text{ is a snake;}\\\lambda^{\vee}\rightharpoonup\mu^{\vee
};\ \left\vert \mu\right\vert -\left\vert \lambda\right\vert =k}}\\\text{(by
(\ref{pf.prop.alt.pieri2.2}))}}}\ \ \underbrace{\overline{s}_{\mu^{\vee}}%
}_{\substack{=\overline{s}_{\mu}\left(  x_{1}^{-1},x_{2}^{-1},\ldots
,x_{n}^{-1}\right)  \\\text{(by Lemma \ref{lem.alt.inverses},}\\\text{applied
to }\mu\text{ instead of }\lambda\text{)}}}\\
&  \ \ \ \ \ \ \ \ \ \ \left(
\begin{array}
[c]{c}%
\text{here, we have substituted }\mu^{\vee}\text{ for }\mu\text{ in the sum,
since}\\
\text{the map }\left\{  \text{snakes}\right\}  \rightarrow\left\{
\text{snakes}\right\}  ,\ \mu\mapsto\mu^{\vee}\text{ is a bijection}%
\end{array}
\right) \\
&  =\sum_{\substack{\mu\text{ is a snake;}\\\lambda^{\vee}\rightharpoonup
\mu^{\vee};\ \left\vert \mu\right\vert -\left\vert \lambda\right\vert
=k}}\overline{s}_{\mu}\left(  x_{1}^{-1},x_{2}^{-1},\ldots,x_{n}^{-1}\right)
,
\end{align*}
we obtain%
\[
h_{k}^{-}\cdot\overline{s}_{\lambda^{\vee}}=\sum_{\substack{\mu\text{ is a
snake;}\\\lambda^{\vee}\rightharpoonup\mu;\ \left\vert \lambda^{\vee
}\right\vert -\left\vert \mu\right\vert =k}}\overline{s}_{\mu}.
\]

We have proved this equality for any snake $\lambda$. Thus, we can apply it to
$\lambda^{\vee}$ instead of $\lambda$ (since Proposition
\ref{prop.snakes.trivia} \textbf{(b)} shows that $\lambda^{\vee}$ is a snake).
We thus obtain%
\[
h_{k}^{-}\cdot\overline{s}_{\left(  \lambda^{\vee}\right)  ^{\vee}}%
=\sum_{\substack{\mu\text{ is a snake;}\\\left(  \lambda^{\vee}\right)
^{\vee}\rightharpoonup\mu;\ \left\vert \left(  \lambda^{\vee}\right)  ^{\vee
}\right\vert -\left\vert \mu\right\vert =k}}\overline{s}_{\mu}.
\]
In view of $\left(  \lambda^{\vee}\right)  ^{\vee}=\lambda$, this can be
rewritten as follows:%
\[
h_{k}^{-}\cdot\overline{s}_{\lambda}=\sum_{\substack{\mu\text{ is a
snake;}\\\lambda\rightharpoonup\mu;\ \left\vert \lambda\right\vert -\left\vert
\mu\right\vert =k}}\overline{s}_{\mu}.
\]
This proves Proposition \ref{prop.alt.pieri2}.
\end{proof}

\subsection{Computing $\overline{s}_{\alpha}$}

\begin{convention}
From now on, for the rest of Section \ref{sect.pf}, we assume that $n\geq2$.
\end{convention}

Our next goal is to obtain a simple formula for the Schur polynomial
$\overline{s}_{\alpha}\left(  x_{1},x_{2},\ldots,x_{n}\right)  $, where
$\alpha$ is as in Theorem \ref{thm.main}. The first step is the following definition:

\begin{definition}
\label{def.ominus}Let $a,b\in\mathbb{N}$. Then, $b\ominus a$ will denote the
snake $\left(  b,0^{n-2},-a\right)  $. (This is indeed a well-defined snake,
since $n\geq2$ and since $b\geq0\geq-a$.)
\end{definition}

\begin{proposition}
\label{prop.ominus.h}Let $a,b\in\mathbb{Z}$. Then,%
\begin{equation}
h_{a}^{-}h_{b}^{+}=\sum_{k=0}^{\min\left\{  a,b\right\}  }\overline
{s}_{\left(  b-k\right)  \ominus\left(  a-k\right)  }.
\label{eq.prop.ominus.h.eq}%
\end{equation}

\end{proposition}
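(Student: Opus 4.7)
The plan is to reduce Proposition~\ref{prop.ominus.h} to a single application of the Pieri rule (Proposition~\ref{prop.alt.pieri2}). The key observation is that $h_b^+$ is itself a Schur Laurent polynomial, so the product $h_a^- h_b^+$ can be evaluated by multiplying an $h_k^-$ into a Schur Laurent polynomial.

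First I would dispose of the degenerate cases. If $a<0$, then $h_a=0$, so $h_a^-=0$ and the left side of \eqref{eq.prop.ominus.h.eq} vanishes; moreover $\min\{a,b\}<0$, so the sum on the right is empty. The same holds if $b<0$. Thus we may assume $a,b\in\mathbb{N}$. In that case Lemma~\ref{lem.alt.h} (applied with $k=b$) tells us that $(b)=(b,0,\dots,0)$ is a nonnegative snake and $\overline{s}_{(b)}=h_b^+$. Hence
\[
h_a^-\, h_b^+ \;=\; h_a^-\cdot\overline{s}_{(b)}.
\]
Now apply Proposition~\ref{prop.alt.pieri2} with $\lambda=(b)$ and $k$ replaced by $a$:
\[
h_a^-\cdot\overline{s}_{(b)} \;=\; \sum_{\substack{\mu\text{ is a snake;}\\ (b)\rightharpoonup\mu;\ \left\vert (b)\right\vert -\left\vert \mu\right\vert = a}} \overline{s}_{\mu}.
\]

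The remaining step is a bookkeeping argument identifying the index set of this sum with $\{0,1,\dots,\min\{a,b\}\}$ via the correspondence $k\leftrightarrow \mu=(b-k)\ominus(a-k)$. Unfolding Definition~\ref{def.horstr}, the condition $(b)\rightharpoonup\mu$ reads
\[
b\geq \mu_1\geq 0\geq \mu_2\geq 0\geq\cdots\geq 0\geq \mu_n,
\]
which forces $\mu_2=\mu_3=\cdots=\mu_{n-1}=0$, $0\leq\mu_1\leq b$, and $\mu_n\leq 0$. Thus every such snake has the form $\mu=(\mu_1,0^{n-2},\mu_n)$, and $|\mu|=\mu_1+\mu_n$. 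The size constraint $|(b)|-|\mu|=a$ becomes $\mu_1+\mu_n=b-a$. Setting $k:=b-\mu_1$, this translates to $\mu_1=b-k$ and $\mu_n=-(a-k)$, so $\mu=(b-k)\ominus(a-k)$ in the notation of Definition~\ref{def.ominus}. The constraints $0\leq\mu_1\leq b$ and $\mu_n\leq 0$ become $0\leq k\leq b$ and $k\leq a$, i.e.\ $0\leq k\leq\min\{a,b\}$. This correspondence is clearly a bijection, which yields \eqref{eq.prop.ominus.h.eq}.

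There is no real obstacle here; the proof is essentially a one-line application of the Pieri rule once we notice $h_b^+=\overline{s}_{(b)}$. The only minor point requiring care is the reindexing of the sum, which amounts to observing that the snakes $\mu$ satisfying $(b)\rightharpoonup\mu$ are exactly those supported at the first and last positions, and then matching the size condition against the parameter $k$.
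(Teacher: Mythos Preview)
Your proof is correct and follows essentially the same approach as the paper: both handle the degenerate cases $a<0$ or $b<0$ first, then use Lemma~\ref{lem.alt.h} to rewrite $h_b^+=\overline{s}_{(b)}$, apply Proposition~\ref{prop.alt.pieri2} with $\lambda=(b)$, and finish by identifying the snakes $\mu$ satisfying $(b)\rightharpoonup\mu$ and $b-|\mu|=a$ with the snakes $(b-k)\ominus(a-k)$ for $k\in\{0,1,\dots,\min\{a,b\}\}$ via the substitution $k=b-\mu_1$.
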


\begin{proof}
[Proof of Proposition \ref{prop.ominus.h}.]We must prove the equality
(\ref{eq.prop.ominus.h.eq}). If (at least) one of the integers $a$ and $b$ is
negative, then this equality boils down to $0=0$\ \ \ \ \footnote{Indeed, its
left hand side is $0$ in this case because every negative integer $k$
satisfies $h_{k}^{-}=0$ and $h_{k}^{+}=0$; but its right hand side is also $0$
in this case, because the negativity of $\min\left\{  a,b\right\}  $ causes
the sum to become empty.}. Hence, for the rest of this proof, we WLOG assume
that none of the integers $a$ and $b$ is negative. Hence, $a,b\in\mathbb{N}$.

Note that each $k\in\left\{  0,1,\ldots,\min\left\{  a,b\right\}  \right\}  $
satisfies $b-k\in\mathbb{N}$ (since $k\leq\min\left\{  a,b\right\}  \leq b$)
and $a-k\in\mathbb{N}$ (likewise). Hence, the snakes $\left(  b-k\right)
\ominus\left(  a-k\right)  $ on the right hand side of the equality
(\ref{eq.prop.ominus.h.eq}) are well-defined.

Lemma \ref{lem.alt.h} (applied to $k=b$) yields that the partition $\left(
b\right)  $ is a nonnegative snake (when regarded as the $n$-tuple $\left(
b,0,0,\ldots,0\right)  $), and satisfies $\overline{s}_{\left(  b\right)
}=h_{b}^{+}$.

Now, Proposition \ref{prop.alt.pieri2} (applied to $\lambda=\left(  b\right)
$ and $k=a$) yields
\[
h_{a}^{-}\cdot\overline{s}_{\left(  b\right)  }=\sum_{\substack{\mu\text{ is a
snake;}\\\left(  b\right)  \rightharpoonup\mu;\ \left\vert \left(  b\right)
\right\vert -\left\vert \mu\right\vert =a}}\overline{s}_{\mu}.
\]
In view of $\overline{s}_{\left(  b\right)  }=h_{b}^{+}$ and $\left\vert
\left(  b\right)  \right\vert =b$, we can rewrite this as%
\begin{equation}
h_{a}^{-}\cdot h_{b}^{+}=\sum_{\substack{\mu\text{ is a snake;}\\\left(
b\right)  \rightharpoonup\mu;\ b-\left\vert \mu\right\vert =a}}\overline
{s}_{\mu}. \label{pf.prop.ominus.h.1}%
\end{equation}

Now, we claim the following:

\begin{statement}
\textit{Claim 1:} The snakes $\mu$ satisfying $\left(  b\right)
\rightharpoonup\mu$ and $b-\left\vert \mu\right\vert =a$ are precisely the
snakes of the form $\left(  b-k\right)  \ominus\left(  a-k\right)  $ for
$k\in\left\{  0,1,\ldots,\min\left\{  a,b\right\}  \right\}  $.
\end{statement}

\begin{vershort}
[\textit{Proof of Claim 1:} The proof of this claim is straightforward (and
can be found elaborated in the detailed version \cite{verlong} of this paper).]
\end{vershort}

\begin{verlong}
[\textit{Proof of Claim 1:} Let $\mu$ be a snake satisfying $\left(  b\right)
\rightharpoonup\mu$ and $b-\left\vert \mu\right\vert =a$. We shall show that
$\mu=\left(  b-k\right)  \ominus\left(  a-k\right)  $ for some $k\in\left\{
0,1,\ldots,\min\left\{  a,b\right\}  \right\}  $.

Indeed, we have $\left(  b\right)  \rightharpoonup\mu$. In other words,
$\left(  b,\underbrace{0,0,\ldots,0}_{n-1\text{ times}}\right)
\rightharpoonup\mu$ (since we are identifying $\left(  b\right)  $ with the
snake $\left(  b,\underbrace{0,0,\ldots,0}_{n-1\text{ times}}\right)  $). But
Definition \ref{def.horstr} shows that we have $\left(
b,\underbrace{0,0,\ldots,0}_{n-1\text{ times}}\right)  \rightharpoonup\mu$ if
and only if we have%
\[
b\geq\mu_{1}\geq0\geq\mu_{2}\geq0\geq\mu_{3}\geq\cdots\geq0\geq\mu_{n}.
\]
Hence, we have%
\[
b\geq\mu_{1}\geq0\geq\mu_{2}\geq0\geq\mu_{3}\geq\cdots\geq0\geq\mu_{n}%
\]
(since we have $\left(  b,\underbrace{0,0,\ldots,0}_{n-1\text{ times}}\right)
\rightharpoonup\mu$). This chain of inequalities yields that each of the
numbers $\mu_{2},\mu_{3},\ldots,\mu_{n-1}$ equals $0$ (since it is sandwiched
between $0$ and $0$ in this chain). Hence, $\mu=\left(  \mu_{1}%
,\underbrace{0,0,\ldots,0}_{n-2\text{ times}},\mu_{n}\right)  =\left(  \mu
_{1},0^{n-2},\mu_{n}\right)  $. Therefore,
\[
\left\vert \mu\right\vert =\mu_{1}+\underbrace{0+0+\cdots+0}_{n-2\text{
times}}+\mu_{n}=\mu_{1}+\mu_{n},
\]
so that $\mu_{1}=\left\vert \mu\right\vert -\mu_{n}$. Moreover, from $0\geq
\mu_{n}$, we obtain $\mu_{n}\leq0$, so that $-\mu_{n}\in\mathbb{N}$ and
$\mu_{1}=\left\vert \mu\right\vert -\underbrace{\mu_{n}}_{\leq0}\geq\left\vert
\mu\right\vert $.

Set $g=b-\mu_{1}$. Thus, $\mu_{1}=b-g$. Also, $g=\underbrace{b}_{\geq\mu_{1}%
}-\mu_{1}\geq\mu_{1}-\mu_{1}=0$. Furthermore, from $\left\vert \mu\right\vert
=\mu_{1}+\mu_{n}$, we obtain%
\[
\mu_{n}=\left\vert \mu\right\vert -\mu_{1}=\underbrace{\left(  b-\mu
_{1}\right)  }_{=g}-\underbrace{\left(  b-\left\vert \mu\right\vert \right)
}_{=a}=g-a=-\left(  a-g\right)  .
\]

Combining the inequalities $g=b-\underbrace{\mu_{1}}_{\geq\left\vert
\mu\right\vert }\leq b-\left\vert \mu\right\vert =a$ and $g=b-\underbrace{\mu
_{1}}_{\geq0}\leq b$, we obtain $g\leq\min\left\{  a,b\right\}  $. Combining
this with $g\geq0$, we find $0\leq g\leq\min\left\{  a,b\right\}  $, so that
$g\in\left\{  0,1,\ldots,\min\left\{  a,b\right\}  \right\}  $. Furthermore,%
\[
\mu=\left(  \underbrace{\mu_{1}}_{=b-g},0^{n-2},\underbrace{\mu_{n}%
}_{=-\left(  a-g\right)  }\right)  =\left(  b-g,0^{n-2},-\left(  a-g\right)
\right)  =\left(  b-g\right)  \ominus\left(  a-g\right)
\]
(since $\left(  b-g\right)  \ominus\left(  a-g\right)  $ was defined to be the
snake $\left(  b-g,0^{n-2},-\left(  a-g\right)  \right)  $). Thus,
$\mu=\left(  b-k\right)  \ominus\left(  a-k\right)  $ for some $k\in\left\{
0,1,\ldots,\min\left\{  a,b\right\}  \right\}  $ (namely, for $k=g$).

Forget that we fixed $\mu$. We thus have shown that if $\mu$ is a snake
satisfying $\left(  b\right)  \rightharpoonup\mu$ and $b-\left\vert
\mu\right\vert =a$, then we have $\mu=\left(  b-k\right)  \ominus\left(
a-k\right)  $ for some $k\in\left\{  0,1,\ldots,\min\left\{  a,b\right\}
\right\}  $. It is straightforward to see that the converse holds as well
(i.e., that if $\mu=\left(  b-k\right)  \ominus\left(  a-k\right)  $ for some
$k\in\left\{  0,1,\ldots,\min\left\{  a,b\right\}  \right\}  $, then $\mu$ is
a snake satisfying $\left(  b\right)  \rightharpoonup\mu$ and $b-\left\vert
\mu\right\vert =a$). Combining these two facts, we conclude that the snakes
$\mu$ satisfying $\left(  b\right)  \rightharpoonup\mu$ and $b-\left\vert
\mu\right\vert =a$ are precisely the snakes of the form $\left(  b-k\right)
\ominus\left(  a-k\right)  $ for $k\in\left\{  0,1,\ldots,\min\left\{
a,b\right\}  \right\}  $. This proves Claim 1.]
\end{verlong}

Now, Claim 1 shows that%
\[
\sum_{\substack{\mu\text{ is a snake;}\\\left(  b\right)  \rightharpoonup
\mu;\ b-\left\vert \mu\right\vert =a}}\overline{s}_{\mu}=\sum_{k\in\left\{
0,1,\ldots,\min\left\{  a,b\right\}  \right\}  }\overline{s}_{\left(
b-k\right)  \ominus\left(  a-k\right)  }%
\]
(indeed, it is clear that the snakes $\left(  b-k\right)  \ominus\left(
a-k\right)  $ for $k\in\left\{  0,1,\ldots,\min\left\{  a,b\right\}  \right\}
$ are all distinct). Hence, (\ref{pf.prop.ominus.h.1}) becomes%
\[
h_{a}^{-}\cdot h_{b}^{+}=\sum_{\substack{\mu\text{ is a snake;}\\\left(
b\right)  \rightharpoonup\mu;\ b-\left\vert \mu\right\vert =a}}\overline
{s}_{\mu}=\sum_{k\in\left\{  0,1,\ldots,\min\left\{  a,b\right\}  \right\}
}\overline{s}_{\left(  b-k\right)  \ominus\left(  a-k\right)  }=\sum
_{k=0}^{\min\left\{  a,b\right\}  }\overline{s}_{\left(  b-k\right)
\ominus\left(  a-k\right)  }.
\]
This proves Proposition \ref{prop.ominus.h}.
\end{proof}

\begin{proposition}
\label{prop.ominus.s}Let $a,b\in\mathbb{N}$. Then,%
\[
\overline{s}_{b\ominus a}=h_{a}^{-}h_{b}^{+}-h_{a-1}^{-}h_{b-1}^{+}.
\]

\end{proposition}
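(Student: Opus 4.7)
The plan is a short telescoping argument based on Proposition \ref{prop.ominus.h}. The key observation is that the right-hand side $h_{a}^{-}h_{b}^{+}-h_{a-1}^{-}h_{b-1}^{+}$ should, upon applying Proposition \ref{prop.ominus.h} to both products, collapse to the single summand $\overline{s}_{b\ominus a}$ coming from $k=0$.

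First I would apply Proposition \ref{prop.ominus.h} directly to obtain
\[
h_{a}^{-}h_{b}^{+}=\overline{s}_{b\ominus a}+\sum_{k=1}^{\min\{a,b\}}\overline{s}_{(b-k)\ominus(a-k)},
\]
by splitting off the $k=0$ term. Then in the remaining sum I would substitute $j=k-1$, so that the sum becomes
\[
\sum_{j=0}^{\min\{a,b\}-1}\overline{s}_{(b-1-j)\ominus(a-1-j)} \;=\; \sum_{j=0}^{\min\{a-1,b-1\}}\overline{s}_{(b-1-j)\ominus(a-1-j)}
\]
(using $\min\{a,b\}-1=\min\{a-1,b-1\}$). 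Finally, I would invoke Proposition \ref{prop.ominus.h} a second time, this time with $a-1$ and $b-1$ in place of $a$ and $b$, to recognize the last sum as $h_{a-1}^{-}h_{b-1}^{+}$. Rearranging gives the claim.

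The only subtlety is the boundary case $\min\{a,b\}=0$ (i.e.\ $a=0$ or $b=0$). In that case the sum over $k\ge 1$ is empty, so the first equation reads $h_a^- h_b^+ = \overline{s}_{b\ominus a}$; at the same time $\min\{a-1,b-1\}<0$, so the second application of Proposition \ref{prop.ominus.h} yields $h_{a-1}^{-}h_{b-1}^{+}=0$ (an empty sum; consistent with the fact that $h_{-1}^{\pm}=0$). Thus the identity reduces to $\overline{s}_{b\ominus a}=\overline{s}_{b\ominus a}-0$ and still holds. I do not expect any real obstacle here — the entire proof is essentially reindexing a finite sum — but care must be taken that Proposition \ref{prop.ominus.h} is stated for all $a,b\in\mathbb{Z}$ (not just $a,b\in\mathbb{N}$), which allows the second application to go through even when $a$ or $b$ is zero.
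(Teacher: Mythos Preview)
Your proof is correct and follows essentially the same approach as the paper: both apply Proposition~\ref{prop.ominus.h} to $(a,b)$ and to $(a-1,b-1)$, reindex one of the sums, and subtract to isolate the $k=0$ term $\overline{s}_{b\ominus a}$. Your explicit treatment of the boundary case $\min\{a,b\}=0$ and your observation that Proposition~\ref{prop.ominus.h} is stated for all $a,b\in\mathbb{Z}$ match the paper's implicit handling of these points.
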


(Recall that every negative integer $k$ satisfies $h_{k}^{-}=0$ and $h_{k}%
^{+}=0$.)

\begin{proof}
[Proof of Proposition \ref{prop.ominus.s}.]We have $\min\left\{  a,b\right\}
\in\mathbb{N}$ (since $a,b\in\mathbb{N}$), so that $\min\left\{  a,b\right\}
\geq0$.

Proposition \ref{prop.ominus.h} yields%
\begin{align}
h_{a}^{-}h_{b}^{+}  &  =\sum_{k=0}^{\min\left\{  a,b\right\}  }\overline
{s}_{\left(  b-k\right)  \ominus\left(  a-k\right)  }=\underbrace{\overline
{s}_{\left(  b-0\right)  \ominus\left(  a-0\right)  }}_{=\overline
{s}_{b\ominus a}}+\sum_{k=1}^{\min\left\{  a,b\right\}  }\overline{s}_{\left(
b-k\right)  \ominus\left(  a-k\right)  }\nonumber\\
&  \ \ \ \ \ \ \ \ \ \ \left(
\begin{array}
[c]{c}%
\text{here, we have split off the addend for }k=0\text{ from the sum,}\\
\text{since }\min\left\{  a,b\right\}  \geq0
\end{array}
\right) \nonumber\\
&  =\overline{s}_{b\ominus a}+\sum_{k=1}^{\min\left\{  a,b\right\}  }%
\overline{s}_{\left(  b-k\right)  \ominus\left(  a-k\right)  }.
\label{pf.prop.ominus.s.1}%
\end{align}
Proposition \ref{prop.ominus.h} (applied to $a-1$ and $b-1$ instead of $a$ and
$b$) yields%
\begin{align*}
h_{a-1}^{-}h_{b-1}^{+}  &  =\sum_{k=0}^{\min\left\{  a-1,b-1\right\}
}\overline{s}_{\left(  \left(  b-1\right)  -k\right)  \ominus\left(  \left(
a-1\right)  -k\right)  }=\sum_{k=0}^{\min\left\{  a,b\right\}  -1}\overline
{s}_{\left(  \left(  b-1\right)  -k\right)  \ominus\left(  \left(  a-1\right)
-k\right)  }\\
&  \ \ \ \ \ \ \ \ \ \ \left(  \text{since }\min\left\{  a-1,b-1\right\}
=\min\left\{  a,b\right\}  -1\right) \\
&  =\sum_{k=1}^{\min\left\{  a,b\right\}  }\overline{s}_{\left(  \left(
b-1\right)  -\left(  k-1\right)  \right)  \ominus\left(  \left(  a-1\right)
-\left(  k-1\right)  \right)  }\\
&  \ \ \ \ \ \ \ \ \ \ \left(  \text{here, we have substituted }k-1\text{ for
}k\text{ in the sum}\right) \\
&  =\sum_{k=1}^{\min\left\{  a,b\right\}  }\overline{s}_{\left(  b-k\right)
\ominus\left(  a-k\right)  }\\
&  \ \ \ \ \ \ \ \ \ \ \left(  \text{since }\left(  b-1\right)  -\left(
k-1\right)  =b-k\text{ and }\left(  a-1\right)  -\left(  k-1\right)
=a-k\right)  .
\end{align*}
Subtracting this equality from (\ref{pf.prop.ominus.s.1}), we obtain%
\[
h_{a}^{-}h_{b}^{+}-h_{a-1}^{-}h_{b-1}^{+}=\left(  \overline{s}_{b\ominus
a}+\sum_{k=1}^{\min\left\{  a,b\right\}  }\overline{s}_{\left(  b-k\right)
\ominus\left(  a-k\right)  }\right)  -\sum_{k=1}^{\min\left\{  a,b\right\}
}\overline{s}_{\left(  b-k\right)  \ominus\left(  a-k\right)  }=\overline
{s}_{b\ominus a}.
\]
This proves Proposition \ref{prop.ominus.s}.
\end{proof}

\begin{corollary}
\label{cor.ominus.salpha}Let $a,b\in\mathbb{N}$. Define the partition
$\alpha=\left(  a+b,a^{n-2}\right)  $. Then, $\alpha$ is a nonnegative snake
and satisfies
\[
\overline{s}_{\alpha}=x_{\Pi}^{a}\cdot\left(  h_{a}^{-}h_{b}^{+}-h_{a-1}%
^{-}h_{b-1}^{+}\right)  .
\]

\end{corollary}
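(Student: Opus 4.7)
The plan is to reduce this corollary to Proposition \ref{prop.ominus.s} via a single application of the shift lemma (Lemma \ref{lem.alt.smove}). The key observation is that the partition $\alpha = (a+b, a^{n-2})$, regarded as an $n$-tuple with trailing zeroes appended, equals $(a+b, a, a, \ldots, a, 0)$ with $n-2$ copies of $a$ in the middle slots. On the other hand, the snake $b \ominus a$ is by definition $(b, 0^{n-2}, -a) = (b, 0, 0, \ldots, 0, -a)$. Adding $a$ to every entry of $b \ominus a$ produces exactly $(a+b, a, a, \ldots, a, 0) = \alpha$. In symbols, $\alpha = (b \ominus a) + a$.

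First, I would check that $\alpha$ is indeed a nonnegative snake: since $a, b \in \mathbb{N}$, the inequalities $a+b \geq a \geq a \geq \cdots \geq a \geq 0$ are all obviously satisfied, so $\alpha \in \operatorname{Par}[n]$. Next, I would invoke Lemma \ref{lem.alt.smove} with $\lambda = b \ominus a$ and $d = a$ (legitimate because Definition \ref{def.ominus} tells us $b \ominus a$ is a snake) to conclude
\[
\overline{s}_{\alpha} = \overline{s}_{(b \ominus a) + a} = x_\Pi^{a}\, \overline{s}_{b \ominus a}.
\]
Finally, I would substitute the formula of Proposition \ref{prop.ominus.s}, namely $\overline{s}_{b \ominus a} = h_a^{-} h_b^{+} - h_{a-1}^{-} h_{b-1}^{+}$, which yields the claimed identity
\[
\overline{s}_{\alpha} = x_\Pi^{a} \cdot \bigl(h_a^{-} h_b^{+} - h_{a-1}^{-} h_{b-1}^{+}\bigr).
\]

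There is no real obstacle here: all the hard work has been done in Proposition \ref{prop.ominus.s} (via the Pieri-type computation of Proposition \ref{prop.ominus.h}) and in Lemma \ref{lem.alt.smove}. The only point that deserves a sentence of verification is the $n$-tuple identity $\alpha = (b \ominus a) + a$, and this is immediate from writing out both tuples entry by entry and recalling that trailing zeroes are suppressed when we identify partitions in $\operatorname{Par}[n]$ with $n$-tuples.
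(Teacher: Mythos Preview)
Your proposal is correct and follows essentially the same approach as the paper's own proof: verify that $\alpha$ is a nonnegative snake, observe the $n$-tuple identity $\alpha = (b \ominus a) + a$, apply Lemma \ref{lem.alt.smove} with $\lambda = b \ominus a$ and $d = a$, and then substitute Proposition \ref{prop.ominus.s}.
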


\begin{proof}
[Proof of Corollary \ref{cor.ominus.salpha}.]From $a,b\in\mathbb{N}$, we
obtain $a+\underbrace{b}_{\geq0}\geq a\geq0$. Hence, $\alpha$ is a partition
of length $\leq n$ (since $n-1\leq n$). In other words, $\alpha$ is a
nonnegative snake.

\begin{vershort}
It is easy to see that $\alpha=\left(  b\ominus a\right)  +a$ (since
$\alpha=\left(  a+b,a^{n-2}\right)  =\left(  a+b,\underbrace{a,a,\ldots
,a}_{n-2\text{ times}},0\right)  $ and $b\ominus a=\left(  b,0^{n-2}%
,-a\right)  =\left(  b,\underbrace{0,0,\ldots,0}_{n-2\text{ times}},-a\right)
$). Hence,%
\begin{align*}
\overline{s}_{\alpha}  &  =\overline{s}_{\left(  b\ominus a\right)  +a}%
=x_{\Pi}^{a}\overline{s}_{b\ominus a}\ \ \ \ \ \ \ \ \ \ \left(  \text{by
Lemma \ref{lem.alt.smove}, applied to }\lambda=b\ominus a\text{ and
}d=a\right) \\
&  =x_{\Pi}^{a}\cdot\left(  h_{a}^{-}h_{b}^{+}-h_{a-1}^{-}h_{b-1}^{+}\right)
\ \ \ \ \ \ \ \ \ \ \left(  \text{by Proposition \ref{prop.ominus.s}}\right)
.
\end{align*}

\end{vershort}

\begin{verlong}
We have
\begin{align*}
&  \underbrace{\left(  b\ominus a\right)  }_{=\left(  b,0^{n-2},-a\right)
=\left(  b,\underbrace{0,0,\ldots,0}_{n-2\text{ times}},-a\right)  }+a\\
&  =\left(  b,\underbrace{0,0,\ldots,0}_{n-2\text{ times}},-a\right)
+a=\left(  b+a,\underbrace{0+a,0+a,\ldots,0+a}_{n-2\text{ times}},-a+a\right)
\\
&  =\left(  a+b,\underbrace{a,a,\ldots,a}_{n-2\text{ times}},0\right)
=\left(  a+b,a^{n-2}\right)  =\alpha
\end{align*}
(since $\alpha=\left(  a+b,a^{n-2}\right)  $). Hence, $\alpha=\left(  b\ominus
a\right)  +a$, so that%
\begin{align*}
\overline{s}_{\alpha}  &  =\overline{s}_{\left(  b\ominus a\right)  +a}%
=x_{\Pi}^{a}\underbrace{\overline{s}_{b\ominus a}}_{\substack{=h_{a}^{-}%
h_{b}^{+}-h_{a-1}^{-}h_{b-1}^{+}\\\text{(by Proposition \ref{prop.ominus.s})}%
}}\\
&  \ \ \ \ \ \ \ \ \ \ \left(  \text{by Lemma \ref{lem.alt.smove}, applied to
}\lambda=b\ominus a\text{ and }d=a\right) \\
&  =x_{\Pi}^{a}\cdot\left(  h_{a}^{-}h_{b}^{+}-h_{a-1}^{-}h_{b-1}^{+}\right)
.
\end{align*}

\end{verlong}

\noindent This proves Corollary \ref{cor.ominus.salpha}.
\end{proof}

We notice that the expression $h_{a}^{-}h_{b}^{+}-h_{a-1}^{-}h_{b-1}^{+}$ in
Proposition \ref{prop.ominus.s} can be rewritten as $\det\left(
\begin{array}
[c]{cc}%
h_{a}^{-} & h_{a-1}^{-}\\
h_{b-1}^{+} & h_{b}^{+}%
\end{array}
\right)  $. This suggests a way to generalize Proposition \ref{prop.ominus.s}
(as well as the first Jacobi--Trudi formula \cite[(2.4.16)]{GriRei}). Such a
generalization indeed exists, and has been proved by Koike as well as by Hamel
and King; see Proposition \ref{prop.jt} below.

\subsection{The sets $R_{\mu,a,b}\left(  \gamma\right)  $ and a formula for
$h_{a}^{-}h_{b}^{+}\overline{s}_{\mu}$}

We shall next aim for a formula for $h_{a}^{-}h_{b}^{+}\overline{s}_{\mu}$
(for a snake $\mu$ and integers $a,b\in\mathbb{Z}$), which will be obtained in
a straightforward way by applying Propositions \ref{prop.alt.pieri1} and
\ref{prop.alt.pieri2}. We will need the following definition:

\begin{definition}
\label{def.Rmab}Let $\mu,\gamma\in\mathbb{Z}^{n}$ and $a,b\in\mathbb{Z}$.
Then, $R_{\mu,a,b}\left(  \gamma\right)  $ shall denote the set of all snakes
$\nu$ satisfying the four conditions%
\[
\mu\rightharpoonup\nu\ \ \ \ \ \ \ \ \ \ \text{and}%
\ \ \ \ \ \ \ \ \ \ \left\vert \mu\right\vert -\left\vert \nu\right\vert
=a\ \ \ \ \ \ \ \ \ \ \text{and}\ \ \ \ \ \ \ \ \ \ \gamma\rightharpoonup
\nu\ \ \ \ \ \ \ \ \ \ \text{and}\ \ \ \ \ \ \ \ \ \ \left\vert \gamma
\right\vert -\left\vert \nu\right\vert =b.
\]

\end{definition}

\begin{lemma}
\label{lem.Rmab.0}Let $\mu,\gamma\in\mathbb{Z}^{n}$ and $a,b\in\mathbb{Z}$.
Assume that $\gamma$ is not a snake. Then, $\left\vert R_{\mu,a,b}\left(
\gamma\right)  \right\vert =0$.
\end{lemma}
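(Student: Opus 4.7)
The plan is to observe that the set $R_{\mu,a,b}(\gamma)$ is empty under the hypothesis, since every element $\nu\in R_{\mu,a,b}(\gamma)$ must satisfy $\gamma\rightharpoonup\nu$, and this relation forces $\gamma$ to be a snake. I would carry this out as follows.

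First, I would unfold the definition of $R_{\mu,a,b}(\gamma)$ from Definition \ref{def.Rmab}: it consists of all snakes $\nu$ satisfying four conditions, one of which is $\gamma\rightharpoonup\nu$. Then I would apply Proposition \ref{prop.horstr.easies} \textbf{(a)} (with $\lambda$ and $\mu$ there taken to be $\nu$ and $\gamma$ respectively) to conclude that if any such $\nu$ existed, then both $\nu$ and $\gamma$ would have to be snakes. Since we are assuming $\gamma$ is not a snake, this is impossible, so no such $\nu$ exists.

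Therefore $R_{\mu,a,b}(\gamma)=\varnothing$, and hence $\left\vert R_{\mu,a,b}(\gamma)\right\vert =0$. There is no real obstacle here; the lemma is essentially an immediate consequence of Proposition \ref{prop.horstr.easies} \textbf{(a)} together with the definition of $R_{\mu,a,b}(\gamma)$, and the short proof will consist only of this observation.
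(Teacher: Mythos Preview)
Your proposal is correct and matches the paper's own proof essentially line for line: both unfold Definition~\ref{def.Rmab}, pick a hypothetical $\nu\in R_{\mu,a,b}(\gamma)$, apply Proposition~\ref{prop.horstr.easies}~\textbf{(a)} to the relation $\gamma\rightharpoonup\nu$ to conclude that $\gamma$ is a snake, and derive a contradiction.
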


\begin{proof}
[Proof of Lemma \ref{lem.Rmab.0}.]Let $\nu\in R_{\mu,a,b}\left(
\gamma\right)  $. We shall obtain a contradiction.

Indeed, we have $\nu\in R_{\mu,a,b}\left(  \gamma\right)  $. In other words,
$\nu$ is a snake satisfying the four conditions%
\[
\mu\rightharpoonup\nu\ \ \ \ \ \ \ \ \ \ \text{and}%
\ \ \ \ \ \ \ \ \ \ \left\vert \mu\right\vert -\left\vert \nu\right\vert
=a\ \ \ \ \ \ \ \ \ \ \text{and}\ \ \ \ \ \ \ \ \ \ \gamma\rightharpoonup
\nu\ \ \ \ \ \ \ \ \ \ \text{and}\ \ \ \ \ \ \ \ \ \ \left\vert \gamma
\right\vert -\left\vert \nu\right\vert =b
\]
(by the definition of $R_{\mu,a,b}\left(  \gamma\right)  $). Thus, in
particular, we have $\gamma\rightharpoonup\nu$. Hence, Proposition
\ref{prop.horstr.easies} \textbf{(a)} (applied to $\gamma$ and $\nu$ instead
of $\mu$ and $\lambda$) yields that both $\nu$ and $\gamma$ are snakes. Hence,
$\gamma$ is a snake. This contradicts the fact that $\gamma$ is not a snake.

Now, forget that we fixed $\nu$. We thus have obtained a contradiction for
each $\nu\in R_{\mu,a,b}\left(  \gamma\right)  $. Hence, there exists no
$\nu\in R_{\mu,a,b}\left(  \gamma\right)  $. In other words, the set
$R_{\mu,a,b}\left(  \gamma\right)  $ is empty. Thus, $\left\vert R_{\mu
,a,b}\left(  \gamma\right)  \right\vert =0$. This proves Lemma
\ref{lem.Rmab.0}.
\end{proof}

\begin{lemma}
\label{lem.Rmab.formula}Let $\mu$ be a snake. Let $a,b\in\mathbb{Z}$. Then,%
\[
h_{a}^{-}h_{b}^{+}\overline{s}_{\mu}=\sum_{\gamma\text{ is a snake}}\left\vert
R_{\mu,a,b}\left(  \gamma\right)  \right\vert \overline{s}_{\gamma}.
\]

\end{lemma}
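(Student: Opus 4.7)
The plan is to combine Propositions \ref{prop.alt.pieri1} and \ref{prop.alt.pieri2} by applying them successively and then switching the order of summation.

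First, I would apply Proposition \ref{prop.alt.pieri2} with $\lambda = \mu$ and $k = a$ to expand
\[
h_{a}^{-}\overline{s}_{\mu} = \sum_{\substack{\nu\text{ is a snake;}\\\mu\rightharpoonup\nu;\ \left\vert \mu\right\vert -\left\vert \nu\right\vert =a}}\overline{s}_{\nu}.
\]
Multiplying both sides by $h_{b}^{+}$ and then applying Proposition \ref{prop.alt.pieri1} to each $h_{b}^{+}\overline{s}_{\nu}$ (with $\lambda=\nu$ and $k=b$) yields
\[
h_{a}^{-}h_{b}^{+}\overline{s}_{\mu} = \sum_{\substack{\nu\text{ is a snake;}\\\mu\rightharpoonup\nu;\ \left\vert \mu\right\vert -\left\vert \nu\right\vert =a}}\ \sum_{\substack{\gamma\text{ is a snake;}\\\gamma\rightharpoonup\nu;\ \left\vert \gamma\right\vert -\left\vert \nu\right\vert =b}}\overline{s}_{\gamma}.
\]

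Next I would interchange the two summations so that the outer sum runs over all snakes $\gamma$ and the inner sum runs over the snakes $\nu$ that satisfy all four conditions $\mu\rightharpoonup\nu$, $\left\vert \mu\right\vert -\left\vert \nu\right\vert =a$, $\gamma\rightharpoonup\nu$, and $\left\vert \gamma\right\vert -\left\vert \nu\right\vert =b$. By the very definition of $R_{\mu,a,b}(\gamma)$ in Definition \ref{def.Rmab}, the inner sum has exactly $\left\vert R_{\mu,a,b}(\gamma)\right\vert $ terms, each equal to $\overline{s}_{\gamma}$. This gives the desired identity
\[
h_{a}^{-}h_{b}^{+}\overline{s}_{\mu} = \sum_{\gamma\text{ is a snake}}\left\vert R_{\mu,a,b}(\gamma)\right\vert \overline{s}_{\gamma}.
\]

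There is no real obstacle here; the only points that need a brief sanity check are that the resulting double sum is finite (so that interchange is legal), which is clear because for each $\nu$ with $\mu\rightharpoonup\nu$ and $\left\vert \mu\right\vert -\left\vert \nu\right\vert = a$ there are only finitely many $\gamma$ with $\gamma\rightharpoonup\nu$ and $\left\vert \gamma\right\vert -\left\vert \nu\right\vert =b$, and similarly there are only finitely many eligible $\nu$ in the first place. Summing over all snakes $\gamma$ (rather than only those for which $R_{\mu,a,b}(\gamma)$ is nonempty) is harmless because the remaining terms contribute $0$.
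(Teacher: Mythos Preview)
Your proof is correct and follows essentially the same approach as the paper: apply Proposition \ref{prop.alt.pieri2} to expand $h_{a}^{-}\overline{s}_{\mu}$, then Proposition \ref{prop.alt.pieri1} to each resulting $h_{b}^{+}\overline{s}_{\nu}$, interchange the double sum, and identify the inner sum with $\left\vert R_{\mu,a,b}(\gamma)\right\vert$. Your added remark about finiteness of the sums is a nice touch that the paper leaves implicit.
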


\begin{proof}
[Proof of Lemma \ref{lem.Rmab.formula}.]Proposition \ref{prop.alt.pieri2}
(with the letters $\lambda$, $k$ and $\mu$ renamed as $\mu$, $a$ and $\nu$)
says that%
\begin{equation}
h_{a}^{-}\cdot\overline{s}_{\mu}=\sum_{\substack{\nu\text{ is a snake;}%
\\\mu\rightharpoonup\nu;\ \left\vert \mu\right\vert -\left\vert \nu\right\vert
=a}}\overline{s}_{\nu}. \label{pf.lem.Rmab.formula.1}%
\end{equation}
Proposition \ref{prop.alt.pieri1} (with the letters $\lambda$, $k$ and $\mu$
renamed as $\nu$, $b$ and $\gamma$) says that%
\begin{equation}
h_{b}^{+}\cdot\overline{s}_{\nu}=\sum_{\substack{\gamma\text{ is a
snake;}\\\gamma\rightharpoonup\nu;\ \left\vert \gamma\right\vert -\left\vert
\nu\right\vert =b}}\overline{s}_{\gamma} \label{pf.lem.Rmab.formula.2}%
\end{equation}
for each snake $\nu$.

Now,%
\begin{align*}
h_{a}^{-}h_{b}^{+}\overline{s}_{\mu}  &  =h_{b}^{+}\cdot\underbrace{h_{a}%
^{-}\cdot\overline{s}_{\mu}}_{\substack{=\sum_{\substack{\nu\text{ is a
snake;}\\\mu\rightharpoonup\nu;\ \left\vert \mu\right\vert -\left\vert
\nu\right\vert =a}}\overline{s}_{\nu}\\\text{(by (\ref{pf.lem.Rmab.formula.1}%
))}}}=h_{b}^{+}\cdot\sum_{\substack{\nu\text{ is a snake;}\\\mu\rightharpoonup
\nu;\ \left\vert \mu\right\vert -\left\vert \nu\right\vert =a}}\overline
{s}_{\nu}\\
&  =\sum_{\substack{\nu\text{ is a snake;}\\\mu\rightharpoonup\nu;\ \left\vert
\mu\right\vert -\left\vert \nu\right\vert =a}}\ \ \ \underbrace{h_{b}^{+}%
\cdot\overline{s}_{\nu}}_{\substack{=\sum_{\substack{\gamma\text{ is a
snake;}\\\gamma\rightharpoonup\nu;\ \left\vert \gamma\right\vert -\left\vert
\nu\right\vert =b}}\overline{s}_{\gamma}\\\text{(by
(\ref{pf.lem.Rmab.formula.2}))}}}=\underbrace{\sum_{\substack{\nu\text{ is a
snake;}\\\mu\rightharpoonup\nu;\ \left\vert \mu\right\vert -\left\vert
\nu\right\vert =a}}\ \ \ \sum_{\substack{\gamma\text{ is a snake;}%
\\\gamma\rightharpoonup\nu;\ \left\vert \gamma\right\vert -\left\vert
\nu\right\vert =b}}}_{=\sum_{\gamma\text{ is a snake}}\ \ \ \sum
_{\substack{\nu\text{ is a snake;}\\\mu\rightharpoonup\nu;\ \left\vert
\mu\right\vert -\left\vert \nu\right\vert =a;\\\gamma\rightharpoonup
\nu;\ \left\vert \gamma\right\vert -\left\vert \nu\right\vert =b}}}%
\overline{s}_{\gamma}\\
&  =\sum_{\gamma\text{ is a snake}}\ \ \ \underbrace{\sum_{\substack{\nu\text{
is a snake;}\\\mu\rightharpoonup\nu;\ \left\vert \mu\right\vert -\left\vert
\nu\right\vert =a;\\\gamma\rightharpoonup\nu;\ \left\vert \gamma\right\vert
-\left\vert \nu\right\vert =b}}}_{\substack{=\sum_{\nu\in R_{\mu,a,b}\left(
\gamma\right)  }\\\text{(by the definition of }R_{\mu,a,b}\left(
\gamma\right)  \text{)}}}\overline{s}_{\gamma}=\sum_{\gamma\text{ is a snake}%
}\ \ \ \underbrace{\sum_{\nu\in R_{\mu,a,b}\left(  \gamma\right)  }%
\overline{s}_{\gamma}}_{=\left\vert R_{\mu,a,b}\left(  \gamma\right)
\right\vert \overline{s}_{\gamma}}\\
&  =\sum_{\gamma\text{ is a snake}}\left\vert R_{\mu,a,b}\left(
\gamma\right)  \right\vert \overline{s}_{\gamma}.
\end{align*}
This proves Lemma \ref{lem.Rmab.formula}.
\end{proof}

\begin{corollary}
\label{cor.Rmab.lr}Let $\mu\in\operatorname*{Par}\left[  n\right]  $. Let
$a,b\in\mathbb{N}$. Define the partition $\alpha=\left(  a+b,a^{n-2}\right)
$. Then, every $\lambda\in\mathbb{Z}^{n}$ satisfies%
\begin{equation}
c_{\alpha,\mu}^{\lambda}=\left\vert R_{\mu,a,b}\left(  \lambda-a\right)
\right\vert -\left\vert R_{\mu,a-1,b-1}\left(  \lambda-a\right)  \right\vert .
\label{eq.cor.Rmab.lr.eq}%
\end{equation}
Here, we understand $c_{\alpha,\mu}^{\lambda}$ to mean $0$ if $\lambda$ is not
a partition (i.e., if $\lambda$ is not a nonnegative snake).
\end{corollary}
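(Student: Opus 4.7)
The plan is to combine the three main tools assembled above, namely the formula for $\overline{s}_{\alpha}$ from Corollary~\ref{cor.ominus.salpha}, the Pieri-type formula for $h_{a}^{-}h_{b}^{+}\overline{s}_{\mu}$ from Lemma~\ref{lem.Rmab.formula}, and the Littlewood--Richardson expansion from Lemma~\ref{lem.alt.lrr}, and then to extract the claimed identity by matching coefficients in the linearly independent family $(\overline{s}_{\gamma})_{\gamma\text{ snake}}$.

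More precisely, I first multiply the identity of Corollary~\ref{cor.ominus.salpha} by $\overline{s}_{\mu}$ to get
\[
\overline{s}_{\alpha}\overline{s}_{\mu}=x_{\Pi}^{a}\cdot\bigl(h_{a}^{-}h_{b}^{+}\overline{s}_{\mu}-h_{a-1}^{-}h_{b-1}^{+}\overline{s}_{\mu}\bigr).
\]
Applying Lemma~\ref{lem.Rmab.formula} to each of the two bracketed products (once with $(a,b)$ and once with $(a-1,b-1)$) and then using Lemma~\ref{lem.alt.smove} to rewrite $x_{\Pi}^{a}\overline{s}_{\gamma}=\overline{s}_{\gamma+a}$ for each snake $\gamma$, this becomes
\[
\overline{s}_{\alpha}\overline{s}_{\mu}=\sum_{\gamma\text{ is a snake}}\bigl(\lvert R_{\mu,a,b}(\gamma)\rvert-\lvert R_{\mu,a-1,b-1}(\gamma)\rvert\bigr)\overline{s}_{\gamma+a}.
\]
Substituting $\lambda=\gamma+a$ (so that $\gamma=\lambda-a$, and the map $\gamma\mapsto\gamma+a$ is a bijection from snakes to snakes by Proposition~\ref{prop.snakes.trivia}\textbf{(a)}) yields
\[
\overline{s}_{\alpha}\overline{s}_{\mu}=\sum_{\lambda\text{ is a snake}}\bigl(\lvert R_{\mu,a,b}(\lambda-a)\rvert-\lvert R_{\mu,a-1,b-1}(\lambda-a)\rvert\bigr)\overline{s}_{\lambda}.
\]

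On the other hand, Lemma~\ref{lem.alt.lrr} (with $\nu=\alpha$, noting $\alpha\in\operatorname*{Par}[n]$ by Corollary~\ref{cor.ominus.salpha}) gives
\[
\overline{s}_{\alpha}\overline{s}_{\mu}=\sum_{\lambda\in\operatorname*{Par}[n]}c_{\alpha,\mu}^{\lambda}\overline{s}_{\lambda}.
\]
Viewing both right-hand sides as $\mathbf{k}$-linear combinations of the family $(\overline{s}_{\lambda})_{\lambda\text{ snake}}$ (extending $c_{\alpha,\mu}^{\lambda}$ by $0$ to snakes $\lambda$ outside $\operatorname*{Par}[n]$), Lemma~\ref{lem.alt.linind} lets me equate coefficients, yielding the claimed identity for every snake $\lambda\in\mathbb{Z}^{n}$.

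Finally, if $\lambda\in\mathbb{Z}^{n}$ is \emph{not} a snake, then $\lambda-a$ is not a snake either (since adding the constant $a$ preserves the snake property), so Lemma~\ref{lem.Rmab.0} forces both $\lvert R_{\mu,a,b}(\lambda-a)\rvert$ and $\lvert R_{\mu,a-1,b-1}(\lambda-a)\rvert$ to vanish, while $c_{\alpha,\mu}^{\lambda}=0$ by the stated convention; hence~(\ref{eq.cor.Rmab.lr.eq}) holds trivially in this case as well. The whole argument is a bookkeeping exercise once the three key lemmas are in place; the only subtlety is making sure that comparing coefficients is legitimate despite $c_{\alpha,\mu}^{\lambda}$ being $a$~priori defined only for $\lambda\in\operatorname*{Par}[n]$, and that subtlety is resolved by the convention and by Lemma~\ref{lem.Rmab.0}.
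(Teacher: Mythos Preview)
Your proof is correct and follows essentially the same approach as the paper: expand $\overline{s}_{\alpha}\overline{s}_{\mu}$ in two ways (once via Corollary~\ref{cor.ominus.salpha} combined with Lemma~\ref{lem.Rmab.formula} and the shift $x_{\Pi}^{a}\overline{s}_{\gamma}=\overline{s}_{\gamma+a}$, once via Lemma~\ref{lem.alt.lrr}), compare coefficients using Lemma~\ref{lem.alt.linind}, and dispose of the non-snake case with Lemma~\ref{lem.Rmab.0}. The only cosmetic difference is that the paper applies Lemma~\ref{lem.alt.lrr} with the roles of $\mu$ and $\nu$ swapped (giving $c_{\alpha,\mu}^{\lambda}$ directly), whereas your citation ``with $\nu=\alpha$'' literally yields $c_{\mu,\alpha}^{\lambda}$; but of course $c_{\mu,\alpha}^{\lambda}=c_{\alpha,\mu}^{\lambda}$, so this is immaterial.
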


\begin{proof}
[Proof of Corollary \ref{cor.Rmab.lr}.]Corollary \ref{cor.ominus.salpha} shows
that $\alpha$ is a nonnegative snake and satisfies
\begin{equation}
\overline{s}_{\alpha}=x_{\Pi}^{a}\cdot\left(  h_{a}^{-}h_{b}^{+}-h_{a-1}%
^{-}h_{b-1}^{+}\right)  . \label{pf.cor.Rmab.lr.1}%
\end{equation}

Every snake $\gamma$ satisfies%
\begin{equation}
\overline{s}_{\gamma+a}=x_{\Pi}^{a}\overline{s}_{\gamma}
\label{pf.cor.Rmab.lr.2}%
\end{equation}
(by Lemma \ref{lem.alt.smove}, applied to $\gamma$ and $a$ instead of
$\lambda$ and $d$).

We have $\alpha\in\operatorname*{Par}\left[  n\right]  $ (since $\alpha$ is a
nonnegative snake). Hence, Lemma \ref{lem.alt.lrr} (applied to $\alpha$ and
$\mu$ instead of $\mu$ and $\nu$) yields%
\begin{align*}
\overline{s}_{\alpha}\overline{s}_{\mu}  &  =\underbrace{\sum_{\lambda
\in\operatorname*{Par}\left[  n\right]  }}_{\substack{=\sum_{\substack{\lambda
\text{ is a snake;}\\\lambda\text{ is nonnegative}}}\\\text{(since the
partitions }\lambda\in\operatorname*{Par}\left[  n\right]  \text{
are}\\\text{precisely the nonnegative snakes)}}}c_{\alpha,\mu}^{\lambda
}\overline{s}_{\lambda}=\sum_{\substack{\lambda\text{ is a snake;}%
\\\lambda\text{ is nonnegative}}}c_{\alpha,\mu}^{\lambda}\overline{s}%
_{\lambda}\\
&  =\sum_{\lambda\text{ is a snake}}c_{\alpha,\mu}^{\lambda}\overline
{s}_{\lambda}-\sum_{\substack{\lambda\text{ is a snake;}\\\lambda\text{ is not
nonnegative}}}\underbrace{c_{\alpha,\mu}^{\lambda}}%
_{\substack{=0\\\text{(since we understand }c_{\alpha,\mu}^{\lambda}\\\text{to
mean }0\text{ if }\lambda\text{ is not}\\\text{a nonnegative snake)}%
}}\overline{s}_{\lambda}\\
&  =\sum_{\lambda\text{ is a snake}}c_{\alpha,\mu}^{\lambda}\overline
{s}_{\lambda}-\underbrace{\sum_{\substack{\lambda\text{ is a snake;}%
\\\lambda\text{ is not nonnegative}}}0\overline{s}_{\lambda}}_{=0}%
=\sum_{\lambda\text{ is a snake}}c_{\alpha,\mu}^{\lambda}\overline{s}%
_{\lambda}.
\end{align*}

\begin{vershort}
\noindent Hence,%
\begin{align*}
\sum_{\lambda\text{ is a snake}}c_{\alpha,\mu}^{\lambda}\overline{s}%
_{\lambda}  &  =\overline{s}_{\alpha}\overline{s}_{\mu}=x_{\Pi}^{a}%
\cdot\left(  h_{a}^{-}h_{b}^{+}-h_{a-1}^{-}h_{b-1}^{+}\right)  \overline
{s}_{\mu}\ \ \ \ \ \ \ \ \ \ \left(  \text{by (\ref{pf.cor.Rmab.lr.1})}\right)
\\
&  =x_{\Pi}^{a}\cdot\underbrace{h_{a}^{-}h_{b}^{+}\overline{s}_{\mu}%
}_{\substack{=\sum_{\gamma\text{ is a snake}}\left\vert R_{\mu,a,b}\left(
\gamma\right)  \right\vert \overline{s}_{\gamma}\\\text{(by Lemma
\ref{lem.Rmab.formula})}}}-x_{\Pi}^{a}\cdot\underbrace{h_{a-1}^{-}h_{b-1}%
^{+}\overline{s}_{\mu}}_{\substack{=\sum_{\gamma\text{ is a snake}}\left\vert
R_{\mu,a-1,b-1}\left(  \gamma\right)  \right\vert \overline{s}_{\gamma
}\\\text{(by Lemma \ref{lem.Rmab.formula},}\\\text{applied to }a-1\text{ and
}b-1\\\text{instead of }a\text{ and }b\text{)}}}\\
&  =x_{\Pi}^{a}\cdot\sum_{\gamma\text{ is a snake}}\left\vert R_{\mu
,a,b}\left(  \gamma\right)  \right\vert \overline{s}_{\gamma}-x_{\Pi}^{a}%
\cdot\sum_{\gamma\text{ is a snake}}\left\vert R_{\mu,a-1,b-1}\left(
\gamma\right)  \right\vert \overline{s}_{\gamma}\\
&  =\sum_{\gamma\text{ is a snake}}\left(  \left\vert R_{\mu,a,b}\left(
\gamma\right)  \right\vert -\left\vert R_{\mu,a-1,b-1}\left(  \gamma\right)
\right\vert \right)  \underbrace{x_{\Pi}^{a}\overline{s}_{\gamma}%
}_{\substack{=\overline{s}_{\gamma+a}\\\text{(by (\ref{pf.cor.Rmab.lr.2}))}%
}}\\
&  =\sum_{\gamma\text{ is a snake}}\left(  \left\vert R_{\mu,a,b}\left(
\gamma\right)  \right\vert -\left\vert R_{\mu,a-1,b-1}\left(  \gamma\right)
\right\vert \right)  \overline{s}_{\gamma+a}\\
&  =\sum_{\lambda\text{ is a snake}}\left(  \left\vert R_{\mu,a,b}\left(
\lambda-a\right)  \right\vert -\left\vert R_{\mu,a-1,b-1}\left(
\lambda-a\right)  \right\vert \right)  \underbrace{\overline{s}_{\left(
\lambda-a\right)  +a}}_{=\overline{s}_{\lambda}}\\
&  \ \ \ \ \ \ \ \ \ \ \left(
\begin{array}
[c]{c}%
\text{here, we have substituted }\lambda-a\text{ for }\gamma\text{ in the
sum,}\\
\text{since the map }\left\{  \text{snakes}\right\}  \rightarrow\left\{
\text{snakes}\right\}  ,\ \lambda\mapsto\lambda-a\\
\text{is a bijection}%
\end{array}
\right) \\
&  =\sum_{\lambda\text{ is a snake}}\left(  \left\vert R_{\mu,a,b}\left(
\lambda-a\right)  \right\vert -\left\vert R_{\mu,a-1,b-1}\left(
\lambda-a\right)  \right\vert \right)  \overline{s}_{\lambda}.
\end{align*}

\end{vershort}

\begin{verlong}
\noindent Hence,%
\begin{align*}
\sum_{\lambda\text{ is a snake}}c_{\alpha,\mu}^{\lambda}\overline{s}%
_{\lambda}  &  =\underbrace{\overline{s}_{\alpha}}_{\substack{=x_{\Pi}%
^{a}\cdot\left(  h_{a}^{-}h_{b}^{+}-h_{a-1}^{-}h_{b-1}^{+}\right)  \\\text{(by
(\ref{pf.cor.Rmab.lr.1}))}}}\overline{s}_{\mu}=x_{\Pi}^{a}\cdot\left(
h_{a}^{-}h_{b}^{+}-h_{a-1}^{-}h_{b-1}^{+}\right)  \overline{s}_{\mu}\\
&  =x_{\Pi}^{a}\cdot\underbrace{h_{a}^{-}h_{b}^{+}\overline{s}_{\mu}%
}_{\substack{=\sum_{\gamma\text{ is a snake}}\left\vert R_{\mu,a,b}\left(
\gamma\right)  \right\vert \overline{s}_{\gamma}\\\text{(by Lemma
\ref{lem.Rmab.formula})}}}-x_{\Pi}^{a}\cdot\underbrace{h_{a-1}^{-}h_{b-1}%
^{+}\overline{s}_{\mu}}_{\substack{=\sum_{\gamma\text{ is a snake}}\left\vert
R_{\mu,a-1,b-1}\left(  \gamma\right)  \right\vert \overline{s}_{\gamma
}\\\text{(by Lemma \ref{lem.Rmab.formula},}\\\text{applied to }a-1\text{ and
}b-1\\\text{instead of }a\text{ and }b\text{)}}}\\
&  =x_{\Pi}^{a}\cdot\sum_{\gamma\text{ is a snake}}\left\vert R_{\mu
,a,b}\left(  \gamma\right)  \right\vert \overline{s}_{\gamma}-x_{\Pi}^{a}%
\cdot\sum_{\gamma\text{ is a snake}}\left\vert R_{\mu,a-1,b-1}\left(
\gamma\right)  \right\vert \overline{s}_{\gamma}\\
&  =\sum_{\gamma\text{ is a snake}}\left\vert R_{\mu,a,b}\left(
\gamma\right)  \right\vert x_{\Pi}^{a}\overline{s}_{\gamma}-\sum_{\gamma\text{
is a snake}}\left\vert R_{\mu,a-1,b-1}\left(  \gamma\right)  \right\vert
x_{\Pi}^{a}\overline{s}_{\gamma}\\
&  =\sum_{\gamma\text{ is a snake}}\left(  \left\vert R_{\mu,a,b}\left(
\gamma\right)  \right\vert -\left\vert R_{\mu,a-1,b-1}\left(  \gamma\right)
\right\vert \right)  \underbrace{x_{\Pi}^{a}\overline{s}_{\gamma}%
}_{\substack{=\overline{s}_{\gamma+a}\\\text{(by (\ref{pf.cor.Rmab.lr.2}))}%
}}\\
&  =\sum_{\gamma\text{ is a snake}}\left(  \left\vert R_{\mu,a,b}\left(
\gamma\right)  \right\vert -\left\vert R_{\mu,a-1,b-1}\left(  \gamma\right)
\right\vert \right)  \overline{s}_{\gamma+a}\\
&  =\sum_{\lambda\text{ is a snake}}\left(  \left\vert R_{\mu,a,b}\left(
\lambda-a\right)  \right\vert -\left\vert R_{\mu,a-1,b-1}\left(
\lambda-a\right)  \right\vert \right)  \underbrace{\overline{s}_{\left(
\lambda-a\right)  +a}}_{=\overline{s}_{\lambda}}\\
&  \ \ \ \ \ \ \ \ \ \ \left(
\begin{array}
[c]{c}%
\text{here, we have substituted }\lambda-a\text{ for }\gamma\text{ in the
sum,}\\
\text{since the map }\left\{  \text{snakes}\right\}  \rightarrow\left\{
\text{snakes}\right\}  ,\ \lambda\mapsto\lambda-a\\
\text{is a bijection}%
\end{array}
\right) \\
&  =\sum_{\lambda\text{ is a snake}}\left(  \left\vert R_{\mu,a,b}\left(
\lambda-a\right)  \right\vert -\left\vert R_{\mu,a-1,b-1}\left(
\lambda-a\right)  \right\vert \right)  \overline{s}_{\lambda}.
\end{align*}

\end{verlong}

\noindent We can compare coefficients on both sides of this equality (since
Lemma \ref{lem.alt.linind} shows that the family $\left(  \overline
{s}_{\lambda}\right)  _{\lambda\in\left\{  \text{snakes}\right\}  }$ of
elements of $\mathcal{L}$ is $\mathbf{k}$-linearly independent), and thus
conclude that%
\[
c_{\alpha,\mu}^{\lambda}=\left\vert R_{\mu,a,b}\left(  \lambda-a\right)
\right\vert -\left\vert R_{\mu,a-1,b-1}\left(  \lambda-a\right)  \right\vert
\ \ \ \ \ \ \ \ \ \ \text{for every snake }\lambda\text{.}%
\]
This proves (\ref{eq.cor.Rmab.lr.eq}) in the case when $\lambda$ is a snake.

However, it is easy to see that (\ref{eq.cor.Rmab.lr.eq}) also holds in the
case when $\lambda$ is not a snake\footnote{\textit{Proof.} Let $\lambda
\in\mathbb{Z}^{n}$ be such that $\lambda$ is not a snake. We must show that
(\ref{eq.cor.Rmab.lr.eq}) holds for this $\lambda$.
\par
We have assumed that $\lambda$ is not a snake. Hence, $\lambda-a$ is not a
snake (because it is easy to see from Definition \ref{def.snakes} that
$\lambda$ is a snake if and only if $\lambda-a$ is a snake). Thus, Lemma
\ref{lem.Rmab.0} (applied to $\gamma=\lambda-a$) yields $\left\vert
R_{\mu,a,b}\left(  \lambda-a\right)  \right\vert =0$. Also, Lemma
\ref{lem.Rmab.0} (applied to $\lambda-a$, $a-1$ and $b-1$ instead of $\gamma$,
$a$ and $b$) yields $\left\vert R_{\mu,a-1,b-1}\left(  \lambda-a\right)
\right\vert =0$. On the other hand, $\lambda$ is not a snake, and thus not a
nonnegative snake. Hence, $c_{\alpha,\mu}^{\lambda}=0$ (since we have defined
$c_{\alpha,\mu}^{\lambda}$ to be $0$ if $\lambda$ is not a nonnegative snake).
Comparing this with $\underbrace{\left\vert R_{\mu,a,b}\left(  \lambda
-a\right)  \right\vert }_{=0}-\underbrace{\left\vert R_{\mu,a-1,b-1}\left(
\lambda-a\right)  \right\vert }_{=0}=0$, we obtain $c_{\alpha,\mu}^{\lambda
}=\left\vert R_{\mu,a,b}\left(  \lambda-a\right)  \right\vert -\left\vert
R_{\mu,a-1,b-1}\left(  \lambda-a\right)  \right\vert $. In other words,
(\ref{eq.cor.Rmab.lr.eq}) holds.
\par
Thus, we have shown that (\ref{eq.cor.Rmab.lr.eq}) holds in the case when
$\lambda$ is not a snake.}. Thus, (\ref{eq.cor.Rmab.lr.eq}) always holds. This
proves Corollary \ref{cor.Rmab.lr}.
\end{proof}

\subsection{\label{subsect.fF}The map $\mathbf{f}_{\mu}$}

\begin{convention}
For the whole Subsection \ref{subsect.fF}, we shall use Convention
\ref{conv.bir.peri} (not only for $n$-tuples $a\in\mathbb{K}^{n}$, but for any
$n$-tuples $a$). This convention does not conflict with Convention
\ref{conv.gammai}, because both conventions define $\gamma_{i}$ in the same
way when $\gamma$ is an $n$-tuple and $i\in\left\{  1,2,\ldots,n\right\}  $
(whereas the latter convention does not define $\gamma_{i}$ for any other
values of $i$).

Convention \ref{conv.bir.peri} does conflict with our old convention (from
Section \ref{sect.not}) to identify partitions with finite tuples: Indeed, if
we let $\gamma$ be the $n$-tuple $\left(  \underbrace{1,1,\ldots,1}_{n\text{
times}}\right)  $, then Convention \ref{conv.bir.peri} yields $\gamma
_{n+1}=\gamma_{1}=1$ when we regard $\gamma$ as an $n$-tuple, but we get
$\gamma_{n+1}=0$ if we regard $\gamma$ as a partition. We shall resolve this
conflict by agreeing \textbf{not to identify partitions with finite tuples in
Subsection \ref{subsect.fF}}. (Thus, in particular, we will not identify a
nonnegative snake $\left(  \mu_{1},\mu_{2},\ldots,\mu_{n}\right)
\in\mathbb{Z}^{n}$ with its corresponding partition $\left(  \mu_{1},\mu
_{2},\ldots,\mu_{n},0,0,0,\ldots\right)  \in\operatorname*{Par}\left[
n\right]  $.)
\end{convention}

Let us now apply the results of Section \ref{sect.bir}. The abelian group
$\left(  \mathbb{Z},+,0\right)  $ of integers is totally ordered (in the usual
way). Thus, Example \ref{exa.semifield.mintrop} (applied to $\left(
\mathbb{A},\ast,e\right)  =\left(  \mathbb{Z},+,0\right)  $) shows that there
is a semifield $\left(  \mathbb{Z},\min,+,0\right)  $ (that is, a semifield
with ground set $\mathbb{Z}$, addition $\min$, multiplication $+$ and unity
$0$), called the \emph{min tropical semifield} of $\left(  \mathbb{Z}%
,+,0\right)  $. We have the following little dictionary between various
operations on this semifield $\left(  \mathbb{Z},\min,+,0\right)  $ and
familiar operations on integers:

\begin{itemize}
\item The addition operation of the semifield $\left(  \mathbb{Z}%
,\min,+,0\right)  $ is the binary operation $\min$ on $\mathbb{Z}$. That is,
for any $a,b\in\mathbb{Z}$, the sum $a+b$ understood with respect to the
semifield $\left(  \mathbb{Z},\min,+,0\right)  $ is precisely the integer
$\min\left\{  a,b\right\}  $.

\item Thus, (nonempty) finite sums in the semifield $\left(  \mathbb{Z}%
,\min,+,0\right)  $ are minima of finite sets of integers. That is, if
$r\in\mathbb{N}$, and if $a_{0},a_{1},\ldots,a_{r}$ are any $r+1$ integers,
then the sum $\sum_{k=0}^{r}a_{k}$ understood with respect to the semifield
$\left(  \mathbb{Z},\min,+,0\right)  $ is $\min\left\{  a_{0},a_{1}%
,\ldots,a_{r}\right\}  =\min\left\{  a_{k}\ \mid\ k\in\left\{  0,1,\ldots
,r\right\}  \right\}  $.

\item Furthermore, the multiplication operation of the semifield $\left(
\mathbb{Z},\min,+,0\right)  $ is the addition $+$ of integers. That is, for
any $a,b\in\mathbb{Z}$, the product $ab$ understood with respect to the
semifield $\left(  \mathbb{Z},\min,+,0\right)  $ is precisely the sum $a+b$
understood with respect to the integer ring $\mathbb{Z}$. Meanwhile, the unity
of the semifield $\left(  \mathbb{Z},\min,+,0\right)  $ is the integer $0$.

\item Thus, the division operation of the semifield $\left(  \mathbb{Z}%
,\min,+,0\right)  $ is the subtraction $-$ of integers. That is, for any
$a,b\in\mathbb{Z}$, the quotient $\dfrac{a}{b}$ understood with respect to the
semifield $\left(  \mathbb{Z},\min,+,0\right)  $ is precisely the difference
$a-b$ understood with respect to the integer ring $\mathbb{Z}$.

\item For the same reason, squaring an element of the semifield $\left(
\mathbb{Z},\min,+,0\right)  $ is tantamount to doubling it as an integer. That
is, for any $a\in\mathbb{Z}$, the square $a^{2}$ understood with respect to
the semifield $\left(  \mathbb{Z},\min,+,0\right)  $ is the product $2a$
understood with respect to the integer ring $\mathbb{Z}$.

\item For the same reason, taking reciprocals in the semifield $\left(
\mathbb{Z},\min,+,0\right)  $ is tantamount to negation of integers. That is,
for any $a\in\mathbb{Z}$, the reciprocal $\dfrac{1}{a}$ understood with
respect to the semifield $\left(  \mathbb{Z},\min,+,0\right)  $ is the integer
$-a$ understood with respect to the integer ring $\mathbb{Z}$.

\item For the same reason, finite products in the semifield $\left(
\mathbb{Z},\min,+,0\right)  $ are sums of integers. That is, if $r\in
\mathbb{N}$, and if $a_{1},a_{2},\ldots,a_{r}$ are any $r$ integers, then the
product $\prod_{k=1}^{r}a_{k}$ understood with respect to the semifield
$\left(  \mathbb{Z},\min,+,0\right)  $ is the sum $\sum_{k=1}^{r}a_{k}$
understood with respect to the integer ring $\mathbb{Z}$.
\end{itemize}

Thus, applying Definition \ref{def.f} to $\mathbb{K}=\left(  \mathbb{Z}%
,\min,+,0\right)  $ (and renaming everything\footnote{Namely, we are
\par
\begin{itemize}
\item renaming the (fixed) $n$-tuple $u$ as $\mu$;
\par
\item renaming the (variable) $n$-tuple $x$ as $\gamma$ (in order to avoid a
clash with the variables $x_{1},x_{2},\ldots,x_{n}$);
\par
\item renaming the elements $t_{r,j}$ as $\tau_{r,j}$;
\par
\item renaming the $n$-tuple $y$ as $\eta$.
\end{itemize}
\par
{}}), we obtain the following:

\begin{definition}
\label{def.fZ}Fix any $n$-tuple $\mu\in\mathbb{Z}^{n}$.

We define a map $\mathbf{f}_{\mu}:\mathbb{Z}^{n}\rightarrow\mathbb{Z}^{n}$ as follows:

Let $\gamma\in\mathbb{Z}^{n}$ be an $n$-tuple. For each $j\in\mathbb{Z}$ and
$r\in\mathbb{N}$, define an element $\tau_{r,j}\in\mathbb{Z}$ by
\begin{align*}
\tau_{r,j}  &  =\min\left\{  \underbrace{\gamma_{j+1}+\gamma_{j+2}%
+\cdots+\gamma_{j+k}}_{=\sum_{i=1}^{k}\gamma_{j+i}}+\underbrace{\mu
_{j+k+1}+\mu_{j+k+2}+\cdots+\mu_{j+r}}_{=\sum_{i=k+1}^{r}\mu_{j+i}}\right. \\
&  \ \ \ \ \ \ \ \ \ \ \ \ \ \ \ \ \ \ \ \ \left.  \ \mid\ k\in\left\{
0,1,\ldots,r\right\}  \right\}  .
\end{align*}
Define $\eta\in\mathbb{Z}^{n}$ by setting%
\[
\eta_{i}=\mu_{i}+\left(  \mu_{i-1}+\tau_{n-1,i-1}\right)  -\left(
\gamma_{i+1}+\tau_{n-1,i+1}\right)  \ \ \ \ \ \ \ \ \ \ \text{for each }%
i\in\left\{  1,2,\ldots,n\right\}  .
\]
Set $\mathbf{f}_{\mu}\left(  \gamma\right)  =\eta$.
\end{definition}

Applying Theorem \ref{thm.f.full} to $\mathbb{K}=\left(  \mathbb{Z}%
,\min,+,0\right)  $ (and renaming everything\footnote{Namely, we are
\par
\begin{itemize}
\item renaming the (fixed) $n$-tuple $u$ as $\mu$;
\par
\item renaming the (variable) $n$-tuple $x$ as $\gamma$;
\par
\item renaming the $n$-tuple $y$ as $\eta$.
\end{itemize}
\par
{}}, and using our above dictionary), we thus obtain the following:

\begin{theorem}
\label{thm.fZ.full}Fix any $n$-tuple $\mu\in\mathbb{Z}^{n}$.

\begin{enumerate}
\item[\textbf{(a)}] The map $\mathbf{f}_{\mu}$ is an involution (i.e., we have
$\mathbf{f}_{\mu}\circ\mathbf{f}_{\mu}=\operatorname*{id}$).

\item[\textbf{(b)}] Let $\gamma\in\mathbb{Z}^{n}$ and $\eta\in\mathbb{Z}^{n}$
be such that $\eta=\mathbf{f}_{\mu}\left(  \gamma\right)  $. Then,%
\[
\left(  \eta_{1}+\eta_{2}+\cdots+\eta_{n}\right)  +\left(  \gamma_{1}%
+\gamma_{2}+\cdots+\gamma_{n}\right)  =2\left(  \mu_{1}+\mu_{2}+\cdots+\mu
_{n}\right)  .
\]

\item[\textbf{(c)}] Let $\gamma\in\mathbb{Z}^{n}$ and $\eta\in\mathbb{Z}^{n}$
be such that $\eta=\mathbf{f}_{\mu}\left(  \gamma\right)  $. Then,%
\[
\min\left\{  \mu_{i},\gamma_{i}\right\}  +\min\left\{  -\mu_{i+1}%
,-\gamma_{i+1}\right\}  =\min\left\{  \mu_{i},\eta_{i}\right\}  +\min\left\{
-\mu_{i+1},-\eta_{i+1}\right\}
\]
for each $i\in\mathbb{Z}$.

\item[\textbf{(d)}] Let $\gamma\in\mathbb{Z}^{n}$ and $\eta\in\mathbb{Z}^{n}$
be such that $\eta=\mathbf{f}_{\mu}\left(  \gamma\right)  $. Then,%
\[
\sum_{i=1}^{n}\left(  \min\left\{  \mu_{i},\gamma_{i}\right\}  -\gamma
_{i}\right)  =\sum_{i=1}^{n}\left(  \min\left\{  \mu_{i},\eta_{i}\right\}
-\mu_{i}\right)  .
\]

\end{enumerate}
\end{theorem}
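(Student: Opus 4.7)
The plan is to derive Theorem \ref{thm.fZ.full} as a direct specialization of Theorem \ref{thm.f.full} to the semifield $\mathbb{K}=(\mathbb{Z},\min,+,0)$, translated through the dictionary listed above. First, I would recall that Example \ref{exa.semifield.mintrop}, applied to the totally ordered abelian group $(\mathbb{Z},+,0)$, guarantees that $(\mathbb{Z},\min,+,0)$ is a semifield, so the hypotheses of Section \ref{sect.bir} are met. Then I would match notation: the fixed tuple $u$ of Section \ref{sect.bir} is our $\mu$, the variable tuple $x$ is our $\gamma$, and the output $y$ is our $\eta$.

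Next I would verify that the map $\mathbf{f}_{\mu}$ of Definition \ref{def.fZ} is literally the map $\mathbf{f}_{u}$ of Definition \ref{def.f} in this semifield. Under the dictionary, the product $x_{j+1}\cdots x_{j+k}$ becomes the sum $\gamma_{j+1}+\cdots+\gamma_{j+k}$, the product $u_{j+k+1}\cdots u_{j+r}$ becomes $\mu_{j+k+1}+\cdots+\mu_{j+r}$, and the outer sum $\sum_{k=0}^{r}$ becomes $\min_{k\in\{0,1,\ldots,r\}}$, so the quantity $t_{r,j}$ of Definition \ref{def.f} becomes the $\tau_{r,j}$ of Definition \ref{def.fZ}. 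Similarly, the formula $y_{i}=u_{i}\cdot\dfrac{u_{i-1}t_{n-1,i-1}}{x_{i+1}t_{n-1,i+1}}$ translates into $\eta_{i}=\mu_{i}+(\mu_{i-1}+\tau_{n-1,i-1})-(\gamma_{i+1}+\tau_{n-1,i+1})$, matching Definition \ref{def.fZ} on the nose.

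With this identification in hand, each part of Theorem \ref{thm.fZ.full} is obtained by tropicalizing the corresponding part of Theorem \ref{thm.f.full}. Part (a) transfers verbatim, since \textquotedblleft involution\textquotedblright\ is a purely set-theoretic statement and does not interact with the semifield structure. Part (b) turns $y_{1}\cdots y_{n}\cdot x_{1}\cdots x_{n}=(u_{1}\cdots u_{n})^{2}$ into the additive equation $(\eta_{1}+\cdots+\eta_{n})+(\gamma_{1}+\cdots+\gamma_{n})=2(\mu_{1}+\cdots+\mu_{n})$, because products become sums and squaring becomes doubling. Part (c) turns $(u_{i}+x_{i})(u_{i+1}^{-1}+x_{i+1}^{-1})=(u_{i}+y_{i})(u_{i+1}^{-1}+y_{i+1}^{-1})$ into the claimed identity, since the semifield sum is $\min$, the reciprocal is negation, and the outer multiplication is ordinary addition. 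Part (d) turns $\prod_{i=1}^{n}\dfrac{u_{i}+x_{i}}{x_{i}}=\prod_{i=1}^{n}\dfrac{u_{i}+y_{i}}{u_{i}}$ into $\sum_{i=1}^{n}\bigl(\min\{\mu_{i},\gamma_{i}\}-\gamma_{i}\bigr)=\sum_{i=1}^{n}\bigl(\min\{\mu_{i},\eta_{i}\}-\mu_{i}\bigr)$, for the same reasons.

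There is really no obstacle here beyond bookkeeping: the whole point of phrasing Section \ref{sect.bir} over an arbitrary semifield was to make this specialization free. The only mild subtlety to flag is notational: Convention \ref{conv.bir.peri} lets us index both $\mu$ and $\gamma$ by arbitrary integers via $n$-periodicity, which is precisely what allows the indices in the statement of part (c) to range over all of $\mathbb{Z}$ and allows the definition of $\tau_{r,j}$ for arbitrary $j\in\mathbb{Z}$. So my proof will consist of a single paragraph invoking Theorem \ref{thm.f.full} together with the dictionary, followed by four one-line translations verifying parts (a)--(d).
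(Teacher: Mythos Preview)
Your proposal is correct and matches the paper's approach exactly: the paper obtains Theorem \ref{thm.fZ.full} precisely by applying Theorem \ref{thm.f.full} to $\mathbb{K}=(\mathbb{Z},\min,+,0)$, renaming $u,x,y,t_{r,j}$ as $\mu,\gamma,\eta,\tau_{r,j}$, and using the dictionary between semifield operations and integer operations. Your verification that Definition \ref{def.fZ} is the tropicalization of Definition \ref{def.f}, and your part-by-part translations, are exactly the bookkeeping the paper has in mind.
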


We obtain the following corollaries from Theorem \ref{thm.fZ.full}:

\begin{corollary}
\label{cor.thm.fZ.full2}Fix any $n$-tuple $\mu\in\mathbb{Z}^{n}$. Let
$\gamma\in\mathbb{Z}^{n}$ and $\eta\in\mathbb{Z}^{n}$ be such that
$\eta=\mathbf{f}_{\mu}\left(  \gamma\right)  $. Then:

\begin{enumerate}
\item[\textbf{(a)}] We have $\left\vert \eta\right\vert -\left\vert
\mu\right\vert =\left\vert \mu\right\vert -\left\vert \gamma\right\vert $.

\item[\textbf{(b)}] We have%
\[
\min\left\{  \mu_{i},\eta_{i}\right\}  -\min\left\{  \mu_{i},\gamma
_{i}\right\}  =\max\left\{  \mu_{i+1},\eta_{i+1}\right\}  -\max\left\{
\mu_{i+1},\gamma_{i+1}\right\}
\]
for each $i\in\left\{  1,2,\ldots,n-1\right\}  $.

\item[\textbf{(c)}] We have%
\[
\sum_{i=1}^{n}\left(  \mu_{i}-\min\left\{  \mu_{i},\eta_{i}\right\}
+\min\left\{  \mu_{i},\gamma_{i}\right\}  \right)  =\sum_{i=1}^{n}\gamma_{i}.
\]

\item[\textbf{(d)}] We have $\gamma=\mathbf{f}_{\mu}\left(  \eta\right)  $.
\end{enumerate}
\end{corollary}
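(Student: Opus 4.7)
The plan is to derive all four parts essentially mechanically from the corresponding parts of Theorem \ref{thm.fZ.full}, which was already obtained by detropicalizing Theorem \ref{thm.f.full} via the min tropical semifield $(\mathbb{Z},\min,+,0)$. The only real work consists of rewriting minima of negatives as negatives of maxima and rearranging terms.

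For part \textbf{(a)}, I would apply Theorem \ref{thm.fZ.full} \textbf{(b)} directly: it says $(\eta_{1}+\cdots+\eta_{n})+(\gamma_{1}+\cdots+\gamma_{n})=2(\mu_{1}+\cdots+\mu_{n})$, i.e.\ $|\eta|+|\gamma|=2|\mu|$, which is just a rearrangement of the desired identity $|\eta|-|\mu|=|\mu|-|\gamma|$. For part \textbf{(d)}, I would invoke Theorem \ref{thm.fZ.full} \textbf{(a)}: since $\mathbf{f}_{\mu}$ is an involution, we have $\mathbf{f}_{\mu}(\eta)=\mathbf{f}_{\mu}(\mathbf{f}_{\mu}(\gamma))=\gamma$. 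Both parts are immediate.

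For part \textbf{(b)}, I would start from Theorem \ref{thm.fZ.full} \textbf{(c)}:
\[
\min\{\mu_{i},\gamma_{i}\}+\min\{-\mu_{i+1},-\gamma_{i+1}\}=\min\{\mu_{i},\eta_{i}\}+\min\{-\mu_{i+1},-\eta_{i+1}\}.
\]
Then I would use the elementary identity $\min\{-a,-b\}=-\max\{a,b\}$ to rewrite the $-\mu_{i+1}$ terms, obtaining
\[
\min\{\mu_{i},\gamma_{i}\}-\max\{\mu_{i+1},\gamma_{i+1}\}=\min\{\mu_{i},\eta_{i}\}-\max\{\mu_{i+1},\eta_{i+1}\},
\]
and rearrange to get the claim. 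The restriction $i\in\{1,\ldots,n-1\}$ simply ensures we stay within the range where the identity is stated without periodic wrapping; the corresponding statement in Theorem \ref{thm.fZ.full} \textbf{(c)} already holds for all $i\in\mathbb{Z}$, so this is harmless.

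For part \textbf{(c)}, I would begin with Theorem \ref{thm.fZ.full} \textbf{(d)}:
\[
\sum_{i=1}^{n}\bigl(\min\{\mu_{i},\gamma_{i}\}-\gamma_{i}\bigr)=\sum_{i=1}^{n}\bigl(\min\{\mu_{i},\eta_{i}\}-\mu_{i}\bigr),
\]
and rearrange to move $\sum\gamma_{i}$ to one side and everything else to the other:
\[
\sum_{i=1}^{n}\bigl(\mu_{i}-\min\{\mu_{i},\eta_{i}\}+\min\{\mu_{i},\gamma_{i}\}\bigr)=\sum_{i=1}^{n}\gamma_{i}.
\]
There is no real obstacle here — the corollary is essentially a dictionary exercise translating the tropicalized content of Theorem \ref{thm.f.full} (parts (a)--(d)) into the $\min/\max$ language that will be convenient for the main theorem. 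The substantive work was done earlier in establishing the birational involution and Theorem \ref{thm.fZ.full} itself.
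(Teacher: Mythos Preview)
Your proposal is correct and follows essentially the same approach as the paper: each part is derived directly from the corresponding part of Theorem \ref{thm.fZ.full} by rewriting $\min\{-a,-b\}=-\max\{a,b\}$ where needed and rearranging terms.
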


\begin{proof}
[Proof of Corollary \ref{cor.thm.fZ.full2}.]\textbf{(a)} Theorem
\ref{thm.fZ.full} \textbf{(b)} yields%
\[
\left(  \eta_{1}+\eta_{2}+\cdots+\eta_{n}\right)  +\left(  \gamma_{1}%
+\gamma_{2}+\cdots+\gamma_{n}\right)  =2\left(  \mu_{1}+\mu_{2}+\cdots+\mu
_{n}\right)  .
\]
In view of the equalities%
\begin{align*}
\left\vert \eta\right\vert  &  =\eta_{1}+\eta_{2}+\cdots+\eta_{n}%
,\ \ \ \ \ \ \ \ \ \ \left\vert \gamma\right\vert =\gamma_{1}+\gamma
_{2}+\cdots+\gamma_{n}\\
\text{and}\ \ \ \ \ \ \ \ \ \ \left\vert \mu\right\vert  &  =\mu_{1}+\mu
_{2}+\cdots+\mu_{n},
\end{align*}
we can rewrite this as $\left\vert \eta\right\vert +\left\vert \gamma
\right\vert =2\left\vert \mu\right\vert $. Equivalently, $\left\vert
\eta\right\vert -\left\vert \mu\right\vert =\left\vert \mu\right\vert
-\left\vert \gamma\right\vert $. This proves Corollary \ref{cor.thm.fZ.full2}
\textbf{(a)}.

\textbf{(b)} Let $i\in\left\{  1,2,\ldots,n-1\right\}  $.\ Then, Theorem
\ref{thm.fZ.full} \textbf{(c)} yields%
\[
\min\left\{  \mu_{i},\gamma_{i}\right\}  +\min\left\{  -\mu_{i+1}%
,-\gamma_{i+1}\right\}  =\min\left\{  \mu_{i},\eta_{i}\right\}  +\min\left\{
-\mu_{i+1},-\eta_{i+1}\right\}  .
\]
In view of $\min\left\{  -\mu_{i+1},-\gamma_{i+1}\right\}  =-\max\left\{
\mu_{i+1},\gamma_{i+1}\right\}  $ and $\min\left\{  -\mu_{i+1},-\eta
_{i+1}\right\}  =-\max\left\{  \mu_{i+1},\eta_{i+1}\right\}  $, we can rewrite
this as%
\[
\min\left\{  \mu_{i},\gamma_{i}\right\}  +\left(  -\max\left\{  \mu
_{i+1},\gamma_{i+1}\right\}  \right)  =\min\left\{  \mu_{i},\eta_{i}\right\}
+\left(  -\max\left\{  \mu_{i+1},\eta_{i+1}\right\}  \right)  .
\]
In other words,%
\[
\min\left\{  \mu_{i},\gamma_{i}\right\}  -\max\left\{  \mu_{i+1},\gamma
_{i+1}\right\}  =\min\left\{  \mu_{i},\eta_{i}\right\}  -\max\left\{
\mu_{i+1},\eta_{i+1}\right\}  .
\]
Equivalently,%
\[
\min\left\{  \mu_{i},\eta_{i}\right\}  -\min\left\{  \mu_{i},\gamma
_{i}\right\}  =\max\left\{  \mu_{i+1},\eta_{i+1}\right\}  -\max\left\{
\mu_{i+1},\gamma_{i+1}\right\}  .
\]
This proves Corollary \ref{cor.thm.fZ.full2} \textbf{(b)}.

\textbf{(c)} Theorem \ref{thm.fZ.full} \textbf{(d)} yields%
\begin{equation}
\sum_{i=1}^{n}\left(  \min\left\{  \mu_{i},\gamma_{i}\right\}  -\gamma
_{i}\right)  =\sum_{i=1}^{n}\left(  \min\left\{  \mu_{i},\eta_{i}\right\}
-\mu_{i}\right)  . \label{pf.cor.thm.fZ.full2.c.1}%
\end{equation}
Now, we have
\begin{align*}
&  \sum_{i=1}^{n}\underbrace{\left(  \mu_{i}-\min\left\{  \mu_{i},\eta
_{i}\right\}  +\min\left\{  \mu_{i},\gamma_{i}\right\}  \right)  }%
_{=\min\left\{  \mu_{i},\gamma_{i}\right\}  -\left(  \min\left\{  \mu_{i}%
,\eta_{i}\right\}  -\mu_{i}\right)  }\\
&  =\sum_{i=1}^{n}\left(  \min\left\{  \mu_{i},\gamma_{i}\right\}  -\left(
\min\left\{  \mu_{i},\eta_{i}\right\}  -\mu_{i}\right)  \right) \\
&  =\sum_{i=1}^{n}\min\left\{  \mu_{i},\gamma_{i}\right\}  -\underbrace{\sum
_{i=1}^{n}\left(  \min\left\{  \mu_{i},\eta_{i}\right\}  -\mu_{i}\right)
}_{\substack{=\sum_{i=1}^{n}\left(  \min\left\{  \mu_{i},\gamma_{i}\right\}
-\gamma_{i}\right)  \\\text{(by (\ref{pf.cor.thm.fZ.full2.c.1}))}}}\\
&  =\sum_{i=1}^{n}\min\left\{  \mu_{i},\gamma_{i}\right\}  -\sum_{i=1}%
^{n}\left(  \min\left\{  \mu_{i},\gamma_{i}\right\}  -\gamma_{i}\right) \\
&  =\sum_{i=1}^{n}\underbrace{\left(  \min\left\{  \mu_{i},\gamma_{i}\right\}
-\left(  \min\left\{  \mu_{i},\gamma_{i}\right\}  -\gamma_{i}\right)  \right)
}_{=\gamma_{i}}=\sum_{i=1}^{n}\gamma_{i}.
\end{align*}
This proves Corollary \ref{cor.thm.fZ.full2} \textbf{(c)}.

\textbf{(d)} Theorem \ref{thm.fZ.full} \textbf{(a)} shows that $\mathbf{f}%
_{\mu}\circ\mathbf{f}_{\mu}=\operatorname*{id}$. But recall that
$\eta=\mathbf{f}_{\mu}\left(  \gamma\right)  $. Applying the map
$\mathbf{f}_{\mu}$ to both sides of this equality, we obtain%
\[
\mathbf{f}_{\mu}\left(  \eta\right)  =\mathbf{f}_{\mu}\left(  \mathbf{f}_{\mu
}\left(  \gamma\right)  \right)  =\underbrace{\left(  \mathbf{f}_{\mu}%
\circ\mathbf{f}_{\mu}\right)  }_{=\operatorname*{id}}\left(  \gamma\right)
=\gamma.
\]
This proves Corollary \ref{cor.thm.fZ.full2} \textbf{(d)}.
\end{proof}

We are now ready to prove the key lemma:

\begin{lemma}
\label{lem.fZ.lr}Fix any $n$-tuple $\mu\in\mathbb{Z}^{n}$. Let $\gamma
\in\mathbb{Z}^{n}$. Let $a,b\in\mathbb{Z}$. Then,%
\[
\left\vert R_{\mu,b,a}\left(  \mathbf{f}_{\mu}\left(  \gamma\right)  \right)
\right\vert =\left\vert R_{\mu,a,b}\left(  \gamma\right)  \right\vert .
\]

\end{lemma}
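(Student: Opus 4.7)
The plan is to exhibit an explicit bijection
\[
\Phi\colon R_{\mu,a,b}(\gamma) \longrightarrow R_{\mu,b,a}(\mathbf{f}_\mu(\gamma))
\]
and to verify that the mirror-image construction, applied in the reverse direction, furnishes its inverse. Write $\eta = \mathbf{f}_\mu(\gamma)$, and for each $i \in \{1, 2, \ldots, n\}$ set
\[
d_i = \min\{\mu_i, \eta_i\} - \min\{\mu_i, \gamma_i\}.
\]
The bijection will be $\Phi(\nu)_i = \nu_i + d_i$ for each $i$. The motivation is that Corollary \ref{cor.thm.fZ.full2} \textbf{(b)} re-expresses these shifts as $d_i = \max\{\mu_{i+1}, \eta_{i+1}\} - \max\{\mu_{i+1}, \gamma_{i+1}\}$ for $i < n$, so that $\Phi$ amounts to sliding $\nu$ rigidly from the interval $[\max\{\mu_{i+1}, \gamma_{i+1}\},\, \min\{\mu_i, \gamma_i\}]$ into the corresponding interval with $\gamma$ replaced by $\eta$.

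Assume for the moment that $R_{\mu,a,b}(\gamma)$ is non-empty, pick $\nu$ in it, and write $\nu' = \Phi(\nu)$. To check $\nu' \in R_{\mu,b,a}(\eta)$, I would verify the four defining conditions in turn. The size identities $|\mu| - |\nu'| = b$ and $|\eta| - |\nu'| = a$ follow at once from $\sum_i d_i = |\mu| - |\gamma|$ (Corollary \ref{cor.thm.fZ.full2} \textbf{(c)}) together with $|\eta| = 2|\mu| - |\gamma|$ (Corollary \ref{cor.thm.fZ.full2} \textbf{(a)}). The horizontal-strip conditions $\mu \rightharpoonup \nu'$ and $\eta \rightharpoonup \nu'$ reduce, via Corollary \ref{cor.thm.fZ.full2} \textbf{(b)}, to the assumed conditions $\mu \rightharpoonup \nu$ and $\gamma \rightharpoonup \nu$: the upper bound $\nu_i \leq \min\{\mu_i, \gamma_i\}$ turns into $\nu'_i \leq \min\{\mu_i, \eta_i\}$ after adding $d_i$, while (for $i < n$) the lower bound $\nu_i \geq \max\{\mu_{i+1}, \gamma_{i+1}\}$ turns into $\nu'_i \geq \max\{\mu_{i+1}, \eta_{i+1}\}$ after adding the same quantity. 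These two inequalities in particular force $\nu'$ to be weakly decreasing, so Proposition \ref{prop.horstr.easies} \textbf{(a)} (or a direct check) ensures $\nu'$ is a snake.

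For the inverse, Corollary \ref{cor.thm.fZ.full2} \textbf{(d)} yields $\mathbf{f}_\mu(\eta) = \gamma$, so running the same construction with the roles of $\gamma$ and $\eta$ swapped defines a map $\Phi'(\nu')_i = \nu'_i - d_i$ from $R_{\mu,b,a}(\eta)$ to $R_{\mu,a,b}(\gamma)$, and it is clear from the definitions that $\Phi' \circ \Phi = \operatorname*{id}$ and $\Phi \circ \Phi' = \operatorname*{id}$. The degenerate cases in which either $R$-set is a priori empty are then automatically absorbed: whichever direction one runs the construction, a non-empty source forces a non-empty target, so the two sets are simultaneously empty or simultaneously in bijection. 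The main bookkeeping obstacle I anticipate is the index $i = n$, where there is no lower bound on $\nu_n$ and hence no analogue of the $\max$-comparison available from Corollary \ref{cor.thm.fZ.full2} \textbf{(b)}; however, the $n$-th shift $d_n$ is pinned down exactly by the total-shift identity in Corollary \ref{cor.thm.fZ.full2} \textbf{(c)}, so the size conditions come out cleanly to $b$ and $a$ as required.
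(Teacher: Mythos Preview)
Your proposal is correct and follows essentially the same route as the paper: the paper defines the very same shift $\zeta_i = \nu_i + \min\{\mu_i,\eta_i\} - \min\{\mu_i,\gamma_i\}$ (your $\nu_i + d_i$), verifies $\mu \rightharpoonup \zeta$ and $\eta \rightharpoonup \zeta$ via the same upper and lower bound arguments using Corollary~\ref{cor.thm.fZ.full2} \textbf{(b)}, checks the size conditions via parts \textbf{(a)} and \textbf{(c)}, and obtains the inverse map by the symmetry afforded by part \textbf{(d)}. Your treatment of the $i=n$ edge case and of the empty case is also handled the same way in spirit.
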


\begin{proof}
[Proof of Lemma \ref{lem.fZ.lr}.]Define $\eta\in\mathbb{Z}^{n}$ by
$\eta=\mathbf{f}_{\mu}\left(  \gamma\right)  $. We must then prove that
$\left\vert R_{\mu,b,a}\left(  \eta\right)  \right\vert =\left\vert
R_{\mu,a,b}\left(  \gamma\right)  \right\vert $.

We know that $R_{\mu,a,b}\left(  \gamma\right)  $ is the set of all snakes
$\nu$ satisfying the four conditions%
\[
\mu\rightharpoonup\nu\ \ \ \ \ \ \ \ \ \ \text{and}%
\ \ \ \ \ \ \ \ \ \ \left\vert \mu\right\vert -\left\vert \nu\right\vert
=a\ \ \ \ \ \ \ \ \ \ \text{and}\ \ \ \ \ \ \ \ \ \ \gamma\rightharpoonup
\nu\ \ \ \ \ \ \ \ \ \ \text{and}\ \ \ \ \ \ \ \ \ \ \left\vert \gamma
\right\vert -\left\vert \nu\right\vert =b.
\]
Likewise, $R_{\mu,b,a}\left(  \eta\right)  $ is the set of all snakes $\nu$
satisfying the four conditions%
\[
\mu\rightharpoonup\nu\ \ \ \ \ \ \ \ \ \ \text{and}%
\ \ \ \ \ \ \ \ \ \ \left\vert \mu\right\vert -\left\vert \nu\right\vert
=b\ \ \ \ \ \ \ \ \ \ \text{and}\ \ \ \ \ \ \ \ \ \ \eta\rightharpoonup
\nu\ \ \ \ \ \ \ \ \ \ \text{and}\ \ \ \ \ \ \ \ \ \ \left\vert \eta
\right\vert -\left\vert \nu\right\vert =a.
\]

Now, fix $\nu\in R_{\mu,a,b}\left(  \gamma\right)  $. Thus, $\nu$ is a snake
satisfying the four conditions%
\[
\mu\rightharpoonup\nu\ \ \ \ \ \ \ \ \ \ \text{and}%
\ \ \ \ \ \ \ \ \ \ \left\vert \mu\right\vert -\left\vert \nu\right\vert
=a\ \ \ \ \ \ \ \ \ \ \text{and}\ \ \ \ \ \ \ \ \ \ \gamma\rightharpoonup
\nu\ \ \ \ \ \ \ \ \ \ \text{and}\ \ \ \ \ \ \ \ \ \ \left\vert \gamma
\right\vert -\left\vert \nu\right\vert =b
\]
(by the definition of $R_{\mu,a,b}\left(  \gamma\right)  $). In particular, we
have $\mu\rightharpoonup\nu$. In other words, we have%
\begin{equation}
\mu_{1}\geq\nu_{1}\geq\mu_{2}\geq\nu_{2}\geq\cdots\geq\mu_{n}\geq\nu_{n}
\label{pf.lem.fZ.lr.3a}%
\end{equation}
(by the definition of \textquotedblleft$\mu\rightharpoonup\nu$%
\textquotedblright). Likewise, from $\gamma\rightharpoonup\nu$, we obtain%
\begin{equation}
\gamma_{1}\geq\nu_{1}\geq\gamma_{2}\geq\nu_{2}\geq\cdots\geq\gamma_{n}\geq
\nu_{n}. \label{pf.lem.fZ.lr.3b}%
\end{equation}

We define an $n$-tuple $\zeta\in\mathbb{Z}^{n}$ by setting
\[
\zeta_{i}=\min\left\{  \mu_{i},\eta_{i}\right\}  -\min\left\{  \mu_{i}%
,\gamma_{i}\right\}  +\nu_{i}\ \ \ \ \ \ \ \ \ \ \text{for each }i\in\left\{
1,2,\ldots,n\right\}  .
\]
We shall prove that $\zeta\in R_{\mu,b,a}\left(  \eta\right)  $.

We begin by proving several auxiliary claims:

\begin{statement}
\textit{Claim 1:} We have $\min\left\{  \mu_{i},\eta_{i}\right\}  \geq
\zeta_{i}$ for each $i\in\left\{  1,2,\ldots,n\right\}  $.
\end{statement}

[\textit{Proof of Claim 1:} Let $i\in\left\{  1,2,\ldots,n\right\}  $. From
(\ref{pf.lem.fZ.lr.3a}), we obtain $\mu_{i}\geq\nu_{i}$. From
(\ref{pf.lem.fZ.lr.3b}), we obtain $\gamma_{i}\geq\nu_{i}$. Combining $\mu
_{i}\geq\nu_{i}$ and $\gamma_{i}\geq\nu_{i}$, we obtain $\min\left\{  \mu
_{i},\gamma_{i}\right\}  \geq\nu_{i}$. Now, the definition of $\zeta$ yields%
\[
\zeta_{i}=\min\left\{  \mu_{i},\eta_{i}\right\}  -\underbrace{\min\left\{
\mu_{i},\gamma_{i}\right\}  }_{\geq\nu_{i}}+\nu_{i}\leq\min\left\{  \mu
_{i},\eta_{i}\right\}  -\nu_{i}+\nu_{i}=\min\left\{  \mu_{i},\eta_{i}\right\}
.
\]
In other words, $\min\left\{  \mu_{i},\eta_{i}\right\}  \geq\zeta_{i}$. This
proves Claim 1.]

\begin{statement}
\textit{Claim 2:} We have $\zeta_{i}\geq\max\left\{  \mu_{i+1},\eta
_{i+1}\right\}  $ for each $i\in\left\{  1,2,\ldots,n-1\right\}  $.
\end{statement}

[\textit{Proof of Claim 2:} Let $i\in\left\{  1,2,\ldots,n-1\right\}  $. From
(\ref{pf.lem.fZ.lr.3a}), we obtain $\nu_{i}\geq\mu_{i+1}$. From
(\ref{pf.lem.fZ.lr.3b}), we obtain $\nu_{i}\geq\gamma_{i+1}$. Combining
$\nu_{i}\geq\mu_{i+1}$ and $\nu_{i}\geq\gamma_{i+1}$, we obtain $\nu_{i}%
\geq\max\left\{  \mu_{i+1},\gamma_{i+1}\right\}  $. Now, the definition of
$\zeta$ yields%
\begin{align*}
\zeta_{i}  &  =\underbrace{\min\left\{  \mu_{i},\eta_{i}\right\}
-\min\left\{  \mu_{i},\gamma_{i}\right\}  }_{\substack{=\max\left\{  \mu
_{i+1},\eta_{i+1}\right\}  -\max\left\{  \mu_{i+1},\gamma_{i+1}\right\}
\\\text{(by Corollary \ref{cor.thm.fZ.full2} \textbf{(b)})}}}+\underbrace{\nu
_{i}}_{\geq\max\left\{  \mu_{i+1},\gamma_{i+1}\right\}  }\\
&  \geq\max\left\{  \mu_{i+1},\eta_{i+1}\right\}  -\max\left\{  \mu
_{i+1},\gamma_{i+1}\right\}  +\max\left\{  \mu_{i+1},\gamma_{i+1}\right\}
=\max\left\{  \mu_{i+1},\eta_{i+1}\right\}  .
\end{align*}
This proves Claim 2.]

\begin{statement}
\textit{Claim 3:} The $n$-tuple $\zeta$ is a snake and satisfies
$\mu\rightharpoonup\zeta$ and $\eta\rightharpoonup\zeta$.
\end{statement}

[\textit{Proof of Claim 3:} For each $i\in\left\{  1,2,\ldots,n\right\}  $, we
have $\mu_{i}\geq\zeta_{i}$ (since $\mu_{i}\geq\min\left\{  \mu_{i},\eta
_{i}\right\}  \geq\zeta_{i}$ (by Claim 1)). For each $i\in\left\{
1,2,\ldots,n-1\right\}  $, we have $\zeta_{i}\geq\mu_{i+1}$ (since Claim 2
yields $\zeta_{i}\geq\max\left\{  \mu_{i+1},\eta_{i+1}\right\}  \geq\mu_{i+1}%
$). Combining the preceding two sentences, we obtain%
\[
\mu_{1}\geq\zeta_{1}\geq\mu_{2}\geq\zeta_{2}\geq\cdots\geq\mu_{n}\geq\zeta
_{n}.
\]
In other words, $\mu\rightharpoonup\zeta$ (by the definition of
\textquotedblleft$\mu\rightharpoonup\zeta$\textquotedblright).

For each $i\in\left\{  1,2,\ldots,n\right\}  $, we have $\eta_{i}\geq\zeta
_{i}$ (since $\eta_{i}\geq\min\left\{  \mu_{i},\eta_{i}\right\}  \geq\zeta
_{i}$ (by Claim 1)). For each $i\in\left\{  1,2,\ldots,n-1\right\}  $, we have
$\zeta_{i}\geq\eta_{i+1}$ (since Claim 2 yields $\zeta_{i}\geq\max\left\{
\mu_{i+1},\eta_{i+1}\right\}  \geq\eta_{i+1}$). Combining the preceding two
sentences, we obtain%
\[
\eta_{1}\geq\zeta_{1}\geq\eta_{2}\geq\zeta_{2}\geq\cdots\geq\eta_{n}\geq
\zeta_{n}.
\]
In other words, $\eta\rightharpoonup\zeta$ (by the definition of
\textquotedblleft$\eta\rightharpoonup\zeta$\textquotedblright). Hence,
Proposition \ref{prop.horstr.easies} \textbf{(a)} (applied to $\eta$ and
$\zeta$ instead of $\mu$ and $\lambda$) yields that both $\zeta$ and $\eta$
are snakes. Hence, $\zeta$ is a snake. This completes the proof of Claim 3.]

\begin{statement}
\textit{Claim 4:} We have $\left\vert \mu\right\vert -\left\vert
\zeta\right\vert =b$ and $\left\vert \eta\right\vert -\left\vert
\zeta\right\vert =a$.
\end{statement}

[\textit{Proof of Claim 4:} We have%
\begin{align*}
\left\vert \mu\right\vert  &  =\mu_{1}+\mu_{2}+\cdots+\mu_{n}=\sum_{i=1}%
^{n}\mu_{i}\ \ \ \ \ \ \ \ \ \ \text{and}\\
\left\vert \zeta\right\vert  &  =\zeta_{1}+\zeta_{2}+\cdots+\zeta_{n}%
=\sum_{i=1}^{n}\zeta_{i}.
\end{align*}

\begin{vershort}
\noindent Subtracting these two equalities from one another, we find%
\begin{align*}
\left\vert \mu\right\vert -\left\vert \zeta\right\vert  &  =\sum_{i=1}^{n}%
\mu_{i}-\sum_{i=1}^{n}\underbrace{\zeta_{i}}_{\substack{=\min\left\{  \mu
_{i},\eta_{i}\right\}  -\min\left\{  \mu_{i},\gamma_{i}\right\}  +\nu
_{i}\\\text{(by the definition of }\zeta\text{)}}}\\
&  =\sum_{i=1}^{n}\mu_{i}-\sum_{i=1}^{n}\left(  \min\left\{  \mu_{i},\eta
_{i}\right\}  -\min\left\{  \mu_{i},\gamma_{i}\right\}  +\nu_{i}\right) \\
&  =\sum_{i=1}^{n}\underbrace{\left(  \mu_{i}-\left(  \min\left\{  \mu
_{i},\eta_{i}\right\}  -\min\left\{  \mu_{i},\gamma_{i}\right\}  +\nu
_{i}\right)  \right)  }_{=\left(  \mu_{i}-\min\left\{  \mu_{i},\eta
_{i}\right\}  +\min\left\{  \mu_{i},\gamma_{i}\right\}  \right)  -\nu_{i}}\\
&  =\sum_{i=1}^{n}\left(  \left(  \mu_{i}-\min\left\{  \mu_{i},\eta
_{i}\right\}  +\min\left\{  \mu_{i},\gamma_{i}\right\}  \right)  -\nu
_{i}\right) \\
&  =\underbrace{\sum_{i=1}^{n}\left(  \mu_{i}-\min\left\{  \mu_{i},\eta
_{i}\right\}  +\min\left\{  \mu_{i},\gamma_{i}\right\}  \right)
}_{\substack{=\sum_{i=1}^{n}\gamma_{i}\\\text{(by Corollary
\ref{cor.thm.fZ.full2} \textbf{(c)})}}}-\sum_{i=1}^{n}\nu_{i}\\
&  =\underbrace{\sum_{i=1}^{n}\gamma_{i}}_{=\gamma_{1}+\gamma_{2}%
+\cdots+\gamma_{n}=\left\vert \gamma\right\vert }-\underbrace{\sum_{i=1}%
^{n}\nu_{i}}_{=\nu_{1}+\nu_{2}+\cdots+\nu_{n}=\left\vert \nu\right\vert
}=\left\vert \gamma\right\vert -\left\vert \nu\right\vert =b.
\end{align*}

\end{vershort}

\begin{verlong}
\noindent Subtracting these two equalities from one another, we find%
\begin{align*}
\left\vert \mu\right\vert -\left\vert \zeta\right\vert  &  =\sum_{i=1}^{n}%
\mu_{i}-\sum_{i=1}^{n}\underbrace{\zeta_{i}}_{\substack{=\min\left\{  \mu
_{i},\eta_{i}\right\}  -\min\left\{  \mu_{i},\gamma_{i}\right\}  +\nu
_{i}\\\text{(by the definition of }\zeta\text{)}}}\\
&  =\sum_{i=1}^{n}\mu_{i}-\sum_{i=1}^{n}\left(  \min\left\{  \mu_{i},\eta
_{i}\right\}  -\min\left\{  \mu_{i},\gamma_{i}\right\}  +\nu_{i}\right) \\
&  =\sum_{i=1}^{n}\underbrace{\left(  \mu_{i}-\left(  \min\left\{  \mu
_{i},\eta_{i}\right\}  -\min\left\{  \mu_{i},\gamma_{i}\right\}  +\nu
_{i}\right)  \right)  }_{=\left(  \mu_{i}-\min\left\{  \mu_{i},\eta
_{i}\right\}  +\min\left\{  \mu_{i},\gamma_{i}\right\}  \right)  -\nu_{i}}\\
&  =\sum_{i=1}^{n}\left(  \left(  \mu_{i}-\min\left\{  \mu_{i},\eta
_{i}\right\}  +\min\left\{  \mu_{i},\gamma_{i}\right\}  \right)  -\nu
_{i}\right) \\
&  =\underbrace{\sum_{i=1}^{n}\left(  \mu_{i}-\min\left\{  \mu_{i},\eta
_{i}\right\}  +\min\left\{  \mu_{i},\gamma_{i}\right\}  \right)
}_{\substack{=\sum_{i=1}^{n}\gamma_{i}\\\text{(by Corollary
\ref{cor.thm.fZ.full2} \textbf{(c)})}}}-\sum_{i=1}^{n}\nu_{i}\\
&  =\underbrace{\sum_{i=1}^{n}\gamma_{i}}_{\substack{=\left\vert
\gamma\right\vert \\\text{(since }\left\vert \gamma\right\vert =\gamma
_{1}+\gamma_{2}+\cdots+\gamma_{n}=\sum_{i=1}^{n}\gamma_{i}\text{)}%
}}-\underbrace{\sum_{i=1}^{n}\nu_{i}}_{\substack{=\left\vert \nu\right\vert
\\\text{(since }\left\vert \nu\right\vert =\nu_{1}+\nu_{2}+\cdots+\nu_{n}%
=\sum_{i=1}^{n}\nu_{i}\text{)}}}=\left\vert \gamma\right\vert -\left\vert
\nu\right\vert =b.
\end{align*}

\end{verlong}

\noindent Furthermore,%
\[
\left\vert \mu\right\vert -\left\vert \gamma\right\vert =\underbrace{\left(
\left\vert \mu\right\vert -\left\vert \nu\right\vert \right)  }_{=a}%
-\underbrace{\left(  \left\vert \gamma\right\vert -\left\vert \nu\right\vert
\right)  }_{=b}=a-b
\]
and%
\[
\left\vert \eta\right\vert -\left\vert \zeta\right\vert =\underbrace{\left(
\left\vert \eta\right\vert -\left\vert \mu\right\vert \right)  }%
_{\substack{=\left\vert \mu\right\vert -\left\vert \gamma\right\vert
\\\text{(by Corollary \ref{cor.thm.fZ.full2} \textbf{(a)})}}%
}+\underbrace{\left(  \left\vert \mu\right\vert -\left\vert \zeta\right\vert
\right)  }_{=b}=\underbrace{\left\vert \mu\right\vert -\left\vert
\gamma\right\vert }_{=a-b}+b=\left(  a-b\right)  +b=a.
\]
Thus, Claim 4 is proven.]

We have now shown (in Claim 3 and Claim 4) that $\zeta$ is a snake satisfying
the four conditions%
\[
\mu\rightharpoonup\zeta\ \ \ \ \ \ \ \ \ \ \text{and}%
\ \ \ \ \ \ \ \ \ \ \left\vert \mu\right\vert -\left\vert \zeta\right\vert
=b\ \ \ \ \ \ \ \ \ \ \text{and}\ \ \ \ \ \ \ \ \ \ \eta\rightharpoonup
\zeta\ \ \ \ \ \ \ \ \ \ \text{and}\ \ \ \ \ \ \ \ \ \ \left\vert
\eta\right\vert -\left\vert \zeta\right\vert =a.
\]
In other words, $\zeta\in R_{\mu,b,a}\left(  \eta\right)  $ (by the definition
of $R_{\mu,b,a}\left(  \eta\right)  $).

Forget that we fixed $\nu$. Thus, for each $\nu\in R_{\mu,a,b}\left(
\gamma\right)  $, we have constructed a $\zeta\in R_{\mu,b,a}\left(
\eta\right)  $. Let us denote this $\zeta$ by $\widetilde{\nu}$. We thus have
defined a map%
\begin{align*}
R_{\mu,a,b}\left(  \gamma\right)   &  \rightarrow R_{\mu,b,a}\left(
\eta\right)  ,\\
\nu &  \mapsto\widetilde{\nu}.
\end{align*}
Let us denote this map by $\mathbf{g}_{\gamma,a,b}$. Its definition shows that%
\begin{align}
\left(  \underbrace{\mathbf{g}_{\gamma,a,b}\left(  \nu\right)  }%
_{=\widetilde{\nu}}\right)  _{i}  &  =\widetilde{\nu}_{i}=\min\left\{  \mu
_{i},\eta_{i}\right\}  -\min\left\{  \mu_{i},\gamma_{i}\right\}  +\nu
_{i}\label{pf.lem.fZ.lr.end1}\\
&  \ \ \ \ \ \ \ \ \ \ \left(
\begin{array}
[c]{c}%
\text{since }\widetilde{\nu}\text{ was defined as the }n\text{-tuple }%
\zeta\text{, whose entries}\\
\text{are given by }\zeta_{i}=\min\left\{  \mu_{i},\eta_{i}\right\}
-\min\left\{  \mu_{i},\gamma_{i}\right\}  +\nu_{i}%
\end{array}
\right) \nonumber
\end{align}
for each $\nu\in R_{\mu,a,b}\left(  \gamma\right)  $ and each $i\in\left\{
1,2,\ldots,n\right\}  $.

However, from $\eta=\mathbf{f}_{\mu}\left(  \gamma\right)  $, we obtain
$\gamma=\mathbf{f}_{\mu}\left(  \eta\right)  $ (by Corollary
\ref{cor.thm.fZ.full2} \textbf{(d)}). The relation between $\gamma$ and $\eta$
is thus symmetric. Hence, in the same way as we defined a map $\mathbf{g}%
_{\gamma,a,b}:R_{\mu,a,b}\left(  \gamma\right)  \rightarrow R_{\mu,b,a}\left(
\eta\right)  $, we can define a map $\mathbf{g}_{\eta,b,a}:R_{\mu,b,a}\left(
\eta\right)  \rightarrow R_{\mu,a,b}\left(  \gamma\right)  $ (by repeating the
above construction of $\mathbf{g}_{\gamma,a,b}$ with $b$, $a$, $\eta$ and
$\gamma$ taking the roles of $a$, $b$, $\gamma$ and $\eta$, respectively). The
resulting map $\mathbf{g}_{\eta,b,a}$ satisfies%
\begin{equation}
\left(  \mathbf{g}_{\eta,b,a}\left(  \nu\right)  \right)  _{i}=\min\left\{
\mu_{i},\gamma_{i}\right\}  -\min\left\{  \mu_{i},\eta_{i}\right\}  +\nu_{i}
\label{pf.lem.fZ.lr.end2}%
\end{equation}
for each $\nu\in R_{\mu,b,a}\left(  \eta\right)  $ and each $i\in\left\{
1,2,\ldots,n\right\}  $. (Indeed, this can be proved just as we proved
(\ref{pf.lem.fZ.lr.end1}), but with $b$, $a$, $\eta$ and $\gamma$ taking the
roles of $a$, $b$, $\gamma$ and $\eta$.)

\begin{vershort}
Now it is easy to see (using (\ref{pf.lem.fZ.lr.end1}) and
(\ref{pf.lem.fZ.lr.end2})) that the two maps $\mathbf{g}_{\gamma,a,b}$ and
$\mathbf{g}_{\eta,b,a}$ are mutually inverse. Hence, these two maps are
invertible, i.e., are bijections.
\end{vershort}

\begin{verlong}
Now, it is easy to see that $\mathbf{g}_{\gamma,a,b}\circ\mathbf{g}_{\eta
,b,a}=\operatorname*{id}$\ \ \ \ \footnote{\textit{Proof.} Let $\nu\in
R_{\mu,b,a}\left(  \eta\right)  $. Thus, $\mathbf{g}_{\eta,b,a}\left(
\nu\right)  \in R_{\mu,a,b}\left(  \gamma\right)  $. Now, both $\left(
\mathbf{g}_{\gamma,a,b}\circ\mathbf{g}_{\eta,b,a}\right)  \left(  \nu\right)
$ and $\nu$ are $n$-tuples. Furthermore, for each $i\in\left\{  1,2,\ldots
,n\right\}  $, we have%
\begin{align*}
\left(  \underbrace{\left(  \mathbf{g}_{\gamma,a,b}\circ\mathbf{g}_{\eta
,b,a}\right)  \left(  \nu\right)  }_{=\mathbf{g}_{\gamma,a,b}\left(
\mathbf{g}_{\eta,b,a}\left(  \nu\right)  \right)  }\right)  _{i}  &  =\left(
\mathbf{g}_{\gamma,a,b}\left(  \mathbf{g}_{\eta,b,a}\left(  \nu\right)
\right)  \right)  _{i}\\
&  =\min\left\{  \mu_{i},\eta_{i}\right\}  -\min\left\{  \mu_{i},\gamma
_{i}\right\}  +\underbrace{\left(  \mathbf{g}_{\eta,b,a}\left(  \nu\right)
\right)  _{i}}_{\substack{=\min\left\{  \mu_{i},\gamma_{i}\right\}
-\min\left\{  \mu_{i},\eta_{i}\right\}  +\nu_{i}\\\text{(by
(\ref{pf.lem.fZ.lr.end2}))}}}\\
&  \ \ \ \ \ \ \ \ \ \ \left(  \text{by (\ref{pf.lem.fZ.lr.end1}), applied to
}\mathbf{g}_{\eta,b,a}\left(  \nu\right)  \text{ instead of }\nu\right) \\
&  =\min\left\{  \mu_{i},\eta_{i}\right\}  -\min\left\{  \mu_{i},\gamma
_{i}\right\}  +\min\left\{  \mu_{i},\gamma_{i}\right\}  -\min\left\{  \mu
_{i},\eta_{i}\right\}  +\nu_{i}\\
&  =\nu_{i}.
\end{align*}
In other words, the two $n$-tuples $\left(  \mathbf{g}_{\gamma,a,b}%
\circ\mathbf{g}_{\eta,b,a}\right)  \left(  \nu\right)  $ and $\nu$ agree in
each entry. Hence, $\left(  \mathbf{g}_{\gamma,a,b}\circ\mathbf{g}_{\eta
,b,a}\right)  \left(  \nu\right)  =\nu=\operatorname*{id}\left(  \nu\right)
$.
\par
Forget that we fixed $\nu$. We thus have shown that $\left(  \mathbf{g}%
_{\gamma,a,b}\circ\mathbf{g}_{\eta,b,a}\right)  \left(  \nu\right)
=\operatorname*{id}\left(  \nu\right)  $ for each $\nu\in R_{\mu,b,a}\left(
\eta\right)  $. In other words, $\mathbf{g}_{\gamma,a,b}\circ\mathbf{g}%
_{\eta,b,a}=\operatorname*{id}$.} and $\mathbf{g}_{\eta,b,a}\circ
\mathbf{g}_{\gamma,a,b}=\operatorname*{id}$\ \ \ \ \footnote{for similar
reasons}. Thus, the two maps $\mathbf{g}_{\gamma,a,b}$ and $\mathbf{g}%
_{\eta,b,a}$ are mutually inverse. Hence, these two maps are invertible, i.e.,
are bijections.
\end{verlong}

Thus, there exists a bijection from $R_{\mu,a,b}\left(  \gamma\right)  $ to
$R_{\mu,b,a}\left(  \eta\right)  $ (namely, $\mathbf{g}_{\gamma,a,b}$). This
yields $\left\vert R_{\mu,a,b}\left(  \gamma\right)  \right\vert =\left\vert
R_{\mu,b,a}\left(  \eta\right)  \right\vert =\left\vert R_{\mu,b,a}\left(
\mathbf{f}_{\mu}\left(  \gamma\right)  \right)  \right\vert $ (since
$\eta=\mathbf{f}_{\mu}\left(  \gamma\right)  $). This proves Lemma
\ref{lem.fZ.lr}.
\end{proof}

Having learned a lot about the map $\mathbf{f}_{\mu}$, let us now connect it
to the map $\varphi$ defined in Theorem \ref{thm.main}. For this, we shall use
the following lemma:

\begin{lemma}
\label{lem.fZ.back}Fix any $n$-tuple $\mu\in\mathbb{Z}^{n}$.

Let $\nu\in\mathbb{Z}^{n}$ be an $n$-tuple. For each $j\in\mathbb{Z}$, let
\begin{align*}
\tau_{j}  &  =\min\left\{  \underbrace{\nu_{j+1}+\nu_{j+2}+\cdots+\nu_{j+k}%
}_{=\sum_{i=1}^{k}\nu_{j+i}}+\underbrace{\mu_{j+k+1}+\mu_{j+k+2}+\cdots
+\mu_{j+n-1}}_{=\sum_{i=k+1}^{n-1}\mu_{j+i}}\right. \\
&  \ \ \ \ \ \ \ \ \ \ \ \ \ \ \ \ \ \ \ \ \left.  \ \mid\ k\in\left\{
0,1,\ldots,n-1\right\}  \right\}  .
\end{align*}
Let $\eta\in\mathbb{Z}^{n}$ be such that%
\[
\eta_{i}=\mu_{i}+\left(  \mu_{i-1}+\tau_{i-1}\right)  -\left(  \nu_{i+1}%
+\tau_{i+1}\right)  \ \ \ \ \ \ \ \ \ \ \text{for each }i\in\left\{
1,2,\ldots,n\right\}  .
\]
Then, $\mathbf{f}_{\mu}\left(  \nu\right)  =\eta$.
\end{lemma}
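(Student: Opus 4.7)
The plan is to recognize Lemma \ref{lem.fZ.back} as the direct tropicalization of Lemma \ref{lem.f.back}, applied in the semifield $\mathbb{K} = (\mathbb{Z}, \min, +, 0)$ of Example \ref{exa.semifield.mintrop}. Recall that the map $\mathbf{f}_\mu$ in Definition \ref{def.fZ} was obtained precisely by specializing Definition \ref{def.f} to this semifield, with the fixed $n$-tuple $u$ renamed as $\mu$. So Lemma \ref{lem.f.back}, whose statement already has the right shape to serve as a one-step characterization of $\mathbf{f}_u(x)$, should give the lemma after we apply the semifield dictionary and rename variables.

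Concretely, first I would apply Lemma \ref{lem.f.back} with $\mathbb{K} = (\mathbb{Z}, \min, +, 0)$, with $u$ taken to be our $\mu$ and with $x$ taken to be our $\nu$. Under the standard dictionary (addition $\leftrightarrow \min$, multiplication $\leftrightarrow +$, division $\leftrightarrow -$, finite products $\leftrightarrow$ finite sums, finite nonempty sums $\leftrightarrow$ finite mins), the element
\[
q_j = \sum_{k=0}^{n-1} x_{j+1} x_{j+2} \cdots x_{j+k} \cdot u_{j+k+1} u_{j+k+2} \cdots u_{j+n-1}
\]
of Lemma \ref{lem.f.back} translates to the integer
\[
\min\Bigl\{\bigl(\nu_{j+1} + \nu_{j+2} + \cdots + \nu_{j+k}\bigr) + \bigl(\mu_{j+k+1} + \mu_{j+k+2} + \cdots + \mu_{j+n-1}\bigr) \,\bigm|\, k \in \{0,1,\ldots,n-1\}\Bigr\},
\]
which is exactly the $\tau_j$ of Lemma \ref{lem.fZ.back}. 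Likewise, the defining relation $z_i = u_i \cdot \dfrac{u_{i-1} q_{i-1}}{x_{i+1} q_{i+1}}$ translates to $z_i = \mu_i + (\mu_{i-1} + \tau_{i-1}) - (\nu_{i+1} + \tau_{i+1})$, which is exactly the formula defining $\eta_i$ in Lemma \ref{lem.fZ.back}.

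So the second step is simply to invoke Lemma \ref{lem.f.back}, which then delivers $\mathbf{f}_\mu(\nu) = \eta$. There is no real obstacle here: the proof is pure bookkeeping, and the only thing to be careful about is that the indexing of $\tau_j$ in Lemma \ref{lem.fZ.back} matches the indexing of $q_j$ in Lemma \ref{lem.f.back} on the nose (both run over $j \in \mathbb{Z}$ with the $n$-periodicity of Convention \ref{conv.bir.peri} in effect), and that $\eta_i$ is defined for all $i \in \{1,2,\ldots,n\}$ by the same formula as $z_i$. Since both matchings are manifest, the lemma follows in one line after the translation is made explicit.
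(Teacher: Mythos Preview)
Your proposal is correct and is essentially identical to the paper's own proof: the paper also derives Lemma~\ref{lem.fZ.back} by applying Lemma~\ref{lem.f.back} to the min tropical semifield $\mathbb{K}=(\mathbb{Z},\min,+,0)$, renaming $u$, $x$, $q_j$, $z$ as $\mu$, $\nu$, $\tau_j$, $\eta$ respectively, and invoking the semifield dictionary.
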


\begin{proof}
[Proof of Lemma \ref{lem.fZ.back}.]Lemma \ref{lem.fZ.back} is obtained when we
apply Lemma \ref{lem.f.back} to \newline$\mathbb{K}=\left(  \mathbb{Z}%
,\min,+,0\right)  $ (and rename everything\footnote{Namely, we are
\par
\begin{itemize}
\item renaming the (fixed) $n$-tuple $u$ as $\mu$;
\par
\item renaming the $n$-tuple $x$ as $\nu$;
\par
\item renaming the elements $q_{j}$ as $\tau_{j}$;
\par
\item renaming the $n$-tuple $z$ as $\eta$.
\end{itemize}
\par
{}}, and use our above dictionary again).
\end{proof}

We can now connect the map $\mathbf{f}_{\mu}$ with the map $\varphi$ from
Theorem \ref{thm.main}:

\begin{lemma}
\label{lem.fZ.phi}Let $a,b\in\mathbb{N}$. Fix any $n$-tuple $\mu\in
\mathbb{Z}^{n}$. Define a map $\varphi:\mathbb{Z}^{n}\rightarrow\mathbb{Z}%
^{n}$ as in Theorem \ref{thm.main}. Then,%
\[
\varphi\left(  \omega\right)  =\mathbf{f}_{\mu}\left(  \omega-a\right)
+b\ \ \ \ \ \ \ \ \ \ \text{for each }\omega\in\mathbb{Z}^{n}.
\]

\end{lemma}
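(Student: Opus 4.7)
The plan is to show that this lemma is essentially a tautology once the notations are aligned: the map $\varphi$ of Theorem \ref{thm.main} is, up to the shifts $-a$ and $+b$, literally the recipe given for $\mathbf{f}_{\mu}$ by Lemma \ref{lem.fZ.back}. Fix $\omega \in \mathbb{Z}^n$ and set $\nu = \omega - a$, so that $\nu$ is precisely the $n$-tuple introduced in Theorem \ref{thm.main}. I would then apply Lemma \ref{lem.fZ.back} (with the given $\mu$ and with its input $n$-tuple taken to be $\nu$) to obtain $\mathbf{f}_{\mu}(\nu) = \eta^{\ast}$, where the auxiliary quantities $\tau_{j}^{\ast}$ and $\eta^{\ast}$ are defined by the formulas appearing in that lemma.

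Next I would check that $\tau_{j}^{\ast} = \tau_{j}$ for every $j \in \mathbb{Z}$, where $\tau_{j}$ is as defined in Theorem \ref{thm.main}. Under Convention \ref{conv.bir.peri} (in force throughout Subsection \ref{subsect.fF}), the symbols $\mu_{j+i}$ and $\nu_{j+i}$ appearing in Lemma \ref{lem.fZ.back} stand for $\mu_{(j+i)\#}$ and $\nu_{(j+i)\#}$ respectively; consequently the defining expression
\[
\tau_{j}^{\ast} = \min\Bigl\{ \textstyle\sum_{i=1}^{k} \nu_{j+i} + \sum_{i=k+1}^{n-1} \mu_{j+i} \ \Bigm|\ k \in \{0,1,\ldots,n-1\} \Bigr\}
\]
is literally the definition of $\tau_{j}$ given in Theorem \ref{thm.main}. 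Similarly, the formula $\eta^{\ast}_{i} = \mu_{i} + (\mu_{i-1} + \tau^{\ast}_{i-1}) - (\nu_{i+1} + \tau^{\ast}_{i+1})$ for $i \in \{1,\ldots,n\}$ coincides with the definition of $\eta_{i}$ in Theorem \ref{thm.main} once one rewrites $\mu_{i-1}$, $\nu_{i+1}$, $\tau^{\ast}_{i-1}$, $\tau^{\ast}_{i+1}$ using the $\#$ notation; here the $n$-periodicity of $(\tau_{j}^{\ast})_{j \in \mathbb{Z}}$ (which is the min-tropical incarnation of Lemma \ref{lem.f.steps} \textbf{(a)}, applied with $r = n-1$) is what makes $\tau^{\ast}_{i-1} = \tau_{(i-1)\#}$ and $\tau^{\ast}_{i+1} = \tau_{(i+1)\#}$.

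Combining these two identifications gives $\eta^{\ast} = \eta$, where $\eta$ is the $n$-tuple constructed in Theorem \ref{thm.main}. Then
\[
\mathbf{f}_{\mu}(\omega - a) + b = \mathbf{f}_{\mu}(\nu) + b = \eta^{\ast} + b = \eta + b = \varphi(\omega),
\]
which is the desired equality. There is no genuine obstacle in this argument: the entire content of the lemma is bookkeeping, and the only thing to watch is the clash between the notation \textquotedblleft$\mu_{i\#}$\textquotedblright\ used in Theorem \ref{thm.main} and the notation \textquotedblleft$\mu_{i}$ interpreted via Convention \ref{conv.bir.peri}\textquotedblright\ used in Definition \ref{def.fZ} and Lemma \ref{lem.fZ.back}. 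Once this is recognized, no computation is required.
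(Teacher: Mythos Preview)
Your proposal is correct and takes essentially the same approach as the paper: both set $\nu=\omega-a$, verify that the $\tau_j$ and $\eta$ defined in Theorem \ref{thm.main} coincide (via the identification $\mu_{p\#}=\mu_p$, $\nu_{p\#}=\nu_p$, $\tau_{p\#}=\tau_p$ coming from Convention \ref{conv.bir.peri} and $n$-periodicity) with the quantities appearing in Lemma \ref{lem.fZ.back}, and then invoke that lemma to conclude $\mathbf{f}_{\mu}(\nu)=\eta$. The paper merely spells out the bookkeeping in five explicit claims, whereas you summarize it more compactly.
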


\begin{proof}
[Proof of Lemma \ref{lem.fZ.phi}.]Let $\omega\in\mathbb{Z}^{n}$.

\begin{vershort}
Define an $n$-tuple $\nu=\left(  \nu_{1},\nu_{2},\ldots,\nu_{n}\right)
\in\mathbb{Z}^{n}$ by
\[
\nu_{i}=\omega_{i}-a\ \ \ \ \ \ \ \ \ \ \text{for each }i\in\left\{
1,2,\ldots,n\right\}  .
\]
Thus, $\nu=\omega-a$.
\end{vershort}

\begin{verlong}
Define an $n$-tuple $\nu=\left(  \nu_{1},\nu_{2},\ldots,\nu_{n}\right)
\in\mathbb{Z}^{n}$ by%
\[
\nu_{i}=\omega_{i}-a\ \ \ \ \ \ \ \ \ \ \text{for each }i\in\left\{
1,2,\ldots,n\right\}  .
\]
Thus,%
\begin{align*}
\nu &  =\left(  \nu_{1},\nu_{2},\ldots,\nu_{n}\right)  =\left(  \omega
_{1}-a,\omega_{2}-a,\ldots,\omega_{n}-a\right)  \\
&  \ \ \ \ \ \ \ \ \ \ \left(  \text{since }\nu_{i}=\omega_{i}-a\text{ for
each }i\in\left\{  1,2,\ldots,n\right\}  \right)  \\
&  =\omega-a.
\end{align*}

\end{verlong}

For each $i\in\mathbb{Z}$, we let $i\#$ denote the unique element of $\left\{
1,2,\ldots,n\right\}  $ congruent to $i$ modulo $n$.

For each $j\in\mathbb{Z}$, set%
\begin{align}
\tau_{j}  &  =\min\left\{  \left(  \nu_{\left(  j+1\right)  \#}+\nu_{\left(
j+2\right)  \#}+\cdots+\nu_{\left(  j+k\right)  \#}\right)  \right.
\nonumber\\
&  \ \ \ \ \ \ \ \ \ \ \ \ \ \ \ \ \ \ \ \ \left.  +\left(  \mu_{\left(
j+k+1\right)  \#}+\mu_{\left(  j+k+2\right)  \#}+\cdots+\mu_{\left(
j+n-1\right)  \#}\right)  \right. \nonumber\\
&  \ \ \ \ \ \ \ \ \ \ \ \ \ \ \ \ \ \ \ \ \left.  \ \mid\ k\in\left\{
0,1,\ldots,n-1\right\}
\vphantom{\left(\nu_{\left(j+1\right)\#}\right)}\right\}  .
\label{pf.lem.fZ.phi.tauj=}%
\end{align}

Define an $n$-tuple $\eta=\left(  \eta_{1},\eta_{2},\ldots,\eta_{n}\right)
\in\mathbb{Z}^{n}$ by setting%
\[
\eta_{i}=\mu_{i\#}+\left(  \mu_{\left(  i-1\right)  \#}+\tau_{\left(
i-1\right)  \#}\right)  -\left(  \nu_{\left(  i+1\right)  \#}+\tau_{\left(
i+1\right)  \#}\right)  \ \ \ \ \ \ \ \ \ \ \text{for each }i\in\left\{
1,2,\ldots,n\right\}  .
\]

The definition of $\varphi$ then yields%
\begin{equation}
\varphi\left(  \omega\right)  =\left(  \eta_{1}+b,\eta_{2}+b,\ldots,\eta
_{n}+b\right)  =\eta+b. \label{pf.lem.fZ.phi.phi1}%
\end{equation}

Our plan is now to show that $\mathbf{f}_{\mu}\left(  \nu\right)  =\eta$. We
shall achieve this by applying Lemma \ref{lem.fZ.back}; but in order to do so,
we need to show that the assumptions of Lemma \ref{lem.fZ.back} are satisfied.

\begin{vershort}
We shall do this piece by piece. First, we make the following two claims,
which both follow from Convention \ref{conv.bir.peri}:
\end{vershort}

\begin{verlong}
We shall do this piece by piece. First we claim the following:
\end{verlong}

\begin{statement}
\textit{Claim 1:} We have $\nu_{p\#}=\nu_{p}$ for each $p\in\mathbb{Z}$.
\end{statement}

\begin{verlong}
[\textit{Proof of Claim 1:} Let $p\in\mathbb{Z}$. Then, $p\#\equiv
p\operatorname{mod}n$ (by the definition of $p\#$).

Convention \ref{conv.bir.peri} ensures that the family $\left(  \nu
_{i}\right)  _{i\in\mathbb{Z}}$ is $n$-periodic. In other words, if $j$ and
$j^{\prime}$ are two integers satisfying $j\equiv j^{\prime}\operatorname{mod}%
n$, then $\nu_{j}=\nu_{j^{\prime}}$. We can apply this to $j=p\#$ and
$j^{\prime}=p$, and thus obtain $\nu_{p\#}=\nu_{p}$. This proves Claim 1.]
\end{verlong}

\begin{statement}
\textit{Claim 2:} We have $\mu_{p\#}=\mu_{p}$ for each $p\in\mathbb{Z}$.
\end{statement}

\begin{verlong}
[\textit{Proof of Claim 2:} This is analogous to the proof of Claim 1.]
\end{verlong}

\begin{vershort}
The next claim is an easy consequence of Claims 1 and 2:
\end{vershort}

\begin{statement}
\textit{Claim 3:} For each $j\in\mathbb{Z}$, we have%
\begin{align*}
\tau_{j}  &  =\min\left\{  \underbrace{\nu_{j+1}+\nu_{j+2}+\cdots+\nu_{j+k}%
}_{=\sum_{i=1}^{k}\nu_{j+i}}+\underbrace{\mu_{j+k+1}+\mu_{j+k+2}+\cdots
+\mu_{j+n-1}}_{=\sum_{i=k+1}^{n-1}\mu_{j+i}}\right. \\
&  \ \ \ \ \ \ \ \ \ \ \ \ \ \ \ \ \ \ \ \ \left.  \ \mid\ k\in\left\{
0,1,\ldots,n-1\right\}  \right\}  .
\end{align*}

\end{statement}

\begin{verlong}
[\textit{Proof of Claim 3:} Let $j\in\mathbb{Z}$. Then, the definition of
$\tau_{j}$ yields%
\begin{align*}
\tau_{j}  &  =\min\left\{  \left(  \underbrace{\nu_{\left(  j+1\right)  \#}%
}_{\substack{=\nu_{j+1}\\\text{(by Claim 1)}}}+\underbrace{\nu_{\left(
j+2\right)  \#}}_{\substack{=\nu_{j+2}\\\text{(by Claim 1)}}}+\cdots
+\underbrace{\nu_{\left(  j+k\right)  \#}}_{\substack{=\nu_{j+k}\\\text{(by
Claim 1)}}}\right)  \right. \\
&  \ \ \ \ \ \ \ \ \ \ \ \ \ \ \ \ \ \ \ \ \left.  +\left(  \underbrace{\mu
_{\left(  j+k+1\right)  \#}}_{\substack{=\mu_{j+k+1}\\\text{(by Claim 2)}%
}}+\underbrace{\mu_{\left(  j+k+2\right)  \#}}_{\substack{=\mu_{j+k+2}%
\\\text{(by Claim 2)}}}+\cdots+\underbrace{\mu_{\left(  j+n-1\right)  \#}%
}_{\substack{=\mu_{j+n-1}\\\text{(by Claim 2)}}}\right)  \right. \\
&  \ \ \ \ \ \ \ \ \ \ \ \ \ \ \ \ \ \ \ \ \left.  \ \mid\ k\in\left\{
0,1,\ldots,n-1\right\}
\vphantom{\left(\nu_{\left(j+1\right)\#}\right)}\right\} \\
&  =\min\left\{  \underbrace{\nu_{j+1}+\nu_{j+2}+\cdots+\nu_{j+k}}%
_{=\sum_{i=1}^{k}\nu_{j+i}}+\underbrace{\mu_{j+k+1}+\mu_{j+k+2}+\cdots
+\mu_{j+n-1}}_{=\sum_{i=k+1}^{n-1}\mu_{j+i}}\right. \\
&  \ \ \ \ \ \ \ \ \ \ \ \ \ \ \ \ \ \ \ \ \left.  \ \mid\ k\in\left\{
0,1,\ldots,n-1\right\}  \right\}  .
\end{align*}
This proves Claim 3.]
\end{verlong}

The next claim is an easy fact from elementary number theory:

\begin{statement}
\textit{Claim 4:} We have $\left(  p\#+q\right)  \#=\left(  p+q\right)  \#$
for any $p\in\mathbb{Z}$ and $q\in\mathbb{Z}$.
\end{statement}

\begin{verlong}
[\textit{Proof of Claim 4:} Let $p\in\mathbb{Z}$ and $q\in\mathbb{Z}$. Recall
that $p\#$ is defined as the unique element of $\left\{  1,2,\ldots,n\right\}
$ congruent to $p$ modulo $n$. Hence, $p\#$ is congruent to $p$ modulo $n$. In
other words, $p\#\equiv p\operatorname{mod}n$. Thus, $\underbrace{p\#}_{\equiv
p\operatorname{mod}n}+q\equiv p+q\operatorname{mod}n$.

Recall that $\left(  p\#+q\right)  \#$ is defined as the unique element of
$\left\{  1,2,\ldots,n\right\}  $ congruent to $p\#+q$ modulo $n$. Hence,
$\left(  p\#+q\right)  \#$ is congruent to $p\#+q$ modulo $n$. In other words,
$\left(  p\#+q\right)  \#\equiv p\#+q\operatorname{mod}n$. Thus, $\left(
p\#+q\right)  \#\equiv p\#+q\equiv p+q\operatorname{mod}n$. In other words,
$\left(  p\#+q\right)  \#$ is congruent to $p+q$ modulo $n$.

Moreover, $\left(  p\#+q\right)  \#$ is an element of $\left\{  1,2,\ldots
,n\right\}  $ (since $\left(  p\#+q\right)  \#$ is the unique element of
$\left\{  1,2,\ldots,n\right\}  $ congruent to $p\#+q$ modulo $n$). Hence,
$\left(  p\#+q\right)  \#$ is an element of $\left\{  1,2,\ldots,n\right\}  $
congruent to $p+q$ modulo $n$.

But let us now recall that $\left(  p+q\right)  \#$ is defined as the unique
element of $\left\{  1,2,\ldots,n\right\}  $ congruent to $p+q$ modulo $n$.
Hence, $\left(  p+q\right)  \#$ is the only such element. In other words, if
$i$ is an element of $\left\{  1,2,\ldots,n\right\}  $ congruent to $p+q$
modulo $n$, then $i=\left(  p+q\right)  \#$. Applying this to $i=\left(
p\#+q\right)  \#$, we conclude that $\left(  p\#+q\right)  \#=\left(
p+q\right)  \#$ (since $\left(  p\#+q\right)  \#$ is an element of $\left\{
1,2,\ldots,n\right\}  $ congruent to $p+q$ modulo $n$). This proves Claim 4.]
\end{verlong}

Using Claim 4, we easily obtain the following:

\begin{statement}
\textit{Claim 5:} We have $\tau_{p\#}=\tau_{p}$ for each $p\in\mathbb{Z}$.
\end{statement}

\begin{verlong}
[\textit{Proof of Claim 5:} Let $p\in\mathbb{Z}$. Then, the definition of
$\tau_{p\#}$ yields%
\begin{align*}
\tau_{p\#}  &  =\min\left\{  \underbrace{\left(  \nu_{\left(  p\#+1\right)
\#}+\nu_{\left(  p\#+2\right)  \#}+\cdots+\nu_{\left(  p\#+k\right)
\#}\right)  }_{=\sum_{i=1}^{k}\nu_{\left(  p\#+i\right)  \#}}\right. \\
&  \ \ \ \ \ \ \ \ \ \ \ \ \ \ \ \ \ \ \ \ \left.  +\underbrace{\left(
\mu_{\left(  p\#+k+1\right)  \#}+\mu_{\left(  p\#+k+2\right)  \#}+\cdots
+\mu_{\left(  p\#+n-1\right)  \#}\right)  }_{=\sum_{i=k+1}^{n-1}\mu_{\left(
p\#+i\right)  \#}}\right. \\
&  \ \ \ \ \ \ \ \ \ \ \ \ \ \ \ \ \ \ \ \ \left.  \ \mid\ k\in\left\{
0,1,\ldots,n-1\right\}
\vphantom{\left(\nu_{\left(j+1\right)\#}\right)}\right\} \\
&  =\min\left\{  \sum_{i=1}^{k}\underbrace{\nu_{\left(  p\#+i\right)  \#}%
}_{\substack{=\nu_{\left(  p+i\right)  \#}\\\text{(since Claim 4}%
\\\text{(applied to }q=i\text{)}\\\text{yields }\left(  p\#+i\right)
\#=\left(  p+i\right)  \#\text{)}}}+\sum_{i=k+1}^{n-1}\underbrace{\mu_{\left(
p\#+i\right)  \#}}_{\substack{=\mu_{\left(  p+i\right)  \#}\\\text{(since
Claim 4}\\\text{(applied to }q=i\text{)}\\\text{yields }\left(  p\#+i\right)
\#=\left(  p+i\right)  \#\text{)}}}\ \mid\ k\in\left\{  0,1,\ldots
,n-1\right\}  \right\} \\
&  =\min\left\{  \sum_{i=1}^{k}\nu_{\left(  p+i\right)  \#}+\sum_{i=k+1}%
^{n-1}\mu_{\left(  p+i\right)  \#}\ \mid\ k\in\left\{  0,1,\ldots,n-1\right\}
\right\}  .
\end{align*}
Comparing this with%
\begin{align*}
\tau_{p}  &  =\min\left\{  \underbrace{\left(  \nu_{\left(  p+1\right)
\#}+\nu_{\left(  p+2\right)  \#}+\cdots+\nu_{\left(  p+k\right)  \#}\right)
}_{=\sum_{i=1}^{k}\nu_{\left(  p+i\right)  \#}}\right. \\
&  \ \ \ \ \ \ \ \ \ \ \ \ \ \ \ \ \ \ \ \ \left.  +\underbrace{\left(
\mu_{\left(  p+k+1\right)  \#}+\mu_{\left(  p+k+2\right)  \#}+\cdots
+\mu_{\left(  p+n-1\right)  \#}\right)  }_{=\sum_{i=k+1}^{n-1}\mu_{\left(
p+i\right)  \#}}\right. \\
&  \ \ \ \ \ \ \ \ \ \ \ \ \ \ \ \ \ \ \ \ \left.  \ \mid\ k\in\left\{
0,1,\ldots,n-1\right\}
\vphantom{\left(\nu_{\left(j+1\right)\#}\right)}\right\} \\
&  \ \ \ \ \ \ \ \ \ \ \ \ \ \ \ \ \ \ \ \ \left(  \text{by the definition of
}\tau_{p}\right) \\
&  =\min\left\{  \sum_{i=1}^{k}\nu_{\left(  p+i\right)  \#}+\sum_{i=k+1}%
^{n-1}\mu_{\left(  p+i\right)  \#}\ \mid\ k\in\left\{  0,1,\ldots,n-1\right\}
\right\}  ,
\end{align*}
we obtain $\tau_{p\#}=\tau_{p}$. This proves Claim 5.]
\end{verlong}

Now, let $i\in\left\{  1,2,\ldots,n\right\}  $. Then, the definition of $\eta$
yields%
\begin{align*}
\eta_{i}  &  =\underbrace{\mu_{i\#}}_{\substack{=\mu_{i}\\\text{(by Claim 2)}%
}}+\left(  \underbrace{\mu_{\left(  i-1\right)  \#}}_{\substack{=\mu
_{i-1}\\\text{(by Claim 2)}}}+\underbrace{\tau_{\left(  i-1\right)  \#}%
}_{\substack{=\tau_{i-1}\\\text{(by Claim 5)}}}\right)  -\left(
\underbrace{\nu_{\left(  i+1\right)  \#}}_{\substack{=\nu_{i+1}\\\text{(by
Claim 1)}}}+\underbrace{\tau_{\left(  i+1\right)  \#}}_{\substack{=\tau
_{i+1}\\\text{(by Claim 5)}}}\right) \\
&  =\mu_{i}+\left(  \mu_{i-1}+\tau_{i-1}\right)  -\left(  \nu_{i+1}+\tau
_{i+1}\right)  .
\end{align*}

Now, forget that we fixed $i$. We thus have proved that
\[
\eta_{i}=\mu_{i}+\left(  \mu_{i-1}+\tau_{i-1}\right)  -\left(  \nu_{i+1}%
+\tau_{i+1}\right)  \ \ \ \ \ \ \ \ \ \ \text{for each }i\in\left\{
1,2,\ldots,n\right\}  .
\]
Combining this with Claim 3, we conclude that the assumptions of Lemma
\ref{lem.fZ.back} are satisfied. Hence, Lemma \ref{lem.fZ.back} yields
$\mathbf{f}_{\mu}\left(  \nu\right)  =\eta$. In view of $\nu=\omega-a$, this
rewrites as $\mathbf{f}_{\mu}\left(  \omega-a\right)  =\eta$. Hence,
$\eta=\mathbf{f}_{\mu}\left(  \omega-a\right)  $, so that
(\ref{pf.lem.fZ.phi.phi1}) becomes%
\[
\varphi\left(  \omega\right)  =\underbrace{\eta}_{=\mathbf{f}_{\mu}\left(
\omega-a\right)  }+b=\mathbf{f}_{\mu}\left(  \omega-a\right)  +b.
\]
This proves Lemma \ref{lem.fZ.phi}.
\end{proof}

\subsection{The finale}

Now, let us again use the convention (from Section \ref{sect.not}) by which we
identify partitions with finite tuples (and therefore identify partitions in
$\operatorname*{Par}\left[  n\right]  $ with nonnegative snakes). This is no
longer problematic, since we are not using Convention \ref{conv.bir.peri} any more.

\begin{lemma}
\label{lem.finale.1}Let $a,b\in\mathbb{N}$. Define the two partitions
$\alpha=\left(  a+b,a^{n-2}\right)  $ and $\beta=\left(  a+b,b^{n-2}\right)  $.

Fix any partition $\mu\in\operatorname*{Par}\left[  n\right]  $. Consider the
map $\mathbf{f}_{\mu}:\mathbb{Z}^{n}\rightarrow\mathbb{Z}^{n}$ defined in
Definition \ref{def.fZ}.

Then, for any $\lambda\in\mathbb{Z}^{n}$, we have
\[
c_{\alpha,\mu}^{\lambda+a}=c_{\beta,\mu}^{\mathbf{f}_{\mu}\left(
\lambda\right)  +b}.
\]
Here, we understand $c_{\alpha,\mu}^{\lambda+a}$ to mean $0$ if $\lambda+a$ is
not a partition, and likewise we understand $c_{\beta,\mu}^{\mathbf{f}_{\mu
}\left(  \lambda\right)  +b}$ to mean $0$ if $\mathbf{f}_{\mu}\left(
\lambda\right)  +b$ is not a partition.
\end{lemma}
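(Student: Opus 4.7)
The plan is to reduce the lemma to two results already at hand: Corollary \ref{cor.Rmab.lr} (which encodes the Littlewood--Richardson coefficients for $\alpha$ as a signed count of snakes in certain sets $R_{\mu,a,b}(\cdot)$) and Lemma \ref{lem.fZ.lr} (which says that $\mathbf{f}_{\mu}$ intertwines the sets $R_{\mu,a,b}(\cdot)$ and $R_{\mu,b,a}(\cdot)$, up to bijection). The key observation is that $\beta=(a+b,b^{n-2})$ is precisely the partition $\alpha$ with the roles of $a$ and $b$ swapped, so Corollary \ref{cor.Rmab.lr} will apply to both sides with parallel formulas whose arguments differ only by this swap.

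More concretely, first I would apply Corollary \ref{cor.Rmab.lr} (with its $\lambda$ replaced by $\lambda+a$) to rewrite the left-hand side as
\[
c_{\alpha,\mu}^{\lambda+a}=\left\vert R_{\mu,a,b}(\lambda)\right\vert -\left\vert R_{\mu,a-1,b-1}(\lambda)\right\vert .
\]
Next I would apply the same Corollary \ref{cor.Rmab.lr}, but with $a$ and $b$ exchanged (so that the partition called $\alpha$ there becomes our $\beta$) and with its $\lambda$ now set to $\mathbf{f}_{\mu}(\lambda)+b$, to obtain
\[
c_{\beta,\mu}^{\mathbf{f}_{\mu}(\lambda)+b}=\left\vert R_{\mu,b,a}(\mathbf{f}_{\mu}(\lambda))\right\vert -\left\vert R_{\mu,b-1,a-1}(\mathbf{f}_{\mu}(\lambda))\right\vert .
\]

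Finally, Lemma \ref{lem.fZ.lr}, applied with $\gamma=\lambda$ and the given $a,b$, yields $\left\vert R_{\mu,b,a}(\mathbf{f}_{\mu}(\lambda))\right\vert =\left\vert R_{\mu,a,b}(\lambda)\right\vert $. Applying the same lemma a second time, now with $a-1$ and $b-1$ in place of $a$ and $b$ (which is permitted since Lemma \ref{lem.fZ.lr} allows arbitrary $a,b\in\mathbb{Z}$), yields $\left\vert R_{\mu,b-1,a-1}(\mathbf{f}_{\mu}(\lambda))\right\vert =\left\vert R_{\mu,a-1,b-1}(\lambda)\right\vert $. Substituting these two equalities into the displayed formula for $c_{\beta,\mu}^{\mathbf{f}_{\mu}(\lambda)+b}$ turns its right-hand side into the right-hand side of the formula for $c_{\alpha,\mu}^{\lambda+a}$, proving the desired identity.

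No step is a real obstacle: the substantive content has already been packaged into Corollary \ref{cor.Rmab.lr} and Lemma \ref{lem.fZ.lr}. The only bookkeeping to watch is the convention that $c_{\alpha,\mu}^{\cdot}$ and $c_{\beta,\mu}^{\cdot}$ vanish on $n$-tuples that are not partitions; but this is precisely the convention already built into Corollary \ref{cor.Rmab.lr}, so the equality (\ref{eq.cor.Rmab.lr.eq}) remains valid for every $\lambda\in\mathbb{Z}^{n}$, regardless of whether $\lambda+a$ or $\mathbf{f}_{\mu}(\lambda)+b$ happens to be a partition, and the argument above goes through unchanged.
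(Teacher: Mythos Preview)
Your proposal is correct and follows essentially the same approach as the paper: apply Corollary \ref{cor.Rmab.lr} once to $\alpha$ (with $\lambda+a$) and once to $\beta$ (with $a,b$ swapped and $\mathbf{f}_{\mu}(\lambda)+b$), then match the resulting differences term by term via two applications of Lemma \ref{lem.fZ.lr}. The paper's proof is identical in structure and detail.
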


\begin{proof}
[Proof of Lemma \ref{lem.finale.1}.]Let $\lambda\in\mathbb{Z}^{n}$. Corollary
\ref{cor.Rmab.lr} (applied to $\lambda+a$ instead of $\lambda$) yields%
\begin{align}
c_{\alpha,\mu}^{\lambda+a}  &  =\left\vert R_{\mu,a,b}\left(  \left(
\lambda+a\right)  -a\right)  \right\vert -\left\vert R_{\mu,a-1,b-1}\left(
\left(  \lambda+a\right)  -a\right)  \right\vert \nonumber\\
&  =\left\vert R_{\mu,a,b}\left(  \lambda\right)  \right\vert -\left\vert
R_{\mu,a-1,b-1}\left(  \lambda\right)  \right\vert \label{pf.lem.finale.1.1}%
\end{align}
(since $\left(  \lambda+a\right)  -a=\lambda$). On the other hand,
$\beta=\left(  \underbrace{a+b}_{=b+a},b^{n-2}\right)  =\left(  b+a,b^{n-2}%
\right)  $. Hence, Corollary \ref{cor.Rmab.lr} (applied to $b$, $a$, $\beta$
and $\mathbf{f}_{\mu}\left(  \lambda\right)  +b$ instead of $a$, $b$, $\alpha$
and $\lambda$) yields%
\begin{align*}
c_{\beta,\mu}^{\mathbf{f}_{\mu}\left(  \lambda\right)  +b}  &  =\left\vert
R_{\mu,b,a}\left(  \left(  \mathbf{f}_{\mu}\left(  \lambda\right)  +b\right)
-b\right)  \right\vert -\left\vert R_{\mu,b-1,a-1}\left(  \left(
\mathbf{f}_{\mu}\left(  \lambda\right)  +b\right)  -b\right)  \right\vert \\
&  =\underbrace{\left\vert R_{\mu,b,a}\left(  \mathbf{f}_{\mu}\left(
\lambda\right)  \right)  \right\vert }_{\substack{=\left\vert R_{\mu
,a,b}\left(  \lambda\right)  \right\vert \\\text{(by Lemma \ref{lem.fZ.lr}%
,}\\\text{applied to }\gamma=\lambda\text{)}}}-\underbrace{\left\vert
R_{\mu,b-1,a-1}\left(  \mathbf{f}_{\mu}\left(  \lambda\right)  \right)
\right\vert }_{\substack{=\left\vert R_{\mu,a-1,b-1}\left(  \lambda\right)
\right\vert \\\text{(by Lemma \ref{lem.fZ.lr},}\\\text{applied to }%
\lambda\text{, }a-1\text{ and }b-1\\\text{instead of }\gamma\text{, }a\text{
and }b\text{)}}}\\
&  \ \ \ \ \ \ \ \ \ \ \left(  \text{since }\left(  \mathbf{f}_{\mu}\left(
\lambda\right)  +b\right)  -b=\mathbf{f}_{\mu}\left(  \lambda\right)  \right)
\\
&  =\left\vert R_{\mu,a,b}\left(  \lambda\right)  \right\vert -\left\vert
R_{\mu,a-1,b-1}\left(  \lambda\right)  \right\vert .
\end{align*}
Comparing this with (\ref{pf.lem.finale.1.1}), we find $c_{\alpha,\mu
}^{\lambda+a}=c_{\beta,\mu}^{\mathbf{f}_{\mu}\left(  \lambda\right)  +b}%
$.\ This proves Lemma \ref{lem.finale.1}.
\end{proof}

We are now ready to prove Theorem \ref{thm.main}:

\begin{proof}
[Proof of Theorem \ref{thm.main}.]The map $\mathbf{f}_{\mu}$ is an involution
(by Theorem \ref{thm.fZ.full} \textbf{(a)}), thus a bijection.

Let $\mathbf{a}^{-}:\mathbb{Z}^{n}\rightarrow\mathbb{Z}^{n}$ be the map that
sends each $\omega\in\mathbb{Z}^{n}$ to $\omega-a$. This map $\mathbf{a}^{-}$
is clearly a bijection.

Let $\mathbf{b}^{+}:\mathbb{Z}^{n}\rightarrow\mathbb{Z}^{n}$ be the map that
sends each $\omega\in\mathbb{Z}^{n}$ to $\omega+b$. This map $\mathbf{b}^{+}$
is clearly a bijection.

From Lemma \ref{lem.fZ.phi}, we can easily see that%
\[
\varphi=\mathbf{b}^{+}\circ\mathbf{f}_{\mu}\circ\mathbf{a}^{-}.
\]

\begin{verlong}
[\textit{Proof:} Let $\omega\in\mathbb{Z}^{n}$. Then, the definition of
$\mathbf{a}^{-}$ yields $\mathbf{a}^{-}\left(  \omega\right)  =\omega-a$.
Hence, $\omega-a=\mathbf{a}^{-}\left(  \omega\right)  $. But Lemma
\ref{lem.fZ.phi} yields
\begin{align*}
\varphi\left(  \omega\right)   &  =\mathbf{f}_{\mu}\left(  \omega-a\right)
+b=\mathbf{b}^{+}\left(  \mathbf{f}_{\mu}\left(  \underbrace{\omega
-a}_{=\mathbf{a}^{-}\left(  \omega\right)  }\right)  \right) \\
&  \ \ \ \ \ \ \ \ \ \ \left(  \text{since the definition of }\mathbf{b}%
^{+}\text{ yields }\mathbf{b}^{+}\left(  \mathbf{f}_{\mu}\left(
\omega-a\right)  \right)  =\mathbf{f}_{\mu}\left(  \omega-a\right)  +b\right)
\\
&  =\mathbf{b}^{+}\left(  \mathbf{f}_{\mu}\left(  \mathbf{a}^{-}\left(
\omega\right)  \right)  \right)  =\left(  \mathbf{b}^{+}\circ\mathbf{f}_{\mu
}\circ\mathbf{a}^{-}\right)  \left(  \omega\right)  .
\end{align*}

Forget that we fixed $\omega$. Thus we have shown that $\varphi\left(
\omega\right)  =\left(  \mathbf{b}^{+}\circ\mathbf{f}_{\mu}\circ\mathbf{a}%
^{-}\right)  \left(  \omega\right)  $ for each $\omega\in\mathbb{Z}^{n}$. In
other words, $\varphi=\mathbf{b}^{+}\circ\mathbf{f}_{\mu}\circ\mathbf{a}^{-}$, qed.]
\end{verlong}

Recall that the maps $\mathbf{b}^{+}$, $\mathbf{f}_{\mu}$ and $\mathbf{a}^{-}$
are bijections. Hence, their composition $\mathbf{b}^{+}\circ\mathbf{f}_{\mu
}\circ\mathbf{a}^{-}$ is a bijection as well. In other words, $\varphi$ is a
bijection (since $\varphi=\mathbf{b}^{+}\circ\mathbf{f}_{\mu}\circ
\mathbf{a}^{-}$). This proves Theorem \ref{thm.main} \textbf{(a)}.

\textbf{(b)} Let $\omega\in\mathbb{Z}^{n}$. Then, Lemma \ref{lem.fZ.phi}
yields $\varphi\left(  \omega\right)  =\mathbf{f}_{\mu}\left(  \omega
-a\right)  +b$. Hence, $\mathbf{f}_{\mu}\left(  \omega-a\right)
+b=\varphi\left(  \omega\right)  $. But Lemma \ref{lem.finale.1} (applied to
$\lambda=\omega-a$) yields%
\[
c_{\alpha,\mu}^{\left(  \omega-a\right)  +a}=c_{\beta,\mu}^{\mathbf{f}_{\mu
}\left(  \omega-a\right)  +b}=c_{\beta,\mu}^{\varphi\left(  \omega\right)
}\ \ \ \ \ \ \ \ \ \ \left(  \text{since }\mathbf{f}_{\mu}\left(
\omega-a\right)  +b=\varphi\left(  \omega\right)  \right)  .
\]
In view of $\left(  \omega-a\right)  +a=\omega$, this rewrites as
$c_{\alpha,\mu}^{\omega}=c_{\beta,\mu}^{\varphi\left(  \omega\right)  }$. This
proves Theorem \ref{thm.main} \textbf{(b)}.
\end{proof}

\section{\label{sect.fin}Final remarks}

\subsection{\label{subsect.fin.jt}Aside: A Jacobi--Trudi formula for Schur
Laurent polynomials}

As mentioned above, Proposition \ref{prop.ominus.s} has the following
generalization, which can be obtained from an identity of Koike
\cite[Proposition 2.8]{Koike89} (via the map $\widetilde{\pi}_{n}$ from
\cite{Koike89} and the correspondence between Schur Laurent polynomials and
rational representations of $\operatorname{GL}\left(  n\right)  $), which has
later been extended by Hamel and King \cite{HamKin11} (see \cite[(6) and
(10)]{HamKin11} for the connection):\footnote{We recall Definition
\ref{def.laurent}, Definition \ref{def.snakes} \textbf{(a)}, Definition
\ref{def.alt}, Definition \ref{def.alt.sbar} and Definition \ref{def.hkpm}, as
well as the conventions made in Section \ref{sect.not}, for the notations used
in this proposition.}

\begin{proposition}
\label{prop.jt}Let $p,q\in\mathbb{N}$ with $p+q\leq n$. Let $\mathbf{a}%
=\left(  a_{1},a_{2},\ldots,a_{p}\right)  $ and $\mathbf{b}=\left(
b_{1},b_{2},\ldots,b_{q}\right)  $ be two partitions. Let $\mathbf{b}%
\ominus\mathbf{a}$ denote the snake $\left(  b_{1},b_{2},\ldots,b_{q}%
,0^{n-p-q},-a_{p},-a_{p-1},\ldots,-a_{1}\right)  $. Let $M$ be the $\left(
p+q\right)  \times\left(  p+q\right)  $-matrix%
\begin{align*}
&  \left(
\begin{cases}
h_{a_{p-i+1}+i-j}^{-}, & \text{if }i\leq p;\\
h_{b_{i-p}-i+j}^{+}, & \text{if }i>p
\end{cases}
\right)  _{i,j\in\left\{  1,2,\ldots,p+q\right\}  }\\
&  =\left(
\begin{array}
[c]{cccccccc}%
h_{a_{p}}^{-} & h_{a_{p}-1}^{-} & \cdots & h_{a_{p}-p+1}^{-} & h_{a_{p}-p}^{-}
& h_{a_{p}-p-1}^{-} & \cdots & h_{a_{p}-p-q+1}^{-}\\
h_{a_{p-1}+1}^{-} & h_{a_{p-1}}^{-} & \cdots & h_{a_{p-1}-p+2}^{-} &
h_{a_{p-1}-p+1}^{-} & h_{a_{p-1}-p}^{-} & \cdots & h_{a_{p-1}-p-q+2}^{-}\\
\vdots & \vdots & \ddots & \vdots & \vdots & \vdots & \ddots & \vdots\\
h_{a_{1}+p-1}^{-} & h_{a_{1}+p-2}^{-} & \cdots & h_{a_{1}}^{-} & h_{a_{1}%
-1}^{-} & h_{a_{1}-2}^{-} & \cdots & h_{a_{1}-q}^{-}\\
h_{b_{1}-p}^{+} & h_{b_{1}-p+1}^{+} & \cdots & h_{b_{1}-1}^{+} & h_{b_{1}}^{+}
& h_{b_{1}+1}^{+} & \cdots & h_{b_{1}+q-1}^{+}\\
h_{b_{2}-p-1}^{+} & h_{b_{2}-p}^{+} & \cdots & h_{b_{2}-2}^{+} & h_{b_{2}%
-1}^{+} & h_{b_{2}}^{+} & \cdots & h_{b_{2}+q-2}^{+}\\
\vdots & \vdots & \ddots & \vdots & \vdots & \vdots & \ddots & \vdots\\
h_{b_{q}-p-q+1}^{+} & h_{b_{q}-p-q+2}^{+} & \cdots & h_{b_{q}-q}^{+} &
h_{b_{q}-q+1}^{+} & h_{b_{q}-q+2}^{+} & \cdots & h_{b_{q}}^{+}%
\end{array}
\right)  .
\end{align*}
Then,%
\[
\overline{s}_{\mathbf{b}\ominus\mathbf{a}}=\det M.
\]

\end{proposition}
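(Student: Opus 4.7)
The strategy is to reduce Proposition~\ref{prop.jt} to the classical first Jacobi--Trudi formula via the right-shift technique already used several times in the paper. Set $\lambda := (\mathbf{b} \ominus \mathbf{a}) + a_{1}$. Since $a_{1} = \max\{a_{1},\ldots,a_{p}\}$ (because $\mathbf{a}$ is a partition), the tuple $\lambda$ is a genuine partition of length $\leq n$ with entries $\lambda_{j} = b_{j} + a_{1}$ for $1\leq j \leq q$, then $\lambda_{j}=a_{1}$ for $q<j\leq n-p$, and finally $\lambda_{n-p+k} = a_{1}-a_{p-k+1}$ for $1\leq k\leq p$. Lemma~\ref{lem.alt.smove} gives $\overline{s}_{\lambda} = x_{\Pi}^{a_{1}}\,\overline{s}_{\mathbf{b}\ominus\mathbf{a}}$, and Lemma~\ref{lem.alt.sbar} combined with the classical first Jacobi--Trudi formula \cite[(2.4.16)]{GriRei} yields
\[
x_{\Pi}^{a_{1}}\,\overline{s}_{\mathbf{b}\ominus\mathbf{a}} \;=\; \overline{s}_{\lambda} \;=\; \det\!\bigl(h^{+}_{\lambda_{i}-i+j}\bigr)_{i,j=1}^{n}.
\]
The proposition would then follow from the determinantal identity
\begin{equation}\label{eq.plan.main}
\det\!\bigl(h^{+}_{\lambda_{i}-i+j}\bigr)_{i,j=1}^{n} \;=\; x_{\Pi}^{a_{1}}\,\det M,
\end{equation}
which compares an $n\times n$ determinant to a $(p+q)\times(p+q)$ one.

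To prove \eqref{eq.plan.main} I would first separate out the ``middle'' $n-p-q$ rows of the $n\times n$ matrix: they all have the uniform staircase form $(h^{+}_{a_{1}-i+j})_{j}$ and give the matrix an explicit block structure. Applying the Sylvester/Laplace expansion along this middle block and using the classical identity $\sum_{k\geq 0}(-1)^{k} e_{k}(x_{1},\ldots,x_{n})\,h^{+}_{r-k} = \delta_{r,0}$ (which is just $E(-t)H^{+}(t) = 1$ for the generating series $E(t)=\prod_{i}(1+x_{i}t)$ and $H^{+}(t)=\prod_{i}(1-x_{i}t)^{-1}$) reduces the $n\times n$ determinant to a $(p+q)\times(p+q)$ determinant whose bottom $q$ rows match those of $M$ directly (they already involve $h^{+}_{b_{i-p}-i+j}$ after reindexing) and whose top $p$ rows involve entries of the form $h^{+}_{a_{1}-a_{p-i+1}-(n-p+i)+j}$.

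The last step is to convert these top-$p$ rows from $h^{+}$-entries with large negative shifts into the $h^{-}$-entries appearing in $M$. The needed conversion is a generating-function identity: from $H^{-}(t) = \prod_{i}(1 - t/x_{i})^{-1} = (-1)^{n}\,x_{\Pi}\,t^{-n}\prod_{i}(1-x_{i}/t)^{-1}$ one deduces, on the level of coefficients, that
\[
x_{\Pi}\cdot h^{-}_{m} \;=\; (-1)^{n-1}\sum_{k}(\text{sign/shift})\,h^{+}_{n-m-k}\,e_{k},
\]
so that $h^{+}$-entries at the shifted indices $a_{1}-a_{p-i+1}-(n-p+i)+j$ transform, after cancelling the factor $x_{\Pi}^{a_{1}}$ against the remaining block and applying row operations with appropriate elementary symmetric functions, into precisely the entries $h^{-}_{a_{p-i+1}+i-j}$ in the top $p$ rows of $M$. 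Tracking the intervening signs (Laplace expansion sign, the reversal of the top $p$ rows, and the $(-1)^{n-1}$ from the conversion) and verifying they all collapse to $+1$ completes the identification $\det M = x_{\Pi}^{-a_{1}}\det(h^{+}_{\lambda_{i}-i+j})$.

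The main obstacle, and the place where most of the work would go, is the middle step: carrying out the Laplace/Sylvester reduction cleanly and checking that the resulting $(p+q)\times(p+q)$ matrix is exactly $M$, not merely determinantally equivalent to it. This is a combinatorial bookkeeping task of the same flavor as the standard proof of the (second) Jacobi--Trudi identity via the first one, but with the new feature that the ``negative'' part of the snake forces the use of both $h^{+}$ and $h^{-}$ generating series simultaneously. Alternatively, one could bypass this by invoking the lattice-path interpretation of Hamel--King \cite{HamKin11}: interpreting each $h^{\pm}_{k}$ as a weighted path-count and applying the Lindström--Gessel--Viennot lemma identifies $\det M$ with the generating function of non-intersecting path systems, which bijects with rational semistandard tableaux of shape $\mathbf{b}\ominus\mathbf{a}$ and hence with $\overline{s}_{\mathbf{b}\ominus\mathbf{a}}$; this route avoids the delicate determinantal manipulations at the cost of setting up the lattice-path machinery.
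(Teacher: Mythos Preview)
The paper does not actually prove Proposition~\ref{prop.jt}; it merely attributes the result to Koike \cite[Proposition~2.8]{Koike89} (via the correspondence between Schur Laurent polynomials and rational representations of $\operatorname{GL}(n)$) and to Hamel--King \cite{HamKin11}. Your second alternative---setting up the Lindstr\"om--Gessel--Viennot lattice-path framework and appealing to Hamel--King---is therefore essentially what the paper does, just expanded into a proof sketch rather than a bare citation.

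Your first route (right-shift to a nonnegative partition, classical Jacobi--Trudi, then determinantal reduction) is more ambitious and would be a genuinely new direct argument, but the sketch has a real gap at the conversion step. The displayed ``identity''
\[
x_{\Pi}\cdot h^{-}_{m} \;=\; (-1)^{n-1}\sum_{k}(\text{sign/shift})\,h^{+}_{n-m-k}\,e_{k}
\]
is not an actual identity: there is no finite $\mathbf{k}[e_{1},\ldots,e_{n}]$-linear relation expressing a single $h^{-}_{m}$ in terms of finitely many $h^{+}_{r}$'s. What the generating-function manipulation $H^{-}(t)=(-1)^{n}x_{\Pi}t^{-n}H^{+}(1/t)$ gives you is an equality of two different Laurent expansions (one in $t$, one in $t^{-1}$), not a coefficient-wise identity. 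The genuine relations available are the separate recursions $\sum_{k=0}^{n}(-1)^{k}e_{k}\,h^{+}_{m-k}=\delta_{m,0}$ and $\sum_{k=0}^{n}(-1)^{k}e_{n-k}\,h^{-}_{m-k}=x_{\Pi}\,\delta_{m,0}$, each internal to one family. Consequently the ``row operations with appropriate elementary symmetric functions'' you envision cannot turn $h^{+}$-rows into $h^{-}$-rows; something more structural (closer to Koike's original representation-theoretic argument, or the Hamel--King lattice model) is needed to bridge the two families inside a single determinant.
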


\begin{remark}
The analogous generalization of the second Jacobi--Trudi formula
(\cite[(2.4.17)]{GriRei}) can easily be proved (although we leave both stating
and proving it to the reader). What makes it easy is the (fairly obvious) fact
that the elementary symmetric functions $e_{k}$ satisfy%
\[
e_{k}\left(  x_{1}^{-1},x_{2}^{-1},\ldots,x_{n}^{-1}\right)  =x_{\Pi}%
^{-1}e_{n-k}\left(  x_{1},x_{2},\ldots,x_{n}\right)
\]
for all $k\in\mathbb{Z}$. (See \cite[Definition 2.2.1]{GriRei} for the
definition of $e_{k}$.)
\end{remark}

Proposition \ref{prop.jt} generalizes Proposition \ref{prop.ominus.s} (which
corresponds to the particular case when $p=1$ and $q=1$) as well as the first
Jacobi--Trudi formula \cite[(2.4.16)]{GriRei} (which corresponds to the
particular case $p=0$).

We notice that what we called $\mathbf{b}\ominus\mathbf{a}$ in Proposition
\ref{prop.jt} has been called $\left[  \mathbf{b},\mathbf{a}\right]  $ in
\cite{Stembr87}.

We thank Grigori Olshanski for informing us of the provenance of Proposition
\ref{prop.jt}.

\subsection{\label{subsect.fin.fu}Questions on $\mathbf{f}_{u}$}

We shall now pose several questions about the birational involution
$\mathbf{f}_{u}$ studied in Section \ref{sect.bir}. Convention
\ref{conv.semifield.notations}, Convention \ref{conv.bir.1} and Convention
\ref{conv.bir.peri} will be used throughout Subsection \ref{subsect.fin.fu}.

\subsubsection{$\mathbf{f}_{u}$ as a composition?}

Most of our questions are meant to attempt seeing the involution
$\mathbf{f}_{u}$ from different directions. The first one is inspired by what
is now known as the \textquotedblleft\textit{toggle approach}%
\textquotedblright\ to dynamical combinatorics (see, e.g., \cite{Roby15}), but
is really an application of the age-old \textquotedblleft divide and
conquer\textquotedblright\ paradigm to complicated maps:

\begin{question}
Is there an equivalent definition of $\mathbf{f}_{u}$ as a composition of
toggles? (A \textit{toggle} here means a birational map $\mathbb{K}%
^{n}\rightarrow\mathbb{K}^{n}$ that changes only one entry of the $n$-tuple.
An example for a birational map that can be defined as a composition of
toggles is \textit{birational rowmotion} -- see, e.g., \cite{EinPro13}.
Cluster mutations, as in the theory of cluster algebras, are another example
of toggles.)
\end{question}

Another set of questions concern the \textit{uniqueness} of $\mathbf{f}_{u}$.
While we defined the map $\mathbf{f}_{u}$ explicitly, all we have then used
are the properties listed in Theorem \ref{thm.f.full}. Thus, it is a natural
question to ask whether these properties characterize $\mathbf{f}_{u}$
uniquely. A pointwise version of this question can be asked as well: Given
$x\in\mathbb{K}^{n}$ and $y\in\mathbb{K}^{n}$ satisfying some of the
equalities in parts \textbf{(b)}, \textbf{(c)} and \textbf{(d)} of Theorem
\ref{thm.f.full}, does it follow that $y=\mathbf{f}_{u}\left(  x\right)  $ ?
(Keep in mind that $u$ is fixed.)

Depending on which equalities we require, we may of course get different
answers. Let us first ask what happens if we require the equalities from
Theorem \ref{thm.f.full} \textbf{(c)} only:

\subsubsection{Characterizing $\mathbf{f}_{u}\left(  x\right)  $ via the
cyclic equations}

\begin{question}
\label{quest.unique.c}Given $x\in\mathbb{K}^{n}$ and $y\in\mathbb{K}^{n}$
satisfying
\begin{equation}
\left(  u_{i}+x_{i}\right)  \left(  \dfrac{1}{u_{i+1}}+\dfrac{1}{x_{i+1}%
}\right)  =\left(  u_{i}+y_{i}\right)  \left(  \dfrac{1}{u_{i+1}}+\dfrac
{1}{y_{i+1}}\right)  \label{eq.quest.f.unique.c}%
\end{equation}
for all $i\in\mathbb{Z}$. Does it follow that $y=\mathbf{f}_{u}\left(
x\right)  $ or $y=x$ ?
\end{question}

Note that the \textquotedblleft or $y=x$\textquotedblright\ part is needed
here, since $y=x$ is obviously a solution to the equations
(\ref{eq.quest.f.unique.c}).

The following example shows that the answer to Question \ref{quest.unique.c}
is \textquotedblleft no\textquotedblright\ if $\mathbb{K}$ is the min tropical
semifield $\left(  \mathbb{Z},\min,+,0\right)  $ of the totally ordered
abelian group $\mathbb{Z}$.

\begin{example}
\label{exa.unique.c.n=3-cex}Let $k,g\in\mathbb{N}$ with $g\geq k$. Let
$\mathbb{K}=\left(  \mathbb{Z},\min,+,0\right)  $ and $n=3$ and $u=\left(
0,0,g\right)  $ and $x=\left(  1,2,0\right)  $. Set $y=\left(  k+1,2,k\right)
$ (where the \textquotedblleft$+$\textquotedblright\ sign in \textquotedblleft%
$k+1$\textquotedblright\ stands for addition of integers, not addition in
$\mathbb{K}$). Then, the equations (\ref{eq.quest.f.unique.c}) hold in
$\mathbb{K}$ for all $i\in\mathbb{Z}$. (Restated in terms of standard
operations on integers, this is saying that%
\[
\min\left\{  u_{i},x_{i}\right\}  +\min\left\{  -u_{i+1},-x_{i+1}\right\}
=\min\left\{  u_{i},y_{i}\right\}  +\min\left\{  -u_{i+1},-y_{i+1}\right\}
\]
for all $i\in\mathbb{Z}$.) This is straightforward to verify, and shows that
for a given $x$ there can be an arbitrarily high (finite) number of
$y\in\mathbb{K}^{n}$ satisfying the equations (\ref{eq.quest.f.unique.c}) for
all $i\in\mathbb{Z}$.
\end{example}

\begin{vershort}
It is not hard to show (see \cite{verlong} for details) that this number
cannot be infinite.
\end{vershort}

\begin{verlong}
We note that this number cannot be infinite. In fact, this follows from the
following proposition\footnote{To be more precise, this claimed finiteness
follows from Proposition \ref{prop.unique.c.trop-finite} \textbf{(b)}, applied
to the $n$-tuple $z\in\mathbb{R}^{n}$ defined by%
\[
z_{i}=\min\left\{  u_{i},x_{i}\right\}  +\min\left\{  -u_{i+1},-x_{i+1}%
\right\}  \ \ \ \ \ \ \ \ \ \ \text{for all }i\in\mathbb{Z}.
\]
}:

\begin{proposition}
\label{prop.unique.c.trop-finite}Let $x\in\mathbb{R}^{n}$ and $z\in
\mathbb{R}^{n}$ be fixed. 

\begin{enumerate}
\item[\textbf{(a)}] Let $y\in\mathbb{R}^{n}$ be such that%
\begin{equation}
z_{i}=\min\left\{  u_{i},y_{i}\right\}  +\min\left\{  -u_{i+1},-y_{i+1}%
\right\}  \label{eq.prop.unique.c.trop-finite.ass}%
\end{equation}
for all $i\in\mathbb{Z}$. Then, we have $z_{i}+u_{i+1}\leq y_{i}\leq
u_{i-1}-z_{i-1}$ for each $i\in\mathbb{Z}$.

\item[\textbf{(b)}] There are only finitely many $y\in\mathbb{Z}^{n}$ such
that (\ref{eq.prop.unique.c.trop-finite.ass}) holds for all $i\in\mathbb{Z}$.
\end{enumerate}
\end{proposition}

\begin{proof}
[Proof of Proposition \ref{prop.unique.c.trop-finite} (sketched).]
\textbf{(a)} Let $i\in\mathbb{Z}$. Then,
(\ref{eq.prop.unique.c.trop-finite.ass}) yields%
\[
z_{i}=\underbrace{\min\left\{  u_{i},y_{i}\right\}  }_{\leq y_{i}%
}+\underbrace{\min\left\{  -u_{i+1},-y_{i+1}\right\}  }_{\leq-u_{i+1}}\leq
y_{i}-u_{i+1},
\]
so that $z_{i}+u_{i+1}\leq y_{i}$. Furthermore,
(\ref{eq.prop.unique.c.trop-finite.ass}) (applied to $i-1$ instead of $i$)
yields%
\[
z_{i-1}=\underbrace{\min\left\{  u_{i-1},y_{i-1}\right\}  }_{\leq u_{i-1}%
}+\underbrace{\min\left\{  -u_{i},-y_{i}\right\}  }_{\leq-y_{i}}\leq
u_{i-1}-y_{i},
\]
so that $y_{i}\leq u_{i-1}-z_{i-1}$. Combining this with $z_{i}+u_{i+1}\leq
y_{i}$, we obtain $z_{i}+u_{i+1}\leq y_{i}\leq u_{i-1}-z_{i-1}$. This proves
Proposition \ref{prop.unique.c.trop-finite} \textbf{(a)}.

\textbf{(b)} If $y\in\mathbb{Z}^{n}$ is such that
(\ref{eq.prop.unique.c.trop-finite.ass}) holds for all $i\in\mathbb{Z}$, then
Proposition \ref{prop.unique.c.trop-finite} \textbf{(a)} shows that each entry
$y_{i}$ of $y$ lies in the finite set
\[
F_{i}:=\left\{  \text{all integers between }z_{i}+u_{i+1}\text{ and }%
u_{i-1}-z_{i-1}\text{ (inclusive)}\right\}  .
\]
Thus, the whole $n$-tuple $y$ lies in the finite set $F_{1}\times F_{2}%
\times\cdots\times F_{n}$. Therefore, there are only finitely many such $y$'s.
This proves Proposition \ref{prop.unique.c.trop-finite} \textbf{(b)}.
\end{proof}
\end{verlong}

Example \ref{exa.unique.c.n=3-cex} has shown that the answer to Question
\ref{quest.unique.c} is \textquotedblleft no\textquotedblright\ when
$\mathbb{K}=\left(  \mathbb{Z},\min,+,0\right)  $. However, the answer to
Question \ref{quest.unique.c} is \textquotedblleft yes\textquotedblright\ if
$\mathbb{K}=\mathbb{Q}_{+}$ and, more generally, if the semifield $\mathbb{K}$
embeds into an integral domain:

\begin{proposition}
\label{prop.unique.c.Qplus}Assume that there is an integral domain
$\mathbb{L}$ such that the semifield $\mathbb{K}$ is a subsemifield of
$\mathbb{L}$ (in the sense that $\mathbb{K}\subseteq\mathbb{L}$ and that the
operations $+$ and $\cdot$ of $\mathbb{K}$ are restrictions of those of
$\mathbb{L}$, whereas the unity of $\mathbb{K}$ is the unity of $\mathbb{L}$).
Let $x\in\mathbb{K}^{n}$. Then, the only $n$-tuples $y\in\mathbb{K}^{n}$
satisfying the equations (\ref{eq.quest.f.unique.c}) for all $i\in\mathbb{Z}$
are $y=\mathbf{f}_{u}\left(  x\right)  $ and $y=x$.
\end{proposition}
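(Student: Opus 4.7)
The plan is to translate the system (\ref{eq.quest.f.unique.c}) into the integral domain $\mathbb{L}$ and exploit that every element of $\mathbb{K}$ (and more generally every finite subtraction-free expression in such elements) is a unit of $\mathbb{L}$, hence nonzero. First, I would clear denominators in (\ref{eq.quest.f.unique.c}) and expand; after grouping terms and simplifying, the difference of the two sides reduces to the clean recurrence
\[
u_{i+1}(u_i + x_i)\, a_{i+1} \;=\; a_i\, x_{i+1}\,(u_{i+1} + y_{i+1}) \qquad \text{for all } i \in \mathbb{Z},
\]
where $a_i := y_i - x_i \in \mathbb{L}$.

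A case analysis on the $a_i$ then does most of the work. If some $a_j$ vanishes, the integral-domain structure of $\mathbb{L}$, together with the nonvanishing of $u_{j+1}$ and $u_j + x_j$ (both of which lie in $\mathbb{K}$ and hence are units of $\mathbb{L}$), forces $a_{j+1} = 0$; iterating and using $n$-periodicity yields $a_i = 0$ for all $i$, i.e.\ $y = x$. Otherwise all $a_i$ are nonzero, and setting $b_i := 1/a_i$ rewrites the recurrence as a linear affine recurrence
\[
b_{i+1} \;=\; c_i b_i + d_i, \qquad c_i := \frac{u_{i+1}(u_i + x_i)}{x_{i+1}(u_{i+1}+x_{i+1})}, \qquad d_i := -\frac{1}{u_{i+1} + x_{i+1}},
\]
with $n$-periodic coefficients whose product telescopes to $c_1 c_2 \cdots c_n = (u_1 \cdots u_n)/(x_1 \cdots x_n)$.

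Finally I would subdivide according to whether $\prod u_i = \prod x_i$. When $\prod u_i \neq \prod x_i$, the linear affine recurrence admits (in the fraction field of $\mathbb{L}$) a unique $n$-periodic solution; meanwhile $y = \mathbf{f}_u(x)$ satisfies (\ref{eq.quest.f.unique.c}) by Theorem \ref{thm.f.full}(c) and differs from $x$, since otherwise Theorem \ref{thm.f.full}(b) would give $(x_1 \cdots x_n)^2 = (u_1 \cdots u_n)^2$ and factoring out the unit $x_1 \cdots x_n + u_1 \cdots u_n \in \mathbb{K}$ in $\mathbb{L}$ would contradict the assumption. Hence $y = \mathbf{f}_u(x)$ by uniqueness. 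When instead $\prod u_i = \prod x_i$, Proposition \ref{prop.f.equal-case} gives $\mathbf{f}_u(x) = x$, and one must rule out $y \neq x$: with $\prod c_i = 1$, the periodicity constraint $b_{n+1} = b_1$ collapses to the compatibility equation $\sum_{k=1}^n d_k / (c_1 \cdots c_k) = 0$, which after a direct telescoping simplification reduces to
\[
\frac{-1}{u_1 + x_1} \sum_{k=1}^n \frac{x_2 x_3 \cdots x_{k+1}}{u_2 u_3 \cdots u_{k+1}} \;=\; 0;
\]
but the displayed sum is a subtraction-free expression in elements of $\mathbb{K}$, hence lies in $\mathbb{K}$ and is therefore a unit of $\mathbb{L}$, contradicting the equation.

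The main obstacle is the final subtraction-free argument: it is precisely there that the hypothesis on the embedding $\mathbb{K} \subseteq \mathbb{L}$ is used in an essential way, and the tropical counterexample preceding the proposition shows that this hypothesis cannot be dropped.
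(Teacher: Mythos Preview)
Your argument is correct and takes a genuinely different route from the paper's. The paper builds auxiliary sequences $g_k=t_{k,-1}$, $h_k=t_{k-2,1}$ and $w_k=g_k-x_1y_0h_k$, shows these satisfy a three-term linear recurrence, and by induction expresses $y_k$ in terms of $w_{k-1}$ and $w_{k+1}$; the periodicity condition $y_n=y_0$ then becomes a quadratic in $y_0$ that factors as $(x_0-y_0)\bigl(u_{n-1}u_0\,t_{n-1,-1}-y_0x_1\,t_{n-1,1}\bigr)=0$, directly exhibiting the two roots $y_0=x_0$ and $y_0=(\mathbf{f}_u(x))_0$, with the rest of $y$ determined by $y_0$. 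Your approach instead works with the differences $a_i=y_i-x_i$, reducing (\ref{eq.quest.f.unique.c}) to a first-order recurrence whose coefficients (after the substitution $y_{i+1}=x_{i+1}+a_{i+1}$ and the inversion $b_i=1/a_i$) depend only on $u$ and $x$; the periodicity then becomes a monodromy condition governed by $\prod c_i=\prod u_i/\prod x_i$. Your method is more streamlined, avoids the $t_{r,j}$ machinery entirely, and makes the role of the hypothesis $\mathbb{K}\subseteq\mathbb{L}$ especially transparent (every subtraction-free combination is a unit); the paper's method, by contrast, never needs the case split on $\prod u_i$ versus $\prod x_i$ and produces the two solutions $x$ and $\mathbf{f}_u(x)$ as the roots of an explicit quadratic, which ties the uniqueness more visibly to the formula for $\mathbf{f}_u$.
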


\begin{vershort}
See the detailed version \cite{verlong} of this paper for a rough outline of
the proof of Proposition \ref{prop.unique.c.Qplus}.
\end{vershort}

\begin{verlong}
\begin{proof}
[Proof of Proposition \ref{prop.unique.c.Qplus} (sketched).]The following is a
rough outline, as we don't have any need for Proposition
\ref{prop.unique.c.Qplus}.

We define the elements $t_{r,j}\in\mathbb{K}$ for all $r\in\mathbb{N}$ and
$j\in\mathbb{Z}$ as in Definition \ref{def.f}. We further set $t_{-1,j}%
=0\in\mathbb{L}$ for each $j\in\mathbb{Z}$. (This is in line with our
definition of $t_{r,j}$, because an empty sum should be understood as $0$.)
Thus, $t_{r,j}\in\mathbb{L}$ is defined for each $r\in\mathbb{N}\cup\left\{
-1\right\}  $ and each $j\in\mathbb{Z}$.

For any $k\in\mathbb{N}$, we set $g_{k}=t_{k,-1}\in\mathbb{K}$.

For any integer $k\geq1$, we set $h_{k}=t_{k-2,1}\in\mathbb{L}$. Note that
$h_{k}\in\mathbb{K}$ if $k\geq2$; however, $h_{1}=t_{1-2,1}=t_{-1,1}=0$ (by
the definition of $t_{-1,1}$).

It is clear that $\mathbf{f}_{u}\left(  x\right)  $ and $x$ are two $n$-tuples
$y\in\mathbb{K}^{n}$ satisfying the equations (\ref{eq.quest.f.unique.c}) for
all $i\in\mathbb{Z}$. (Indeed, for $\mathbf{f}_{u}\left(  x\right)  $, this
follows from Theorem \ref{thm.f.full} \textbf{(c)}, while for $x$, it is
obvious.) We thus only need to prove the converse: Let $y\in\mathbb{K}^{n}$ be
an $n$-tuple satisfying the equations (\ref{eq.quest.f.unique.c}) for all
$i\in\mathbb{Z}$. We must show that $y=\mathbf{f}_{u}\left(  x\right)  $ or
$y=x$.

We WLOG assume that $n\geq2$, since otherwise (i.e., for $n=1$) this claim is
easily checked by hand.

For any integer $k\geq1$, we set $w_{k}=g_{k}-x_{1}y_{0}h_{k}\in\mathbb{L}$.
We shall see soon (as a consequence of Claim 3) that this $w_{k}$ actually
belongs to $\mathbb{K}$.

We begin with the following claim:

\begin{statement}
\textit{Claim 1:} \textbf{(a)} We have $u_{k-1}x_{k}g_{k-1}+g_{k+1}=\left(
u_{k}+x_{k}\right)  g_{k}$ for each integer $k\geq1$.

\textbf{(b)} We have $u_{k-1}x_{k}h_{k-1}+h_{k+1}=\left(  u_{k}+x_{k}\right)
h_{k}$ for each integer $k\geq2$.
\end{statement}

[\textit{Proof of Claim 1:} We shall prove something more general. Namely, for
any $p\in\mathbb{Z}$ and any integer $k\geq p-1$, we set%
\begin{equation}
z_{k,p}=t_{k-p,p-1}\in\mathbb{L}. \label{pf.prop.unique.c.Qplus.c1.pf.zkp=}%
\end{equation}
Thus, for each integer $k\geq-1$, we have
\begin{equation}
z_{k,0}=t_{k-0,0-1}=t_{k,-1}=g_{k}. \label{pf.prop.unique.c.Qplus.c1.pf.zk0=}%
\end{equation}
Moreover, for each integer $k\geq1$, we have%
\begin{equation}
z_{k,2}=t_{k-2,2-1}=t_{k-2,1}=h_{k}. \label{pf.prop.unique.c.Qplus.c1.pf.zk2=}%
\end{equation}

Now, we claim that%
\begin{equation}
u_{k-1}x_{k}z_{k-1,p}+z_{k+1,p}=\left(  u_{k}+x_{k}\right)  z_{k,p}
\label{pf.prop.unique.c.Qplus.c1.pf.main}%
\end{equation}
for any $p\in\mathbb{Z}$ and any integer $k\geq p$. Once this is proved, then
Claim 1 will easily follow. (Indeed, Claim 1 \textbf{(a)} will follow by
applying (\ref{pf.prop.unique.c.Qplus.c1.pf.main}) to $p=0$ and rewriting the
result using (\ref{pf.prop.unique.c.Qplus.c1.pf.zk0=}). Likewise, Claim 1
\textbf{(b)} will follow by applying (\ref{pf.prop.unique.c.Qplus.c1.pf.main})
to $p=2$ and rewriting the result using
(\ref{pf.prop.unique.c.Qplus.c1.pf.zk2=}).)

So it suffices to prove (\ref{pf.prop.unique.c.Qplus.c1.pf.main}). Let us do
this. Fix $p\in\mathbb{Z}$ and $k\geq p$. We must prove the equality
(\ref{pf.prop.unique.c.Qplus.c1.pf.main}). If $k=p$, then this equality boils
down to $u_{p-1}x_{p}\cdot0+\left(  u_{p}+x_{p}\right)  =\left(  u_{p}%
+x_{p}\right)  \cdot1$ (since it is easily seen that $z_{p-1,p}=t_{-1,p-1}=0$
and $z_{p+1,p}=t_{1,p-1}=u_{p}+x_{p}$ and $z_{p,p}=t_{0,p-1}=1$), which is
obvious. Thus, for the rest of this proof of
(\ref{pf.prop.unique.c.Qplus.c1.pf.main}), we WLOG assume that $k\neq p$.
Hence, $k\geq p+1$ (since $k\geq p$), so that $k-p-1\in\mathbb{N}$.

We can thus apply Lemma \ref{lem.f.steps} \textbf{(d)} to $r=k-p-1$ and
$j=p-1$\ \ \ \ \footnote{This might appear strange, since one of the
conditions in Lemma \ref{lem.f.steps} is not satisfied (namely, we do not have
a guarantee that $y$ is as in Definition \ref{def.f}). However, this does not
matter, since Lemma \ref{lem.f.steps} \textbf{(d)} does not depend on this
condition (as is clear from the proof).}. This results in%
\[
u_{k-1}t_{k-p-1,p-1}+x_{p}x_{p+1}\cdots x_{k-1}=t_{k-p,p-1}=z_{k,p}%
\]
(by (\ref{pf.prop.unique.c.Qplus.c1.pf.zkp=})). Multiplying both sides of this
equality by $x_{k}$, we find%
\[
x_{k}\left(  u_{k-1}t_{k-p-1,p-1}+x_{p}x_{p+1}\cdots x_{k-1}\right)
=x_{k}z_{k,p},
\]
so that%
\begin{align}
x_{k}z_{k,p}  &  =x_{k}\left(  u_{k-1}t_{k-p-1,p-1}+x_{p}x_{p+1}\cdots
x_{k-1}\right) \nonumber\\
&  =u_{k-1}x_{k}\underbrace{t_{k-p-1,p-1}}_{\substack{=t_{k-1-p,p-1}%
\\=z_{k-1,p}\\\text{(by (\ref{pf.prop.unique.c.Qplus.c1.pf.zkp=}), applied to
}k-1\\\text{instead of }k\text{)}}}+\underbrace{\left(  x_{p}x_{p+1}\cdots
x_{k-1}\right)  x_{k}}_{=x_{p}x_{p+1}\cdots x_{k}}\nonumber\\
&  =u_{k-1}x_{k}z_{k-1,p}+x_{p}x_{p+1}\cdots x_{k}.
\label{pf.prop.unique.c.Qplus.c1.pf.xz1}%
\end{align}

On the other hand, we can apply Lemma \ref{lem.f.steps} \textbf{(d)} to
$r=k-p$ and $j=p-1$. This results in%
\[
u_{k}t_{k-p,p-1}+x_{p}x_{p+1}\cdots x_{k}=t_{k-p+1,p-1}=t_{k+1-p,p-1}%
=z_{k+1,p}%
\]
(by (\ref{pf.prop.unique.c.Qplus.c1.pf.zkp=}), applied to $k+1$ instead of
$k$). Hence,%
\[
z_{k+1,p}=u_{k}t_{k-p,p-1}+x_{p}x_{p+1}\cdots x_{k}.
\]
Adding $u_{k-1}x_{k}z_{k-1,p}$ to both sides of this equality, we obtain%
\begin{align*}
&  u_{k-1}x_{k}z_{k-1,p}+z_{k+1,p}\\
&  =u_{k-1}x_{k}z_{k-1,p}+u_{k}t_{k-p,p-1}+x_{p}x_{p+1}\cdots x_{k}\\
&  =\underbrace{u_{k-1}x_{k}z_{k-1,p}+x_{p}x_{p+1}\cdots x_{k}}%
_{\substack{=x_{k}z_{k,p}\\\text{(by (\ref{pf.prop.unique.c.Qplus.c1.pf.xz1}%
))}}}+u_{k}\underbrace{t_{k-p,p-1}}_{\substack{=z_{k,p}\\\text{(by
(\ref{pf.prop.unique.c.Qplus.c1.pf.zkp=}))}}}\\
&  =x_{k}z_{k,p}+u_{k}z_{k,p}=\left(  u_{k}+x_{k}\right)  z_{k,p}.
\end{align*}
This proves (\ref{pf.prop.unique.c.Qplus.c1.pf.main}). As we said, this
completes the proof of Claim 1.]

As a consequence, we can easily conclude the following:

\begin{statement}
\textit{Claim 2:} We have
\begin{equation}
u_{k-1}x_{k}w_{k-1}+w_{k+1}=\left(  u_{k}+x_{k}\right)  w_{k}
\label{pf.prop.unique.c.Qplus.c3.1}%
\end{equation}
for each integer $k\geq2$.
\end{statement}

[\textit{Proof of Claim 2:} Let $k\geq2$ be an integer. Then, Claim 1
\textbf{(a)} yields $u_{k-1}x_{k}g_{k-1}+g_{k+1}=\left(  u_{k}+x_{k}\right)
g_{k}$, so that $g_{k+1}=\left(  u_{k}+x_{k}\right)  g_{k}-u_{k-1}x_{k}%
g_{k-1}$. Also, Claim 1 \textbf{(b)} yields $u_{k-1}x_{k}h_{k-1}%
+h_{k+1}=\left(  u_{k}+x_{k}\right)  h_{k}$, so that $h_{k+1}=\left(
u_{k}+x_{k}\right)  h_{k}-u_{k-1}x_{k}h_{k-1}$. Now, the definition of
$w_{k+1}$ yields%
\begin{align*}
w_{k+1}  &  =\underbrace{g_{k+1}}_{=\left(  u_{k}+x_{k}\right)  g_{k}%
-u_{k-1}x_{k}g_{k-1}}-x_{1}y_{0}\underbrace{h_{k+1}}_{=\left(  u_{k}%
+x_{k}\right)  h_{k}-u_{k-1}x_{k}h_{k-1}}\\
&  =\left(  \left(  u_{k}+x_{k}\right)  g_{k}-u_{k-1}x_{k}g_{k-1}\right)
-x_{1}y_{0}\left(  \left(  u_{k}+x_{k}\right)  h_{k}-u_{k-1}x_{k}%
h_{k-1}\right) \\
&  =\left(  u_{k}+x_{k}\right)  \underbrace{\left(  g_{k}-x_{1}y_{0}%
h_{k}\right)  }_{\substack{=w_{k}\\\text{(by the definition of }w_{k}\text{)}%
}}-u_{k-1}x_{k}\underbrace{\left(  g_{k-1}-x_{1}y_{0}h_{k-1}\right)
}_{\substack{=w_{k-1}\\\text{(by the definition of }w_{k-1}\text{)}}}\\
&  =\left(  u_{k}+x_{k}\right)  w_{k}-u_{k-1}x_{k}w_{k-1}.
\end{align*}
Hence, $u_{k-1}x_{k}w_{k-1}+w_{k+1}=\left(  u_{k}+x_{k}\right)  w_{k}$. This
proves Claim 2.]

We now claim the following:

\begin{statement}
\textit{Claim 3:} Let $k$ be a positive integer. Then, the elements
$w_{1},w_{2},\ldots,w_{k+1}$ belong to $\mathbb{K}$. Moreover, if $k\geq2$,
then%
\begin{equation}
y_{k}=\dfrac{u_{k-1}u_{k}x_{k}w_{k-1}}{w_{k+1}}.
\label{pf.prop.unique.c.Qplus.c3.2}%
\end{equation}

\end{statement}

[\textit{Proof of Claim 3:} Proceed by induction on $k$.

\textit{Induction base:} We shall use two base cases: the cases $k=1$ and
$k=2$. These require us to show that the elements $w_{1},w_{2},w_{3}$ belong
to $\mathbb{K}$ and that the equality (\ref{pf.prop.unique.c.Qplus.c3.2}) is
true for $k=2$.

The definition of $w_{1}$ says $w_{1}=g_{1}-x_{1}y_{0}\underbrace{h_{1}}%
_{=0}=g_{1}=t_{1,-1}=u_{0}+x_{0}$. Thus, $w_{1}$ belongs to $\mathbb{K}$.

The definition of $w_{2}$ yields%
\[
w_{2}=\underbrace{g_{2}}_{\substack{=t_{2,-1}\\=u_{0}u_{1}+x_{0}u_{1}%
+x_{0}x_{1}}}-x_{1}y_{0}\underbrace{h_{2}}_{=t_{0,1}=1}=u_{0}u_{1}+x_{0}%
u_{1}+x_{0}x_{1}-x_{1}y_{0}.
\]

The definition of $w_{3}$ yields%
\begin{align*}
w_{3}  &  =\underbrace{g_{3}}_{\substack{=t_{3,-1}\\=u_{0}u_{1}u_{2}%
+x_{0}u_{1}u_{2}+x_{0}x_{1}u_{2}+x_{0}x_{1}x_{2}}}-x_{1}y_{0}\underbrace{h_{3}%
}_{=t_{1,1}=u_{2}+x_{2}}\\
&  =u_{0}u_{1}u_{2}+x_{0}u_{1}u_{2}+x_{0}x_{1}u_{2}+x_{0}x_{1}x_{2}-x_{1}%
y_{0}\left(  u_{2}+x_{2}\right)  .
\end{align*}

The equations (\ref{eq.quest.f.unique.c}) are satisfied for all $i\in
\mathbb{Z}$, and thus are satisfied for $i=0$. In other words, we have%
\[
\left(  u_{0}+x_{0}\right)  \left(  \dfrac{1}{u_{1}}+\dfrac{1}{x_{1}}\right)
=\left(  u_{0}+y_{0}\right)  \left(  \dfrac{1}{u_{1}}+\dfrac{1}{y_{1}}\right)
.
\]
Solving this equation for $\dfrac{1}{y_{1}}$, we obtain%
\begin{align}
\dfrac{1}{y_{1}}  &  =\dfrac{\left(  u_{0}+x_{0}\right)  \left(  \dfrac
{1}{u_{1}}+\dfrac{1}{x_{1}}\right)  }{u_{0}+y_{0}}-\dfrac{1}{u_{1}}%
=\dfrac{u_{0}u_{1}+x_{0}u_{1}+x_{0}x_{1}+x_{1}u_{0}}{u_{1}x_{1}\left(
u_{0}+y_{0}\right)  }-\dfrac{1}{u_{1}}\nonumber\\
&  =\dfrac{u_{0}u_{1}+x_{0}u_{1}+x_{0}x_{1}+x_{1}u_{0}-x_{1}\left(
u_{0}+y_{0}\right)  }{u_{1}x_{1}\left(  u_{0}+y_{0}\right)  }\nonumber\\
&  =\dfrac{w_{2}}{u_{1}x_{1}\left(  u_{0}+y_{0}\right)  },
\label{pf.prop.unique.c.Qplus.c3.pf.1}%
\end{align}
since $u_{0}u_{1}+x_{0}u_{1}+x_{0}x_{1}+x_{1}u_{0}-x_{1}\left(  u_{0}%
+y_{0}\right)  =u_{0}u_{1}+x_{0}u_{1}+x_{0}x_{1}-x_{1}y_{0}=w_{2}$. This shows
that $w_{2}=u_{1}x_{1}\left(  u_{0}+y_{0}\right)  \cdot\dfrac{1}{y_{1}}%
\in\mathbb{K}$ (since $u_{1},x_{1},u_{0},y_{0},y_{1}\in\mathbb{K}$). Thus, we
have shown that $w_{2}$ belongs to $\mathbb{K}$.

Furthermore, by taking reciprocals on both sides of
(\ref{pf.prop.unique.c.Qplus.c3.pf.1}), we obtain%
\[
y_{1}=\dfrac{u_{1}x_{1}\left(  u_{0}+y_{0}\right)  }{w_{2}}.
\]
Hence,%
\begin{align}
u_{1}+y_{1}  &  =u_{1}+\dfrac{u_{1}x_{1}\left(  u_{0}+y_{0}\right)  }{w_{2}%
}=u_{1}\cdot\dfrac{w_{2}+x_{1}\left(  u_{0}+y_{0}\right)  }{w_{2}}\nonumber\\
&  =u_{1}\cdot\dfrac{\left(  u_{0}+x_{0}\right)  \left(  u_{1}+x_{1}\right)
}{w_{2}} \label{pf.prop.unique.c.Qplus.c3.pf.2}%
\end{align}
(since a straightforward computation yields $w_{2}+x_{1}\left(  u_{0}%
+y_{0}\right)  =\left(  u_{0}+x_{0}\right)  \left(  u_{1}+x_{1}\right)  $).

The equations (\ref{eq.quest.f.unique.c}) are satisfied for all $i\in
\mathbb{Z}$, and thus are satisfied for $i=1$. In other words, we have%
\[
\left(  u_{1}+x_{1}\right)  \left(  \dfrac{1}{u_{2}}+\dfrac{1}{x_{2}}\right)
=\left(  u_{1}+y_{1}\right)  \left(  \dfrac{1}{u_{2}}+\dfrac{1}{y_{2}}\right)
.
\]
Solving this equation for $\dfrac{1}{y_{2}}$, we obtain%
\begin{align}
\dfrac{1}{y_{2}}  &  =\dfrac{\left(  u_{1}+x_{1}\right)  \left(  \dfrac
{1}{u_{2}}+\dfrac{1}{x_{2}}\right)  }{u_{1}+y_{1}}-\dfrac{1}{u_{2}}%
=\dfrac{\left(  u_{1}+x_{1}\right)  \left(  \dfrac{1}{u_{2}}+\dfrac{1}{x_{2}%
}\right)  }{u_{1}\cdot\dfrac{\left(  u_{0}+x_{0}\right)  \left(  u_{1}%
+x_{1}\right)  }{w_{2}}}-\dfrac{1}{u_{2}}\nonumber\\
&  \ \ \ \ \ \ \ \ \ \ \left(  \text{by (\ref{pf.prop.unique.c.Qplus.c3.pf.2}%
)}\right) \nonumber\\
&  =\dfrac{\dfrac{1}{u_{2}}+\dfrac{1}{x_{2}}}{u_{1}\cdot\dfrac{u_{0}+x_{0}%
}{w_{2}}}-\dfrac{1}{u_{2}}=\dfrac{w_{2}\left(  u_{2}+x_{2}\right)  }%
{u_{1}u_{2}x_{2}\left(  u_{0}+x_{0}\right)  }-\dfrac{1}{u_{2}}\nonumber\\
&  =\dfrac{w_{2}\left(  u_{2}+x_{2}\right)  -u_{1}x_{2}\left(  u_{0}%
+x_{0}\right)  }{u_{1}u_{2}x_{2}\left(  u_{0}+x_{0}\right)  }\nonumber\\
&  =\dfrac{w_{3}}{u_{1}u_{2}x_{2}w_{1}} \label{pf.prop.unique.c.Qplus.c3.pf.3}%
\end{align}
(since straightforward computations yield $w_{2}\left(  u_{2}+x_{2}\right)
-u_{1}x_{2}\left(  u_{0}+x_{0}\right)  =w_{3}$ and $u_{0}+x_{0}=w_{1}$). This
shows that $w_{3}=u_{1}u_{2}x_{2}w_{1}\cdot\dfrac{1}{y_{2}}\in\mathbb{K}$
(since $u_{1},u_{2},x_{2},w_{1},y_{2}\in\mathbb{K}$). Thus, we have shown that
$w_{3}$ belongs to $\mathbb{K}$.

Taking reciprocals on both sides of (\ref{pf.prop.unique.c.Qplus.c3.pf.3}), we
find%
\[
y_{2}=\dfrac{u_{1}u_{2}x_{2}w_{1}}{w_{3}}.
\]
In other words, (\ref{pf.prop.unique.c.Qplus.c3.2}) is true for $k=2$. Thus,
the induction base is complete.

\textit{Induction step:} Let $i\geq2$ be an integer. Assume (as the induction
hypothesis) that Claim 3 holds for $k=i$. We shall now show that Claim 3 holds
for $k=i+1$.

Claim 2 (applied to $k=i$) yields%
\begin{equation}
u_{i-1}x_{i}w_{i-1}+w_{i+1}=\left(  u_{i}+x_{i}\right)  w_{i}.
\label{pf.prop.unique.c.Qplus.c3.pf.IS.IH1}%
\end{equation}

We have assumed that Claim 3 holds for $k=i$. In other words, the elements
$w_{1},w_{2},\ldots,w_{i+1}$ belong to $\mathbb{K}$, and the equality
(\ref{pf.prop.unique.c.Qplus.c3.2}) is true for $k=i$.

Thus, in particular, (\ref{pf.prop.unique.c.Qplus.c3.2}) is true for $k=i$. In
other words, we have%
\begin{equation}
y_{i}=\dfrac{u_{i-1}u_{i}x_{i}w_{i-1}}{w_{i+1}}.
\label{pf.prop.unique.c.Qplus.c3.pf.IS.IH2}%
\end{equation}
Adding $u_{i}$ to both sides of this equality, we obtain%
\begin{align}
u_{i}+y_{i}  &  =u_{i}+\dfrac{u_{i-1}u_{i}x_{i}w_{i-1}}{w_{i+1}}=\dfrac
{u_{i}\left(  u_{i-1}x_{i}w_{i-1}+w_{i+1}\right)  }{w_{i+1}}\nonumber\\
&  =\dfrac{u_{i}\left(  u_{i}+x_{i}\right)  w_{i}}{w_{i+1}}
\label{pf.prop.unique.c.Qplus.c3.pf.IS.3}%
\end{align}
(by (\ref{pf.prop.unique.c.Qplus.c3.pf.IS.IH1})).

But Claim 2 (applied to $k=i+1$) yields $u_{i}x_{i+1}w_{i}+w_{i+2}=\left(
u_{i+1}+x_{i+1}\right)  w_{i+1}$, so that
\begin{equation}
\left(  u_{i+1}+x_{i+1}\right)  w_{i+1}-u_{i}x_{i+1}w_{i}=w_{i+2}.
\label{pf.prop.unique.c.Qplus.c3.pf.IS.5}%
\end{equation}

Now, recall that the equation (\ref{eq.quest.f.unique.c}) is true. Solving
this equation for $\dfrac{1}{y_{i+1}}$, we find%
\begin{align}
\dfrac{1}{y_{i+1}}  &  =\dfrac{\left(  u_{i}+x_{i}\right)  \left(  \dfrac
{1}{u_{i+1}}+\dfrac{1}{x_{i+1}}\right)  }{u_{i}+y_{i}}-\dfrac{1}{u_{i+1}%
}=\dfrac{\left(  u_{i}+x_{i}\right)  \left(  u_{i+1}+x_{i+1}\right)  }%
{u_{i+1}x_{i+1}\left(  u_{i}+y_{i}\right)  }-\dfrac{1}{u_{i+1}}\nonumber\\
&  =\dfrac{\left(  u_{i}+x_{i}\right)  \left(  u_{i+1}+x_{i+1}\right)
}{u_{i+1}x_{i+1}\cdot\dfrac{u_{i}\left(  u_{i}+x_{i}\right)  w_{i}}{w_{i+1}}%
}-\dfrac{1}{u_{i+1}}\ \ \ \ \ \ \ \ \ \ \left(  \text{by
(\ref{pf.prop.unique.c.Qplus.c3.pf.IS.3})}\right) \nonumber\\
&  =\dfrac{\left(  u_{i+1}+x_{i+1}\right)  w_{i+1}}{u_{i}u_{i+1}x_{i+1}w_{i}%
}-\dfrac{1}{u_{i+1}}=\dfrac{\left(  u_{i+1}+x_{i+1}\right)  w_{i+1}%
-u_{i}x_{i+1}w_{i}}{u_{i}u_{i+1}x_{i+1}w_{i}}\nonumber\\
&  =\dfrac{w_{i+2}}{u_{i}u_{i+1}x_{i+1}w_{i}}\ \ \ \ \ \ \ \ \ \ \left(
\text{by (\ref{pf.prop.unique.c.Qplus.c3.pf.IS.5})}\right)  .
\label{pf.prop.unique.c.Qplus.c3.pf.IS.4}%
\end{align}
Hence,%
\begin{equation}
w_{i+2}=u_{i}u_{i+1}x_{i+1}w_{i}\cdot\dfrac{1}{y_{i+1}}\in\mathbb{K}\nonumber
\end{equation}
(since $u_{i+1},x_{i+1},w_{i+1},u_{i},x_{i+1},w_{i}\in\mathbb{K}$). Hence,
$w_{i+2}$ belongs to $\mathbb{K}$.

Taking reciprocals on both sides of the equality
(\ref{pf.prop.unique.c.Qplus.c3.pf.IS.4}), we obtain%
\[
y_{i+1}=\dfrac{u_{i}u_{i+1}x_{i+1}w_{i}}{w_{i+2}}.
\]
In other words, (\ref{pf.prop.unique.c.Qplus.c3.2}) is true for $k=i+1$.

We have now proved that the elements $w_{1},w_{2},\ldots,w_{i+2}$ belong to
$\mathbb{K}$ (since we already know that $w_{1},w_{2},\ldots,w_{i+1}$ belong
to $\mathbb{K}$, and since $w_{i+2}$ belongs to $\mathbb{K}$), and that the
equality (\ref{pf.prop.unique.c.Qplus.c3.2}) is true for $k=i+1$. In other
words, Claim 3 holds for $k=i+1$. This completes the induction step, and
therefore Claim 3 is proved.]

\begin{statement}
\textit{Claim 4:} For each $k\geq1$, we have
\begin{equation}
g_{k}=\left(  u_{0}+x_{0}\right)  \cdot u_{1}u_{2}\cdots u_{k-1}+x_{0}%
x_{1}h_{k} \label{pf.prop.unique.c.Qplus.c4.1}%
\end{equation}
and%
\begin{equation}
h_{k}=\dfrac{g_{k}-\left(  u_{0}+x_{0}\right)  \cdot u_{1}u_{2}\cdots u_{k-1}%
}{x_{0}x_{1}}. \label{pf.prop.unique.c.Qplus.c4.2}%
\end{equation}

\end{statement}

[\textit{Proof of Claim 4:} The equality (\ref{pf.prop.unique.c.Qplus.c4.1})
is easily checked directly. The equality (\ref{pf.prop.unique.c.Qplus.c4.2})
follows by solving (\ref{pf.prop.unique.c.Qplus.c4.1}) for $h_{k}$.]

Now, Claim 3 (applied to $k=n$) shows that the elements $w_{1},w_{2}%
,\ldots,w_{n+1}$ belong to $\mathbb{K}$ and satisfy
\begin{equation}
y_{n}=\dfrac{u_{n-1}u_{n}x_{n}w_{n-1}}{w_{n+1}}.
\label{pf.prop.unique.c.Qplus.yn}%
\end{equation}

The definition of $w_{n-1}$ yields%
\begin{align*}
w_{n-1}  &  =g_{n-1}-x_{1}y_{0}\underbrace{h_{n-1}}_{\substack{=\dfrac
{g_{n-1}-\left(  u_{0}+x_{0}\right)  \cdot u_{1}u_{2}\cdots u_{n-2}}%
{x_{0}x_{1}}\\\text{(by (\ref{pf.prop.unique.c.Qplus.c4.2}), applied to
}k=n-1\text{)}}}\\
&  =g_{n-1}-x_{1}y_{0}\cdot\dfrac{g_{n-1}-\left(  u_{0}+x_{0}\right)  \cdot
u_{1}u_{2}\cdots u_{n-2}}{x_{0}x_{1}}\\
&  =\left(  1-\dfrac{y_{0}}{x_{0}}\right)  \underbrace{g_{n-1}}%
_{\substack{=t_{n-1,-1}\\\text{(by the definition}\\\text{of }g_{n-1}\text{)}%
}}+\dfrac{y_{0}\cdot\left(  u_{0}+x_{0}\right)  \cdot u_{1}u_{2}\cdots
u_{n-2}}{x_{0}}\\
&  =\left(  1-\dfrac{y_{0}}{x_{0}}\right)  t_{n-1,-1}+\dfrac{y_{0}\cdot\left(
u_{0}+x_{0}\right)  \cdot u_{1}u_{2}\cdots u_{n-2}}{x_{0}}\\
&  =\dfrac{1}{x_{0}}\left(  \left(  x_{0}-y_{0}\right)  t_{n-1,-1}+y_{0}%
\cdot\left(  u_{0}+x_{0}\right)  \cdot u_{1}u_{2}\cdots u_{n-2}\right)  .
\end{align*}
Thus,%
\begin{align*}
x_{n}w_{n-1}  &  =\underbrace{x_{n}\cdot\dfrac{1}{x_{0}}}%
_{\substack{=1\\\text{(since }x_{n}=x_{0}\text{)}}}\left(  \left(  x_{0}%
-y_{0}\right)  t_{n-1,-1}+y_{0}\cdot\left(  u_{0}+x_{0}\right)  \cdot
u_{1}u_{2}\cdots u_{n-2}\right) \\
&  =\left(  x_{0}-y_{0}\right)  t_{n-1,-1}+y_{0}\cdot\left(  u_{0}%
+x_{0}\right)  \cdot u_{1}u_{2}\cdots u_{n-2}.
\end{align*}
Therefore,%
\begin{align}
&  u_{n-1}u_{n}x_{n}w_{n-1}\nonumber\\
&  =u_{n-1}u_{n}\left(  \left(  x_{0}-y_{0}\right)  t_{n-1,-1}+y_{0}%
\cdot\left(  u_{0}+x_{0}\right)  \cdot u_{1}u_{2}\cdots u_{n-2}\right)
\nonumber\\
&  =u_{n-1}\underbrace{u_{n}}_{=u_{0}}\left(  x_{0}-y_{0}\right)
t_{n-1,-1}+\underbrace{u_{n-1}u_{n}\cdot y_{0}\cdot\left(  u_{0}+x_{0}\right)
\cdot u_{1}u_{2}\cdots u_{n-2}}_{=y_{0}\cdot\left(  u_{0}+x_{0}\right)  \cdot
u_{1}u_{2}\cdots u_{n}}\nonumber\\
&  =u_{n-1}u_{0}\left(  x_{0}-y_{0}\right)  t_{n-1,-1}+y_{0}\cdot\left(
u_{0}+x_{0}\right)  \cdot u_{1}u_{2}\cdots u_{n}.
\label{pf.prop.unique.c.Qplus.xnwn-1}%
\end{align}

The definition of $w_{n+1}$ yields%
\begin{align}
w_{n+1}  &  =\underbrace{g_{n+1}}_{\substack{=\left(  u_{0}+x_{0}\right)
\cdot u_{1}u_{2}\cdots u_{n}+x_{0}x_{1}h_{n+1}\\\text{(by
(\ref{pf.prop.unique.c.Qplus.c4.1}), applied to }k=n+1\text{)}}}-x_{1}%
y_{0}h_{n+1}\nonumber\\
&  =\left(  u_{0}+x_{0}\right)  \cdot u_{1}u_{2}\cdots u_{n}+x_{0}x_{1}%
h_{n+1}-x_{1}y_{0}h_{n+1}\nonumber\\
&  =\left(  u_{0}+x_{0}\right)  \cdot u_{1}u_{2}\cdots u_{n}+x_{1}\left(
x_{0}-y_{0}\right)  \underbrace{h_{n+1}}_{\substack{=t_{n-1,1}\\\text{(by the
definition}\\\text{of }h_{n+1}\text{)}}}\nonumber\\
&  =\left(  u_{0}+x_{0}\right)  \cdot u_{1}u_{2}\cdots u_{n}+x_{1}\left(
x_{0}-y_{0}\right)  t_{n-1,1}. \label{pf.prop.unique.c.Qplus.wnp1}%
\end{align}

Now, (\ref{pf.prop.unique.c.Qplus.yn}) becomes%
\[
y_{n}=\dfrac{u_{n-1}u_{n}x_{n}w_{n-1}}{w_{n+1}}=\dfrac{u_{n-1}u_{0}\left(
x_{0}-y_{0}\right)  t_{n-1,-1}+y_{0}\cdot\left(  u_{0}+x_{0}\right)  \cdot
u_{1}u_{2}\cdots u_{n}}{\left(  u_{0}+x_{0}\right)  \cdot u_{1}u_{2}\cdots
u_{n}+x_{1}\left(  x_{0}-y_{0}\right)  t_{n-1,1}}%
\]
(by (\ref{pf.prop.unique.c.Qplus.xnwn-1}) and
(\ref{pf.prop.unique.c.Qplus.wnp1})). Comparing this with $y_{n}=y_{0}$, we
obtain%
\[
y_{0}=\dfrac{u_{n-1}u_{0}\left(  x_{0}-y_{0}\right)  t_{n-1,-1}+y_{0}%
\cdot\left(  u_{0}+x_{0}\right)  \cdot u_{1}u_{2}\cdots u_{n}}{\left(
u_{0}+x_{0}\right)  \cdot u_{1}u_{2}\cdots u_{n}+x_{1}\left(  x_{0}%
-y_{0}\right)  t_{n-1,1}}.
\]
In other words,%
\begin{align*}
&  y_{0}\cdot\left(  \left(  u_{0}+x_{0}\right)  \cdot u_{1}u_{2}\cdots
u_{n}+x_{1}\left(  x_{0}-y_{0}\right)  t_{n-1,1}\right) \\
&  =u_{n-1}u_{0}\left(  x_{0}-y_{0}\right)  t_{n-1,-1}+y_{0}\cdot\left(
u_{0}+x_{0}\right)  \cdot u_{1}u_{2}\cdots u_{n}.
\end{align*}
Thus,%
\begin{align*}
0  &  =u_{n-1}u_{0}\left(  x_{0}-y_{0}\right)  t_{n-1,-1}+y_{0}\cdot\left(
u_{0}+x_{0}\right)  \cdot u_{1}u_{2}\cdots u_{n}\\
&  \ \ \ \ \ \ \ \ \ \ -y_{0}\cdot\left(  \left(  u_{0}+x_{0}\right)  \cdot
u_{1}u_{2}\cdots u_{n}+x_{1}\left(  x_{0}-y_{0}\right)  t_{n-1,1}\right) \\
&  =u_{n-1}u_{0}\left(  x_{0}-y_{0}\right)  t_{n-1,-1}-y_{0}x_{1}\left(
x_{0}-y_{0}\right)  t_{n-1,1}\\
&  =\left(  x_{0}-y_{0}\right)  \left(  u_{n-1}u_{0}t_{n-1,-1}-y_{0}%
x_{1}t_{n-1,1}\right)  .
\end{align*}
Since $\mathbb{L}$ is an integral domain, we thus conclude that either
$0=x_{0}-y_{0}$ or $0=u_{n-1}u_{0}t_{n-1,-1}-y_{0}x_{1}t_{n-1,1}$. In the
former case, we obtain $y_{0}=x_{0}$; in the latter, we find
\begin{align*}
y_{0}  &  =\dfrac{u_{n-1}u_{0}t_{n-1,-1}}{x_{1}t_{n-1,1}}=u_{0}\cdot
\dfrac{u_{n-1}t_{n-1,-1}}{x_{1}t_{n-1,1}}=u_{0}\cdot\dfrac{u_{-1}t_{n-1,-1}%
}{x_{1}t_{n-1,1}}\ \ \ \ \ \ \ \ \ \ \left(  \text{since }u_{n-1}%
=u_{-1}\right) \\
&  =\left(  \mathbf{f}_{u}\left(  x\right)  \right)  _{0}%
\end{align*}
(by Lemma \ref{lem.f.steps} \textbf{(h)} (applied to $\mathbf{f}_{u}\left(
x\right)  $ and $0$ instead of $y$ and $i$), because the $y$ in Lemma
\ref{lem.f.steps} \textbf{(h)} equals $\mathbf{f}_{u}\left(  x\right)  $).
Thus, we have shown that either $y_{0}=x_{0}$ or $y_{0}=\left(  \mathbf{f}%
_{u}\left(  x\right)  \right)  _{0}$.

However, the equalities (\ref{pf.prop.unique.c.Qplus.c3.2}) holding for all
$k\geq2$ show that $y_{k}$ is uniquely determined by $y_{0}$ for all $k\geq2$.
The same formula determines $x_{k}$ in terms of $x_{0}$ and determines
$\left(  \mathbf{f}_{u}\left(  x\right)  \right)  _{k}$ in terms of $\left(
\mathbf{f}_{u}\left(  x\right)  \right)  _{0}$ (since both $n$-tuples $x$ and
$\mathbf{f}_{u}\left(  x\right)  $ satisfy the same equalities
(\ref{eq.quest.f.unique.c}) as $y$ does). Thus, if $y_{0}=x_{0}$, then
$y_{k}=x_{k}$ for all $k\geq2$, and therefore $y=x$; likewise, if
$y_{0}=\left(  \mathbf{f}_{u}\left(  x\right)  \right)  _{0}$, then
$y_{k}=\left(  \mathbf{f}_{u}\left(  x\right)  \right)  _{k}$ for all $k\geq
2$, and therefore $y=\mathbf{f}_{u}\left(  x\right)  $. Hence, we conclude
that either $y=x$ or $y=\mathbf{f}_{u}\left(  x\right)  $ (since we know that
either $y_{0}=x_{0}$ or $y_{0}=\left(  \mathbf{f}_{u}\left(  x\right)
\right)  _{0}$). This completes the proof of Proposition
\ref{prop.unique.c.Qplus}.
\end{proof}
\end{verlong}

\subsubsection{Characterizing $\mathbf{f}_{u}\left(  x\right)  $ via the
cyclic equations and the product equation}

Another avatar of the uniqueness question is the following:

\begin{question}
\label{quest.f.unique}Given $x\in\mathbb{K}^{n}$ and $y\in\mathbb{K}^{n}$
satisfying both (\ref{eq.quest.f.unique.c}) for all $i\in\mathbb{Z}$ and
\begin{equation}
y_{1}y_{2}\cdots y_{n}\cdot x_{1}x_{2}\cdots x_{n}=\left(  u_{1}u_{2}\cdots
u_{n}\right)  ^{2}. \label{eq.quest.f.unique.b}%
\end{equation}
Does it follow that $y=\mathbf{f}_{u}\left(  x\right)  $ ?
\end{question}

The answer to this question is definitely \textquotedblleft
yes\textquotedblright\ when $\mathbb{K}=\mathbb{Q}_{+}$, by essentially the
same argument that was used in Remark \ref{rmk.f.invol-by-trick}. Again,
however, the answer is \textquotedblleft no\textquotedblright\ when
$\mathbb{K}=\left(  \mathbb{Z},\min,+,0\right)  $. For example, if
$\mathbb{K}=\left(  \mathbb{Z},\min,+,0\right)  $ and $n=4$ and $u=\left(
2,1,1,0\right)  $ and $x=\left(  1,1,1,1\right)  $, then the two $n$-tuples
$\left(  1,1,1,1\right)  $ and $\left(  2,2,0,0\right)  $ both can be taken as
$y$ in Question \ref{quest.f.unique}, but clearly cannot both equal
$\mathbf{f}_{u}\left(  x\right)  $. (On the other hand, if
$\mathbb{K}=\left(  \mathbb{Z},\min,+,0\right)  $ and $n=3$, then
the answer is \textquotedblleft yes\textquotedblright\ again; this can
be shown by an unenlightening yet not particularly arduous case analysis.)

An even stronger version of Question \ref{quest.f.unique} holds when
$\mathbb{K}=\mathbb{Q}_{+}$:

\begin{proposition}
\label{prop.unique.bc.Qplus}Assume that $\mathbb{K}=\mathbb{Q}_{+}$. Let
$x\in\mathbb{K}^{n}$ and $y\in\mathbb{K}^{n}$. Assume that
(\ref{eq.quest.f.unique.c}) holds for all $i\in\left\{  1,2,\ldots
,n-1\right\}  $, and assume that (\ref{eq.quest.f.unique.b}) holds. Then,
$y=\mathbf{f}_{u}\left(  x\right)  $.
\end{proposition}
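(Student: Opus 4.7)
The plan is to follow essentially the same inequality-cascade argument that appeared in Remark \ref{rmk.f.invol-by-trick}, but adapted to compare $y$ directly to $z := \mathbf{f}_u(x)$ rather than to compare $\mathbf{f}_u(\mathbf{f}_u(x))$ to $x$. The key observation is that we only need the equations (\ref{eq.quest.f.unique.c}) to propagate inequalities forward (from index $i$ to index $i+1$), so having them for $i \in \{1, 2, \ldots, n-1\}$ is already enough to chain from any index up to $n$.

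More precisely, I would proceed as follows. Set $z := \mathbf{f}_u(x)$; our goal is $y = z$. First, by Theorem \ref{thm.f.full} \textbf{(b)} we have $z_1 z_2 \cdots z_n \cdot x_1 x_2 \cdots x_n = (u_1 u_2 \cdots u_n)^2$, and the assumption (\ref{eq.quest.f.unique.b}) gives the same product identity for $y$ in place of $z$; dividing yields
\[
y_1 y_2 \cdots y_n = z_1 z_2 \cdots z_n.
\]
Second, by Theorem \ref{thm.f.full} \textbf{(c)}, the pair $(x,z)$ satisfies (\ref{eq.quest.f.unique.c}) for every $i \in \mathbb{Z}$, and in particular for $i \in \{1, \ldots, n-1\}$. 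Combining this with the hypothesis on $(x,y)$, we conclude that for each $i \in \{1, \ldots, n-1\}$,
\[
(u_i + y_i)\left(\tfrac{1}{u_{i+1}} + \tfrac{1}{y_{i+1}}\right) = (u_i + z_i)\left(\tfrac{1}{u_{i+1}} + \tfrac{1}{z_{i+1}}\right).
\]

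Now I would argue by contradiction: suppose $y \neq z$, and let $i_0 \in \{1, 2, \ldots, n\}$ be the smallest index with $y_{i_0} \neq z_{i_0}$. WLOG $y_{i_0} > z_{i_0}$ (the reverse case is symmetric). For each $i \in \{i_0, i_0+1, \ldots, n-1\}$, if we have $y_i > z_i$, then the displayed equation above yields (after cancelling positive common factors over $\mathbb{Q}_+$) that $\tfrac{1}{y_{i+1}} < \tfrac{1}{z_{i+1}}$, hence $y_{i+1} > z_{i+1}$; inductively, $y_j > z_j$ for every $j \in \{i_0, \ldots, n\}$. Combining this with the minimality of $i_0$ (which gives $y_j = z_j$ for $j < i_0$), we obtain
\[
y_1 y_2 \cdots y_n > z_1 z_2 \cdots z_n,
\]
contradicting the product equality established above. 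Hence $y = z = \mathbf{f}_u(x)$.

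No step looks like a genuine obstacle: the cascade argument is elementary over $\mathbb{Q}_+$ because every quantity involved is positive, which both justifies the cancellations and makes strict inequalities behave as expected under products. The only small subtlety worth stating carefully is that the forward-cascade mechanism works unidirectionally from $i_0$ upward to $n$, which is exactly why weakening the hypothesis from \textquotedblleft all $i \in \mathbb{Z}$\textquotedblright\ to \textquotedblleft$i \in \{1, \ldots, n-1\}$\textquotedblright\ still suffices (we never need to cascade across the boundary from $n$ back to $1$).
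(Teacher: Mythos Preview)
Your proposal is correct and follows essentially the same approach as the paper's proof: set $z=\mathbf{f}_u(x)$, use Theorem \ref{thm.f.full} \textbf{(b)} and \textbf{(c)} to obtain both the product identity $y_1\cdots y_n=z_1\cdots z_n$ and the shared relation (\ref{eq.quest.f.unique.c}) for $y$ and $z$, then cascade inequalities forward via that relation and confront the result with the product identity. The only cosmetic difference is that the paper starts at index $1$ with a weak inequality $y_1\geq z_1$ (WLOG) and shows $y_k\geq z_k$ for all $k$ before invoking the product equality, whereas you argue by contradiction from the minimal index of disagreement; the underlying mechanism is identical.
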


\begin{vershort}
The proof of Proposition \ref{prop.unique.bc.Qplus} is sketched in the
detailed version \cite{verlong} of this paper.
\end{vershort}

\begin{verlong}
\begin{proof}
[Proof of Proposition \ref{prop.unique.bc.Qplus} (sketched).]Let
$z=\mathbf{f}_{u}\left(  x\right)  $. We have either $y_{1}\geq z_{1}$ or
$y_{1}\leq z_{1}$. Assume WLOG that $y_{1}\geq z_{1}$ holds (since in the
other case, we can use the same argument with all inequality signs reversed).

Now, we notice the following:

\begin{statement}
\textit{Claim 1:} We have $y_{k}\geq z_{k}$ for each $k\in\left\{
1,2,\ldots,n\right\}  $.
\end{statement}

[\textit{Proof of Claim 1:} We shall prove Claim 1 by induction on $k$:

\textit{Induction base:} We have assumed that $y_{1}\geq z_{1}$. In other
words, Claim 1 holds for $k=1$. This completes the induction base.

\textit{Induction step:} Let $i\in\left\{  1,2,\ldots,n-1\right\}  $. Assume
that Claim 1 holds for $k=i$. We must prove that Claim 1 holds for $k=i+1$.

We have assumed that Claim 1 holds for $k=i$. In other words, we have
$y_{i}\geq z_{i}$.

We have $i\in\left\{  1,2,\ldots,n-1\right\}  $; thus,
(\ref{eq.quest.f.unique.c}) holds (by our assumption). In other words,%
\[
\left(  u_{i}+x_{i}\right)  \left(  \dfrac{1}{u_{i+1}}+\dfrac{1}{x_{i+1}%
}\right)  =\left(  u_{i}+y_{i}\right)  \left(  \dfrac{1}{u_{i+1}}+\dfrac
{1}{y_{i+1}}\right)  .
\]
On the other hand, Theorem \ref{thm.f.full} \textbf{(c)} (applied to $z$
instead of $y$) yields%
\[
\left(  u_{i}+x_{i}\right)  \left(  \dfrac{1}{u_{i+1}}+\dfrac{1}{x_{i+1}%
}\right)  =\left(  u_{i}+z_{i}\right)  \left(  \dfrac{1}{u_{i+1}}+\dfrac
{1}{z_{i+1}}\right)
\]
(since $z=\mathbf{f}_{u}\left(  x\right)  $). Comparing these two equalities,
we obtain%
\[
\left(  u_{i}+y_{i}\right)  \left(  \dfrac{1}{u_{i+1}}+\dfrac{1}{y_{i+1}%
}\right)  =\left(  u_{i}+z_{i}\right)  \left(  \dfrac{1}{u_{i+1}}+\dfrac
{1}{z_{i+1}}\right)  .
\]
Hence,%
\[
\left(  u_{i}+z_{i}\right)  \left(  \dfrac{1}{u_{i+1}}+\dfrac{1}{z_{i+1}%
}\right)  =\left(  u_{i}+\underbrace{y_{i}}_{\geq z_{i}}\right)  \left(
\dfrac{1}{u_{i+1}}+\dfrac{1}{y_{i+1}}\right)  \geq\left(  u_{i}+z_{i}\right)
\left(  \dfrac{1}{u_{i+1}}+\dfrac{1}{y_{i+1}}\right)  .
\]
We can cancel $u_{i}+z_{i}$ from this inequality (since $u_{i}+z_{i}$ is a
positive rational number), and obtain $\dfrac{1}{u_{i+1}}+\dfrac{1}{z_{i+1}%
}\geq\dfrac{1}{u_{i+1}}+\dfrac{1}{y_{i+1}}$. In other words, $\dfrac
{1}{z_{i+1}}\geq\dfrac{1}{y_{i+1}}$. Thus, $y_{i+1}\geq z_{i+1}$. In other
words, Claim 1 holds for $k=i+1$. This completes the induction step. Thus,
Claim 1 is proven by induction.]

On the other hand, Theorem \ref{thm.f.full} \textbf{(b)} (applied to $z$
instead of $y$) yields%
\begin{align*}
z_{1}z_{2}\cdots z_{n}\cdot x_{1}x_{2}\cdots x_{n}  &  =\left(  u_{1}%
u_{2}\cdots u_{n}\right)  ^{2}\ \ \ \ \ \ \ \ \ \ \left(  \text{since
}z=\mathbf{f}_{u}\left(  x\right)  \right) \\
&  =y_{1}y_{2}\cdots y_{n}\cdot x_{1}x_{2}\cdots x_{n}%
\end{align*}
(by (\ref{eq.quest.f.unique.b})). We can cancel $x_{1}x_{2}\cdots x_{n}$ from
this equality (since $x_{1}x_{2}\cdots x_{n}$ is a positive rational number),
and obtain $z_{1}z_{2}\cdots z_{n}=y_{1}y_{2}\cdots y_{n}$.

Claim 1 shows that $y_{1}\geq z_{1}$ and $y_{2}\geq z_{2}$ and $\ldots$ and
$y_{n}\geq z_{n}$. Multiplying these $n$ inequalities yields%
\[
y_{1}y_{2}\cdots y_{n}\geq z_{1}z_{2}\cdots z_{n}.
\]
But this inequality must be an equality (since $z_{1}z_{2}\cdots z_{n}%
=y_{1}y_{2}\cdots y_{n}$). Hence, all the $n$ inequalities $y_{1}\geq z_{1}$
and $y_{2}\geq z_{2}$ and $\ldots$ and $y_{n}\geq z_{n}$ (which we multiplied
to obtain it) must be equalities (indeed, since we are working with positive
rational numbers, we will always obtain a strict inequality if we multiply a
strict inequality with a weak inequality). In other words, we have
$y_{1}=z_{1}$ and $y_{2}=z_{2}$ and $\ldots$ and $y_{n}=z_{n}$. In other
words, $y=z$. In other words, $y=\mathbf{f}_{u}\left(  x\right)  $ (since
$z=\mathbf{f}_{u}\left(  x\right)  $). This proves Proposition
\ref{prop.unique.bc.Qplus}.
\end{proof}
\end{verlong}

\subsubsection{Understanding Lemma \ref{lem.f.steps}}

Another question concerns Lemma \ref{lem.f.steps}:

\begin{question}
What is the \textquotedblleft real meaning\textquotedblright\ of some of the
more complicated parts of Lemma \ref{lem.f.steps}? In particular, Lemma
\ref{lem.f.steps} \textbf{(g)} reminds of the Pl\"{u}cker relation for minors
of a $2\times m$-matrix; can it be viewed that way? (Such a proof would not be
superior to the one given above, as it wouldn't be subtraction-free and thus
wouldn't work natively over arbitrary semifields. But it would shine more
light on the lemma.)
\end{question}

\subsection{On the genesis of $\varphi$ (and $\mathbf{f}_{u}$)}

As we mentioned in the introduction to this paper, Pelletier and Ressayre did
not conjecture Theorem \ref{thm.main} in this exact form; instead, they
conjectured the existence of a mysterious bijection $\varphi$ that satisfies
Theorem \ref{thm.main} \textbf{(b)}. Our definition of $\varphi$ appears
\textit{ex caelis oblatus}; while we have seen that our $\varphi$ duly plays
its part, it is far from clear how we have found it in the first place. The
following few paragraphs are meant to demystify this process.

We were looking for a bijection $\varphi:\mathbb{Z}^{n}\rightarrow
\mathbb{Z}^{n}$ satisfying Theorem \ref{thm.main} \textbf{(b)}. In other
words, we were looking for a way to match\footnote{\textquotedblleft
Matching\textquotedblright\ means \textquotedblleft perfect
matching\textquotedblright\ here -- i.e., every coefficient on either side
should get a unique partner.} the nonzero coefficients in the product
$s_{\alpha}\left(  x_{1},x_{2},\ldots,x_{n}\right)  \cdot s_{\mu}\left(
x_{1},x_{2},\ldots,x_{n}\right)  $ (when expanded in the basis $\left(
s_{\lambda}\left(  x_{1},x_{2},\ldots,x_{n}\right)  \right)  _{\lambda
\in\operatorname*{Par}\left[  n\right]  }$ of the $\mathbf{k}$-module of
symmetric polynomials in $x_{1},x_{2},\ldots,x_{n}$) with the nonzero
coefficients in the product $s_{\beta}\left(  x_{1},x_{2},\ldots,x_{n}\right)
\cdot s_{\mu}\left(  x_{1},x_{2},\ldots,x_{n}\right)  $ in such a way that
matching coefficients are equal.

The first step towards this goal was the discovery of the formula $s_{\alpha
}\left(  x_{1},x_{2},\ldots,x_{n}\right)  =x_{\Pi}^{a}\cdot\left(  h_{a}%
^{-}h_{b}^{+}-h_{a-1}^{-}h_{b-1}^{+}\right)  $: our Corollary
\ref{cor.ominus.salpha}. We originally proved this formula combinatorially, by
analyzing the structure of semistandard tableaux of shape $\alpha
$.\ \ \ \ \footnote{Each of the first $a$ columns of such a tableau would have
the form $\left(  1,2,\ldots,i-1,i+1,\ldots,n\right)  $ for some $i\in\left\{
1,2,\ldots,n\right\}  $, and these numbers $i$ would weakly increase as one
moves right.} The proof of Corollary \ref{cor.ominus.salpha} given above
(using the Pieri rule) was an afterthought.

Corollary \ref{cor.ominus.salpha} was a visible step in the right direction,
as it moved the problem from the world of Littlewood--Richardson coefficients
into the simpler world of Pieri rules. Indeed, instead of expanding
$s_{\alpha}\left(  x_{1},x_{2},\ldots,x_{n}\right)  \cdot s_{\mu}\left(
x_{1},x_{2},\ldots,x_{n}\right)  $, we now only had to expand $x_{\Pi}%
^{a}\cdot\left(  h_{a}^{-}h_{b}^{+}-h_{a-1}^{-}h_{b-1}^{+}\right)  \cdot
s_{\mu}\left(  x_{1},x_{2},\ldots,x_{n}\right)  $, which looked like an
expansion that the Pieri rule could help with (to be fully honest, we only
knew the Pieri rule for multiplying by $h_{k}^{+}$; but we soon would find one
for multiplying by $h_{k}^{-}$). The $x_{\Pi}^{a}$ factor was clearly a mere
distraction, but in order to get rid of it, we had to extend our polynomial
ring to the ring $\mathcal{L}$ of Laurent polynomials (since $\left(
h_{a}^{-}h_{b}^{+}-h_{a-1}^{-}h_{b-1}^{+}\right)  \cdot s_{\mu}\left(
x_{1},x_{2},\ldots,x_{n}\right)  $ is, in general, not a polynomial). This
extension had already been done by Stembridge in \cite{Stembr87}, and all we
had to do was rename \textquotedblleft staircases\textquotedblright\ as
\textquotedblleft snakes\textquotedblright, define Schur Laurent polynomials
(by generalizing the alternant formula for Schur polynomials in the most
obvious way), extend some basic properties of Schur polynomials to Schur
Laurent polynomials, and find the \textquotedblleft
upside-down\textquotedblright\ Pieri rule (Proposition \ref{prop.alt.pieri2}).
None of this was difficult; in particular, the \textquotedblleft
upside-down\textquotedblright\ Pieri rule followed easily from the usual Pieri
rule using Lemma \ref{lem.alt.inverses} (which is our main device for turning
things \textquotedblleft upside down\textquotedblright). The Schur polynomial
$s_{\nu}\left(  x_{1},x_{2},\ldots,x_{n}\right)  $ was generalized to the
Schur Laurent polynomial $\overline{s}_{\nu}$.

Thus our problem was reduced to matching the nonzero coefficients in the
product $\left(  h_{a}^{-}h_{b}^{+}-h_{a-1}^{-}h_{b-1}^{+}\right)
\cdot\overline{s}_{\mu}$ with the nonzero coefficients in the product $\left(
h_{b}^{-}h_{a}^{+}-h_{b-1}^{-}h_{a-1}^{+}\right)  \cdot\overline{s}_{\mu}$.
The products could both be expressed using the Pieri rules, but the
differences were still a distraction. At this point, we made a fortunate
guess: We hoped it would suffice to match the nonzero coefficients in the
product $h_{a}^{-}h_{b}^{+}\cdot\overline{s}_{\mu}$ with the nonzero
coefficients in the product $h_{b}^{-}h_{a}^{+}\cdot\overline{s}_{\mu}$. More
precisely, we hoped to find such a matching that would not depend on $a$ and
$b$; then it would also provide a matching between the nonzero coefficients in
the product $h_{a-1}^{-}h_{b-1}^{+}\cdot\overline{s}_{\mu}$ and the nonzero
coefficients in the product $h_{b-1}^{-}h_{a-1}^{+}\cdot\overline{s}_{\mu}$,
and therefore by taking differences we would obtain a matching between the
nonzero coefficients in the product $\left(  h_{a}^{-}h_{b}^{+}-h_{a-1}%
^{-}h_{b-1}^{+}\right)  \cdot\overline{s}_{\mu}$ and the nonzero coefficients
in the product $\left(  h_{b}^{-}h_{a}^{+}-h_{b-1}^{-}h_{a-1}^{+}\right)
\cdot\overline{s}_{\mu}$.

Thus we needed to expand $h_{a}^{-}h_{b}^{+}\cdot\overline{s}_{\mu}$. Using
the Pieri rules, this was straightforward -- the answer is in Lemma
\ref{lem.Rmab.formula}. Our problem was to connect this result with what we
would similarly obtain from expanding $h_{b}^{-}h_{a}^{+}\cdot\overline
{s}_{\mu}$. In other words, we wanted to construct a bijection $\mathbf{f}%
_{\mu}:\mathbb{Z}^{n}\rightarrow\mathbb{Z}^{n}$ that would satisfy%
\[
\left\vert R_{\mu,b,a}\left(  \mathbf{f}_{\mu}\left(  \gamma\right)  \right)
\right\vert =\left\vert R_{\mu,a,b}\left(  \gamma\right)  \right\vert
\ \ \ \ \ \ \ \ \ \ \text{for any }a,b\in\mathbb{Z}\text{ and }\gamma
\in\mathbb{Z}^{n}.
\]
Fixing $\gamma\in\mathbb{Z}^{n}$, we thus were looking for an $n$-tuple $\eta$
(our $\mathbf{f}_{\mu}\left(  \gamma\right)  $-to-be) that would satisfy
$\left\vert R_{\mu,b,a}\left(  \eta\right)  \right\vert =\left\vert
R_{\mu,a,b}\left(  \gamma\right)  \right\vert $.

In the case when $b=0$ (in which case this equality would be equivalent to
saying \textquotedblleft$\eta\rightharpoonup\mu$ if and only if $\mu
\rightharpoonup\gamma$\textquotedblright), we found such an $\eta$ directly,
by setting%
\[
\eta_{i}=\mu_{i}+\mu_{i+1}-\gamma_{i+1}\ \ \ \ \ \ \ \ \ \ \text{for each
}i\in\left\{  1,2,\ldots,n\right\}  ,
\]
where indices are cyclic modulo $n$ (so that $\mu_{0}=\mu_{n}$ and $\nu
_{0}=\nu_{n}$). This formula surprised us with its cyclic symmetry (which was
not expected from the original problem, and which foreshadowed the usefulness
of Convention \ref{conv.bir.peri}, although we thought nothing of it at that
point). Nevertheless, the formula failed in various examples for $b>0$, and we
could not easily fix it.

We tried to be more systematic. It was easy to rewrite the definition of
$R_{\mu,a,b}\left(  \gamma\right)  $ as%
\begin{align*}
&  R_{\mu,a,b}\left(  \gamma\right) \\
&  =\left\{  \nu\in\mathbb{Z}^{n}\ \mid\ \left(  \min\left\{  \mu_{i}%
,\gamma_{i}\right\}  \geq\nu_{i}\geq\max\left\{  \mu_{i+1},\gamma
_{i+1}\right\}  \text{ for each }i\in\left\{  1,2,\ldots,n-1\right\}  \right)
\right. \\
&  \ \ \ \ \ \ \ \ \ \ \ \ \ \ \ \ \ \ \ \ \ \ \ \ \ \ \ \ \ \ \left.
\text{and }\left\vert \mu\right\vert -\left\vert \nu\right\vert =a\text{ and
}\left\vert \gamma\right\vert -\left\vert \nu\right\vert =b\right\}  .
\end{align*}
Thus, the size of this set would depend only

\begin{itemize}
\item on the differences $\min\left\{  \mu_{i},\gamma_{i}\right\}
-\max\left\{  \mu_{i+1},\gamma_{i+1}\right\}  $ for $i\in\left\{
1,2,\ldots,n-1\right\}  $ (each of which differences would determine the
\textquotedblleft breathing space\textquotedblright\ for the corresponding
$\nu_{i}$),

\item on the difference $\left\vert \mu\right\vert -\left\vert \gamma
\right\vert $ (which would have to equal $a-b$ in order for the two conditions
$\left\vert \mu\right\vert -\left\vert \nu\right\vert =a$ and $\left\vert
\gamma\right\vert -\left\vert \nu\right\vert =b$ to be satisfiable simultaneously),

\item as well as on something else we could not quite pinpoint (in order for
$\left\vert \mu\right\vert -\left\vert \nu\right\vert =a$ and $\left\vert
\gamma\right\vert -\left\vert \nu\right\vert =b$ to actually hold, as opposed
to merely $\left\vert \mu\right\vert -\left\vert \gamma\right\vert =a-b$).
\end{itemize}

\noindent Analogous observations held for $R_{\mu,b,a}\left(  \eta\right)  $.
With Occam's razor in hand, we suspected that $\left\vert R_{\mu,b,a}\left(
\eta\right)  \right\vert =\left\vert R_{\mu,a,b}\left(  \gamma\right)
\right\vert $ could best be achieved by requiring these differences to be the
same for $\left(  \mu,a,b,\gamma\right)  $ as for $\left(  \mu,b,a,\eta
\right)  $. Thus, in particular, we hoped to have%
\begin{align*}
\min\left\{  \mu_{i},\gamma_{i}\right\}  -\max\left\{  \mu_{i+1},\gamma
_{i+1}\right\}   &  =\min\left\{  \mu_{i},\eta_{i}\right\}  -\max\left\{
\mu_{i+1},\eta_{i+1}\right\} \\
&  \ \ \ \ \ \ \ \ \ \ \text{for each }i\in\left\{  1,2,\ldots,n-1\right\}
\end{align*}
and%
\[
\left\vert \mu\right\vert -\left\vert \gamma\right\vert =\left\vert
\eta\right\vert -\left\vert \mu\right\vert .
\]
(Due to the \textquotedblleft mystery ingredient\textquotedblright, this would
likely neither be necessary nor sufficient for $\left\vert R_{\mu,b,a}\left(
\eta\right)  \right\vert =\left\vert R_{\mu,a,b}\left(  \gamma\right)
\right\vert $, but it looked like the right tree to bark up.) This is a system
of equations that whose solution is neither unique nor straightforward.
However, the system was a beacon rather than a destination to us, so we merely
needed something like a good solution.

Systems of equations involving sums, differences, minima and maxima belong to
\textit{tropical geometry} -- a discipline we were not expert in and could not
hope to master quickly. However, we were aware of a surprisingly successful
strategy for taming such systems: \textit{detropicalization}. The mainstay of
this strategy is the observation (made above in Example
\ref{exa.semifield.mintrop}) that the binary operations $\min$, $\max$, $+$
and $-$ are the addition, the \textquotedblleft harmonic
addition\textquotedblright\footnote{\textit{Harmonic addition} is a binary
operation defined on any semifield. It sends any pair $\left(  a,b\right)  $
of elements of the semifield to $\dfrac{1}{\dfrac{1}{a}+\dfrac{1}{b}}%
=\dfrac{ab}{a+b}$.}, the multiplication and the division of a certain
semifield (the min tropical semifield of $\left(  \mathbb{Z},+,0\right)  $, or
of whatever totally ordered abelian group our numbers belong to). Thus, even
if we could not solve our system, we could generalize it to arbitrary
semifields by replacing $\min$, $\max$, $+$ and $-$ by addition,
\textquotedblleft harmonic addition\textquotedblright, multiplication and
division, respectively. Thus our system would become%
\begin{align*}
\left(  \mu_{i}+\gamma_{i}\right)  /\dfrac{1}{\dfrac{1}{\mu_{i+1}}+\dfrac
{1}{\gamma_{i+1}}}  &  =\left(  \mu_{i}+\eta_{i}\right)  /\dfrac{1}{\dfrac
{1}{\mu_{i+1}}+\dfrac{1}{\eta_{i+1}}}\\
&  \ \ \ \ \ \ \ \ \ \ \text{for each }i\in\left\{  1,2,\ldots,n-1\right\}
\end{align*}
and%
\[
\dfrac{\mu_{1}\mu_{2}\cdots\mu_{n}}{\gamma_{1}\gamma_{2}\cdots\gamma_{n}%
}=\dfrac{\eta_{1}\eta_{2}\cdots\eta_{n}}{\mu_{1}\mu_{2}\cdots\mu_{n}}.
\]
Renaming $\mu$, $\gamma$ and $\eta$ as $u$, $x$ and $y$, and simplifying the
fractions somewhat, we rewrote this as%
\begin{align*}
\left(  u_{i}+x_{i}\right)  \left(  \dfrac{1}{u_{i+1}}+\dfrac{1}{x_{i+1}%
}\right)   &  =\left(  u_{i}+y_{i}\right)  \left(  \dfrac{1}{u_{i+1}}%
+\dfrac{1}{y_{i+1}}\right) \\
&  \ \ \ \ \ \ \ \ \ \ \text{for each }i\in\left\{  1,2,\ldots,n-1\right\}
\end{align*}
and%
\[
y_{1}y_{2}\cdots y_{n}\cdot x_{1}x_{2}\cdots x_{n}=\left(  u_{1}u_{2}\cdots
u_{n}\right)  ^{2}.
\]

This new system was a system of polynomial equations (at least after clearing
denominators), so we did the obvious thing: We left it to the computer for
small values of $n$ (specifically, $n=2$, $n=3$ and $n=4$) and looked at the
results. For $n=3$, the computer (SageMath's \texttt{solve} function, to be
precise) laid out the following two solutions:

\begin{itemize}
\item \textit{Solution 1:}%
\begin{align*}
y_{1}  &  =\dfrac{u_{1}\left(  u_{1}u_{2}u_{3}+x_{1}u_{2}u_{3}+x_{1}x_{2}%
u_{3}+x_{1}x_{2}x_{3}\right)  }{u_{1}x_{2}u_{3}-x_{1}x_{2}x_{3}},\\
y_{2}  &  =\dfrac{-u_{1}u_{2}u_{3}}{x_{1}x_{3}},\\
y_{3}  &  =\dfrac{u_{2}u_{3}\left(  x_{1}x_{3}-u_{1}u_{3}\right)  }{u_{1}%
u_{2}u_{3}+x_{1}u_{2}u_{3}+x_{1}x_{2}u_{3}+x_{1}x_{2}x_{3}}.
\end{align*}

\item \textit{Solution 2:}%
\begin{align*}
y_{1}  &  =\dfrac{u_{1}u_{3}\left(  u_{1}u_{2}+x_{1}u_{2}+x_{1}x_{2}\right)
}{x_{2}\left(  u_{1}u_{3}+u_{1}x_{3}+x_{1}x_{3}\right)  },\\
y_{2}  &  =\dfrac{u_{1}u_{2}\left(  u_{2}u_{3}+x_{2}u_{3}+x_{2}x_{3}\right)
}{x_{3}\left(  u_{1}u_{2}+x_{1}u_{2}+x_{1}x_{2}\right)  },\\
y_{3}  &  =\dfrac{u_{2}u_{3}\left(  u_{1}u_{3}+u_{1}x_{3}+x_{1}x_{3}\right)
}{x_{1}\left(  u_{2}u_{3}+x_{2}u_{3}+x_{2}x_{3}\right)  }.
\end{align*}

\end{itemize}

The computer did not know that we were trying to work over a semifield (which
had no subtraction), but we did, so we immediately discarded Solution 1 as
useless due to the minus signs. The question was whether Solution 2 would be
of any use. The omens were favorable: There were no minus signs; the
(unexpected, but not unwelcome) cyclic symmetry reared its head again;
finally, the nontrivial factors (such as $u_{1}u_{2}+x_{1}u_{2}+x_{1}x_{2}$)
had a structure that appeared in the definition of the geometric crystal
R-matrix (see, e.g., \cite[(5)]{Etingo03} or \cite[(4.19)]{NoumiYamada}) -- a
known successful case of detropicalization.

Solution 2 turned out to be generalizable indeed. Proving that the general
formula indeed produced a solution to our system (parts \textbf{(b)} and
\textbf{(c)} of Theorem \ref{thm.f.full}) was not completely trivial, but not
hard either. (The first few parts of Lemma \ref{lem.f.steps} were discovered
along the way.) Thus we had a candidate for the map $\mathbf{f}_{\mu}$ (and
thus for the map $\varphi$, which was obtained from $\mathbf{f}_{\mu}$ by
shifting by $a$ and $b$, corresponding to the $x_{\Pi}^{a}$ factor that we had dropped).

Why was this map $\mathbf{f}_{\mu}$ a bijection? Again, we believed that the
easiest way lay through the birational realm (i.e., we had to detropicalize).
Computer experiments suggested that $\mathbf{f}_{\mu}$ was not only a
bijection but actually an involution (part \textbf{(a)} of Theorem
\ref{thm.f.full}). The first proof of this we found was the one sketched in
Remark \ref{rmk.f.invol-by-trick}; the alternative, computational proof that
we gave first was found afterwards.

Having found our bijection $\mathbf{f}_{\mu}$, we had to retrace our steps.
Most of this was straightforward. The equality $\left\vert R_{\mu,b,a}\left(
\mathbf{f}_{\mu}\left(  \gamma\right)  \right)  \right\vert =\left\vert
R_{\mu,a,b}\left(  \gamma\right)  \right\vert $ still had to be proved, but
this turned out to be rather easy (part \textbf{(d)} of Theorem
\ref{thm.f.full} was discovered along the way, as the missing ingredient from
our previous analysis of the size of $R_{\mu,a,b}\left(  \gamma\right)  $).
The way the proof was written up in the end was mostly decided by concerns of
readability rather than authenticity; we believe that, had we followed the
logic of its discovery in our writeup, we would have lost more in clarity than
would be gained in motivation. We placed the study of the birational map
$\mathbf{f}_{u}$ (Section \ref{sect.bir}) in front due to its self-contained
nature and possible applicability to different problems; likewise, Section
\ref{sect.pf} begins with general properties of Schur Laurent polynomials and
slowly progresses towards more technical lemmas tailored for the proof of
Theorem \ref{thm.main}. We would not be too surprised if some unnecessary
detours were made along our way (Lemma \ref{lem.f.steps} appears a
particularly likely place for such), for which we apologize in advance (any
simplifications are appreciated).

\subsection{\label{subsect.fin.Rmat}The birational $R$-matrix connection}

In this section, we shall connect the map $\mathbf{f}_{u}$ from our Definition
\ref{def.f} with the \textit{birational R-matrix }$\eta$ defined in
\cite[\S 6]{LamPyl12} and studied further (e.g.) in \cite{CheLin20}.

We fix a positive integer $n$ and a semifield $\mathbb{K}$. We shall use
Convention \ref{conv.semifield.notations} and Convention \ref{conv.bir.peri}.
Let us recall the definition of the birational R-matrix $\eta$ (no relation to
the $\eta$ in Theorem \ref{thm.main}):

\begin{definition}
\label{def.eta}We define a map $\eta:\mathbb{K}^{n}\times\mathbb{K}%
^{n}\rightarrow\mathbb{K}^{n}\times\mathbb{K}^{n}$ as follows:

Let $a\in\mathbb{K}^{n}$ and $b\in\mathbb{K}^{n}$ be two $n$-tuples. For any
$i\in\mathbb{Z}$, define an element $\kappa_{i}\left(  a,b\right)
\in\mathbb{K}$ by%
\[
\kappa_{i}\left(  a,b\right)  =\sum_{j=i}^{i+n-1}\underbrace{b_{i+1}%
b_{i+2}\cdots b_{j}}_{=\prod_{p=i+1}^{j}b_{p}}\cdot\underbrace{a_{j+1}%
a_{j+2}\cdots a_{i+n-1}}_{=\prod_{p=j+1}^{i+n-1}a_{p}}.
\]
Define $a^{\prime}\in\mathbb{K}^{n}$ and $b^{\prime}\in\mathbb{K}^{n}$ by
setting%
\[
a_{i}^{\prime}=\dfrac{a_{i-1}\kappa_{i-1}\left(  a,b\right)  }{\kappa
_{i}\left(  a,b\right)  }\ \ \ \ \ \ \ \ \ \ \text{for each }i\in\left\{
1,2,\ldots,n\right\}
\]
and%
\[
b_{i}^{\prime}=\dfrac{b_{i+1}\kappa_{i+1}\left(  a,b\right)  }{\kappa
_{i}\left(  a,b\right)  }\ \ \ \ \ \ \ \ \ \ \text{for each }i\in\left\{
1,2,\ldots,n\right\}  .
\]
Set $\eta\left(  a,b\right)  =\left(  a^{\prime},b^{\prime}\right)  $.
\end{definition}

The map $\eta$ we just defined is known as a \textit{birational R-matrix};
related maps have previously appeared in the literature (\cite[Lemma
8.6]{BraKaz00}, \cite[Definition 2.1]{Yamada01}, \cite[Proposition
3.1]{Etingo03}). In particular, the map $R$ from \cite[Proposition
3.1]{Etingo03} is equivalent to $\eta$ (at least up to technical issues of
where it is defined\footnote{Namely: We have defined our map $\eta$ as a
literal map $\mathbb{K}^{n}\times\mathbb{K}^{n}\rightarrow\mathbb{K}^{n}%
\times\mathbb{K}^{n}$ for any semifield $\mathbb{K}$, whereas
\cite[Proposition 3.1]{Etingo03} defines $R$ as a birational map $\left(
\mathbb{C}^{\times}\right)  ^{n}\times\left(  \mathbb{C}^{\times}\right)
^{n}\dashrightarrow\left(  \mathbb{C}^{\times}\right)  ^{n}\times\left(
\mathbb{C}^{\times}\right)  ^{n}$. Neither of these two settings generalizes
the other, but it is not hard to transfer identities from one to the other (as
long as they are \textit{subtraction-free}, i.e., no minus signs appear in
them).}). Indeed, it is not hard to see that the map $\eta$ from Definition
\ref{def.eta} becomes the map $R$ from \cite[Proposition 3.1]{Etingo03} if we
set $x_{i}=b_{i+1}$ and $y_{i}=a_{i}$ and $x_{i}^{\prime}=b_{i}^{\prime}$ and
$y_{i}^{\prime}=a_{i+1}^{\prime}$ (that is, if we define $x_{i},y_{i}%
,x_{i}^{\prime},y_{i}^{\prime}$ this way, then the equalities \cite[(8), (9)
and (10)]{Etingo03} are satisfied, so that we have $R\left(  x,y\right)
=\left(  x^{\prime},y^{\prime}\right)  $ where $R$ is as defined in
\cite[Proposition 3.1]{Etingo03}). This birational R-matrix $R$ has its
origins in the theory of geometric crystals and total positivity. A related
map is the transformation $\left(  x,a\right)  \mapsto\left(  y,b\right)  $ in
\cite[\S 2.2]{NoumiYamada} (see also \cite{Zygour18}).

Now, we shall see that the map $\eta$ is intimately related to our map
$\mathbf{f}_{u}$ (even though $\mathbf{f}_{u}$ transforms a single $n$-tuple
$x$ into a single $n$-tuple $y$ using the fixed $n$-tuple $u$, while $\eta$
takes a pair of two $n$-tuples to another such pair). In order to state this
relation, we define some more notation:

\begin{definition}
\label{def.n-tup-frac}If $a\in\mathbb{K}^{n}$ and $b\in\mathbb{K}^{n}$ are two
$n$-tuples, then we define two new $n$-tuples $ab\in\mathbb{K}^{n}$ and
$\dfrac{a}{b}\in\mathbb{K}^{n}$ by setting%
\[
\left(  ab\right)  _{i}=a_{i}b_{i}\ \ \ \ \ \ \ \ \ \ \text{and}%
\ \ \ \ \ \ \ \ \ \ \left(  \dfrac{a}{b}\right)  _{i}=\dfrac{a_{i}}{b_{i}%
}\ \ \ \ \ \ \ \ \ \ \text{for each }i\in\left\{  1,2,\ldots,n\right\}  .
\]

\end{definition}

We can now express the map $\mathbf{f}_{u}$ from Definition \ref{def.f}
through the map $\eta$ from Definition \ref{def.eta} as follows:

\begin{theorem}
\label{thm.eta-f1}Let $u\in\mathbb{K}^{n}$ and $x\in\mathbb{K}^{n}$ be two
$n$-tuples. Let $\left(  a^{\prime},b^{\prime}\right)  =\eta\left(
u,x\right)  $. Then,%
\[
\mathbf{f}_{u}\left(  x\right)  =u\dfrac{a^{\prime}}{b^{\prime}}.
\]

\end{theorem}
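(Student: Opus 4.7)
The plan is to prove Theorem \ref{thm.eta-f1} by a direct unfolding and comparison of the two definitions; no new ideas are needed once one notices that the quantities $\kappa_j(u,x)$ from Definition \ref{def.eta} and the quantities $t_{n-1,j}$ from Definition \ref{def.f} are literally equal. I would begin by substituting $a = u$ and $b = x$ into the definition of $\kappa_j$ and then reindexing the sum by setting $m = k - j$, so that $m$ ranges over $\{0, 1, \ldots, n-1\}$. This transforms
\[
\kappa_j(u,x) = \sum_{k=j}^{j+n-1} x_{j+1} x_{j+2} \cdots x_k \cdot u_{k+1} u_{k+2} \cdots u_{j+n-1}
\]
into
\[
\kappa_j(u,x) = \sum_{m=0}^{n-1} x_{j+1} x_{j+2} \cdots x_{j+m} \cdot u_{j+m+1} u_{j+m+2} \cdots u_{j+n-1},
\]
which is exactly the definition of $t_{n-1,j}$ from Definition \ref{def.f}. (The extreme addends $m = 0$ and $m = n-1$ involve empty products, and the reader should just check that these match on both sides; this is the only very minor bookkeeping needed.)

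Next, I would fix $i \in \{1, 2, \ldots, n\}$ and compute $a'_i / b'_i$ directly from Definition \ref{def.eta}. Since both $a'_i$ and $b'_i$ carry $\kappa_i(u,x)$ in the denominator, these factors cancel and one obtains
\[
\frac{a'_i}{b'_i} = \frac{u_{i-1}\, \kappa_{i-1}(u,x)}{x_{i+1}\, \kappa_{i+1}(u,x)}.
\]
Multiplying both sides by $u_i$ and substituting the equality $\kappa_j(u,x) = t_{n-1,j}$ proved in the previous step (once with $j = i-1$ and once with $j = i+1$), I would find
\[
u_i \cdot \frac{a'_i}{b'_i} = u_i \cdot \frac{u_{i-1}\, t_{n-1, i-1}}{x_{i+1}\, t_{n-1, i+1}},
\]
and by Definition \ref{def.f} this is precisely $\mathbf{f}_u(x)_i$. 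Since this holds for every $i \in \{1, 2, \ldots, n\}$, we conclude $u \cdot \dfrac{a'}{b'} = \mathbf{f}_u(x)$, as required.

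There is no real obstacle in this argument: the theorem is essentially a dictionary translating between the two papers' notations, and the only place where one must be slightly careful is the reindexing of the sum defining $\kappa_j$ (in particular, verifying the matching of empty products at the endpoints of the summation range). Everything else is algebraic simplification.
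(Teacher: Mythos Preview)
Your proposal is correct and follows essentially the same approach as the paper: set $a=u$, $b=x$, verify $\kappa_j(u,x)=t_{n-1,j}$ by reindexing the sum, then cancel the common $\kappa_i$ factor in $a'_i/b'_i$ and compare with the formula for $y_i$ in Definition~\ref{def.f}. The paper's proof is line-for-line the same computation (with the substitution variable called $k$ rather than $m$).
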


\begin{proof}
[Proof of Theorem \ref{thm.eta-f1}.]Set $a=u$ and $b=x$. We shall use the
notations $\kappa_{i}\left(  a,b\right)  $ from Definition \ref{def.eta} and
the notations $t_{r,j}$ and $y$ from Definition \ref{def.f}. Then,
$\mathbf{f}_{u}\left(  x\right)  =y$ (by Definition \ref{def.f}).

For each $i\in\mathbb{Z}$, we have%
\begin{align}
\kappa_{i}\left(  a,b\right)   &  =\sum_{j=i}^{i+n-1}b_{i+1}b_{i+2}\cdots
b_{j}\cdot a_{j+1}a_{j+2}\cdots a_{i+n-1}\ \ \ \ \ \ \ \ \ \ \left(  \text{by
the definition of }\kappa_{i}\left(  a,b\right)  \right) \nonumber\\
&  =\sum_{j=i}^{i+n-1}x_{i+1}x_{i+2}\cdots x_{j}\cdot u_{j+1}u_{j+2}\cdots
u_{i+n-1}\ \ \ \ \ \ \ \ \ \ \left(  \text{since }a=u\text{ and }b=x\right)
\nonumber\\
&  =\sum_{k=0}^{n-1}x_{i+1}x_{i+2}\cdots x_{i+k}\cdot u_{i+k+1}u_{i+k+2}\cdots
u_{i+n-1}\nonumber\\
&  \ \ \ \ \ \ \ \ \ \ \ \ \ \ \ \ \ \ \ \ \left(  \text{here, we have
substituted }i+k\text{ for }j\text{ in the sum}\right) \nonumber\\
&  =t_{n-1,i} \label{pf.thm.eta-f1.k=t}%
\end{align}
(since the definition of $t_{n-1,i}$ yields $t_{n-1,i}=\sum_{k=0}^{n-1}%
x_{i+1}x_{i+2}\cdots x_{i+k}\cdot u_{i+k+1}u_{i+k+2}\cdots u_{i+n-1}$).

However, $\left(  a^{\prime},b^{\prime}\right)  =\eta\left(  u,x\right)
=\eta\left(  a,b\right)  $ (since $u=a$ and $x=b$). Hence, Definition
\ref{def.eta} yields that
\[
a_{i}^{\prime}=\dfrac{a_{i-1}\kappa_{i-1}\left(  a,b\right)  }{\kappa
_{i}\left(  a,b\right)  }\ \ \ \ \ \ \ \ \ \ \text{for each }i\in\left\{
1,2,\ldots,n\right\}
\]
and%
\[
b_{i}^{\prime}=\dfrac{b_{i+1}\kappa_{i+1}\left(  a,b\right)  }{\kappa
_{i}\left(  a,b\right)  }\ \ \ \ \ \ \ \ \ \ \text{for each }i\in\left\{
1,2,\ldots,n\right\}  .
\]
Hence, for each $i\in\left\{  1,2,\ldots,n\right\}  $, we have%
\begin{align}
a_{i}^{\prime}/b_{i}^{\prime} &  =\dfrac{a_{i-1}\kappa_{i-1}\left(
a,b\right)  }{\kappa_{i}\left(  a,b\right)  }/\dfrac{b_{i+1}\kappa
_{i+1}\left(  a,b\right)  }{\kappa_{i}\left(  a,b\right)  }=\dfrac
{a_{i-1}\kappa_{i-1}\left(  a,b\right)  }{b_{i+1}\kappa_{i+1}\left(
a,b\right)  }=\dfrac{a_{i-1}t_{n-1,i-1}}{b_{i+1}t_{n-1,i+1}}\nonumber\\
&  \ \ \ \ \ \ \ \ \ \ \ \ \ \ \ \ \ \ \ \ \left(
\begin{array}
[c]{c}%
\text{since }\kappa_{i-1}\left(  a,b\right)  =t_{n-1,i-1}\text{ (by
(\ref{pf.thm.eta-f1.k=t}), applied to }i-1\text{ instead of }i\text{)}\\
\text{and }\kappa_{i+1}\left(  a,b\right)  =t_{n-1,i+1}\text{ (by
(\ref{pf.thm.eta-f1.k=t}), applied to }i+1\text{ instead of }i\text{)}%
\end{array}
\right)  \nonumber\\
&  =\dfrac{u_{i-1}t_{n-1,i-1}}{x_{i+1}t_{n-1,i+1}}\ \ \ \ \ \ \ \ \ \ \left(
\text{since }a=u\text{ and }b=x\right)  \nonumber\\
&  =y_{i}/u_{i}\label{pf.thm.eta-f1.fracs}%
\end{align}
(since the definition of $y$ yields $y_{i}=u_{i}\cdot\dfrac{u_{i-1}%
t_{n-1,i-1}}{x_{i+1}t_{n-1,i+1}}$). Now, for each $i\in\left\{  1,2,\ldots
,n\right\}  $, we have%
\begin{align*}
\left(  u\dfrac{a^{\prime}}{b^{\prime}}\right)  _{i} &  =u_{i}\dfrac
{a_{i}^{\prime}}{b_{i}^{\prime}}\ \ \ \ \ \ \ \ \ \ \left(  \text{by
Definition \ref{def.n-tup-frac}}\right)  \\
&  =u_{i}\cdot\underbrace{a_{i}^{\prime}/b_{i}^{\prime}}_{\substack{=y_{i}%
/u_{i}\\\text{(by (\ref{pf.thm.eta-f1.fracs}))}}}=u_{i}\cdot y_{i}/u_{i}%
=y_{i}.
\end{align*}
In other words, $u\dfrac{a^{\prime}}{b^{\prime}}=y$. Comparing this with
$\mathbf{f}_{u}\left(  x\right)  =y$, we obtain $\mathbf{f}_{u}\left(
x\right)  =u\dfrac{a^{\prime}}{b^{\prime}}$. This proves Theorem
\ref{thm.eta-f1}.
\end{proof}

We finish by stating some \textquotedblleft gauge-invariance\textquotedblright%
\ properties for $\mathbf{f}_{u}$ and $\eta$:

\begin{proposition}
\label{prop.fu-gauge}Let $g,u,x\in\mathbb{K}^{n}$. Then, $\mathbf{f}%
_{gu}\left(  gx\right)  =g\mathbf{f}_{u}\left(  x\right)  $.
\end{proposition}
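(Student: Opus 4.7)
The plan is to unwind the definition of $\mathbf{f}_{gu}(gx)$ and show that the powers of $g$ accumulated from the inner products collapse to a single factor of $g_i$ in the $i$-th coordinate, thanks to Lemma \ref{lem.aprod}.

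More precisely, first I would introduce the auxiliary quantities $\widetilde{t}_{r,j}$ and $\widetilde{y}$ obtained by applying Definition \ref{def.f} to the pair $(gu, gx)$ in place of $(u, x)$. A direct expansion of the defining sum, using $(gx)_{p} = g_p x_p$ and $(gu)_p = g_p u_p$, factors all the $g_p$'s out, giving
\[
\widetilde{t}_{r,j} = \sum_{k=0}^{r} \bigl( \prod_{p=1}^{k} g_{j+p} x_{j+p} \bigr) \bigl( \prod_{p=k+1}^{r} g_{j+p} u_{j+p} \bigr) = \bigl( g_{j+1} g_{j+2} \cdots g_{j+r} \bigr) \cdot t_{r,j},
\]
since every term in the sum contains exactly the same product $g_{j+1} g_{j+2} \cdots g_{j+r}$ of $g$'s regardless of $k$.

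Next I would substitute this into the formula defining the $i$-th entry of $\mathbf{f}_{gu}(gx)$. By Definition \ref{def.f}, that entry equals
\[
(gu)_i \cdot \frac{(gu)_{i-1}\, \widetilde{t}_{n-1,i-1}}{(gx)_{i+1}\, \widetilde{t}_{n-1,i+1}} = g_i u_i \cdot \frac{g_{i-1} u_{i-1} \cdot \bigl(g_i g_{i+1} \cdots g_{i+n-2}\bigr) t_{n-1,i-1}}{g_{i+1} x_{i+1} \cdot \bigl(g_{i+2} g_{i+3} \cdots g_{i+n}\bigr) t_{n-1,i+1}}.
\]
Lemma \ref{lem.aprod} applied to the $n$-tuple $g$ shows that both the numerator's $g$-product $g_{i-1} g_i g_{i+1} \cdots g_{i+n-2}$ and the denominator's $g$-product $g_{i+1} g_{i+2} \cdots g_{i+n}$ equal $g_1 g_2 \cdots g_n$, so they cancel. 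What remains is exactly $g_i \cdot u_i \cdot \dfrac{u_{i-1} t_{n-1,i-1}}{x_{i+1} t_{n-1,i+1}} = g_i \cdot (\mathbf{f}_u(x))_i = (g \mathbf{f}_u(x))_i$.

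There is no real obstacle here: the only delicate point is making sure the two products of $n$ consecutive $g$'s in the numerator and denominator are genuinely $n$ consecutive entries of the $n$-periodic family $(g_i)_{i \in \mathbb{Z}}$, so that Lemma \ref{lem.aprod} actually applies. This follows immediately from Convention \ref{conv.bir.peri}, which is in force throughout Section \ref{sect.bir}. Collecting the identity over all $i \in \{1,2,\ldots,n\}$ yields $\mathbf{f}_{gu}(gx) = g \mathbf{f}_u(x)$, proving Proposition \ref{prop.fu-gauge}.
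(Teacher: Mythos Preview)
Your proof is correct and is precisely the kind of direct computation the paper has in mind; the paper itself leaves the proof to the reader, saying only that it ``can be proved by fairly simple computations,'' and your argument carries this out cleanly. One tiny remark: the proposition lives in Subsection \ref{subsect.fin.Rmat} rather than Section \ref{sect.bir}, but that subsection explicitly reinstates Convention \ref{conv.bir.peri}, so your appeal to it is justified.
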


\begin{proposition}
\label{prop.eta-gauge}Let $g,a,b\in\mathbb{K}^{n}$. Let $\left(  a^{\prime
},b^{\prime}\right)  =\eta\left(  a,b\right)  $. Then, $\left(  ga^{\prime
},gb^{\prime}\right)  =\eta\left(  ga,gb\right)  $.
\end{proposition}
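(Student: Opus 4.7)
The plan is to prove Proposition \ref{prop.eta-gauge} by direct computation, unraveling Definition \ref{def.eta} on both sides and using Lemma \ref{lem.aprod} to handle the resulting products of the $g_i$'s. Let $(A', B') = \eta(ga, gb)$; the goal is to show that $A' = ga'$ and $B' = gb'$.

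First I would compute $\kappa_i(ga, gb)$ for each $i \in \mathbb{Z}$. Expanding each term of the defining sum gives
\[
(gb)_{i+1}(gb)_{i+2}\cdots(gb)_j \cdot (ga)_{j+1}(ga)_{j+2}\cdots(ga)_{i+n-1} = (g_{i+1}g_{i+2}\cdots g_{i+n-1}) \cdot b_{i+1}\cdots b_j \cdot a_{j+1}\cdots a_{i+n-1},
\]
so the common prefactor $g_{i+1}g_{i+2}\cdots g_{i+n-1}$ comes out of the sum. Setting $G := g_1 g_2 \cdots g_n$, Lemma \ref{lem.aprod} gives $g_{i+1} g_{i+2} \cdots g_{i+n} = G$, whence $g_{i+1}g_{i+2}\cdots g_{i+n-1} = G/g_{i+n} = G/g_i$ (using the $n$-periodicity from Convention \ref{conv.bir.peri}). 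Therefore
\[
\kappa_i(ga, gb) = \frac{G}{g_i}\,\kappa_i(a, b) \qquad \text{for every } i \in \mathbb{Z}.
\]

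Next I would plug this into the definitions of $A'_i$ and $B'_i$. For each $i \in \{1, 2, \ldots, n\}$,
\[
A'_i = \frac{(ga)_{i-1} \, \kappa_{i-1}(ga, gb)}{\kappa_i(ga, gb)} = \frac{g_{i-1} a_{i-1} \cdot \frac{G}{g_{i-1}} \kappa_{i-1}(a,b)}{\frac{G}{g_i}\,\kappa_i(a,b)} = g_i \cdot \frac{a_{i-1}\kappa_{i-1}(a,b)}{\kappa_i(a,b)} = g_i a'_i,
\]
so $A'_i = (ga')_i$. The same manipulation applied to the $B'_i$ formula yields
\[
B'_i = \frac{g_{i+1} b_{i+1} \cdot \frac{G}{g_{i+1}} \kappa_{i+1}(a,b)}{\frac{G}{g_i}\,\kappa_i(a,b)} = g_i b'_i = (gb')_i.
\]
Combining the two gives $(A', B') = (ga', gb')$, i.e., $\eta(ga, gb) = (ga', gb')$, which is the claim.

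There is no real obstacle here: the only slightly non-obvious step is identifying the telescoping product $g_{i+1}g_{i+2}\cdots g_{i+n-1}$ as $G/g_i$, but that is an immediate consequence of Lemma \ref{lem.aprod} together with the $n$-periodicity convention. Everything else is formal cancellation in the semifield, using that all factors involved are invertible.
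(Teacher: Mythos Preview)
Your proof is correct and is exactly the direct computation the paper has in mind; the paper itself does not give a proof of Proposition \ref{prop.eta-gauge} but merely remarks that it ``can be proved by fairly simple computations, which are left to the reader.'' Your key identity $\kappa_i(ga,gb)=\dfrac{G}{g_i}\,\kappa_i(a,b)$ and the subsequent cancellations are precisely what is needed.
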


\begin{vershort}
Both of these propositions can be proved by fairly simple computations, which
are left to the reader.
\end{vershort}

\begin{verlong}
\begin{proof}
[Proof of Proposition \ref{prop.fu-gauge}.] Clearly, we have%
\begin{equation}
\left(  ab\right)  _{i}=a_{i}b_{i}\label{pf.prop.fu-gauge.abi=}%
\end{equation}
for any $a,b\in\mathbb{K}^{n}$ and any $i\in\mathbb{Z}$. (Indeed, because of
Convention \ref{conv.bir.peri}, it suffices to prove this in the case when
$i\in\left\{  1,2,\ldots,n\right\}  $. However, in this case, this follows
from Definition \ref{def.n-tup-frac}.)

We shall use the notations $t_{r,j}$ and $y$ from Definition \ref{def.f}.
Then, $\mathbf{f}_{u}\left(  x\right)  =y$ (by Definition \ref{def.f}).

For each $j\in\mathbb{Z}$, define an element $q_{j}\in\mathbb{K}$ by
\[
q_{j}=\sum_{k=0}^{n-1}\underbrace{\left(  gx\right)  _{j+1}\left(  gx\right)
_{j+2}\cdots\left(  gx\right)  _{j+k}}_{=\prod_{i=1}^{k}\left(  gx\right)
_{j+i}}\cdot\underbrace{\left(  gu\right)  _{j+k+1}\left(  gu\right)
_{j+k+2}\cdots\left(  gu\right)  _{j+n-1}}_{=\prod_{i=k+1}^{n-1}\left(
gu\right)  _{j+i}}.
\]
Thus, for each $j\in\mathbb{Z}$, we have%
\begin{align}
q_{j}  & =\sum_{k=0}^{n-1}\underbrace{\left(  gx\right)  _{j+1}\left(
gx\right)  _{j+2}\cdots\left(  gx\right)  _{j+k}}_{=\prod_{i=1}^{k}\left(
gx\right)  _{j+i}}\cdot\underbrace{\left(  gu\right)  _{j+k+1}\left(
gu\right)  _{j+k+2}\cdots\left(  gu\right)  _{j+n-1}}_{=\prod_{i=k+1}%
^{n-1}\left(  gu\right)  _{j+i}}\nonumber\\
& =\sum_{k=0}^{n-1}\left(  \prod_{i=1}^{k}\underbrace{\left(  gx\right)
_{j+i}}_{\substack{=g_{j+i}x_{j+i}\\\text{(by (\ref{pf.prop.fu-gauge.abi=}))}%
}}\right)  \cdot\left(  \prod_{i=k+1}^{n-1}\underbrace{\left(  gu\right)
_{j+i}}_{\substack{=g_{j+i}u_{j+i}\\\text{(by (\ref{pf.prop.fu-gauge.abi=}))}%
}}\right)  \nonumber\\
& =\sum_{k=0}^{n-1}\underbrace{\left(  \prod_{i=1}^{k}\left(  g_{j+i}%
x_{j+i}\right)  \right)  }_{=\left(  \prod_{i=1}^{k}g_{j+i}\right)
\cdot\left(  \prod_{i=1}^{k}x_{j+i}\right)  }\cdot\underbrace{\left(
\prod_{i=k+1}^{n-1}\left(  g_{j+i}u_{j+i}\right)  \right)  }_{=\left(
\prod_{i=k+1}^{n-1}g_{j+i}\right)  \cdot\left(  \prod_{i=k+1}^{n-1}%
u_{j+i}\right)  }\nonumber\\
& =\sum_{k=0}^{n-1}\underbrace{\left(  \prod_{i=1}^{k}\left(  g_{j+i}%
x_{j+i}\right)  \right)  }_{=\left(  \prod_{i=1}^{k}g_{j+i}\right)
\cdot\left(  \prod_{i=1}^{k}x_{j+i}\right)  }\cdot\underbrace{\left(
\prod_{i=k+1}^{n-1}\left(  g_{j+i}u_{j+i}\right)  \right)  }_{=\left(
\prod_{i=k+1}^{n-1}g_{j+i}\right)  \cdot\left(  \prod_{i=k+1}^{n-1}%
u_{j+i}\right)  }\nonumber\\
& =\sum_{k=0}^{n-1}\left(  \prod_{i=1}^{k}g_{j+i}\right)  \cdot
\underbrace{\left(  \prod_{i=1}^{k}x_{j+i}\right)  \cdot\left(  \prod
_{i=k+1}^{n-1}g_{j+i}\right)  }_{=\left(  \prod_{i=k+1}^{n-1}g_{j+i}\right)
\cdot\left(  \prod_{i=1}^{k}x_{j+i}\right)  }\cdot\left(  \prod_{i=k+1}%
^{n-1}u_{j+i}\right)  \nonumber\\
& =\sum_{k=0}^{n-1}\underbrace{\left(  \prod_{i=1}^{k}g_{j+i}\right)
\cdot\left(  \prod_{i=k+1}^{n-1}g_{j+i}\right)  }_{=\prod_{i=1}^{n-1}g_{j+i}%
}\cdot\left(  \prod_{i=1}^{k}x_{j+i}\right)  \cdot\left(  \prod_{i=k+1}%
^{n-1}u_{j+i}\right)  \nonumber\\
& =\sum_{k=0}^{n-1}\left(  \prod_{i=1}^{n-1}g_{j+i}\right)  \cdot\left(
\prod_{i=1}^{k}x_{j+i}\right)  \cdot\left(  \prod_{i=k+1}^{n-1}u_{j+i}\right)
\nonumber
\end{align}%
\begin{align}
& =\left(  \prod_{i=1}^{n-1}g_{j+i}\right)  \cdot\sum_{k=0}^{n-1}%
\underbrace{\left(  \prod_{i=1}^{k}x_{j+i}\right)  }_{=x_{j+1}x_{j+2}\cdots
x_{j+k}}\cdot\underbrace{\left(  \prod_{i=k+1}^{n-1}u_{j+i}\right)
}_{=u_{j+k+1}u_{j+k+2}\cdots u_{j+n-1}}\nonumber\\
& =\left(  \prod_{i=1}^{n-1}g_{j+i}\right)  \cdot\underbrace{\sum_{k=0}%
^{n-1}x_{j+1}x_{j+2}\cdots x_{j+k}\cdot u_{j+k+1}u_{j+k+2}\cdots u_{j+n-1}%
}_{\substack{=t_{n-1,j}\\\text{(by the definition of }t_{n-1,j}\text{)}%
}}\nonumber\\
& =\left(  \prod_{i=1}^{n-1}g_{j+i}\right)  \cdot t_{n-1,j}%
.\label{pf.prop.fu-gauge.1}%
\end{align}

However, for each $j\in\mathbb{Z}$, we have%
\[
\left(  \prod_{i=1}^{n-1}g_{j+i}\right)  \cdot g_{j+n}=\prod_{i=1}^{n}%
g_{j+i}=g_{j+1}g_{j+2}\cdots g_{j+n}=g_{1}g_{2}\cdots g_{n}%
\]
(by Lemma \ref{lem.aprod}, applied to $a=g$ and $k=j$) and therefore%
\begin{equation}
\prod_{i=1}^{n-1}g_{j+i}=\dfrac{g_{1}g_{2}\cdots g_{n}}{g_{j+n}}=\dfrac
{g_{1}g_{2}\cdots g_{n}}{g_{j}}\label{pf.prop.fu-gauge.2}%
\end{equation}
(since Convention \ref{conv.bir.peri} yields $g_{j+n}=g_{j}$). 

Thus, for each $j\in\mathbb{Z}$, we have
\begin{align}
q_{j}  & =\underbrace{\left(  \prod_{i=1}^{n-1}g_{j+i}\right)  }%
_{\substack{=\dfrac{g_{1}g_{2}\cdots g_{n}}{g_{j}}\\\text{(by
(\ref{pf.prop.fu-gauge.2}))}}}\cdot t_{n-1,j}\ \ \ \ \ \ \ \ \ \ \left(
\text{by (\ref{pf.prop.fu-gauge.1})}\right)  \nonumber\\
& =\dfrac{g_{1}g_{2}\cdots g_{n}}{g_{j}}\cdot t_{n-1,j}%
.\label{pf.prop.fu-gauge.3}%
\end{align}

Now, let $i\in\left\{  1,2,\ldots,n\right\}  $. Applying
(\ref{pf.prop.fu-gauge.3}) to $j=i-1$, we obtain%
\begin{equation}
q_{i-1}=\dfrac{g_{1}g_{2}\cdots g_{n}}{g_{i-1}}\cdot t_{n-1,i-1}%
.\label{pf.prop.fu-gauge.3a}%
\end{equation}
Applying (\ref{pf.prop.fu-gauge.3}) to $j=i+1$, we obtain%
\begin{equation}
q_{i+1}=\dfrac{g_{1}g_{2}\cdots g_{n}}{g_{i+1}}\cdot t_{n-1,i+1}%
.\label{pf.prop.fu-gauge.3b}%
\end{equation}
Dividing the equality (\ref{pf.prop.fu-gauge.3a}) by the equality
(\ref{pf.prop.fu-gauge.3b}), we obtain%
\begin{equation}
\dfrac{q_{i-1}}{q_{i+1}}=\dfrac{\dfrac{g_{1}g_{2}\cdots g_{n}}{g_{i-1}}\cdot
t_{n-1,i-1}}{\dfrac{g_{1}g_{2}\cdots g_{n}}{g_{i+1}}\cdot t_{n-1,i+1}}%
=\dfrac{t_{n-1,i-1}}{t_{n-1,i+1}}\cdot\dfrac{g_{i+1}}{g_{i-1}}%
.\label{pf.prop.fu-gauge.3ab}%
\end{equation}

However, the definition of $y$ yields%
\begin{equation}
y_{i}=u_{i}\cdot\dfrac{u_{i-1}t_{n-1,i-1}}{x_{i+1}t_{n-1,i+1}}%
.\label{pf.prop.fu-gauge.4}%
\end{equation}
On the other hand, from (\ref{pf.prop.fu-gauge.abi=}), we obtain the
equalities $\left(  gu\right)  _{i}=g_{i}u_{i}$ and $\left(  gu\right)
_{i-1}=g_{i-1}u_{i-1}$ and $\left(  gx\right)  _{i+1}=g_{i+1}x_{i+1}$. Thus,%
\begin{align*}
\left(  gu\right)  _{i}\cdot\dfrac{\left(  gu\right)  _{i-1}q_{i-1}}{\left(
gx\right)  _{i+1}q_{i+1}}  & =g_{i}u_{i}\cdot\dfrac{g_{i-1}u_{i-1}q_{i-1}%
}{g_{i+1}x_{i+1}q_{i+1}}=g_{i}u_{i}\cdot\dfrac{g_{i-1}u_{i-1}}{g_{i+1}x_{i+1}%
}\cdot\dfrac{q_{i-1}}{q_{i+1}}\\
& =g_{i}u_{i}\cdot\dfrac{g_{i-1}u_{i-1}}{g_{i+1}x_{i+1}}\cdot\dfrac
{t_{n-1,i-1}}{t_{n-1,i+1}}\cdot\dfrac{g_{i+1}}{g_{i-1}}%
\ \ \ \ \ \ \ \ \ \ \left(  \text{by (\ref{pf.prop.fu-gauge.3ab})}\right)  \\
& =g_{i}\underbrace{u_{i}\cdot\dfrac{u_{i-1}t_{n-1,i-1}}{x_{i+1}t_{n-1,i+1}}%
}_{\substack{=y_{i}\\\text{(by (\ref{pf.prop.fu-gauge.4}))}}}=g_{i}%
y_{i}=\left(  gy\right)  _{i}%
\end{align*}
(since the definition of $gy$ yields $\left(  gy\right)  _{i}=g_{i}y_{i}$).
Therefore,
\[
\left(  gy\right)  _{i}=\left(  gu\right)  _{i}\cdot\dfrac{\left(  gu\right)
_{i-1}q_{i-1}}{\left(  gx\right)  _{i+1}q_{i+1}}.
\]

Now, forget that we fixed $i$. We thus have proved that
\[
\left(  gy\right)  _{i}=\left(  gu\right)  _{i}\cdot\dfrac{\left(  gu\right)
_{i-1}q_{i-1}}{\left(  gx\right)  _{i+1}q_{i+1}}\ \ \ \ \ \ \ \ \ \ \text{for
each }i\in\left\{  1,2,\ldots,n\right\}  .
\]
Hence, Lemma \ref{lem.f.back} (applied to $gu$, $gx$ and $gy$ instead of $u$,
$x$ and $z$) yields that $\mathbf{f}_{gu}\left(  gx\right)  =g\underbrace{y}%
_{=\mathbf{f}_{u}\left(  x\right)  }=g\mathbf{f}_{u}\left(  x\right)  $.
Proposition \ref{prop.fu-gauge} is thus proved.
\end{proof}

\begin{proof}
[Proof of Proposition \ref{prop.eta-gauge}.] For any $i\in\mathbb{Z}$, we
consider the element $\kappa_{i}\left(  a,b\right)  $ defined in Definition
\ref{def.eta}, and we also consider the element $\kappa_{i}\left(
ga,gb\right)  $ defined in the same way as $\kappa_{i}\left(  a,b\right)  $
(but using $ga$ and $gb$ instead of $a$ and $b$).

Define two $n$-tuples $a^{\prime}\in\mathbb{K}^{n}$ and $b^{\prime}%
\in\mathbb{K}^{n}$ as in Definition \ref{def.eta}. Furthermore, let $\left(
ga\right)  ^{\prime}\in\mathbb{K}^{n}$ and $\left(  gb\right)  ^{\prime}%
\in\mathbb{K}^{n}$ be the two $n$-tuples defined in the same way (but using
$ga$ and $gb$ instead of $a$ and $b$). Definition \ref{def.eta} yields that
$\eta\left(  a,b\right)  =\left(  a^{\prime},b^{\prime}\right)  $. The same
argument (applied to $ga$ and $gb$ instead of $a$ and $b$) yields
\begin{equation}
\eta\left(  ga,gb\right)  =\left(  \left(  ga\right)  ^{\prime},\left(
gb\right)  ^{\prime}\right)  .\label{pf.prop.eta-gauge.etag1}%
\end{equation}

We shall now show that $\left(  ga\right)  ^{\prime}=ga^{\prime}$ and $\left(
gb\right)  ^{\prime}=gb^{\prime}$.

Indeed, let $i\in\mathbb{Z}$ be arbitrary. Then, the definition of $\kappa
_{i}\left(  ga,gb\right)  $ yields%
\begin{align}
\kappa_{i}\left(  ga,gb\right)    & =\sum_{j=i}^{i+n-1}\underbrace{\left(
gb\right)  _{i+1}\left(  gb\right)  _{i+2}\cdots\left(  gb\right)  _{j}%
}_{=\prod_{p=i+1}^{j}\left(  gb\right)  _{p}}\cdot\underbrace{\left(
ga\right)  _{j+1}\left(  ga\right)  _{j+2}\cdots\left(  ga\right)  _{i+n-1}%
}_{=\prod_{p=j+1}^{i+n-1}\left(  ga\right)  _{p}}\nonumber\\
& =\sum_{j=i}^{i+n-1}\left(  \prod_{p=i+1}^{j}\underbrace{\left(  gb\right)
_{p}}_{\substack{=g_{p}b_{p}\\\text{(by (\ref{pf.prop.fu-gauge.abi=}))}%
}}\right)  \cdot\left(  \prod_{p=j+1}^{i+n-1}\underbrace{\left(  ga\right)
_{p}}_{\substack{=g_{p}a_{p}\\\text{(by (\ref{pf.prop.fu-gauge.abi=}))}%
}}\right)  \nonumber\\
& =\sum_{j=i}^{i+n-1}\underbrace{\left(  \prod_{p=i+1}^{j}\left(  g_{p}%
b_{p}\right)  \right)  }_{=\left(  \prod_{p=i+1}^{j}g_{p}\right)  \cdot\left(
\prod_{p=i+1}^{j}b_{p}\right)  }\cdot\underbrace{\left(  \prod_{p=j+1}%
^{i+n-1}\left(  g_{p}a_{p}\right)  \right)  }_{=\left(  \prod_{p=j+1}%
^{i+n-1}g_{p}\right)  \cdot\left(  \prod_{p=j+1}^{i+n-1}a_{p}\right)
}\nonumber\\
& =\sum_{j=i}^{i+n-1}\left(  \prod_{p=i+1}^{j}g_{p}\right)  \cdot
\underbrace{\left(  \prod_{p=i+1}^{j}b_{p}\right)  \cdot\left(  \prod
_{p=j+1}^{i+n-1}g_{p}\right)  }_{=\left(  \prod_{p=j+1}^{i+n-1}g_{p}\right)
\cdot\left(  \prod_{p=i+1}^{j}b_{p}\right)  }\cdot\left(  \prod_{p=j+1}%
^{i+n-1}a_{p}\right)  \nonumber\\
& =\sum_{j=i}^{i+n-1}\underbrace{\left(  \prod_{p=i+1}^{j}g_{p}\right)
\cdot\left(  \prod_{p=j+1}^{i+n-1}g_{p}\right)  }_{=\prod_{p=i+1}^{i+n-1}%
g_{p}}\cdot\left(  \prod_{p=i+1}^{j}b_{p}\right)  \cdot\left(  \prod
_{p=j+1}^{i+n-1}a_{p}\right)  \nonumber\\
& =\sum_{j=i}^{i+n-1}\left(  \prod_{p=i+1}^{i+n-1}g_{p}\right)  \cdot\left(
\prod_{p=i+1}^{j}b_{p}\right)  \cdot\left(  \prod_{p=j+1}^{i+n-1}a_{p}\right)
\nonumber\\
& =\left(  \prod_{p=i+1}^{i+n-1}g_{p}\right)  \cdot\sum_{j=i}^{i+n-1}%
\underbrace{\left(  \prod_{p=i+1}^{j}b_{p}\right)  }_{=b_{i+1}b_{i+2}\cdots
b_{j}}\cdot\underbrace{\left(  \prod_{p=j+1}^{i+n-1}a_{p}\right)  }%
_{=a_{j+1}a_{j+2}\cdots a_{i+n-1}}\nonumber\\
& =\left(  \prod_{p=i+1}^{i+n-1}g_{p}\right)  \cdot\underbrace{\sum
_{j=i}^{i+n-1}b_{i+1}b_{i+2}\cdots b_{j}\cdot a_{j+1}a_{j+2}\cdots a_{i+n-1}%
}_{\substack{=\kappa_{i}\left(  a,b\right)  \\\text{(by the definition of
}\kappa_{i}\left(  a,b\right)  \text{)}}}\nonumber\\
& =\left(  \prod_{p=i+1}^{i+n-1}g_{p}\right)  \cdot\kappa_{i}\left(
a,b\right)  .\label{pf.prop.eta-gauge.1}%
\end{align}
However, for each $i\in\mathbb{Z}$, we have%
\[
\left(  \prod_{p=i+1}^{i+n-1}g_{p}\right)  \cdot g_{i+n}=\prod_{p=i+1}%
^{i+n}g_{p}=g_{i+1}g_{i+2}\cdots g_{i+n}=g_{1}g_{2}\cdots g_{n}%
\]
(by Lemma \ref{lem.aprod}, applied to $g$ and $i$ instead of $a$ and $k$) and
therefore%
\begin{equation}
\prod_{p=i+1}^{i+n-1}g_{p}=\dfrac{g_{1}g_{2}\cdots g_{n}}{g_{i+n}}%
=\dfrac{g_{1}g_{2}\cdots g_{n}}{g_{i}}\label{pf.prop.eta-gauge.2}%
\end{equation}
(since Convention \ref{conv.bir.peri} yields $g_{i+n}=g_{i}$). 

Thus, for each $i\in\mathbb{Z}$, we have
\begin{align}
\kappa_{i}\left(  ga,gb\right)    & =\underbrace{\left(  \prod_{p=i+1}%
^{i+n-1}g_{p}\right)  }_{\substack{=\dfrac{g_{1}g_{2}\cdots g_{n}}{g_{i}%
}\\\text{(by (\ref{pf.prop.eta-gauge.2}))}}}\cdot\kappa_{i}\left(  a,b\right)
\ \ \ \ \ \ \ \ \ \ \left(  \text{by (\ref{pf.prop.eta-gauge.1})}\right)
\nonumber\\
& =\dfrac{g_{1}g_{2}\cdots g_{n}}{g_{i}}\cdot\kappa_{i}\left(  a,b\right)
.\label{pf.prop.eta-gauge.3}%
\end{align}

Now, let $i\in\left\{  1,2,\ldots,n\right\}  $. Then, the definition of
$a_{i}^{\prime}$ yields%
\begin{equation}
a_{i}^{\prime}=\dfrac{a_{i-1}\kappa_{i-1}\left(  a,b\right)  }{\kappa
_{i}\left(  a,b\right)  }.\label{pf.prop.eta-gauge.4}%
\end{equation}
Likewise, the definition of $\left(  ga\right)  _{i}^{\prime}$ yields%
\begin{align*}
\left(  ga\right)  _{i}^{\prime}  & =\dfrac{\left(  ga\right)  _{i-1}%
\kappa_{i-1}\left(  ga,gb\right)  }{\kappa_{i}\left(  ga,gb\right)
}=\underbrace{\left(  ga\right)  _{i-1}}_{\substack{=g_{i-1}a_{i-1}\\\text{(by
(\ref{pf.prop.fu-gauge.abi=}))}}}\cdot\underbrace{\kappa_{i-1}\left(
ga,gb\right)  }_{\substack{=\dfrac{g_{1}g_{2}\cdots g_{n}}{g_{i-1}}\cdot
\kappa_{i-1}\left(  a,b\right)  \\\text{(by (\ref{pf.prop.eta-gauge.3}%
),}\\\text{applied to }i-1\\\text{instead of }i\text{)}}}\diagup
\underbrace{\kappa_{i}\left(  ga,gb\right)  }_{\substack{=\dfrac{g_{1}%
g_{2}\cdots g_{n}}{g_{i}}\cdot\kappa_{i}\left(  a,b\right)  \\\text{(by
(\ref{pf.prop.eta-gauge.3}))}}}\\
& =g_{i-1}a_{i-1}\cdot\dfrac{g_{1}g_{2}\cdots g_{n}}{g_{i-1}}\cdot\kappa
_{i-1}\left(  a,b\right)  \diagup\left(  \dfrac{g_{1}g_{2}\cdots g_{n}}{g_{i}%
}\cdot\kappa_{i}\left(  a,b\right)  \right)  \\
& =g_{i}\cdot\underbrace{\dfrac{a_{i-1}\kappa_{i-1}\left(  a,b\right)
}{\kappa_{i}\left(  a,b\right)  }}_{\substack{=a_{i}^{\prime}\\\text{(by
(\ref{pf.prop.eta-gauge.4}))}}}=g_{i}a_{i}^{\prime}=\left(  ga^{\prime
}\right)  _{i}%
\end{align*}
(since the definition of $ga^{\prime}$ yields $\left(  ga^{\prime}\right)
_{i}=g_{i}a_{i}^{\prime}$).

Forget that we fixed $i$. We thus have shown that $\left(  ga\right)
_{i}^{\prime}=\left(  ga^{\prime}\right)  _{i}$ for each $i\in\left\{
1,2,\ldots,n\right\}  $. In other words, $\left(  ga\right)  ^{\prime
}=ga^{\prime}$. A similar argument shows that $\left(  gb\right)  ^{\prime
}=gb^{\prime}$. Thus, (\ref{pf.prop.eta-gauge.etag1}) becomes%
\[
\eta\left(  ga,gb\right)  =\left(  \underbrace{\left(  ga\right)  ^{\prime}%
}_{=ga^{\prime}},\underbrace{\left(  gb\right)  ^{\prime}}_{=gb^{\prime}%
}\right)  =\left(  ga^{\prime},gb^{\prime}\right)  .
\]
This proves Proposition \ref{prop.eta-gauge}.
\end{proof}
\end{verlong}

\end{document}